\pdfoutput=1

\documentclass{amsart}
\usepackage{amsmath,amssymb}

\usepackage[T1]{fontenc}
\usepackage{newtxtext}
\usepackage{newtxmath}

\usepackage[margin=1in]{geometry}
\usepackage{tikz-cd}
\usepackage[hidelinks,linktoc=all]{hyperref}

\hypersetup{
  pdftitle={Meet obstructions and saturation for the constant window convolution on graded posets},
  pdfauthor={Shinobu Yokoyama},
  pdfsubject={Algebraic topology (math.AT); MSC2020 Primary 55N31; Secondary 18G80, 18A40, 06A11, 05E45},
  pdfkeywords={cellular sheaf; face poset; window convolution; thickening kernel; incidence algebra; homotopy colimit; bar construction; homotopy finality; interleaving distance; saturation}
}

\newtheorem{theorem}{Theorem}[subsection]
\newtheorem{lemma}[theorem]{Lemma}
\newtheorem{proposition}[theorem]{Proposition}
\newtheorem{corollary}[theorem]{Corollary}
\newtheorem{conjecture}[theorem]{Conjecture}
\theoremstyle{definition}
\newtheorem{definition}[theorem]{Definition}
\newtheorem{example}[theorem]{Example}
\newtheorem{problem}[theorem]{Problem}
\theoremstyle{remark}
\newtheorem{remark}[theorem]{Remark}
\theoremstyle{plain}

\numberwithin{subsection}{section}

\newcommand{\Shv}{\mathrm{Shv}}
\newcommand{\Pos}{\mathsf{P}}
\newcommand{\Ch}{\mathrm{Ch}}
\newcommand{\FinVec}{\mathrm{FinVec}}
\newcommand{\colim}{\operatorname*{colim}}
\newcommand{\id}{\mathrm{id}}
\newcommand{\len}{\ell}
\newcommand{\st}{\operatorname{st}}
\newcommand{\cl}{\operatorname{cl}}
\newcommand{\ct}{T^{*}\Pos}
\newcommand{\LC}{\mathbb{C}}
\newcommand{\RC}{\mathbb{C}}
\newcommand{\sky}{\operatorname{sky}}
\newcommand{\Wt}{W}
\newcommand{\hocolim}{\operatorname*{hocolim}}
\newcommand{\meet}{\wedge}
\newcommand{\up}{\mathbin{\uparrow}}
\newcommand{\Ka}{\mathcal{K}}
\DeclareMathOperator{\rk}{rk}
\DeclareMathOperator{\Chb}{Ch^{b}}
\DeclareMathOperator{\Db}{D^{b}}
\DeclareMathOperator{\Lan}{Lan}
\DeclareMathOperator{\Ran}{Ran}
\DeclareMathOperator{\Hom}{Hom}
\DeclareMathOperator{\srep}{srep}

\begin{document}

\title[Meet obstructions and saturation for the constant window convolution]{Meet obstructions and saturation for the constant window convolution on graded posets}

\author{Shinobu Yokoyama}
\address{Independent}
\email{yokoyama.shinobu.46c@kyoto-u.jp}

\date{August 31, 2026}

\subjclass[2020]{Primary 55N31; Secondary 18G80, 18A40, 06A11, 05E45}
\keywords{cellular sheaf, face poset, window convolution, thickening kernel, incidence algebra, homotopy colimit, bar construction, homotopy finality, interleaving distance, saturation}

\begin{abstract}
Let $\mathsf{P}$ be a finite graded poset and $\Delta_a^{\mathsf{P}}$ the height-$a$ thickening of its diagonal, with projections $q_1,q_2$ to $\mathsf{P}$. We study the \emph{window convolution} $C_a=\operatorname{Lan}_{q_1}\circ q_2^\ast$ on $\mathrm{Shv}(\mathsf{P};k)$, a discrete analogue of convolution against a thickening kernel.
An interleaving distance needs the homotopy window convolution $\mathbb{C}_a$ to compose as a flow, $\mathbb{C}_a\mathbb{C}_b\simeq\mathbb{C}_{a+b}$; where the meet assignment $\Phi$ is total it is a functor and carries a comparison map.
Finality is sufficient, and necessary at every minimal apex and wherever the finality defect of $\Phi$ is essential; where $\Phi$ is total at a minimal apex with unit windows, it is the failure of a length-two interval above the apex to have a single interior element. The flow fails at every branching length-two interval with minimal bottom element, and with it on the face poset of every finite regular cell complex of dimension $\ge2$. It survives on tame posets, where $\mathrm{id}\Rightarrow\mathbb{C}_a$ gives a canonical extended interleaving pseudometric on $\operatorname{D^{b}}(\mathrm{Shv}(\mathsf{P};k))$; in the saturation cases computed here it takes no finite value above the length of $\mathsf{P}$, and is finite if and only if the derived colimits agree.
\end{abstract}

\maketitle

% Sections only: with the lead subsections that the three-level numbering
% requires, a subsection-level table of contents no longer fits on page one.
\setcounter{tocdepth}{1}
% The requirement is that the contents end on page one; any excess is absorbed
% here rather than by shortening the abstract or an appendix title. At eleven
% contents lines this was worth 2pt of overfull title page; the 2026-08-12
% appendix merge brought the count to ten, and the setting is kept as margin.
\enlargethispage{\baselineskip}
\tableofcontents

\section{Introduction}

Fix a finite graded poset $\Pos$; write $\ct=\{(s,t)\in \Pos\times \Pos: s\le t\}$ with the product order, $q_1,q_2\colon\ct\to \Pos$ for the two projections, and, for $a\ge0$, $\Delta_a^{\Pos}\subseteq\ct$ for the height-$a$ window \eqref{eq:window}, a discrete analogue of the thickened diagonal of Petit--Schapira \cite{PS}. The motivating case is $\Pos=\Pos(K)$, the face poset of a finite regular cell complex $K$ (Section~\ref{sec:conventions}); the standing hypothesis is only that $\Pos$ is finite and graded, and each result that uses the cell structure names it. The \emph{window convolution} is the endofunctor of $\Shv(\Pos;k)$
\[
C_a\;=\;\Lan_{q_1^\Delta}\circ(q_2^\Delta)^\ast,
\qquad
q_i^\Delta=q_i|_{\Delta_a^{\Pos}},
\]
the left Kan extension along the first projection of the window of the restriction along the second, and the discrete analogue of convolution against a thickening kernel (Definition~\ref{def:Ca}, Remark~\ref{rem:kernel}); it is right exact, not exact, one half of an adjunction $C_a\dashv C^a$. We write $\LC_a$ for the homotopy window convolution, the homotopy left Kan extension along the same projection (Definition~\ref{def:LCa}). Interleaving-type distances related to Aoki, Petit--Schapira and Berkouk--Ginot \cite{Aoki,PS,BerkoukGinot} require the family $\{\LC_a\}$ to compose like a flow, $\LC_a\LC_b\simeq\LC_{a+b}$. This paper bounds the domain of that flow for this kernel, isolates the obstruction between the bounds, and delimits what the distance measures where the flow survives.

\emph{Degree zero.} For every finite connected regular cell complex of dimension $\ge1$ and every $a\ge1$, no assignment $D$ on isomorphism classes of vector spaces computes $H^0(\Pos;C_aF)$ from $H^0(\Pos;F)$ (Theorem~\ref{thm:noshadow}). Section \ref{sec:shadow} proves this and computes $C_a$ on the indecomposable injectives, which is what the derived work below is built on.

\emph{The derived flow and its obstruction.} $\LC_a$ has a finite stalkwise bar model, and the strong flow holds on every rooted forest, chains included (Propositions~\ref{prop:bar}, \ref{prop:chain}, \ref{prop:rooted}). Where the meet assignment $\Phi$ of Definition~\ref{def:phi} is total it carries the meet comparison $\gamma_\sigma\colon\LC_a\LC_bF\to\LC_{a+b}F$, coherent in both windows (Propositions~\ref{prop:final}, \ref{prop:meet-coherence}). The obstruction to $\gamma_\sigma$ being a quasi-isomorphism is the finality defect of $\Phi$, the reduced homology of its comma fibres assembled over the fibres of $q_2^J$ into a defect module and built from the order of $\Pos$ alone (Remark~\ref{rem:weight-independent}). Vanishing is sufficient (Proposition~\ref{prop:final}) and is necessary at every minimal apex (Theorem~\ref{thm:minimal-sharp}); away from one it is necessary wherever that module is essential (Corollary~\ref{cor:sharp-essential}), as it is at every unit first window. Where $\Phi$ is total and its defect sits at the top of the window at a minimal apex, as it does throughout a unit first window, the two functors are exhibited with different values and the law itself fails, not merely the comparison (Proposition~\ref{prop:sat-nogo}). What separates the two directions is that the available test sheaves impose a $k$-homology equivalence for each principal up-set of $\Pos$, where finality asks for one for each principal up-set of the index poset (Proposition~\ref{prop:representable-test}).

\emph{The distance on the tame domain.} On tame posets (Definition~\ref{def:tame}), the transition maps $\id\Rightarrow\LC_a$ and the flow law give an extended pseudometric on $\Db(\Shv(\Pos;k))$, independent of the isomorphisms witnessing tameness (Definition~\ref{def:interleaving}, Propositions~\ref{prop:metric}, \ref{prop:choice}). The tame class contains the closed edge (Example~\ref{ex:closed-edge}) and every rooted forest, chains included (Proposition~\ref{prop:rooted}); on a rooted forest with a branch $\LC_a$ need not be concentrated in degree zero, so the distance there is not a height shift. It is bounded by the window all the same: in each saturation case computed here it takes no finite value above the length of $\Pos$, and is bounded by that length if and only if the derived colimits agree (Lemma~\ref{lem:constant-flow}). The missing-meet and branching length-two examples are why no distance is asserted on arbitrary $\Pos$.

\subsection{Related work}

The Alexandrov and cellular sheaf background follows Curry \cite{Curry}, and the module presentation $\Shv(\Pos;k)\simeq\mathrm{Mod}\text{-}k\Pos$ over the incidence algebra follows Ladkani \cite{Ladkani}. The abstract frame in which a thickening kernel is a monoidal presheaf on the parameter monoid and convolution is its multiplication is that of Kashiwara--Schapira \cite[Def.~2.2]{KS} and Petit--Schapira \cite{PS}; the derived isometry theory for such convolution distances is Berkouk--Ginot \cite[Thm.~5.10]{BerkoukGinot}, and the flow-metric formalism within which the distance of Section~\ref{sec:metric} is constructed is de Silva--Munch--Stefanou \cite{dSMS}.

Height interleavings on finite posets are Aoki \cite{Aoki}; his height window is the object our near-window (Definition~\ref{def:near-window}) inverts, and Example~\ref{ex:aoki-chain-table} carries the comparison on chains. Guzeev \cite[Thm.~6]{Guzeev} and Guzeev--Tanaka \cite[Thm.~3.8]{GT} bound an interleaving distance along a map of persistence posets whose fibres are $\varepsilon$-acyclic, respectively weakly $\varepsilon$-contractible; there the poset is the variable object and the bound carries a multiple of the target's cardinality, whereas here the poset is fixed, finality is applied to the meet functor comparing two windows upon it, and the criterion is exact with $k$-linear coefficients. Where those results are stability theorems, the analysis here is a delimitation. The (homotopy) colimits underlying $C_a$ are read throughout as derived functors of $\colim$, in the sense of Riehl \cite{Riehl} and Shulman \cite[\S5]{Shulman}; derived functors via deformations follow \cite[Ch.~2]{Riehl}, \cite{DHKS}, and the finality and Grothendieck apparatus follows \cite[Ch.~5, Ch.~8]{Riehl}, \cite[\S1, Thm.~A]{Quillen}, \cite{Thomason}, with the poset-indexed counterparts of Thomason's theorem and of Quillen's Theorem~A in Fern\'andez--Minian \cite{FM}. The diamond property of the face poset of a regular cell complex is treated by Bj\"orner \cite{Bjorner}.

\section{Conventions and the window convolution}\label{sec:conventions}

Throughout, $\Pos$ is a \emph{finite graded poset}. Write $[s,t]=\{x\in\Pos:s\le x\le t\}$ for the closed interval and $(s,t)=\{x\in\Pos:s<x<t\}$ for the open one, and $s\lessdot t$ to say that $t$ \emph{covers} $s$, that is $s<t$ with $[s,t]=\{s,t\}$. Gradedness is the existence of a rank function $\rk\colon\Pos\to\mathbb Z$ with $\rk(t)=\rk(s)+1$ whenever $s\lessdot t$, and for $s\le t$ the \emph{height}
\[
   \len(s,t)=\rk(t)-\rk(s)
\]
is the length of every maximal chain in $[s,t]$, so that $\len$ is additive, $\len(s,u)=\len(s,t)+\len(t,u)$ for $s\le t\le u$, and $\len(s,t)=0$ if and only if $s=t$. Set
\begin{equation}\label{eq:window}
\ct=\{(s,t)\in \Pos\times \Pos: s\le t\},\qquad
\Delta_a^{\Pos}=\{(s,t)\in \ct:\len(s,t)\le a\}\quad (a\in\mathbb{Z}_{\ge0}),
\end{equation}
with the product order, and $q_1,q_2\colon\ct\to \Pos$ the two projections.

The \emph{face poset} of a finite regular cell complex $K$ in Curry's sense \cite[Def.~4.1.1]{Curry} is its poset $\Pos(K)$ of nonempty cells ordered by the face relation, graded by $\rk=\dim+1$; gradedness is strictly weaker.
Appendix \ref{app:combinatorics} fixes the passages between posets, simplicial complexes and simplicial sets, and Appendix \ref{ssec:face-posets} carries Bj\"orner's characterisation of them, the witnesses separating them from the graded posets, and the diamond property that Section \ref{sec:derived} uses.

Let $k$ be a field.
Curry's Alexandrov equivalence \cite[Thm.~4.2.10]{Curry}, stated there for an arbitrary complete and cocomplete target, identifies sheaves of $k$-vector spaces on $\Pos$ (with its Alexandrov topology) with functors on $\Pos$,
\[
\Shv(\Pos;k)\;\simeq\;\mathrm{Fun}(\Pos,\mathrm{Vec}_k),
\]
under which the stalk of $F$ at $\sigma$ is $F(\sigma)$, the corestriction for $\sigma\le\tau$ is $F(\sigma)\to F(\tau)$, and sections over an open set are the limit of the stalks it contains (Appendix \ref{ssec:chains-alexandrov}). We work in the functor presentation and write $\Shv(\Pos;k)$ for it; Remark \ref{rem:kernel} sets the construction beside the sheaf-theoretic convolution of \cite{PS}. The same equivalence gives $\Shv(\ct;k)\simeq\mathrm{Fun}(\ct,\mathrm{Vec}_k)$. Write
\[
\Shv(\Pos;\FinVec_k)\;\simeq\;\mathrm{Fun}(\Pos,\FinVec_k)
\]
for the full subcategory of sheaves with finite-dimensional stalks.

For an element $x$ of a finite poset $X$, write
\[
\st(x)=\{y\in X:x\le y\},
\qquad
\cl(x)=\{y\in X:y\le x\}
\]
for the principal up-set and the principal down-set at $x$: the smallest open set containing $x$ and the smallest closed one, that is the closure of $\{x\}$ (Appendix \ref{ssec:chains-alexandrov}). Both are used for $\Pos$ and for the auxiliary index posets of Sections \ref{sec:derived}--\ref{sec:metric} alike.

\subsection{The near-window and the window convolution}

\begin{definition}[The near-window]\label{def:near-window}
For $a\ge0$ and $x\in \Pos$, the \emph{$a$-near-window} (or \emph{window-star}) at $x$ is the fibre of $\Delta_a^{\Pos}$ over $x$ under the first projection,
\[
\st_a(x):=\{u\in \Pos:(x,u)\in\Delta_a^{\Pos}\}=\{u\in\st(x):\len(x,u)\le a\}.
\]
This is the discrete thickened-diagonal object of \cite[Def.~1.3.1(a)]{PS} read at $x$, and the superlevel form of the window attached to Aoki's height-difference function \cite[Def.~3.2]{Aoki}. On a face poset, where $\rk=\dim+1$, one has $\rk(u)-\rk(v)=\dim u$ for a vertex $v$, so $\st_a(v)$ is the set of cells of dimension $\le a$ incident to $v$.
\end{definition}

The near-window is the motivating object, but it is \emph{not} functorial in $x$: for $x\le x'$ an element $u\in\st_a(x)$ need not admit any $u'\in\st_a(x')$ with $u\le u'$, so the naive superlevel colimit $\sigma\mapsto\colim_{u\in\st_a(\sigma)}F(u)$ acquires no corestrictions for $a\ge1$. On $\{v<e_1,\ v<e_2\}$ one has $\st_1(v)=\{v,e_1,e_2\}$ and $\st_1(e_1)=\{e_1\}$, the latter a single point because $e_1$ is maximal; so $e_2\in\st_1(v)$ has nowhere to go along $v\le e_1$. (This inverts Aoki's $a$-latching functor $F\mapsto L^\ell_aF$ for the height function $\ell$, which colimits over the sublevel window below $\sigma$ \cite[Def.~3.4]{Aoki}, into the superlevel cone above $\sigma$.) The covariant repair is to index instead over the comma object below.

\begin{definition}[The window-comma poset]\label{def:window-index}
For $a\ge0$ and $\sigma\in \Pos$, set
\[
J_a(\sigma):=\{(s,t)\in\Delta_a^{\Pos}:s\le\sigma\}
=\textstyle\bigsqcup_{s\le\sigma}\{s\}\times\st_a(s),
\]
viewed as the full subposet of $\Delta_a^{\Pos}$.
Call $a$ the \emph{window scale} and $\sigma$ the \emph{apex} of $J_a(\sigma)$.
Note that $\sigma\mapsto J_a(\sigma)$ is monotone in $\sigma$.
\end{definition}

\begin{definition}[The window convolution]\label{def:Ca}
For $a\ge 0$ write $q_i^\Delta:=q_i|_{\Delta_a^{\Pos}}\colon\Delta_a^{\Pos}\to \Pos$ ($i=1,2$) for the restrictions of the two projections to the window. The \emph{window convolution} is the endofunctor of $\Shv(\Pos;k)$
\[
   C_a\;:=\;\Lan_{q_1^\Delta}\circ(q_2^\Delta)^\ast ,
\]
the left Kan extension along the first projection of the restriction along the second.
\end{definition}

\begin{remark}[The word ``kernel'']\label{rem:kernel}
Petit--Schapira call a \emph{thickening kernel} on a space $X$ a monoidal presheaf on $(\mathbb R_{\ge0},+)$ with values in the monoidal category $(\Db(k_{X\times X}),\circ)$ of kernels on $X$ \cite[Def.~1.3.1(a)]{PS}.
Of those two ingredients Definition \ref{def:Ca} carries the first, the window standing in for the kernel object, and Remark \ref{rem:weighted} records the weights it may be convolved against. The ambient shorthand $q_{1\sharp}(k_{\Delta_a^{\Pos}}\otimes^{\mathbb L}q_2^\ast F)$ is not used. Extension by zero to $\ct$ forces the transition of a weight to vanish along every relation that factors through a term outside the window, and the window is not convex: on the chain $0<1<2<3$ the relation $(0,1)\le(2,3)$ between terms of $\Delta_1^{\Pos}$ factors through $(0,3)$, which is not one. The extension is therefore a functor on $\ct$ only for a weight whose transitions already vanish there, and never for the constant weight, whose transitions are identities.
\end{remark}

\begin{lemma}[Stalk formula]\label{lem:support-presentation}
For every $a\ge0$ and $F\in\Shv(\Pos;k)$, $C_aF$ is the functor
\[
   \sigma\;\longmapsto\;(C_aF)(\sigma)=\colim_{(s,t)\in J_a(\sigma)} F(t),
\]
the colimit of $(s,t)\mapsto F(t)$ with transition maps the restrictions of $F$, the corestriction along $\sigma\le\sigma'$ being induced by the full inclusion $J_a(\sigma)\subseteq J_a(\sigma')$.
\end{lemma}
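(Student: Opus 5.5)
The plan is to apply the pointwise formula for left Kan extensions valued in the cocomplete category $\mathrm{Vec}_k$. For a functor $p\colon A\to B$ between small categories and $G\colon A\to\mathrm{Vec}_k$, $(\Lan_pG)(b)=\colim_{(a,\,p(a)\to b)\in p\downarrow b}G(a)$, with the corestriction along $b\to b'$ induced by the functor $p\downarrow b\to p\downarrow b'$ given by postcomposition. Take $A=\Delta_a^{\Pos}$, $B=\Pos$, $p=q_1^\Delta$ and $G=(q_2^\Delta)^\ast F=F\circ q_2^\Delta$, so $G(s,t)=F(t)$. A relation $(s,t)\le(s',t')$ in $\Delta_a^{\Pos}$ means $s\le s'$ and $t\le t'$, and $G$ sends it to the restriction $F(t)\to F(t')$. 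This is exactly the diagram in the statement, once the index category is identified.

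The first step is that identification. Since $\Pos$ is a poset, a morphism $q_1^\Delta(s,t)=s\to\sigma$ exists iff $s\le\sigma$ and is then unique. So the comma category $q_1^\Delta\downarrow\sigma$ is isomorphic to the full subcategory of $\Delta_a^{\Pos}$ on objects with $s\le\sigma$. A morphism between two such objects is a morphism in $\Delta_a^{\Pos}$, and the compatibility triangle over $\sigma$ commutes automatically because $\Pos$ is thin. This full subposet is $J_a(\sigma)$ of Definition~\ref{def:window-index}.

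The second step handles the corestrictions. For $\sigma\le\sigma'$, postcomposition with $\sigma\to\sigma'$ sends $(s,t)$ to $(s,t)$, so under the identification it becomes the inclusion $J_a(\sigma)\subseteq J_a(\sigma')$, which is full since both are full subposets of $\Delta_a^{\Pos}$. The induced map of colimits is the canonical one. Functoriality in $\sigma$ comes from the Kan extension, or directly from the fact that inclusions compose.

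There is no real obstacle here. The only point needing care is that the colimit is taken in $\mathrm{Vec}_k$, not in sheaves on the Alexandrov space. This is justified by transporting along Curry's equivalence $\Shv(\Pos;k)\simeq\mathrm{Fun}(\Pos,\mathrm{Vec}_k)$ and $\Shv(\ct;k)\simeq\mathrm{Fun}(\ct,\mathrm{Vec}_k)$ restricted to $\Delta_a^{\Pos}$. Under that equivalence, Definition~\ref{def:Ca} is the functor-category Kan extension, which exists pointwise because $\mathrm{Vec}_k$ is cocomplete and $J_a(\sigma)$ is finite.
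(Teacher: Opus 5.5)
Your proposal is correct and follows the paper's proof. Both identify the comma poset $q_1^\Delta\downarrow\sigma$ with the full subposet $J_a(\sigma)$ and apply the pointwise formula for left Kan extensions, with the corestrictions induced by the inclusions of comma posets; your remarks on thinness of $\Pos$ and on Curry's equivalence make explicit what the paper's one-line proof leaves implicit.
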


\begin{proof}
For $\sigma\in \Pos$ the comma poset $(q_1^\Delta\downarrow\sigma)$ is $\{(s,t)\in\Delta_a^{\Pos}:s\le\sigma\}=J_a(\sigma)$, so the pointwise formula for left Kan extensions \cite[Thm.~6.2.1]{RiehlCTIC}, \cite[Ch.~X]{ML} gives the displayed colimit, naturally in $\sigma$ and $F$, with the corestrictions induced by the inclusions of comma posets.
\end{proof}

\begin{lemma}[Unit]\label{lem:unit}
$C_0\simeq\id_{\Shv(\Pos;k)}$.
\end{lemma}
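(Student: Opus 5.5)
The plan is to compute $C_0$ directly from the stalk formula of Lemma~\ref{lem:support-presentation} and then identify it with the identity. For $a=0$ the window $\Delta_0^{\Pos}$ is the diagonal $\{(s,s):s\in\Pos\}$, since $\len(s,t)=0$ forces $s=t$. Hence $J_0(\sigma)=\{(s,s):s\le\sigma\}$, which is isomorphic as a poset to $\cl(\sigma)$ via $(s,s)\mapsto s$. The diagram $(s,t)\mapsto F(t)$ becomes $s\mapsto F(s)$ on $\cl(\sigma)$, with transition maps the restrictions of $F$.

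Next we use that $\cl(\sigma)$ has the terminal object $\sigma$. The colimit of any diagram over a poset with a top element is its value at the top, with colimit cocone given by the maps $F(s\le\sigma)$. This gives an isomorphism $(C_0F)(\sigma)\cong F(\sigma)$.

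It remains to check naturality in $\sigma$ and in $F$, so that these stalkwise isomorphisms assemble into $C_0\simeq\id$. Take $\sigma\le\sigma'$. By Lemma~\ref{lem:support-presentation}, the corestriction of $C_0F$ is induced by the inclusion $J_0(\sigma)\subseteq J_0(\sigma')$. Under the identification above, this is the map whose component at $s\le\sigma$ is the coprojection into the colimit over $\cl(\sigma')$, i.e.\ $F(s\le\sigma')=F(\sigma\le\sigma')\circ F(s\le\sigma)$. Evaluating at $s=\sigma$ shows that the corestriction is $F(\sigma\le\sigma')$, as required. Naturality in $F$ is immediate because the colimit isomorphism is natural in the diagram.

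A cleaner alternative is to observe that $q_1^\Delta\colon\Delta_0^{\Pos}\to\Pos$ is an isomorphism of posets, with $q_2^\Delta=q_1^\Delta$. Then $\Lan_{q_1^\Delta}$ is precomposition with the inverse isomorphism, so $C_0F=F\circ q_2^\Delta\circ(q_1^\Delta)^{-1}=F$. I would give this short argument and mention the stalk computation as a check. No step is a real obstacle; the only point to state carefully is that $\len(s,t)=0$ if and only if $s=t$, which holds by gradedness as recorded in Section~\ref{sec:conventions}.
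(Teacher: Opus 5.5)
Your proof is correct, and the stalk computation is exactly the paper's argument: $J_0(\sigma)=\{(s,s):s\le\sigma\}$ has terminal object $(\sigma,\sigma)$, so the colimit is $F(\sigma)$, naturally in $\sigma$ and $F$. Your explicit check of the corestrictions spells out what the paper leaves implicit. Your preferred shortcut, that $q_1^\Delta=q_2^\Delta$ is an isomorphism $\Delta_0^{\Pos}\to\Pos$ so that $\Lan_{q_1^\Delta}$ is precomposition with its inverse, is the argument the paper itself uses in Proposition~\ref{prop:derived-exist}\textup{(iv)} to obtain $\LC_0\simeq\id$; there it has the advantage of passing to the derived functor with no extra work.
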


\begin{proof}
By Lemma \ref{lem:support-presentation}, for $a=0$ the index poset at $\sigma$ is $\{(s,s):s\le\sigma\}$, which has terminal object $(\sigma,\sigma)$; the colimit is $F(\sigma)$, functorially in $\sigma$ and $F$.
\end{proof}

\subsection{Exactness, the right adjoint, and the derived functors}

The Kan-extension form of Definition \ref{def:Ca} has a matching right adjoint, from which the conventional categorical properties of $C_a$ follow at once.

\begin{definition}[The co-window limit]\label{def:Cupper}
For $G\in\Shv(\Pos;k)$ and $a\ge0$, set
\[
C^aG:=\Ran_{q_2^\Delta}(q_1^\Delta)^\ast G,
\qquad\text{pointwise}\qquad
(C^aG)(\tau)=\lim_{\substack{(s,t)\in\Delta_a^{\Pos}\\ \tau\le t}}G(s),
\]
the limit of the diagram $(s,t)\mapsto G(s)$ over the full subposet $\{(s,t)\in\Delta_a^{\Pos}:\tau\le t\}\subseteq\Delta_a^{\Pos}$; for $\tau\le\tau'$ the reverse inclusion of index posets restricts compatible families and gives the corestriction $(C^aG)(\tau)\to(C^aG)(\tau')$. We call $C^a$ the \emph{co-window} (or \emph{matching}) limit.
\end{definition}

\begin{proposition}[Adjunction]\label{prop:adjunction}
$C_a\dashv C^a$ for every $a\ge0$.
\end{proposition}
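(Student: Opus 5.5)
The plan is to compose the two standard Kan-extension adjunctions through the functor category $\mathrm{Fun}(\Delta_a^{\Pos},\mathrm{Vec}_k)$. Write $q_1=q_1^\Delta$ and $q_2=q_2^\Delta$. First, $\Lan_{q_1}\dashv q_1^\ast$, since $\mathrm{Vec}_k$ is cocomplete and $\Delta_a^{\Pos}$ is finite, so the pointwise left Kan extension exists; this is the formula of Lemma~\ref{lem:support-presentation}. Second, $q_2^\ast\dashv\Ran_{q_2}$, since $\mathrm{Vec}_k$ is complete, so the pointwise right Kan extension exists. Composing these gives
\[
\Hom(C_aF,G)=\Hom(\Lan_{q_1}q_2^\ast F,G)\cong\Hom(q_2^\ast F,q_1^\ast G)\cong\Hom(F,\Ran_{q_2}q_1^\ast G)=\Hom(F,C^aG),
\]
natural in $F$ and $G$. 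Hence $C_a\dashv C^a$, because the composite of two adjunctions is an adjunction.

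The remaining step is to check that this $\Ran_{q_2}q_1^\ast G$ really is the functor displayed in Definition~\ref{def:Cupper}. The pointwise formula gives the value at $\tau$ as the limit over the comma poset $(\tau\downarrow q_2)=\{(s,t)\in\Delta_a^{\Pos}:\tau\le t\}$ of $(s,t)\mapsto G(s)$. Its functoriality in $\tau$ comes from the inclusion $(\tau'\downarrow q_2)\subseteq(\tau\downarrow q_2)$ for $\tau\le\tau'$, which restricts compatible families. That is exactly the corestriction prescribed in the definition.

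There is no serious obstacle. The only care needed is in the variance: the right Kan extension along $q_2$ is indexed by the under-category $\tau\downarrow q_2$, which gives the condition $\tau\le t$. With that bookkeeping done, the identification with Definition~\ref{def:Cupper} follows. As an alternative, one could write the unit and counit directly from the colimit and limit formulas, but the composite-adjunction argument is cleaner.
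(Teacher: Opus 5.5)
Your proof is correct and is the paper's own argument: compose $\Lan_{q_1^\Delta}\dashv(q_1^\Delta)^\ast$ with $(q_2^\Delta)^\ast\dashv\Ran_{q_2^\Delta}$ to obtain the chain of Hom isomorphisms, natural in $F$ and $G$. Your extra check is accurate and only makes explicit what the paper writes into Definition~\ref{def:Cupper} itself: the pointwise right Kan extension is indexed by $\tau\downarrow q_2^\Delta=\{(s,t)\in\Delta_a^{\Pos}:\tau\le t\}$, and for $\tau\le\tau'$ it acquires the restriction of compatible families as corestriction.
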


\begin{proof}
By Definition \ref{def:Ca} and the two Kan-extension adjunctions $\Lan_{q_1^\Delta}\dashv(q_1^\Delta)^\ast$ and $(q_2^\Delta)^\ast\dashv\Ran_{q_2^\Delta}$, which are the defining universal properties \cite[Def.~6.1.1, Cor.~6.2.7]{RiehlCTIC}:
\[
\Hom(C_aF,G)
\cong\Hom\bigl(\Lan_{q_1^\Delta}(q_2^\Delta)^\ast F,\,G\bigr)
\cong\Hom\bigl((q_2^\Delta)^\ast F,\,(q_1^\Delta)^\ast G\bigr)
\cong\Hom\bigl(F,\,\Ran_{q_2^\Delta}(q_1^\Delta)^\ast G\bigr),
\]
naturally in $F$ and $G$.
\end{proof}

\begin{corollary}[Right exactness and the dual unit]\label{cor:exactness}
$C_a$ preserves all colimits and is right exact; $C^a$ preserves all limits and is left exact \cite[Thm.~4.5.2, Cor.~4.5.9]{RiehlCTIC}. Moreover $C^0\simeq\id$, by Lemma \ref{lem:unit} and uniqueness of adjoints \cite[Prop.~4.3.1]{RiehlCTIC}.
\end{corollary}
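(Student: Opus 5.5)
The corollary has three parts. Each follows formally from Proposition~\ref{prop:adjunction} together with Lemma~\ref{lem:unit}, so I will argue at the level of adjunctions and not touch the stalk formula.

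\emph{Preservation of colimits and limits.} By Proposition~\ref{prop:adjunction}, $C_a$ is a left adjoint and $C^a$ a right adjoint. The standard fact that left adjoints preserve all colimits and right adjoints preserve all limits \cite[Thm.~4.5.2]{RiehlCTIC} then gives the first two claims directly.

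\emph{Exactness.} $\Shv(\Pos;k)\simeq\mathrm{Fun}(\Pos,\mathrm{Vec}_k)$ is abelian, with cokernels and kernels computed pointwise. An additive functor that preserves finite colimits preserves cokernels, hence is right exact. Dually, one that preserves finite limits is left exact.

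Additivity of $C_a$ needs a remark. I would either note that any functor preserving finite coproducts between additive categories is additive, or read it off Lemma~\ref{lem:support-presentation}, since colimits of vector spaces are additive in the diagram. The same two arguments apply to $C^a$, using finite products and the limit formula of Definition~\ref{def:Cupper}.

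\emph{The dual unit.} Lemma~\ref{lem:unit} gives $C_0\simeq\id$, and $\id$ is right adjoint to itself. Adjoints transport along natural isomorphisms, so $C^0$ and $\id$ are both right adjoints of $C_0$. Uniqueness of adjoints up to unique natural isomorphism \cite[Prop.~4.3.1]{RiehlCTIC} then yields $C^0\simeq\id$.

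As a sanity check, one can also verify this directly. The index poset $\{(s,t)\in\Delta_0^{\Pos}:\tau\le t\}=\{(t,t):\tau\le t\}$ has initial object $(\tau,\tau)$, so the limit in Definition~\ref{def:Cupper} is $G(\tau)$. Each corestriction $(C^0G)(\tau)\to(C^0G)(\tau')$ is then identified with $G(\tau\le\tau')$.

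None of these steps is a real obstacle. The one point needing care is the additivity used to pass from preservation of finite colimits to right exactness in the abelian sense.
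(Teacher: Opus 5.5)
Your proposal is correct and follows the paper's route. Colimit and limit preservation are read off the adjunction $C_a\dashv C^a$ of Proposition~\ref{prop:adjunction}, and $C^0\simeq\id$ follows from Lemma~\ref{lem:unit} by uniqueness of adjoints. Your additivity argument for passing to exactness, and the direct check that $(C^0G)(\tau)=G(\tau)$ through the initial object $(\tau,\tau)$, are valid supplements to what the paper leaves to its citations.
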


Right exactness is strict in general but not on every $\Pos$: a minimal element with two distinct covers already makes $C_1$ inexact, while on a chain $C_a$ is exact for every $a$ (Proposition \ref{prop:inexact}). The induced endofunctor of complexes is therefore not homotopical in general, which is why Sections \ref{sec:derived}--\ref{sec:metric} are written for $\LC_a$ and not for $C_a$.

\begin{lemma}[Enough projectives]\label{lem:enough-projectives}
For $x\in \Pos$ and a $k$-vector space $M$, write $M_{\st(x)}\in\Shv(\Pos;k)$ for the constant sheaf on $\st(x)$ taking value equal to $M$, extended by zero to $\Pos$ elsewhere. This is the left Kan extension $x_!M$ of $M:\bullet\to \mathrm{Vec}_k$ along $x:\bullet\to \Pos$.
Then
\[
\Hom(x_!M,F)\;\cong\;\Hom_k(M,F(x))
\]
naturally in $M$ and $F$, each $M_{\st(x)}$ is projective, and the counit
\[
\bigoplus_{x\in \Pos}F(x)_{\st(x)}\longrightarrow F
\]
is an epimorphism from a projective. So $\Shv(\Pos;k)$ has enough projectives.
\end{lemma}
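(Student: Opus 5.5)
The plan is the usual Yoneda argument for representable projectives in a functor category, followed by a pointwise surjectivity check.

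\emph{The formula for $x_!M$.} By the pointwise formula for left Kan extensions \cite[Thm.~6.2.1]{RiehlCTIC}, as in Lemma~\ref{lem:support-presentation}, the value at $y$ is the colimit of $M$ over the comma category $(x\downarrow y)$. Because $\Pos$ is a poset, this category is a single point when $x\le y$ and empty otherwise. Hence $(x_!M)(y)=M$ for $y\in\st(x)$ and $0$ elsewhere. For $x\le y\le y'$ the transition map is the identity of $M$, being induced by the identity of the one-point comma category. This identifies $x_!M$ with $M_{\st(x)}$.

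\emph{The adjunction and projectivity.} The isomorphism $\Hom(x_!M,F)\cong\Hom_k(M,F(x))$ is the adjunction $\Lan_x\dashv x^\ast$, exactly as in Proposition~\ref{prop:adjunction}, and $x^\ast F=F(x)$. Concretely, a map $M_{\st(x)}\to F$ is determined by its component at $x$, by naturality along $x\le y$. This component can be arbitrary, because $\st(x)$ is an up-set, so no naturality condition is imposed from outside $\st(x)$. Since $\Hom(x_!M,-)\cong\Hom_k(M,-)\circ\mathrm{ev}_x$, projectivity follows from two facts:
\begin{itemize}
\item evaluation at $x$ is exact, because kernels and cokernels in $\mathrm{Fun}(\Pos,\mathrm{Vec}_k)$ are computed pointwise;
\item $\Hom_k(M,-)$ is exact, because $k$ is a field.
\end{itemize}
A direct sum of projectives is projective, so $\bigoplus_x F(x)_{\st(x)}$ is projective.

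\emph{The counit is an epimorphism.} The counit map $\bigoplus_x F(x)_{\st(x)}\to F$ has as its $x$-summand the adjunct of $\id_{F(x)}$. Its component at $y$ is the sum over $x\le y$ of the restrictions $F(x)\to F(y)$. The summand with $x=y$ is $\id_{F(y)}$, so the component at $y$ is surjective. Epimorphisms are detected pointwise, so the counit is an epimorphism.

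There is no real obstacle in this argument. The only point needing care is the observation that $\st(x)$ is an up-set. This is what makes the extension by zero a functor: maps leave the support but never enter it, so the relevant transitions are zero. It is also what makes the Hom computation unconstrained.
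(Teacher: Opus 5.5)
Your proof is correct and follows the paper's argument. Like the paper, you use the adjunction $\Hom(x_!M,F)\cong\Hom_k(M,F(x))$, exactness of evaluation at $x$ together with projectivity of $M$ over a field, and closure of projectives under direct sums; you also spell out two steps the paper leaves to a citation of Ladkani and to ``by definition'', namely the pointwise Kan-extension computation of $x_!M$ and the pointwise surjectivity of the counit.

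One slip in your closing remark: ``maps leave the support but never enter it'' is backwards for the up-set $\st(x)$, and describes a down-set such as $\cl(\sigma)$ instead. For $\st(x)$, arrows can enter the support from outside, with zero source, which is exactly why naturality is automatic there, but they never leave it.
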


\begin{proof}
Every $k$-vector space has a basis, hence is projective.
The displayed adjunction is \cite[\S2.4]{Ladkani}, and it is natural in both variables.
The covariant hom-functor $\Hom(M_{\st(x)},-)$ preserves epimorphisms because $M$ is projective and the stalk functor is exact. The direct sum of projectives is projective, and the claim follows by definition.
\end{proof}

\begin{remark}[The module presentation]\label{rem:module-presentation}
$\Shv(\Pos;k)$ is equivalent to the category $\mathrm{Mod}\text{-}k\Pos$ of all right modules over the incidence algebra $k\Pos$.
The algebra is at Definition \ref{def:incidence-algebra} and the equivalence at Lemma \ref{lem:module-diagram}, where the finiteness of stalks is not assumed and the only finiteness used is that of $\Pos$.
$\Shv(\Pos;k)$ is abelian, has enough projectives by Lemma \ref{lem:enough-projectives}, and has enough injectives by the dual statement \cite[Cor.~2.2]{Ladkani}, with $M_{\cl(x)}$ in place of $M_{\st(x)}$.
\end{remark}

\begin{remark}[Homotopical conventions]\label{rem:homotopical}
Write $\mathcal A=\Shv(\Pos;k)\simeq\mathrm{Mod}\text{-}k\Pos$. Throughout, $\Ch(\mathcal A)$ carries the quasi-isomorphisms as its weak equivalences and nothing else; the class is saturated (every map inverted in the homotopy category is already a weak equivalence) and satisfies two-out-of-six, so $(\Ch(\mathcal A),\text{q-iso})$ is a homotopical category \cite[Def.~2.1.1, Rem.~2.1.9, Lem.~2.1.10]{Riehl}, \cite{DHKS}. We write $\Chb(\mathcal A)$ for bounded complexes with the induced weak equivalences, and $\Db(\mathcal A)$ for the localization of $\Chb(\mathcal A)$ at quasi-isomorphisms \cite[Def.~2.1.6]{Riehl}.

Of the two ingredients of $C_a=\Lan_{q_1^\Delta}\circ(q_2^\Delta)^\ast$ (Definition \ref{def:Ca}) the restriction is exact and the Kan extension is not, and it is the Kan extension alone that is derived here. Lemma \ref{lem:enough-projectives} holds verbatim on any finite poset, so $\Shv(\Delta_a^{\Pos};k)$ has enough projectives, and enough injectives by the dual statement of Remark \ref{rem:module-presentation}. Appendix \ref{app:bar} records the deformation and bar conventions used here.

Complexes are graded homologically throughout, with differential lowering degree by one. We adopt the following shift and cone conventions:
\[
   (C[p])_n=C_{n-p},
   \qquad
   d_{C[p]}=(-1)^pd_C,
\]
so that $C[1]$ is the suspension and $C[-1]$ the desuspension, and, for a chain map $f\colon X\to Y$,
\[
   \operatorname{Cone}(f)_n=X_{n-1}\oplus Y_n,
   \qquad
   d(x,y)=\bigl(-d_Xx,\ f(x)+d_Yy\bigr).
\]
Thus $\operatorname{Cone}(f)[-1]$ is the shifted cone occurring in Lemma \ref{lem:defect-reduced} and in the comparison of Section \ref{sec:derived}, and a distinguished triangle closes with a $[1]$. Weibel's translation is the opposite one, $(C[p])_n=C_{n+p}$ \cite[1.2.8]{Weibel}; the results cited from \cite{Weibel} below involve no shift, so no transcription is needed.
\end{remark}

\begin{definition}[The homotopy window convolution and its right adjoint]\label{def:LCa}
For $a\ge0$ the \emph{homotopy window convolution} and its right adjoint are
\[
   \LC_a:=\mathbb L\Lan_{q_1^\Delta}\circ(q_2^\Delta)^\ast,
   \qquad
   \RC^a:=\mathbb R\Ran_{q_2^\Delta}\circ(q_1^\Delta)^\ast ,
\]
both endofunctors of $\Db(\Shv(\Pos;k))$ (Proposition \ref{prop:derived-exist}): the homotopy left Kan extension along the first projection of the restriction along the second, and the homotopy right Kan extension along the second of the restriction along the first. We write $H_i\LC_aF$ for the homology of the one and $H^i\RC^aG$ for the cohomology of the other.
\end{definition}

A left derived functor composed with an exact functor, $\LC_a$ is triangulated; it preserves boundedness by the vanishing range of Proposition \ref{prop:bar}; and $H_0\LC_aF\cong C_aF$ for a sheaf $F$, since $\Lan_{q_1^\Delta}$ is right exact and its zeroth left derived functor is itself.

\begin{remark}[Not a derived functor of $C_a$]\label{rem:not-derived}
The derived operation is the Kan extension alone, so in general
\[
\LC_a=\mathbb L\Lan_{q_1^\Delta}\circ(q_2^\Delta)^\ast\;\not\simeq\;\mathbb L\bigl(\Lan_{q_1^\Delta}\circ(q_2^\Delta)^\ast\bigr)=\mathbb LC_a ,
\]
and no comparison between the two is claimed. The two agree if and only if $(q_2^\Delta)^\ast$ carries projectives to $\Lan_{q_1^\Delta}$-acyclic objects, equivalently if and only if for every projective $Q$ and every $\sigma$ the restriction $\left.(q_2^\Delta)^\ast Q\right\vert_{J_a(\sigma)}$ is $\colim_{J_a(\sigma)}$-acyclic. Necessity is immediate, a projective $Q$ having $H_i(\mathbb LC_aQ)=0$ for $i\ge1$. For sufficiency, take a projective resolution $Q_\bullet\to F$, which exists because $\Pos$ is finite (Lemma \ref{lem:enough-projectives}); exactness of $(q_2^\Delta)^\ast$ makes $(q_2^\Delta)^\ast Q_\bullet\to(q_2^\Delta)^\ast F$ a resolution, under the hypothesis one by $\Lan_{q_1^\Delta}$-acyclic objects, and dimension shifting along that resolution (Remark \ref{rem:dimension-shifting}) identifies the homology of $\Lan_{q_1^\Delta}(q_2^\Delta)^\ast Q_\bullet$, which computes $\mathbb LC_aF$, with that of $\mathbb L\Lan_{q_1^\Delta}\bigl((q_2^\Delta)^\ast F\bigr)$.

The hypothesis fails already at $a=2$. Take the bigon decomposition of $S^2$, with two vertices, two edges and two $2$-cells, and let $\sigma$ and $u$ be the two $2$-cells. Then $u$ is maximal, so $\st(u)=\{u\}$ and the skyscraper at $u$ is the projective $k_{\st(u)}$ of Lemma \ref{lem:enough-projectives}, whence $H_i(\mathbb LC_2k_{\st(u)})=0$ for $i\ge1$; while $(q_2^\Delta)^\ast k_{\st(u)}$ is carried on the up-set of those $(s,t)\in J_2(\sigma)$ with $t=u$, which is the face poset of the circle the two $2$-cells share, its order complex a $4$-cycle, so the bar model of Proposition \ref{prop:bar} gives $\bigl(H_1\LC_2k_{\st(u)}\bigr)(\sigma)\cong k$.
\end{remark}

\begin{proposition}[Existence of the derived functors and transition maps]\label{prop:derived-exist}
Let $a\ge0$.
\begin{enumerate}
\item[\textup{(i)}] the total left derived functor $\mathbb L\Lan_{q_1^\Delta}$ exists, and with it the homotopy window convolution $\LC_a$ of Definition \ref{def:LCa}, an endofunctor of $\Db(\Shv(\Pos;k))$; boundedness holds because $H_i\LC_a$ vanishes above the maximal chain length of the window-comma posets (Proposition \ref{prop:bar});
\item[\textup{(ii)}] dually, $\mathbb R\Ran_{q_2^\Delta}$ exists, and with it $\RC^a\colon\Db(\Shv(\Pos;k))\to\Db(\Shv(\Pos;k))$;
\item[\textup{(iii)}] the adjunction descends: $\LC_a\dashv\RC^a$ on $\Db(\Shv(\Pos;k))$;
\item[\textup{(iv)}] for $a\le b$ there are natural transformations
\[
   \tau_{a,b}\colon C_a\Rightarrow C_b,\qquad
   \eta_{a,b}\colon \LC_a\Rightarrow\LC_b,
\]
with
\[
   \tau_{a,a}=\id,\quad \tau_{b,c}\circ\tau_{a,b}=\tau_{a,c},
   \qquad
   \eta_{a,a}=\id,\quad \eta_{b,c}\circ\eta_{a,b}=\eta_{a,c}.
\]
The transformations $\eta_{a,b}$ are the \emph{derived transition maps}. After the identification $\LC_0\simeq\id$, write $\eta_a:=\eta_{0,a}\colon\id\Rightarrow\LC_a$; then $\eta_b=\eta_{a,b}\circ\eta_a$ for $a\le b$.
\end{enumerate}
\end{proposition}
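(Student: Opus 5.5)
For (i) I will use the standard construction of a total left derived functor of a right exact functor between abelian categories with enough projectives, in the deformation language of \cite[Ch.~2]{Riehl}. By Lemma~\ref{lem:enough-projectives}, applied to the finite poset $\Delta_a^{\Pos}$ as Remark~\ref{rem:homotopical} permits, every bounded complex $X$ in $\Shv(\Delta_a^{\Pos};k)$ has a projective resolution $QX\to X$. The resolution is bounded above, and I will truncate it to a bounded one after the degree where the homology of $\Lan$ vanishes. The functor $\Lan_{q_1^\Delta}$ is additive and preserves projectives: it is left adjoint to the exact functor $(q_1^\Delta)^\ast$, and indeed $\Lan_{q_1^\Delta}$ of $(x)_!M$ is $q_1(x)_!M$. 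It therefore carries quasi-isomorphisms between bounded-above complexes of projectives to quasi-isomorphisms. Hence $Q$ is a left deformation, and $\mathbb L\Lan_{q_1^\Delta}:=\Lan_{q_1^\Delta}\circ Q$ is the total left derived functor on the homotopy category. Since $(q_2^\Delta)^\ast$ is exact, it descends to $\Db$ unchanged, and composing gives $\LC_a$.

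Boundedness is the one nonformal point, and I take it from Proposition~\ref{prop:bar}. There $H_i\LC_aF(\sigma)$ is the homology of the bar complex over $J_a(\sigma)$, which vanishes for $i$ above the maximal chain length of $J_a(\sigma)$. That length is finite and bounded uniformly in $\sigma$ because $\Pos$ is finite. So $\LC_a$ of a bounded complex has bounded homology and lands in $\Db$. This is the step I expect to need the most care: one must check that the bar model agrees with the projective-resolution model, since both compute $\mathbb L\colim$ stalkwise. I will quote Proposition~\ref{prop:bar} for that.

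Part (ii) is the dual argument. Take injective resolutions, which exist by the dual statement in Remark~\ref{rem:module-presentation}. $\Ran_{q_2^\Delta}$ is right adjoint to the exact $(q_2^\Delta)^\ast$ and so preserves injectives. Right derived functors are bounded for the same chain-length reason, using the cobar model (the dual of Proposition~\ref{prop:bar}).

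For (iii) I will compose adjunctions. Deformable adjunctions descend to the homotopy categories \cite[Ch.~2]{Riehl}, so $\mathbb L\Lan_{q_1^\Delta}\dashv(q_1^\Delta)^\ast$ and $(q_2^\Delta)^\ast\dashv\mathbb R\Ran_{q_2^\Delta}$ hold on $\Db$, the exact functors being their own derived functors. Composing these two adjunctions, exactly as in the proof of Proposition~\ref{prop:adjunction}, gives $\LC_a\dashv\RC^a$.

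For (iv), the inclusion $\Delta_a^{\Pos}\subseteq\Delta_b^{\Pos}$ makes $J_a(\sigma)\subseteq J_b(\sigma)$ for every $\sigma$, naturally in $\sigma$. The induced maps of colimits in Lemma~\ref{lem:support-presentation} give $\tau_{a,b}$. Both identities hold because the maps are induced by inclusions, and inclusions compose strictly.

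For $\eta_{a,b}$ I will write $\iota\colon\Delta_a^{\Pos}\to\Delta_b^{\Pos}$, so that $q_i^{\Delta_a}=q_i^{\Delta_b}\iota$. The map is then the composite
\[
\mathbb L\Lan_{q_1^{\Delta_a}}\iota^\ast(q_2^{\Delta_b})^\ast\simeq\mathbb L\Lan_{q_1^{\Delta_b}}\mathbb L\Lan_\iota\,\iota^\ast(q_2^{\Delta_b})^\ast\Rightarrow\mathbb L\Lan_{q_1^{\Delta_b}}(q_2^{\Delta_b})^\ast ,
\]
where the isomorphism is the composition of derived left Kan extensions (valid because $\Lan$ preserves projectives) and the arrow is the derived counit of $\mathbb L\Lan_\iota\dashv\iota^\ast$. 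The identity $\eta_{a,a}=\id$ holds because $\iota=\id$. Transitivity follows from the pseudofunctoriality of $\mathbb L\Lan$ together with the compatibility of counits under composition of adjunctions. Concretely, on the bar model both sides are the maps of bar complexes induced by the inclusions $J_a(\sigma)\subseteq J_b(\sigma)\subseteq J_c(\sigma)$, and there the identity is strict.

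Finally, $\eta_a$ uses $\LC_0\simeq\id$. This holds because $J_0(\sigma)$ has the terminal object $(\sigma,\sigma)$, so its bar complex is contractible onto $F(\sigma)$, as in Lemma~\ref{lem:unit}. The identity $\eta_b=\eta_{a,b}\circ\eta_a$ is then transitivity applied with $0\le a\le b$.
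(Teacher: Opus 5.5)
Parts (i)--(iii) follow the paper: resolutions over the finite poset $\Delta_a^{\Pos}$, boundedness from Proposition~\ref{prop:bar}, and composing the two descended adjunctions. Your colimit-inclusion description of $\tau_{a,b}$ is the same map the paper obtains from initiality of the unit $(q_2^a)^\ast F\to(q_1^a)^\ast C_aF$ in the comma category.

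The real difference is in $\eta_{a,b}$.

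\emph{The paper's route.} The paper \emph{defines} $\eta_{a,b}$ stalkwise as the bar map induced by $J_a(\sigma)\subseteq J_b(\sigma)$ with identity coefficients. Then $\eta_{a,a}=\id$ and $\eta_{b,c}\circ\eta_{a,b}=\eta_{a,c}$ hold strictly, because inclusions compose. It gets $\LC_0\simeq\id$ from $q_1^\Delta\colon\Delta_0^{\Pos}\to\Pos$ being an isomorphism. The price is that it must lean on the naturality in $\sigma$ and $F$ of the identification in Proposition~\ref{prop:bar}, in order to read stalkwise bar maps as a transformation on $\Db$.

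\emph{Your route.} You define $\eta_{a,b}$ as $\mathbb L\Lan_{q_1^b}$ applied to the derived counit of $\mathbb L\Lan_\iota\dashv\iota^\ast$, after the composition isomorphism. This is a transformation on $\Db$ by construction, and the two identities follow formally from the coherence of composed derived Kan extensions and of counits of composite adjunctions.

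\emph{What your route does not give for free.} It does not supply the bar representative, which the paper needs afterwards: Proposition~\ref{prop:meet-coherence} computes with the inclusion-induced bar maps. Your sentence ``concretely, on the bar model both sides are the maps induced by the inclusions'' is asserted, not shown. It is true, and can be checked with Lemma~\ref{lem:groth-hocolim}:
\begin{itemize}
\item model $\mathbb L\Lan_{q_1^b}\mathbb L\Lan_\iota\iota^\ast(q_2^b)^\ast F$ at $\sigma$ over $\{(o,o')\colon o\in J_b(\sigma),\ o'\in\Delta_a^{\Pos},\ o'\le o\}$;
\item the projection $(o,o')\mapsto o'$ onto $J_a(\sigma)$ is homotopy final, since the fibre over $o'_0$ has least element $(o'_0,o'_0)$, and it has section $o'\mapsto(o',o')$;
\item the counit is induced by $(o,o')\mapsto o$, and its composite with that section is the inclusion with identity coefficients.
\end{itemize}
Present this as a separate check, not as part of the proof of transitivity, which your formal argument already covers.

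Two small corrections in (i).
\begin{itemize}
\item An additive functor carries quasi-isomorphisms between bounded-below complexes of projectives to quasi-isomorphisms because those are chain homotopy equivalences (Remark~\ref{rem:deformation}). Preservation of projectives is not the reason, so your ``therefore'' is misplaced. You do need projective preservation later, for $\mathbb L\Lan_{q_1^a}\simeq\mathbb L\Lan_{q_1^b}\mathbb L\Lan_\iota$.
\item Truncating a projective resolution leaves a top term that need not be projective. The finite global dimension of the incidence algebra of $\Delta_a^{\Pos}$, recorded in the same remark, gives bounded projective resolutions directly.
\end{itemize}
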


\begin{proof}
Lemma \ref{lem:enough-projectives} and Remark \ref{rem:module-presentation} hold verbatim over the finite poset $\Delta_a^{\Pos}$, so $\Shv(\Delta_a^{\Pos};k)$ has enough projectives and enough injectives; existence of the derived functors of the right exact $\Lan_{q_1^\Delta}$ and the left exact $\Ran_{q_2^\Delta}$ over a module category is then standard homological algebra \cite[Thm.~10.5.6, Cor.~10.5.7]{Weibel}, and is the deformation construction recalled in Remark \ref{rem:deformation} \cite[\S2.3]{Riehl}. For (iii), the restrictions $(q_1^\Delta)^\ast$ and $(q_2^\Delta)^\ast$ are exact and so homotopical, and the deformable pairs $\Lan_{q_1^\Delta}\dashv(q_1^\Delta)^\ast$ and $(q_2^\Delta)^\ast\dashv\Ran_{q_2^\Delta}$ descend to $\mathbb L\Lan_{q_1^\Delta}\dashv(q_1^\Delta)^\ast$ and $(q_2^\Delta)^\ast\dashv\mathbb R\Ran_{q_2^\Delta}$ \cite[Thm.~2.2.11]{Riehl}, and composing those two adjunctions gives $\LC_a\dashv\RC^a$; it is obtained that way and not by deriving the adjoint pair $(C_a,C^a)$ of Proposition \ref{prop:adjunction}. The vanishing range in (i) is proved independently by the bar model of Proposition \ref{prop:bar}.

For (iv), write $\mathcal A=\Shv(\Pos;k)$ and write $q_i^c\colon\Delta_c^{\Pos}\to\Pos$ for the two projections at window scale $c$.  For $F\in\mathcal A$, let
\[
   \alpha^a_F\colon (q_2^a)^\ast F\longrightarrow (q_1^a)^\ast C_aF
\]
be the adjunction unit for $\Lan_{q_1^a}\dashv(q_1^a)^\ast$. Equivalently, $(C_aF,\alpha^a_F)$ is the initial object of the comma category
\[
   (q_2^a)^\ast F\downarrow(q_1^a)^\ast .
\]
If $a\le b$, the inclusion $i:\Delta_a^{\Pos}\subseteq\Delta_b^{\Pos}$ restricts the $b$-unit to
\[
(q_2^a)^\ast F=i^\ast(q_2^b)^\ast F\longrightarrow
i^\ast(q_1^b)^\ast C_bF=(q_1^a)^\ast C_bF.
\]
Thus $C_bF$ is also an object of $(q_2^a)^\ast F\downarrow(q_1^a)^\ast$, and initiality gives the unique morphism
\[
   \tau_{a,b,F}\colon C_aF\longrightarrow C_bF
\]
whose image under $(q_1^a)^\ast$ makes the following triangle commute:
\[
\begin{tikzcd}
& \arrow[ld] (q_2^a)^\ast F\arrow[rd] & \\
(q_1^a)^\ast C_a F \arrow[rr,dotted,"(q_1^a)^\ast\tau_{a,b,F}"'] && (q_1^a)^\ast C_b F.
\end{tikzcd}
\]
The uniqueness is compatible with morphisms in $F$ and gives a natural transformation $\tau_{a,b}\colon C_a\Rightarrow C_b$. The same initiality argument gives $\tau_{a,a}=\id$ and $\tau_{b,c}\circ\tau_{a,b}=\tau_{a,c}$ for $a\le b\le c$.

For the derived transition maps, fix $a\le b$. The inclusion $\Delta_a^{\Pos}\subseteq\Delta_b^{\Pos}$ satisfies $q_1^b\circ i=q_1^a$ and $i^\ast(q_2^b)^\ast=(q_2^a)^\ast$, so at each $\sigma$ the two homotopy colimits of Proposition \ref{prop:bar} are taken over $J_a(\sigma)\subseteq J_b(\sigma)$ with the same coefficient diagram. Define $\eta_{a,b}$ stalkwise as the bar map that inclusion induces with identity coefficients (Definition \ref{def:bar-hocolim}); the identification of Proposition \ref{prop:bar} is natural in $\sigma$ and in $F$, so this is a map of functors on $\Db(\Shv(\Pos;k))$, it inherits the two identities because the inclusions compose, and it is $\tau_{a,b}$ on $H_0$. Finally $\len(s,t)=0$ with $s\le t$ forces $s=t$, so $q_1^\Delta$ is an isomorphism $\Delta_0^{\Pos}\to\Pos$, whence $\LC_0\simeq\id$, and $\eta_a:=\eta_{0,a}$ satisfies $\eta_b=\eta_{a,b}\circ\eta_a$ for $a\le b$.
\end{proof}

\begin{example}[Chains: the two height shifts]\label{ex:chain-adjoint}
On a chain $x_0<x_1<\cdots<x_n$ the co-window index poset at $x_i$ has least element $(x_{\max(i-a,0)},x_i)$, so
\[
(C^aG)(x_i)=G\bigl(x_{\max(i-a,0)}\bigr):
\]
$C^a$ is the downward height shift, adjoint to the upward shift $C_a$ of Proposition \ref{prop:chain}.
\end{example}

\begin{example}[The branching poset: annihilation transposed]\label{ex:lambda}
On $\Lambda=\{v<e_1,\ v<e_2\}$ one has $C_1\sky_v=0$ (the annihilation mechanism of Theorem \ref{thm:noshadow}), and the adjunction transposes this to $\Hom(\sky_v,C^1G)=0$ for every $G$. Directly, $C^1\sky_v\cong k_\Lambda$, the constant sheaf: at $\tau=v$ the index poset is all of $\Delta_1^\Lambda$ and a compatible family is determined by its value over $(v,v)$; at $\tau=e_i$ the index is $\{(v,e_i),(e_i,e_i)\}$ and the limit is again $k$. Consistently, $\Hom(\sky_v,k_\Lambda)=\ker\bigl(k\to k^2\bigr)=0$.
\end{example}

\begin{remark}[The window convolution is one weight]\label{rem:weighted}
A \emph{weight} on the window is a functor $\Wt\colon\Delta_a^{\Pos}\to\mathrm{Vec}_k$, convolution against it is $F\mapsto\Lan_{q_1^\Delta}\bigl(\Wt\otimes_k(q_2^\Delta)^\ast F\bigr)$, and Definition \ref{def:Ca} is the constant weight $\Wt=k$; this is the discrete analogue of the kernel-convolution calculus of Kashiwara--Schapira and Petit--Schapira \cite{KS,PS}, with the delimitation of Remark \ref{rem:kernel}. Over a field a weight twists the same diagrams by an exact tensor factor. Nevertheless, no weighted flow statement is used below; the classification of flow-carrying weights is deferred.
\end{remark}

On a finite poset, $H^r(\Pos;F)$ denotes the right derived functors of $F\mapsto\lim_{\Pos}F$, so that $H^0(\Pos;F)=\lim_{\Pos}F$. For a face poset $\Pos=\Pos(K)$ this is cellular sheaf cohomology, computed by the cellular cochain complex $C^r(\Pos;F)=\bigoplus_{\dim\sigma=r}F(\sigma)$ with incidence signs and the restriction maps of $F$ \cite[Def.~6.2.1]{Curry}. For the constant sheaf $k_{\Pos}$ this is the cellular cochain complex of $K$, so $H^*(\Pos;k_{\Pos})\cong H^*(K;k)$. For any $\sigma\in\Pos$, $\sky_\sigma$ denotes the skyscraper sheaf with stalk $k$ at $\sigma$ and $0$ elsewhere.

\section{No degree-zero shadow}\label{sec:shadow}

\subsection{The no-shadow theorem and its minimal witness}

\begin{theorem}[No-shadow theorem]\label{thm:noshadow}
Let $K$ be a finite connected regular cell complex with at least one $1$-cell, and let $\Pos$ be its face poset; the same assertion holds for any finite connected poset of length $\ge1$. Fix $a\ge1$. There is no assignment $D$ on isomorphism classes of $k$-vector spaces, the class of $V$ written $[V]$, such that
\[
H^0(\Pos;C_aF)\;\cong\;D\bigl(H^0(\Pos;F)\bigr)\qquad\text{for all }F\in\Shv(\Pos;k);
\]
so there is no endofunctor $D_a$ of $\mathrm{Vec}_k$ making the diagram commute:
$$
\begin{tikzcd}
{\rm Shv}(\Pos;k) \arrow[d,"C_a"]\arrow[r,"H^0(\Pos;-)"] & \mathrm{Vec}_k \arrow[d,dotted,"D_a"] \\
{\rm Shv}(\Pos;k) \arrow[r,"H^0(\Pos;-)"] & \mathrm{Vec}_k.
\end{tikzcd}
$$
\end{theorem}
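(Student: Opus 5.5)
The plan is to exhibit two sheaves $F,G$ on $\Pos$ with $H^0(\Pos;F)\cong H^0(\Pos;G)$ but $H^0(\Pos;C_aF)\not\cong H^0(\Pos;C_aG)$. Any assignment $D$ as in the statement would give $[H^0(C_aF)]=D([H^0F])=D([H^0G])=[H^0(C_aG)]$, a contradiction; the same pair rules out an endofunctor $D_a$. I will take $G=k_\Pos$, the constant sheaf, and $F=\sky_v$ for a minimal element $v$ that has at least one cover $e$. Such a $v$ exists: $\Pos$ is connected of length $\ge1$, so it has at least two elements, every minimal element is comparable to some other element necessarily above it, and finiteness gives a cover. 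For a face poset with a $1$-cell, any vertex $v$ incident to an edge $e$ will do.

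\emph{Global sections of the inputs.} $H^0(\Pos;k_\Pos)=\lim_\Pos k\cong k$ because $\Pos$ is connected. For $\sky_v$, a compatible family $(x_p)$ has $x_p=0$ for $p\neq v$. The only constraints involving $x_v$ are along relations $v\le q$, whose maps $k\to0$ impose nothing, and there are no relations $u<v$ because $v$ is minimal. Hence $H^0(\Pos;\sky_v)\cong k$ as well.

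\emph{Applying $C_a$.} I use the stalk formula of Lemma~\ref{lem:support-presentation}. For $G=k_\Pos$, $(C_ak_\Pos)(\sigma)=k^{\pi_0(J_a(\sigma))}$. The poset $J_a(\sigma)$ is connected: every $(s,t)\in J_a(\sigma)$ satisfies $(s,s)\le(s,t)$ and $(s,s)\le(\sigma,\sigma)$, both in the window. So each stalk is $k$, the corestrictions are identities, $C_ak_\Pos\cong k_\Pos$, and $H^0(\Pos;C_ak_\Pos)\cong k$.

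For $F=\sky_v$, the diagram $(s,t)\mapsto F(t)$ on $J_a(\sigma)$ is nonzero only where $t=v$. Since $s\le t=v$ and $v$ is minimal, this forces $s=v$, so the only such point is $(v,v)$, present exactly when $v\le\sigma$. Because $a\ge1$ and $\len(v,e)=1$, the point $(v,e)$ lies in $J_a(\sigma)$ whenever $(v,v)$ does, and $(v,v)\le(v,e)$ carries the transition $k\to F(e)=0$. In the colimit, the quotient of $\bigoplus F(t)$ by the relations $x\sim F(\le)x$, the generator at $(v,v)$ is therefore identified with $0$. So $C_a\sky_v=0$ and $H^0(\Pos;C_a\sky_v)=0\not\cong k$, which is the required contradiction.

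The only step needing care is the existence of a minimal element with a cover in the general connected-poset case, together with checking that $(v,e)$ sits in every $J_a(\sigma)$ that contains $(v,v)$, so that the annihilation is stalkwise. Both are immediate from the definitions. The main point is simply the choice of witnesses, which is the annihilation mechanism already visible on $\Lambda$ in Example~\ref{ex:lambda}.
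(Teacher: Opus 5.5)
Your proposal is correct and follows the paper's own proof. It uses the same two witnesses, $k_\Pos$ and $\sky_v$ at a minimal element $v$ with a cover $e$, the same connectedness of $J_a(\sigma)$ via $(s,s)\le(s,t)$ and $(s,s)\le(\sigma,\sigma)$ to get $C_ak_\Pos\cong k_\Pos$, and the same killing arrow $(v,v)\le(v,e)$ to get $C_a\sky_v=0$. The only difference is how you locate $v$ in the general case: you use connectedness to give every minimal element something above it, where the paper descends from a comparable pair $x<y$. Both work equally well.
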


\begin{proof}
The assertion is a non-existence, so it suffices to exhibit two sheaves with the same $H^0(\Pos;-)$ and different $H^0(\Pos;C_a-)$.

\emph{The constant sheaf.} We claim $C_ak_{\Pos}\cong k_{\Pos}$. Fix $\sigma$. Every value of the diagram over $J_a(\sigma)$ is $k$ and every transition map is the identity, so it suffices to show $J_a(\sigma)$ is connected; then the colimit is $k$, with every structure map the identity. Connectedness: any object $(s,t)$ receives $(s,s)\le(s,t)$, and $(s,s)\le(\sigma,\sigma)$ since $s\le\sigma$, so every object is connected to $(\sigma,\sigma)$ in at most two steps. The restriction maps of $C_ak_{\Pos}$ are induced by the full inclusions $J_a(\sigma)\subseteq J_a(\sigma')$ and send the class of any object to the class of the same object, and are the identity of $k$. Thus $C_ak_{\Pos}\cong k_{\Pos}$ and, since $K$ is connected,
\[
H^0(\Pos;C_ak_{\Pos})\cong H^0(K;k)\cong k,\qquad H^0(\Pos;k_{\Pos})\cong k.
\]

\emph{A vertex skyscraper.} Since $K$ has a $1$-cell, choose a $1$-cell $f$ and a vertex $v\le f$; then $\len(v,f)=1\le a$. We claim $C_a\sky_v=0$. Fix $\sigma$. The only object of $J_a(\sigma)$ on which the diagram $(s,t)\mapsto \sky_v(t)$ is nonzero is $(v,v)$ (a pair $(s,t)$ has $\sky_v(t)\ne0$ only for $t=v$, forcing $s=v$), and $(v,v)\in J_a(\sigma)$ only when $v\le\sigma$. In that case $(v,v)\le(v,f)\in J_a(\sigma)$, and the transition map sends the generator of $\sky_v(v)=k$ to $0=\sky_v(f)$. The class of the generator vanishes in the colimit and $(C_a\sky_v)(\sigma)=0$ for every $\sigma$, so
\[
H^0(\Pos;C_a\sky_v)=0.
\]
On the other hand $C^0(\Pos;\sky_v)=k$ concentrated at $v$, and $d^0=0$ because all edge stalks of $\sky_v$ vanish; so $H^0(\Pos;\sky_v)\cong k$.

If $D$ existed, then
\[
D([k])\cong H^0(\Pos;C_ak_{\Pos})\cong k
\quad\text{and}\quad
D([k])\cong H^0(\Pos;C_a\sky_v)=0,
\]
a contradiction.

\emph{General posets.} Each use of the cell structure has a poset substitute. $C_ak_{\Pos}\cong k_{\Pos}$ was proved above for an arbitrary finite poset, and $H^0(\Pos;k_{\Pos})=\lim_{\Pos}k_{\Pos}=k$ because $\Pos$ is connected and all restrictions are identities. For the skyscraper, take a comparable pair $x<y$, choose a minimal $v\le x$, and choose a cover $f$ above $v$; then $\len(v,f)=1\le a$. Since $v$ is minimal, $\lim_{\Pos}\sky_v=k$, no $s<v$ forcing the value there to vanish, and the same arrow $(v,v)\le(v,f)$ kills $C_a\sky_v$ at every stalk.
\end{proof}

If $\dim K=0$ then $\Delta_a^{\Pos}$ is the diagonal for every $a$, so $C_a\simeq\id$ by Lemma \ref{lem:unit} and the shadow exists. Both witnesses have finite-dimensional stalks and finite-dimensional cohomology, so the theorem is unchanged if $F$ ranges over $\Shv(\Pos;\FinVec_k)$ and $D$ is defined only on isomorphism classes of finite-dimensional spaces.

The next computation shows the opposite failure mode: $C_1$ retains stalk-level incidence data that cohomology has forgotten.

\begin{proposition}[What $C_1$ retains: the minimal witness]\label{prop:edge}
Let $\Pos$ be the face poset of the $2$-simplex $\Delta^2$, with vertices $v_0,v_1,v_2$, edges $e_{ij}$, and top cell $T$. Then
\[
H^0(\Pos;C_1\sky_{e_{01}})\cong k^2,\qquad H^0(\Pos;C_1\sky_T)=0,
\]
while $H^0(\Pos;\sky_{e_{01}})=0=H^0(\Pos;\sky_T)$.
\end{proposition}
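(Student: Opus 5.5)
The plan is to compute both sheaves $C_1\sky_{e_{01}}$ and $C_1\sky_T$ stalk by stalk using the stalk formula (Lemma~\ref{lem:support-presentation}), and then read off $H^0(\Pos;-)=\lim_{\Pos}$. In this face poset the vertices have rank $1$, the edges rank $2$ and $T$ rank $3$. So $\Delta_1^{\Pos}$ consists of the diagonal pairs together with the covering pairs $(v_i,e_{ij})$ and $(e_{ij},T)$.

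For a skyscraper $\sky_x$, the diagram $(s,t)\mapsto\sky_x(t)$ on $J_1(\sigma)$ is supported on the objects with $t=x$. The colimit therefore has one summand $k$ for each such object, subject to two kinds of relation:
\begin{itemize}
\item comparable supported objects are identified;
\item a supported object is killed if it lies below some object $(s',t')\in J_1(\sigma)$ with $t'>x$, since that transition map lands in $0$.
\end{itemize}

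\emph{The sheaf $C_1\sky_e$, with $e=e_{01}$.} The supported objects are $(v_0,e)$, $(v_1,e)$ and $(e,e)$, each present only when its first coordinate is $\le\sigma$. The only window pairs with second coordinate $>e$ are the pairs $(e',T)$ with $e'$ an edge, and such a pair lies above $(v_i,e)$ exactly when $v_i\le e'$. The stalks are then as follows.
\begin{itemize}
\item At $\sigma=v_0$, the object $(v_0,e)$ is alone: it cannot reach $(v_0,T)$, which has height $2$ and so lies outside the window. The stalk is $k$, and likewise at $v_1$.
\item At $v_2$ there is no supported object, so the stalk is $0$.
\item At $\sigma=e_{01}$, the object $(e,e)\le(e,T)$ dies, and each $(v_i,e)$ is identified with $(e,e)$, so the stalk is $0$.
\item At $\sigma=e_{02}$, the only supported object is $(v_0,e)$, and it lies below $(e_{02},T)$, so the stalk is $0$. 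The case $e_{12}$ is symmetric.
\item At $\sigma=T$, every supported object is killed in the same way.
\end{itemize}
Hence $C_1\sky_{e}$ is $k$ at $v_0$ and $v_1$ and $0$ elsewhere. Its limit is an arbitrary pair of elements at $v_0$ and $v_1$, since the constraints imposed at edges and at $T$ are vacuous. This gives $H^0\cong k^2$.

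\emph{The sheaf $C_1\sky_T$.} The supported objects are $(e_{ij},T)$ for the edges $e_{ij}\le\sigma$, together with $(T,T)$ when $\sigma=T$. Since $T$ is maximal, nothing lies above them with a larger second coordinate, so no object is killed, and the colimit is $k^{\pi_0}$ of the supported subposet. The stalks are:
\begin{itemize}
\item $0$ at each vertex, since no edge lies below a vertex;
\item $k$ at each edge;
\item $k$ at $T$, where all three $(e_{ij},T)\le(T,T)$ form one component.
\end{itemize}
The edge-to-$T$ corestrictions are identities. In a compatible family the vertex entries vanish, which forces every edge entry to vanish (each edge lies above a vertex), and then the entry at $T$ vanishes too. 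So $H^0=0$.

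\emph{The skyscrapers themselves.} For $\sky_{e_{01}}$ and $\sky_T$, each point lies above a vertex whose stalk is $0$, so the same compatibility argument gives $\lim=0$.

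\emph{Main obstacle.} The step I expect to need the most care is the bookkeeping of which window pairs lie in $J_1(\sigma)$ and dominate a supported object. In particular, the vertex stalks of $C_1\sky_{e_{01}}$ survive only because $(v_0,T)$ has height $2$ and so falls outside $\Delta_1^{\Pos}$. I would verify that case explicitly against the definition of the window before assembling the limit.
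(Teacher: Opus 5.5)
Your proposal is correct and follows the paper's proof. Like the paper, you compute the stalks through Lemma~\ref{lem:support-presentation} using the same killing arrows (for instance $(v_0,e_{01})\le(e_{02},T)$ at $e_{02}$); the only cosmetic difference is that you read $H^0$ as $\lim_{\Pos}$, where the paper reads it off the cellular cochains $C^0$ and $d^0$. One small slip: your list of window pairs with second coordinate above $e_{01}$ omits the diagonal pair $(T,T)$. That pair enters only $J_1(T)$, where the supported objects are already killed by $(e_{01},T)$, so nothing changes.
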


\begin{proof}
The last statement is immediate: skyscrapers on positive-dimensional cells have zero degree-zero cochains.

\emph{Stalks of $C_1\sky_{e}$, $e=e_{01}$.} The objects carrying a nonzero value are the $(s,e)$ with $s\le e$, i.e.\ $s\in\{v_0,v_1,e\}$. The outcome is the following, each cell carrying the stalk of $C_1\sky_{e_{01}}$ there in parentheses:
\[
\begin{tikzcd}[row sep=large,column sep=large]
& T\ (0) & \\
e_{01}\ (0)\arrow[ur] & e_{02}\ (0)\arrow[u] & e_{12}\ (0)\arrow[ul]\\
v_0\ (k)\arrow[u]\arrow[ur] & v_1\ (k)\arrow[ul]\arrow[ur] & v_2\ (0)\arrow[ul]\arrow[u]
\end{tikzcd}
\]
The diagram $(s,t)\mapsto\sky_e(t)$ on $J_1(\sigma)$ is nonzero on the objects $(s,e)$ with $s\le\sigma$ and on no others, and all of these carry the same class, since $(s,e)\le(e,e)$ whenever $(e,e)$ is present. The stalk is therefore $k$ unless the index poset supplies an arrow out of one of them into an object of value $0$, and the table lists such an arrow wherever there is one:
\[
\begin{array}{c|c|c|c}
\sigma & \text{nonzero objects of }J_1(\sigma) & \text{killing arrow} & \text{stalk}\\
\hline
v_i\ (i=0,1) & (v_i,e) & \text{none} & k\\
v_2 & \text{none} & \text{--} & 0\\
e & (v_0,e),(v_1,e),(e,e) & (e,e)\le(e,T) & 0\\
e_{02} & (v_0,e) & (v_0,e)\le(e_{02},T) & 0\\
e_{12} & (v_1,e) & (v_1,e)\le(e_{12},T) & 0\\
T & (v_0,e),(v_1,e),(e,e) & (e,e)\le(e,T) & 0
\end{array}
\]
Two verifications suffice for the whole table. That no arrow leaves $(v_i,e)$ inside $J_1(v_i)$ for $i=0,1$: such an arrow would need $t'\ge e$ with $\len(v_i,t')\le1$, forcing $t'=e$, and $s'$ with $v_i\le s'\le v_i$. That the listed arrows are arrows of the index poset and land on the value $0$: each has second coordinate $T$, where $\sky_e$ vanishes, and each satisfies the two window conditions (for instance $(v_0,e)\le(e_{02},T)$ because $v_0\le e_{02}\le T$, $e\le T$ and $\len(e_{02},T)=1$). The table gives $C^0(\Pos;C_1\sky_e)\cong k^2$ (at $v_0,v_1$); all edge stalks vanish, so $d^0=0$ and $H^0(\Pos;C_1\sky_e)\cong k^2$.

\emph{Stalks of $C_1\sky_T$ at vertices.} At $\sigma=v_i$ the window objects are $(v_i,t)$ with $t$ of dimension $\le1$, and $\sky_T$ vanishes on all of them, so the stalk is $0$; then $C^0(\Pos;C_1\sky_T)=0$ and $H^0(\Pos;C_1\sky_T)=0$.
\end{proof}

\subsection{Which sheaves \texorpdfstring{$C_a$}{C\_a} annihilates}

The module $\bigoplus_{x\in \Pos}F(x)$ attached to $F$ by Lemma \ref{lem:module-diagram} is finite-dimensional if and only if every stalk is, $\Pos$ being finite, so that equivalence restricts to an identification of $\Shv(\Pos;\FinVec_k)$ with the category of finite-dimensional right $k\Pos$-modules \cite[Lem.~2.7]{Ladkani}. In the sheaf notation used here, its indecomposable injectives are the down-set sheaves
\[
I_\sigma:=k_{\cl(\sigma)},
\]
constant equal to $k$ on the principal down-set $\cl(\sigma)$ with identity restrictions, and $0$ elsewhere. Curry proves this for a face poset, with $I_\sigma$ his elementary injective supported on the closure of $\sigma$ \cite[Def.~7.1.3, Lem.~7.1.6]{Curry}, attributing the harder direction to Shepard; over a finite poset it is the standard indecomposable-injective classification for the finite-dimensional algebra $k\Pos$, read through Remark \ref{rem:module-presentation} \cite[\S2]{Ladkani}. The classification is the one statement of the paper that needs finite dimensionality; Proposition \ref{prop:inj-annih} itself is a computation of $C_ak_{\cl(\sigma)}$ over $\mathrm{Vec}_k$.

\begin{proposition}[Exact annihilation locus of the injectives]\label{prop:inj-annih}
Let $\Pos$ be a finite poset, $\sigma\in \Pos$, and $a\ge1$. Then the indecomposable injective $I_\sigma$ survives $C_a$ if and only if some minimal element below $\sigma$ has its entire near-window among the elements below $\sigma$:
\[
C_aI_\sigma\ne 0
\quad\Longleftrightarrow\quad
\exists\ v\le\sigma \text{ minimal in }\Pos \ \text{ s.t. }\ \st_a(v)\subseteq \cl(\sigma).
\]
On a face poset the minimal elements are the vertices, and the criterion reads: some vertex of $\sigma$ has its entire near-window among the faces of $\sigma$.
\end{proposition}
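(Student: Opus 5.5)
The plan is to compute $C_aI_\sigma$ stalkwise with Lemma~\ref{lem:support-presentation} and to read the colimit off the combinatorics of the support of the diagram. Fix an apex $\tau$. Over $J_a(\tau)$ the diagram is $(s,t)\mapsto I_\sigma(t)$. Its value is $k$ when $t\le\sigma$ and $0$ otherwise, and every transition between two nonzero values is the identity.

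Let $D_\tau=\{(s,t)\in J_a(\tau): t\le\sigma\}$ be the support. The first step is to check that $D_\tau$ is a down-set of $J_a(\tau)$: if $(s',t')\le(s,t)$ then $t'\le t\le\sigma$. For a diagram that is constant $k$ with identity maps on a down-set and $0$ off it, the colimit is a direct sum of copies of $k$, one for each connected component of $D_\tau$ that has no arrow leaving it into $J_a(\tau)\setminus D_\tau$. This holds because a generator in a component is identified with its image along every arrow, and it becomes $0$ as soon as one arrow lands outside the support. This elementary description of the colimit is the only general fact needed. I will state it briefly, since it is the same reasoning the paper already used in Theorem~\ref{thm:noshadow} and Proposition~\ref{prop:edge}.

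For the direction ($\Leftarrow$), suppose $v\le\sigma$ is minimal and $\st_a(v)\subseteq\cl(\sigma)$. Take $\tau=v$. Minimality forces $s=v$ for every object, so $J_a(v)=\{(v,u):u\in\st_a(v)\}$. Every such $u$ lies below $\sigma$, so $D_v=J_a(v)$. This poset is connected, with least element $(v,v)$, and there is no complement for an arrow to leave into. Hence $(C_aI_\sigma)(v)=k\neq0$.

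For the direction ($\Rightarrow$), I argue by contraposition. Assume every minimal $v\le\sigma$ has some $u_v\in\st_a(v)$ with $u_v\not\le\sigma$, and fix any $\tau$ and any $(s,t)\in D_\tau$. Since $\Pos$ is finite, we may choose a minimal $v\le s$. Then $v\le s\le t\le\sigma$ and $v\le\tau$, so both $(s,s)$ and $(v,v)$ lie in $D_\tau$, and they give the zigzag $(s,t)\ge(s,s)\ge(v,v)$ inside $D_\tau$. Moreover $(v,u_v)\in J_a(\tau)$ with value $0$, and $(v,v)\le(v,u_v)$ is an arrow out of $D_\tau$. So every component of $D_\tau$ is killed, $(C_aI_\sigma)(\tau)=0$ for all $\tau$, and $C_aI_\sigma=0$.

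The face-poset reading then follows because the minimal elements of $\Pos(K)$ are the vertices. I expect the main care to lie not in any single step but in making sure the killing arrow $(v,v)\le(v,u_v)$ really lies in $J_a(\tau)$, which needs $v\le\tau$ and $\len(v,u_v)\le a$. Both hold by construction.
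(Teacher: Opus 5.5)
Your proposal is correct and follows the paper's argument: the same computation at a minimal element for $(\Leftarrow)$, and for $(\Rightarrow)$ the same contrapositive in which a minimal $v\le s$ and the killing arrow $(v,v)\le(v,u_v)$ inside $J_a(\tau)$ annihilate every generator. The only differences are presentational. The paper phrases the colimit by generators and relations rather than by components of the support. Your zigzag through $(s,s)$ is harmless but unnecessary, since $(v,v)\le(s,t)$ holds directly.
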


\begin{proof}
Since $I_\sigma(u)=k$ for $u\le\sigma$ and $0$ otherwise, with identity restrictions on $\cl(\sigma)$, the colimit of Lemma \ref{lem:support-presentation} at a cell $x$ is presented as a quotient of the direct sum of its nonzero terms,
\[
(C_aI_\sigma)(x)\;=\;\Big(\ \bigoplus_{\substack{(s,u)\in J_a(x)\\ u\le\sigma}} k_{(s,u)}\ \Big)\Big/\!\sim,
\]
one copy $k_{(s,u)}$ of $k$ per pair $(s,u)\in J_a(x)$ with $u\le\sigma$; the relations identify $k_{(s,u)}\xrightarrow{\ \id\ }k_{(s',u')}$ along each $(s,u)\le(s',u')$ with $u'\le\sigma$, and send $k_{(s,u)}\mapsto0$ along each $(s,u)\le(s',u')$ with $u'\not\le\sigma$.

\emph{Stalk formula at a minimal element.} Let $v$ be minimal in $\Pos$. Minimality forces $s=v$ for every $(s,u)\in J_a(v)$, so $J_a(v)=\{(v,u):u\in\st_a(v)\}$ is isomorphic (via the second coordinate) to $\st_a(v)$, with least object $(v,v)$; the diagram value is $k$ on $\st_a(v)\cap \cl(\sigma)$ and $0$ elsewhere.
\begin{itemize}
\item If $\st_a(v)\subseteq \cl(\sigma)$, all values are $k$, all transition maps are identities, and the index is connected (it has a least object); the colimit is $k$.
\item If $\st_a(v)\not\subseteq \cl(\sigma)$, then either $v\not\le\sigma$, whence $\st_a(v)\cap \cl(\sigma)=\varnothing$ (because $u\in \cl(\sigma)$ and $v\le u$ would give $v\in \cl(\sigma)$) and the diagram is identically $0$; or $v\le\sigma$ and some $u_0\in\st_a(v)$ has $u_0\not\le\sigma$, so the arrow $(v,v)\le(v,u_0)$ sends $k_{(v,v)}\mapsto0$, and every surviving generator $k_{(v,u)}$ ($u\in\st_a(v)\cap \cl(\sigma)$) equals $k_{(v,v)}$ through $(v,v)\le(v,u)$ and vanishes with it. Either way the colimit is $0$.
\end{itemize}
Collecting the two cases gives, for every minimal $v$, the dichotomy
\[
(C_aI_\sigma)(v)=
\begin{cases}
k, & \st_a(v)\subseteq \cl(\sigma),\\[2pt]
0, & \text{otherwise,}
\end{cases}
\tag{$\ast$}
\]
which drives both directions of the criterion.

\emph{The nonvanishing criterion.} ``$\Leftarrow$'' is immediate: if a minimal $v\le\sigma$ satisfies $\st_a(v)\subseteq \cl(\sigma)$, then $(\ast)$ gives $(C_aI_\sigma)(v)=k\ne0$, so $C_aI_\sigma\ne0$.

For ``$\Rightarrow$'' we prove the contrapositive: assume every minimal $v\le\sigma$ has $\st_a(v)\not\subseteq \cl(\sigma)$, fix an arbitrary $x\in\Pos$, and show $(C_aI_\sigma)(x)=0$. Let $(s,u)\in J_a(x)$ be a generator, so $u\le\sigma$. Pick a minimal $v\le s$, which exists because $\Pos$ is finite; then $v\le s\le u\le\sigma$, so $v\le\sigma$, and by hypothesis some $u_0\in\st_a(v)$ has $u_0\not\le\sigma$. As $u_0\in\st_a(v)$, i.e.\ $(v,u_0)\in\Delta_a^{\Pos}$, and $v\le s\le x$, the pair $(v,u_0)$ lies in $J_a(x)$ with value $0$, and $(v,v)\le(v,u_0)$ forces $k_{(v,v)}\mapsto0$. Finally $(v,v)\le(s,u)$ (since $v\le s\le u$) with $u\le\sigma$, so $k_{(s,u)}=k_{(v,v)}=0$ in the colimit. Every generator vanishes, so $(C_aI_\sigma)(x)=0$; as $x$ was arbitrary, $C_aI_\sigma=0$.
\end{proof}

\begin{corollary}[Skyscraper stalks at a minimal element]\label{cor:sky-stalk}
Let $\Pos$ be a finite poset, let $a\ge1$, let $v$ be minimal in $\Pos$, and let $\sigma\in \Pos$. Then
\[
(C_a\sky_\sigma)(v)=
\begin{cases}
k, & (v,\sigma)\in\Delta_a^{\Pos} \ \text{ and }\ \sigma \text{ is maximal in } \st_a(v),\\[2pt]
0, & \text{otherwise.}
\end{cases}
\]
\end{corollary}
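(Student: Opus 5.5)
We compute the stalk directly from Lemma \ref{lem:support-presentation}, following the stalk computation at a minimal element in the proof of Proposition \ref{prop:inj-annih}. Since $v$ is minimal, every $(s,t)\in J_a(v)$ has $s\le v$ and hence $s=v$. The second coordinate therefore identifies $J_a(v)$ with $\st_a(v)$, and the least object $(v,v)$ corresponds to $v$. Under this identification the diagram becomes $u\mapsto\sky_\sigma(u)$ on $\st_a(v)$, with transition maps the restrictions of $\sky_\sigma$. It is $k$ at $u=\sigma$ if $\sigma\in\st_a(v)$, and $0$ at every other $u$. So we must compute $\colim_{u\in\st_a(v)}\sky_\sigma(u)$.

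The first case is $\sigma\notin\st_a(v)$, which is exactly $(v,\sigma)\notin\Delta_a^{\Pos}$. Then every term of the diagram is zero, and so is the colimit.

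The second case is $\sigma\in\st_a(v)$. Write the colimit as the quotient of the single summand $k_\sigma$ by the relations coming from arrows out of $\sigma$; arrows into $\sigma$ come from zero terms and impose nothing. Let $u\in\st_a(v)$ with $\sigma<u$. The transition map $\sky_\sigma(\sigma)\to\sky_\sigma(u)=0$ kills the generator, so the colimit is $0$. If $\sigma$ is maximal in $\st_a(v)$, no such $u$ exists. The only relations are then identity maps $k_\sigma\to k_\sigma$ and zero maps from the other terms, so the colimit is $k$.

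To make the last step rigorous, I would exhibit the colimit cocone. Take the map $\sky_\sigma|_{\st_a(v)}\to k$ that is the identity at $\sigma$ and zero elsewhere. It is compatible with every arrow of $\st_a(v)$ because no arrow leaves $\sigma$ to a nonzero term. Its universality follows because any cocone is determined by its component at $\sigma$. I do not expect a genuine obstacle. The only point needing care is that minimality of $v$ collapses the first coordinate, and I would invoke this exactly as in $(\ast)$ of Proposition \ref{prop:inj-annih}.
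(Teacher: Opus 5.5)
Your proposal is correct and follows the paper's proof essentially line for line: minimality of $v$ collapses $J_a(v)$ onto $\st_a(v)$, and the stalk is then read off by cases on whether $\sigma\in\st_a(v)$ and whether some $u'\in\st_a(v)$ lies strictly above $\sigma$. Your explicit cocone is a spelled-out justification of the paper's remark that the single generator at $\sigma$ survives exactly when no arrow carries it to a zero term.
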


\begin{proof}
Because $v$ is minimal, every $(s,t)\in J_a(v)$ has $s\le v$ forcing $s=v$, so $J_a(v)=\{(v,u):u\in\st_a(v)\}$ is isomorphic to the poset $\st_a(v)$, which has least element $v$. By Lemma~\ref{lem:support-presentation} the stalk is $\colim_{u\in\st_a(v)}\sky_\sigma(u)$. This diagram equals $k$ at $u=\sigma$ if and only if $\sigma\in\st_a(v)$, i.e.\ when $(v,\sigma)\in\Delta_a^{\Pos}$, and $0$ at every other $u$. If $(v,\sigma)\notin\Delta_a^{\Pos}$ the diagram vanishes, and so does the colimit. If $(v,\sigma)\in\Delta_a^{\Pos}$, the single generator at $u=\sigma$ survives the colimit iff no arrow carries it to a zero object, that is iff there is no $u'\in\st_a(v)$ with $\sigma<u'$; equivalently $\sigma$ is maximal in $\st_a(v)$. When $\sigma$ is not maximal, the restriction $\sky_\sigma(\sigma)\to \sky_\sigma(u')=0$ along some $\sigma<u'$ kills the class.
\end{proof}

\begin{example}[The minimality hypothesis is essential]\label{ex:lateral}
At a non-minimal $v$ the dichotomy fails: some $s<v$ can feed into the window an element that is \emph{not} above $v$, so the comma index $J_a(v)$ sees elements laterally and not only through $\st_a(v)$. On the poset $\{0<p,\ 0<q\}$ one has $(C_1\sky_p)(q)=k$ although $q\not\le p$, and on the bigon (two edges $s,v$ on the same pair of vertices) $(C_1\sky_s)(v)=k^{2}$. In general $(C_a\sky_\sigma)(v)\cong k^{\,c}$, where $c$ is the number of connected components of $\{s\le v:(s,\sigma)\in\Delta_a^{\Pos}\}$ that admit no window-coface strictly above $\sigma$ (the degree-zero component count of Corollary \ref{cor:h0-count}). At a minimal $v$ one has $c\in\{0,1\}$ and the count reduces to the stated condition.
\end{example}

The obstruction is not confined to degree zero: the two computations in the proof of Theorem \ref{thm:noshadow} give $H^i(\Pos;C_ak_{\Pos})\cong H^i(K;k)$ and $H^i(\Pos;C_a\sky_v)=0$ in every degree $i$. Nor is it tied to the value $[k]$: by Proposition \ref{prop:edge} any $D$ as in Theorem \ref{thm:noshadow} would have to send $[0]$ to both $k^2$ and $0$.

\section{The derived flow and the meet obstruction}\label{sec:derived}

The object of study is the finality defect of the meet functor $\Phi$ (Remark \ref{rem:weight-independent}). All quasi-isomorphisms are in the sense of Remark \ref{rem:homotopical}.

\subsection{Stalkwise models, and the strict flow on chains}

\begin{definition}[Flows and their strength]\label{def:flow}
Let $(M,\le,+,0)$ be a commutative monoid carrying a partial order for which $+$ is monotone in each variable (here $M=\mathbb{Z}_{\ge0}$, or $\mathbb{Z},\mathbb{R}$ with their usual orders), read as a monoidal category with one arrow $a\to b$ when $a\le b$. A \emph{flow indexed by $M$} on a category $\mathcal C$ is a lax monoidal functor $X_\bullet\colon(M,\le,+,0)\to(\operatorname{End}(\mathcal C),\circ,\mathrm{Id})$: a family $\{X_a\}$ with $X_0=\mathrm{Id}$, transitions $X_a\Rightarrow X_b$ composing along $\le$, and a comparison $\mu_{a,b}$ between $X_a\circ X_b$ and $X_{a+b}$, natural in $a$ and $b$ as well as in the object and subject to the unit and associativity coherence. Forgetting the order leaves a coherent action of the bare monoid, which is the notion of de Silva--Munch--Stefanou \cite[Def.~2.3, Def.~2.4]{dSMS}, there a coherent action of $([0,\infty),+)$ carrying a natural transformation $u\colon\mathrm{Id}\to X_0$ that is not required to be invertible; it is normalized here to the strictly unital variant, in which $X_0=\mathrm{Id}$.
The parameter is $(\mathbb Z_{\ge0},\le,+)$ throughout, the order being what the derived transition maps of Proposition \ref{prop:derived-exist}\textup{(iv)} and every interleaving-type distance use. The \emph{strength} of a flow is the nature of $\mu_{a,b}$: \emph{strict} (an equality $X_aX_b=X_{a+b}$), \emph{strong} (a coherent natural isomorphism), \emph{lax} (a coherent natural transformation $X_aX_b\Rightarrow X_{a+b}$, not required invertible), or \emph{oplax} (one in the reverse direction). We are concerned with the family $\{\LC_a\}$ on $\Db(\Shv(\Pos;k))$: strict when $\Pos$ is a chain, but obstructed already by missing meets and by non-final diamond fibres.
\end{definition}

\begin{definition}[Simplicial replacement and finite bar model]\label{def:bar-hocolim}
Let $I$ be a finite poset and let $D\colon I\to\Chb(\mathrm{Vec}_k)$ be a covariant diagram. The \emph{simplicial replacement} $\srep(D)$ is the simplicial object of $\Chb(\mathrm{Vec}_k)$ with
\[
   \srep(D)_n=
   \bigoplus_{i_0\le\cdots\le i_n}D(i_0).
\]
The face $d_0$ applies the structure map $D(i_0)\to D(i_1)$ and then deletes $i_0$; the other faces delete the indicated index and act by the identity on the coefficient. Degeneracies repeat an index. The \emph{bar complex}
\[
   \operatorname{Bar}(I,D)
\]
is the normalized total complex of this simplicial replacement (Definition \ref{def:normalized-complex}), equivalently the one-sided complex $B(k,I,D)$, the case of a constant first variable in the two-sided bar complex of Definition \ref{def:two-sided-bar}. In bar degree $n$ it is
\[
   \operatorname{Bar}_n(I,D)=
   \bigoplus_{i_0<\cdots<i_n}D(i_0),
\]
with the faces of the simplicial replacement above. Write
\[
   \partial_{\mathrm{bar}}=\sum_{j=0}^n(-1)^jd_j
\]
for the resulting alternating bar differential. If $D$ takes values in $\mathrm{Vec}_k$, $\operatorname{Bar}(I,D)$ is the complex just displayed with differential $\partial_{\mathrm{bar}}$. For chain-complex-valued $D$ there are two differential directions, and $\operatorname{Bar}(I,D)$ is the total complex of the double complex
\[
   \operatorname{Bar}_{n,m}(I,D)=\bigoplus_{i_0<\cdots<i_n}D(i_0)_m ,
\]
placed in total degree $n+m$: on a homogeneous $x\in D(i_0)_m$ indexed by $i_0<\cdots<i_n$,
\[
   d_{\mathrm{tot}}(x)=d_{D(i_0)}(x)+(-1)^m\,\partial_{\mathrm{bar}}(x).
\]
The sign is forced. Every face map is built from chain maps, so $d_D$ and $\partial_{\mathrm{bar}}$ commute; the Koszul factor $(-1)^m$, inserted because $\partial_{\mathrm{bar}}$ has degree $-1$ in the bar direction, converts that into the anticommutation required for $d_{\mathrm{tot}}^2=0$. This is the case $R=k$ of Definition \ref{def:two-sided-bar}.

A monotone map $\Psi\colon I'\to I$, together with a map of diagrams $\alpha\colon D'\to\Psi^\ast D$, gives a map of simplicial replacements $\srep(D')\to\srep(D)$: the summand at $i_0\le\cdots\le i_n$ goes by $\alpha_{i_0}\colon D'(i_0)\to D(\Psi(i_0))$ to the summand at $\Psi(i_0)\le\cdots\le\Psi(i_n)$. It commutes with every face, the $d_0$ case being that $\alpha$ is a map of diagrams, and with every degeneracy, so it is a chain map of Moore complexes and descends to the normalized quotients (Definition \ref{def:normalized-complex}). The \emph{induced bar map} is that quotient map,
\[
   \Psi_\#\colon\operatorname{Bar}(I',D')\longrightarrow\operatorname{Bar}(I,D),
\]
written after the monotone map, with $\alpha$ the identity of $\Psi^\ast D$ unless another is named. It carries the summand at $i_0<\cdots<i_n$ to the summand at $\Psi(i_0)<\cdots<\Psi(i_n)$, and to zero when that image is not strict, the second clause being the passage to the normalized quotient.

The passage from the simplicial object $\srep(D)$ to the complex $\operatorname{Bar}(I,D)$ is an \emph{algebraic realization}.
It is the algebraic analogue of Dugger's realization formula $\hocolim_ID=|\srep(D)|=\int^{[n]\in\mathbf\Delta}\srep(D)_n\times\Delta^n$ \cite[\S4, Def.~4.5]{DuggerHocolim}: with $\mathcal N_\ast(\Delta^n;k)$ the normalized simplicial chain complex of the standard $n$-simplex,
\[
   \operatorname{Bar}(I,D)
   \;\cong\;
   \int^{[n]\in\mathbf\Delta}\mathcal N_\ast(\Delta^n;k)\otimes_k\srep(D)_n ,
\]
the coend being formed in $\Ch(\mathrm{Vec}_k)$ and $\otimes_k$ the tensor product of complexes, whose sign rule is the $(-1)^m$ above. This is Lemma \ref{lem:algebraic-realization}.

In the present finite $k$-linear setting $\operatorname{Bar}(I,D)$ computes $\hocolim_I D$, i.e.\ the left derived colimit $\mathbb L\!\colim_I D$:
\begin{equation}\label{eq:colim-square}
\begin{tikzcd}[column sep=huge,row sep=large]
\Chb(\Shv(I;k))
  \arrow[r,"\colim_I"]
  \arrow[d,"\gamma"']
&
\Chb(\mathrm{Vec}_k)
  \arrow[d,"\gamma"]
\\
\Db(\Shv(I;k))
  \arrow[r,"\mathbb L\!\colim_I\cong \operatorname{Bar}(I{,}-)"']
&
\Db(\mathrm{Vec}_k)
  \arrow[Rightarrow,from=2-1,to=1-2,shorten <=12pt,shorten >=12pt,"\theta"]
\end{tikzcd}
\end{equation}
Here $\operatorname{Bar}(I,-)$ represents the lower row, and $\theta$, the comparison in the sense of Remark \ref{rem:deformation}, is induced by the augmentation of the bar complex. The Bousfield--Kan formula and its identification with the derived colimit \cite[Thm.~5.1.1, Cor.~5.1.3]{Riehl}, \cite[\S\S~4, 8, 10]{DuggerHocolim} are stated for spaces or simplicial model categories; for diagrams of chain complexes; specifically over a finite category $I$, and quasi-isomorphisms it is \cite[Thm.~3.4--(1)]{Arakawa}.

Over the incidence algebra $kI$ (Definition \ref{def:incidence-algebra}), $\operatorname{Bar}(I,D)$ computes $D\otimes^{\mathbb L}_{kI}k=\mathbb L\!\colim_ID$ by resolving the trivial module rather than $D$; this is Lemma \ref{lem:bar-derived-colim}, proved from Lemma \ref{lem:incidence-trivial}\textup{(ii)--(iii)} and the deformation of Remark \ref{rem:deformation}.
\end{definition}

\begin{proposition}[Stalkwise bar model]\label{prop:bar}
Let $F\in\Shv(\Pos;k)$, $\sigma\in \Pos$, $a\ge0$. Then
\[
   (\LC_aF)(\sigma)\;\simeq\;\hocolim_{J_a(\sigma)}\ q_2^\ast F ,
\]
computed by $\operatorname{Bar}(J_a(\sigma),q_2^\ast F)$, namely
\[
\operatorname{Bar}_n\bigl(J_a(\sigma),q_2^\ast F\bigr)=\bigoplus_{o_0<\cdots<o_n}F\bigl(t(o_0)\bigr),
\qquad o_i=(s_i,t_i)\in J_a(\sigma),
\]
where $d_0$ deletes $o_0$ and applies $F(t_0\le t_1)$, and $d_i$ deletes $o_i$ (identity on the coefficient) for $i\ge1$. The complex has $H_0\bigl(\operatorname{Bar}(J_a(\sigma),q_2^\ast F)\bigr)=(C_aF)(\sigma)$, and $H_i\LC_aF=0$ above the longest chain in $\Delta_a^{\Pos}$.

The statement holds as well when $F$ is a bounded complex of sheaves, that is $F\in\Chb(\Shv(\Pos;k))$; then $\operatorname{Bar}(J_a(\sigma),q_2^\ast F)$ is read as the totalization of Definition \ref{def:bar-hocolim}: the displayed bar object acquires the internal differential of $F$ with the Koszul sign, and the vanishing bound applies to the bar direction.
\end{proposition}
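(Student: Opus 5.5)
The plan is to reduce the stalk of $\LC_a F$ to a derived colimit over the comma poset, and then read off the bar model from Definition \ref{def:bar-hocolim}.

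\emph{Step 1: pointwise formula.} I will show that the pointwise formula for left Kan extensions derives. Evaluation at $\sigma$ is exact on $\Shv(\Pos;k)$. The underived stalk is $(\Lan_{q_1^\Delta}G)(\sigma)=\colim_{J_a(\sigma)}G|_{J_a(\sigma)}$, by Lemma \ref{lem:support-presentation} applied to $G=(q_2^\Delta)^\ast F$. To derive it, I take a projective resolution $Q_\bullet\to G$ in $\Shv(\Delta_a^{\Pos};k)$ by sums of the representables $M_{\st(o)}$, which exist by Lemma \ref{lem:enough-projectives} over the finite poset $\Delta_a^{\Pos}$. I then check two facts:
\begin{enumerate}
\item[(a)] $\Lan_{q_1^\Delta}$ sends $M_{\st(o)}$ to $M_{\st(q_1 o)}$, which is exact to evaluate.
\item[(b)] Restriction to the down-closed subposet $J_a(\sigma)=(q_1^\Delta)^{-1}(\cl\sigma)$ carries $M_{\st(o)}$ to the up-set sheaf $M_{\st(o)\cap J_a(\sigma)}$. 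That sheaf is either $0$ or the representable of $J_a(\sigma)$ at $o$, so it is $\colim$-acyclic.
\end{enumerate}
Hence $(\mathbb L\Lan_{q_1^\Delta}G)(\sigma)\simeq\mathbb L\colim_{J_a(\sigma)}(G|_{J_a(\sigma)})$, naturally in $\sigma$ and $F$. Combined with exactness of $(q_2^\Delta)^\ast$, this gives $(\LC_aF)(\sigma)\simeq\hocolim_{J_a(\sigma)}q_2^\ast F$.

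\emph{Step 2: bar model.} Next I invoke the identification $\mathbb L\colim_I\cong\operatorname{Bar}(I,-)$ of \eqref{eq:colim-square}, that is Lemma \ref{lem:bar-derived-colim} (or \cite{Arakawa}), with $I=J_a(\sigma)$. The displayed description of $\operatorname{Bar}_n$ and of its faces is then just Definition \ref{def:bar-hocolim} specialised to $D=q_2^\ast F$, whose structure maps are the $F(t_0\le t_1)$.

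\emph{Step 3: degree zero.} For $H_0$, the cokernel of $\partial_{\mathrm{bar}}\colon\operatorname{Bar}_1\to\operatorname{Bar}_0$ is $\bigoplus_o F(t(o))$ modulo the relations $x\sim F(t_0\le t_1)x$ along each $o_0<o_1$. This is the standard presentation of $\colim_{J_a(\sigma)}$, so $H_0$ equals $(C_aF)(\sigma)$ by Lemma \ref{lem:support-presentation}.

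\emph{Step 4: vanishing and complexes.} For vanishing, note that $\operatorname{Bar}_n$ is indexed by strict chains of length $n$ in $J_a(\sigma)\subseteq\Delta_a^{\Pos}$. It is therefore zero for $n$ above the longest chain in $\Delta_a^{\Pos}$, and homology vanishes in that range; this range is uniform in $\sigma$. For a bounded complex $F$, I apply the same argument to the totalisation: Step 1 works termwise through a Cartan--Eilenberg or total projective resolution, and Step 2 is already stated for $\Chb$-valued diagrams. The bound is then read in the bar direction, because $\operatorname{Bar}_{n,m}=0$ for large $n$.

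\emph{Main obstacle.} I expect Step 1 to be the hard part. The difficulty is justifying that the derived Kan extension computes stalkwise as a derived colimit over the comma poset, i.e.\ the acyclicity check in (b) and naturality of the resulting identification in $\sigma$. I would first try the representable computation above. If naturality in $\sigma$ gets messy, I would fall back on the fact that the bar construction is functorial in the index inclusions $J_a(\sigma)\subseteq J_a(\sigma')$, and compare both sides on projectives.
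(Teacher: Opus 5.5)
Your proposal is correct and follows essentially the paper's own proof. You resolve $(q_2^\Delta)^\ast F$ by sums of representables $M_{\st(o)}$, note that each restricts to $J_a(\sigma)$ as $0$ or the up-set of $o$ there and is therefore $\colim_{J_a(\sigma)}$-acyclic, identify the derived colimit with $\operatorname{Bar}(J_a(\sigma),q_2^\ast F)$ via Lemma \ref{lem:bar-derived-colim}, and read off $H_0$ and the vanishing range from the bar grading; the paper obtains the acyclicity from the cone-point computation of the bar complex instead of from projectivity, and your remark (a) on $\Lan_{q_1^\Delta}M_{\st(o)}$ is correct but not needed.
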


\begin{proof}
Write $G=(q_2^\Delta)^\ast F$, so that $\LC_aF=\mathbb L\Lan_{q_1^\Delta}G$ (Definition \ref{def:LCa}). The pointwise formula for an ordinary left Kan extension \cite[Thm.~6.2.1]{RiehlCTIC} evaluates $\Lan_{q_1^\Delta}$ at $\sigma$ as the colimit over the comma poset $q_1^\Delta\downarrow\sigma=J_a(\sigma)$, and the stalk functor at $\sigma$ is exact, so
\[
   \bigl(H_i\,\mathbb L\Lan_{q_1^\Delta}G\bigr)(\sigma)
   =H_i\Bigl(\colim_{J_a(\sigma)}P_\bullet|_{J_a(\sigma)}\Bigr)
\]
for a projective resolution $P_\bullet\to G$ in $\Shv(\Delta_a^{\Pos};k)$. Resolve by the epimorphisms of Lemma \ref{lem:enough-projectives}, so that each $P_n$ is a direct sum of objects $M_{\st(o)}$ with $o\in\Delta_a^{\Pos}$ and $M$ a $k$-vector space. Such an object restricts on $J_a(\sigma)$ to $M$ on $\{o'\in J_a(\sigma):o'\ge o\}$ and to $0$ elsewhere, that set being empty when $o\notin J_a(\sigma)$, since $(s,t)\ge o=(s_0,t_0)$ and $s\le\sigma$ force $s_0\le\sigma$. It is up-closed in $J_a(\sigma)$ with least element $o$, so Lemma \ref{lem:support-restriction}\textup{(i)} presents the bar complex of the restriction as $M\otimes_kC_\ast(\Delta(\{o'\in J_a(\sigma):o'\ge o\});k)$, and that order complex is contractible (Lemma \ref{lem:cone-point}); by Lemma \ref{lem:bar-derived-colim}\textup{(ii)} the restriction is therefore acyclic for $\colim_{J_a(\sigma)}$. A resolution by such objects computes the derived colimit, which is $\hocolim_{J_a(\sigma)}G$. Definition \ref{def:bar-hocolim} presents it by the displayed finite bar complex, the chain-level identification being Lemma \ref{lem:bar-derived-colim} and \cite[Thm.~3.4--(1)]{Arakawa}; \cite[Cor.~5.1.3]{Riehl} states the bar model for a simplicial model category, which is not assumed here (Remark \ref{rem:homotopical}). Its $H_0$ is the cokernel of $d_0-d_1$, which is the colimit presentation of Lemma \ref{lem:colim-coeq} at $I=J_a(\sigma)$ and so $(C_aF)(\sigma)$; vanishing above the nerve dimension of $J_a(\sigma)$ is immediate from the bar grading.

For a bounded complex $F$ of sheaves nothing changes in the argument, the construction acting degreewise; the stalk at $\sigma$ is again the homotopy colimit over $J_a(\sigma)$, now of the $\Chb(\mathrm{Vec}_k)$-valued diagram $q_2^\ast F$, which Definition \ref{def:bar-hocolim} computes by the totalization with differential $d_{\mathrm{tot}}$.
\end{proof}

\begin{definition}[Grothendieck construction of a $\mathbf{Cat}$-valued diagram]\label{def:grothendieck}
Let $I$ be a small category and $X\colon I\to\mathbf{Cat}$ a functor. Following \cite[Def.~1.1]{Thomason}, the \emph{Grothendieck construction} $\int_IX$ is the category whose objects are pairs $(i,x)$ with $i\in I$ and $x\in X(i)$, and whose morphisms $(i,x)\to(i',x')$ are pairs $(\alpha,f)$ with $\alpha\colon i\to i'$ in $I$ and $f\colon X(\alpha)(x)\to x'$ in $X(i')$; composition is $(\alpha',f')\circ(\alpha,f)=(\alpha'\alpha,\ f'\circ X(\alpha')(f))$. The assignment $(i,x)\mapsto i$ is a functor $\mathrm{pr}\colon\int_IX\to I$, the \emph{Grothendieck projection}, with fibre $X(i)$ over $i$.

We use only the case in which $I$ and $L$ are finite posets, every $X(i)$ is a full subposet of $L$, and every $X(i\le i')$ is the inclusion $X(i)\subseteq X(i')$. Then $\int_IX$ is a finite poset, there is at most one morphism between any two objects, and the order is componentwise,
\[
   (i,x)\le(i',x')
   \iff
   i\le i'\ \text{ and }\ x\le x' ,
\]
the second relation read in $L$, equivalently in $X(i')$, which is full and contains both $x$ and $x'$. Alongside $\mathrm{pr}$ it carries the \emph{second projection}
\[
   \pi\colon\textstyle\int_IX\longrightarrow L,\qquad\pi(i,x)=x ,
\]
monotone by that display.
\end{definition}

\begin{proposition}[Grothendieck single-hocolim model of the iterate]\label{prop:groth}
For $a,b\ge0$ the assignment $(s,t)\mapsto J_b(t)\subseteq \Delta_b^{\Pos}$ is a covariant functor $D_{a,b}(\sigma):J_a(\sigma)\to\mathbf{Cat}$. The Grothendieck construction (Definition \ref{def:grothendieck}) of the diagram $D_{a,b}(\sigma)$ is the finite poset
\[
   \Ka_{a,b}(\sigma)=\textstyle\int_{(s,t)\in J_a(\sigma)}J_b(t)
   =\{(s,t,r,u):s\le\sigma,\ (s,t)\in\Delta_a^{\Pos},\ r\le t,\ (r,u)\in\Delta_b^{\Pos}\},
\]
ordered componentwise, and carrying the Grothendieck projection $\mathrm{pr}(s,t,r,u)=(s,t)$ of Definition \ref{def:grothendieck}, together with the coefficient projection $p(s,t,r,u)=u$ of Proposition \ref{prop:final} that fit in the diagram:
\[
\begin{tikzcd}[column sep=large,row sep=large]
J_b(t)\arrow[r,hook]\arrow[d] & \Ka_{a,b}(\sigma)\arrow[d,"\mathrm{pr}"]\arrow[r,"p"] & \Pos\\
\{(s,t)\}\arrow[r,hook] & J_a(\sigma) &.
\end{tikzcd}
\]
The left-hand square exhibits the inner window-comma poset $J_b(t)$ as the fibre of $\mathrm{pr}$ over $(s,t)$, and the coefficient diagram of the model below is $p^\ast F$. Then
\[
   (\LC_a\LC_bF)(\sigma)\;\simeq\;\hocolim_{\Ka_{a,b}(\sigma)}\ F(u),
\]
a single homotopy colimit; in particular $H_0$ recovers the underived iterate $(C_aC_bF)(\sigma)$.
\end{proposition}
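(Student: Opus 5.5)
First I would check the combinatorial part, then prove the homotopy statement by a Fubini argument. Functoriality of $D_{a,b}(\sigma)$ comes from Definition~\ref{def:window-index}. If $(s,t)\le(s',t')$ in $J_a(\sigma)$, then $t\le t'$, and monotonicity of the apex gives the full inclusion $J_b(t)\subseteq J_b(t')$. Full inclusions compose, so we get a functor into full subposets of $L=\Delta_b^{\Pos}$. Definition~\ref{def:grothendieck} then makes $\int D_{a,b}(\sigma)$ the componentwise-ordered poset of pairs $((s,t),(r,u))$ with $(s,t)\in J_a(\sigma)$ and $(r,u)\in J_b(t)$. Unwinding, these are exactly the quadruples displayed in the statement. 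The fibre of $\mathrm{pr}$ over $(s,t)$ is $\{(s,t)\}\times J_b(t)$, which gives the left square. The coefficient map $p=q_2\circ\pi$ is monotone, so $p^\ast F$ is a diagram on $\Ka_{a,b}(\sigma)$.

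For the homotopy statement I would apply Proposition~\ref{prop:bar} twice. Represent $\LC_bF$ by the complex of sheaves $\tau\mapsto\operatorname{Bar}(J_b(\tau),q_2^\ast F)$. This is a legitimate representative: the bar construction is natural in $\tau$ through the induced bar maps of the inclusions $J_b(\tau)\subseteq J_b(\tau')$, and it is quasi-isomorphic to $\LC_bF$ by the naturality clause of Proposition~\ref{prop:bar}. It is bounded, since everything is finite. The complex case of Proposition~\ref{prop:bar} then gives
\[
(\LC_a\LC_bF)(\sigma)\simeq\hocolim_{(s,t)\in J_a(\sigma)}\ \hocolim_{J_b(t)}p^\ast F .
\]
The inner hocolim is taken over the fibre of $\mathrm{pr}$, and its transition maps are the bar maps of the fibre inclusions.

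The remaining step is Thomason's homotopy colimit theorem \cite[Thm.~1.2]{Thomason}, in its poset form \cite{FM}, transported to $k$-linear bar complexes. It says that $\hocolim_{\int X}G\simeq\hocolim_{i\in I}\hocolim_{X(i)}G|_{X(i)}$. I would prove this directly at the chain level in the present setting, without citing the space-level theorem. Write $\operatorname{Bar}(\int X,G)$ as $G$ tensored over $k[\int X]$ with the bar resolution of the trivial module (Lemma~\ref{lem:bar-derived-colim}). The iterated bar complex is then a double (bisimplicial) construction: its $(n,m)$ simplices are a chain $i_0<\cdots<i_n$ in $I$ together with a chain $x_0<\cdots<x_m$ in $X(i_0)$. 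The diagonal of this bisimplicial object maps to $\srep(G)$ on $\int X$ by sending $((i_j),(x_j))\mapsto((i_j,x_j))$. By Eilenberg--Zilber, the total complex of the bisimplicial object is quasi-isomorphic to its diagonal. I would then show that the diagonal comparison is a quasi-isomorphism by filtering by the first coordinate. Over a fixed $i$, the relevant comma category $\mathrm{pr}\downarrow i$ contains the fibre $X(i)$ as a homotopy-final subposet, because $(i',x)\mapsto(i,x)$ is a right adjoint-style retraction: $X(i')\subseteq X(i)$ for $i'\le i$. By Quillen's Theorem~A for finite posets and the acyclic-fibre criterion (Lemma~\ref{lem:cone-point}), the comparison is a quasi-isomorphism on each graded piece. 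Taking $H_0$ and using $H_0\operatorname{Bar}=\colim$ (Lemma~\ref{lem:colim-coeq}) together with the strict Fubini formula $\colim_{\int X}=\colim_I\colim_{X(i)}$ recovers $(C_aC_bF)(\sigma)$.

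I expect the main obstacle to be this chain-level Thomason step: making the diagonal/Eilenberg--Zilber comparison and the signs of Definition~\ref{def:bar-hocolim} honest. My fallback is the abstract argument. Here $\mathbb L\colim_{\int X}=\mathbb L\colim_I\circ\mathbb L\Lan_{\mathrm{pr}}$, because the composite of left Kan extensions is a left Kan extension and $\Lan_{\mathrm{pr}}$ preserves projectives (those of Lemma~\ref{lem:enough-projectives} go to representables). It then suffices that $\mathbb L\Lan_{\mathrm{pr}}$ at $(s,t)$ is computed over the fibre rather than over the comma $\mathrm{pr}\downarrow(s,t)$. That holds because the fibre inclusion into the comma is final, and the same final-inclusion argument proves it.
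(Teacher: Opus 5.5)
Your combinatorial part, the two applications of Proposition~\ref{prop:bar}, and the $H_0$ clause all go through. For $H_0$ you use $H_0\operatorname{Bar}=\colim$ and the strict Fubini formula; the paper instead uses right exactness, $H_0\LC_aG\cong C_aH_0G$. The difference is the Thomason step, which the paper proves as Lemma~\ref{lem:groth-hocolim} without Eilenberg--Zilber. With $I=J_a(\sigma)$ and $K=\Ka_{a,b}(\sigma)$, the paper proceeds as follows.
\begin{enumerate}
\item[\textup{(1)}] It introduces $\mathfrak B_K(i)=\operatorname{Bar}(\mathrm{pr}\downarrow i,E)$ and maps it to $\mathfrak B(i)=\operatorname{Bar}(X(i),G)$ by the retraction $\rho_i\colon(i',x)\mapsto x$. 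This is a quasi-isomorphism because its section $\iota_i\colon y\mapsto(i,y)$ has comma fibres with least elements (Corollary~\ref{cor:final-criterion}).
\item[\textup{(2)}] Lemma~\ref{lem:bar-invariance} then gives $\operatorname{Bar}(I,\mathfrak B_K)\simeq\operatorname{Bar}(I,\mathfrak B)$.
\item[\textup{(3)}] Finally, the collapse $\operatorname{Bar}(I,\mathfrak B_K)\to\operatorname{Bar}(K,E)$ onto $I$-bar degree $0$ is a quasi-isomorphism, by filtering by $K$-chain degree: over a fixed chain $c_0<\cdots<c_q$ of $K$ the $I$-chains range over $\st_I(\mathrm{pr}(c_q))$, which has a least element.
\end{enumerate}
Your fallback is exactly the conceptual content of this zigzag: $\mathfrak B_K$ is the pointwise model of $\mathbb L\Lan_{\mathrm{pr}}E$, and the collapse is $\hocolim_I\mathbb L\Lan_{\mathrm{pr}}\simeq\hocolim_K$. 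The chain-level version buys explicit maps and never needs the derived Kan extension along $\mathrm{pr}$, or its naturality in $i$. In either form, make explicit that the fibre-to-comma comparison must be a strict map of $I$-diagrams. The map $\iota$ is not one: for $i<i'$ the two composites send $y$ to $(i',y)$ and to $(i,y)$. The map $\rho$ is one, which is why the paper uses it.

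Your main route, Thomason's diagonal map plus Eilenberg--Zilber, gives a single direct comparison map and is the classical proof. Its last step is too loose as stated. The graded pieces of a filtration by the $I$-coordinate are relative complexes: chains reaching level $i$, modulo those that do not. They are not $\operatorname{Bar}(\mathrm{pr}\downarrow i,E)$, so one finality statement per $i$ does not make each piece a quasi-isomorphism. The route can be completed by filtering through down-closed $D\subseteq I$, working on both sides. For $i$ maximal in $D$, every chain over $D$ lies over $D\setminus\{i\}$ or over $\cl_I(i)$, with overlap over $\cl_I(i)\setminus\{i\}$. Mayer--Vietoris and induction on $\lvert D\rvert$ then reduce everything to $D=\cl_I(i)$. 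There the terminal object $i$ collapses the diagonal side onto $\mathfrak B(i)$, on which the diagonal map is $\iota_{i\#}$, a quasi-isomorphism by finality.
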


\begin{proof}
Write $D:=D_{a,b}(\sigma)\colon J_a(\sigma)\to\mathbf{Cat}$. A morphism $(\alpha,h)\colon(s,t,r,u)\to(s',t',r',u')$ of $\Ka_{a,b}(\sigma)$ consists of a relation $\alpha\colon(s,t)\le(s',t')$ in $J_a(\sigma)$ together with $D(\alpha)(r,u)\le(r',u')$ in $J_b(t')$.
Because each $J_b(t)$ is a full subposet of $\Delta_b^{\Pos}$ (Definition \ref{def:window-index}) and the transition maps $D(\alpha)$ are the inclusions $J_b(t)\hookrightarrow J_b(t')$ for $t\le t'$ (if $r\le t\le t'$ then $(r,u)\in J_b(t')$), the assignment is functorial; by Definition \ref{def:grothendieck} the Grothendieck construction is the displayed poset with the componentwise order. Applying Proposition \ref{prop:bar} twice gives
\[
   (\LC_a\LC_bF)(\sigma)\simeq
   \hocolim_{(s,t)\in J_a(\sigma)}\hocolim_{(r,u)\in J_b(t)}F(u),
\]
the inner homotopy colimit being $\operatorname{Bar}(J_b(t),q_2^\ast F)$, functorial in $t$ through the inclusions $J_b(t)\subseteq J_b(t')$ and so a chain-level model of $\LC_bF$ over $J_a(\sigma)$, which the outer bar complex may use by Lemma \ref{lem:bar-invariance}. Lemma \ref{lem:groth-hocolim}, applied with $I=J_a(\sigma)$, ambient poset $L=\Delta_b^{\Pos}$, $X=D$ and $G=q_2^\ast F$, so that $E(s,t,r,u)=F(u)$, identifies the iterated homotopy colimit with the single one over $\Ka_{a,b}(\sigma)=\int_ID$; it is the $k$-linear coefficient counterpart of Thomason's theorem \cite{Thomason}, \cite[Cor.~3.3]{FM}. For the last assertion, only the Kan extension in $C_a=\Lan_{q_1^\Delta}\circ(q_2^\Delta)^\ast$ is derived (Definition \ref{def:LCa}), and a left Kan extension is right exact; so $H_0\LC_a=C_a$, and more generally $H_0(\LC_aG)\cong C_a(H_0G)$ for a complex $G$ of sheaves concentrated in degrees $\ge0$.
Taking $G=\LC_bF$, whose $H_0$ is $C_bF$, gives the $H_0$ statement.
\end{proof}

\begin{proposition}[Skyscraper stalks of the derived convolution]\label{prop:derived-sky}
Let $\tau,\sigma\in \Pos$ and $a\ge0$. Set
\[
A_{\sigma,\tau}=\{(s,t)\in J_a(\sigma):t\ge\tau\},
\qquad
B_{\sigma,\tau}=\{(s,t)\in J_a(\sigma):t>\tau\},
\]
both up-closed subposets of $J_a(\sigma)$. Then for every $i\ge0$
\[
H_i\bigl((\LC_a\sky_\tau)(\sigma)\bigr)\;\cong\;H_i\bigl(\Delta(A_{\sigma,\tau}),\Delta(B_{\sigma,\tau});k\bigr),
\]
the relative simplicial homology of the pair of order complexes.
\end{proposition}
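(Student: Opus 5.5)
Start from the stalkwise bar model of Proposition \ref{prop:bar} with $F=\sky_\tau$. The stalk $(\LC_a\sky_\tau)(\sigma)$ is then computed by $\operatorname{Bar}(J_a(\sigma),q_2^\ast\sky_\tau)$. In bar degree $n$ this complex is $\bigoplus_{o_0<\cdots<o_n}\sky_\tau(t_0)$, and a summand is nonzero exactly when $t_0=\tau$.

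The natural way to organise this is to write the coefficient diagram $G=q_2^\ast\sky_\tau$ on $J_a(\sigma)$ as a cokernel. Let $k_A$ and $k_B$ be the constant sheaves $k$ on $A=A_{\sigma,\tau}$ and $B=B_{\sigma,\tau}$, extended by zero to $J_a(\sigma)$. Both are functors on $J_a(\sigma)$ because $A$ and $B$ are up-closed: if $(s,t)\le(s',t')$ then $t\le t'$, so $t\ge\tau$ forces $t'\ge\tau$, and likewise for $>$. Since $B\subseteq A$ is up-closed, there is an injection $k_B\hookrightarrow k_A$ whose cokernel is $k$ on $A\setminus B=\{t=\tau\}$, with zero maps leaving that set. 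This cokernel is exactly $G$, so we get a short exact sequence of diagrams
\[
0\to k_B\to k_A\to G\to 0 .
\]

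Next apply the bar construction. It is exact in the diagram variable, since in each bar degree it is a direct sum of stalks. So the sequence above gives a short exact sequence of complexes
\[
0\to\operatorname{Bar}(J_a(\sigma),k_B)\to\operatorname{Bar}(J_a(\sigma),k_A)\to\operatorname{Bar}(J_a(\sigma),G)\to0 .
\]
For an up-closed $U$, the complex $\operatorname{Bar}(J_a(\sigma),k_U)$ has summands indexed by strict chains with $o_0\in U$, and then the whole chain lies in $U$. All face maps act by identities on the coefficients, including $d_0$, because $k_U$ has identity transitions inside $U$. Hence $\operatorname{Bar}(J_a(\sigma),k_U)$ is the normalized simplicial chain complex $C_\ast(\Delta(U);k)$; this is Lemma \ref{lem:support-restriction}\textup{(i)} in the case $M=k$, which we invoke directly. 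Under these identifications the first map of the sequence is the inclusion $C_\ast(\Delta(B))\subseteq C_\ast(\Delta(A))$, because chains in $B$ are chains in $A$ with the same summand. The quotient is therefore the relative chain complex, and the claimed isomorphism follows by taking homology, for every $i\ge0$.

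The main care point is the identification of the bar differential with the simplicial boundary. Specifically, $d_0$ applied to a chain starting in $A\setminus B$ and continuing into $B$ must give zero in $G$ but the identity in $k_A$, and the quotient must match exactly. This holds because the structure maps of $G$ out of $A\setminus B$ are zero, so the chain summands in $\operatorname{Bar}(G)$ are precisely the chains of $\Delta(A)$ not contained in $\Delta(B)$, with the induced quotient differential.
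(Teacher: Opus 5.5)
Your proof is correct and follows the paper's route. The paper identifies $q_2^\ast\sky_\tau$ with $k_{A_{\sigma,\tau}/B_{\sigma,\tau}}$, the cokernel of $k_B\hookrightarrow k_A$, and then cites Lemma \ref{lem:bar-pair}\textup{(i)}; that lemma is proved by exactly your argument, namely exactness of $\operatorname{Bar}$ in the diagram variable together with Lemma \ref{lem:support-restriction}\textup{(i)} for the two up-closed subposets, so you have simply inlined it.
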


\begin{proof}
Write $J:=J_a(\sigma)$ and $D$ for the diagram $(s,t)\mapsto\sky_\tau(t)$ on $J$; it is $k$ on $A_{\sigma,\tau}\setminus B_{\sigma,\tau}=\{(s,\tau)\in J\}$ and $0$ elsewhere, with identity structure maps between nonzero values, so $D=k_{A_{\sigma,\tau}/B_{\sigma,\tau}}$ in the notation of Lemma \ref{lem:bar-pair}. The stalk is $\operatorname{Bar}(J,D)$ by Proposition \ref{prop:bar}, and Lemma \ref{lem:bar-pair}\textup{(i)} identifies that with $C_\ast(\Delta(A_{\sigma,\tau});k)/C_\ast(\Delta(B_{\sigma,\tau});k)$, whose homology is the stated relative homology.
\end{proof}

\begin{corollary}[Degree zero: the component count]\label{cor:h0-count}
$\dim_k(C_a\sky_\tau)(\sigma)$ equals the number of connected components of $\Delta(A_{\sigma,\tau})$ containing no vertex of $\Delta(B_{\sigma,\tau})$. At a vertex $\sigma=v$ this recovers Corollary \ref{cor:sky-stalk}, and at a general cell it is the component count of Example \ref{ex:lateral}.
\end{corollary}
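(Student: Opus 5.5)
Take $i=0$ in Proposition \ref{prop:derived-sky}. Since $H_0\bigl((\LC_a\sky_\tau)(\sigma)\bigr)=(C_a\sky_\tau)(\sigma)$ by Proposition \ref{prop:bar}, the problem becomes computing $\dim_k H_0\bigl(\Delta(A_{\sigma,\tau}),\Delta(B_{\sigma,\tau});k\bigr)$.

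\emph{The main step.} For a simplicial pair $(X,Y)$ the relative $H_0$ is the cokernel of $H_0(Y)\to H_0(X)$. This follows from the tail $H_0(Y)\to H_0(X)\to H_0(X,Y)\to 0$ of the long exact sequence, or directly from the fact that relative $0$-chains modulo boundaries are $k[\text{vertices of }X\setminus Y]$ modulo the relations coming from edges. Now $H_0(X)$ has basis the components of $X$, and $H_0(Y)\to H_0(X)$ sends each component of $Y$ to the component of $X$ containing it. The image is therefore spanned by the components of $X$ that meet $Y$, and the cokernel has dimension equal to the number of components of $X=\Delta(A_{\sigma,\tau})$ containing no vertex of $Y=\Delta(B_{\sigma,\tau})$. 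Two more points are needed: when $Y$ is empty the relative group is $H_0(X)$, which agrees with the count; and components of an order complex are components of the comparability graph of the poset, so the count can be read on $A_{\sigma,\tau}$ directly.

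\emph{At a vertex $\sigma=v$.} Let $v$ be minimal. As in Corollary \ref{cor:sky-stalk}, $J_a(v)\cong\st_a(v)$, so $A_{v,\tau}\cong\{u\in\st_a(v):u\ge\tau\}$. If this set is nonempty it has least element $\tau$, hence is connected, and this forces $\tau\in\st_a(v)$. It contains a vertex of $B$ exactly when some $u'>\tau$ lies in $\st_a(v)$. The count is therefore $1$ precisely when $(v,\tau)\in\Delta_a^{\Pos}$ and $\tau$ is maximal in $\st_a(v)$, and $0$ otherwise, which is Corollary \ref{cor:sky-stalk}.

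\emph{At a general cell.} Here the task is to match the count with the description in Example \ref{ex:lateral}. Every $(s,t)\in A_{\sigma,\tau}$ is comparable to $(s,\tau)$ whenever the latter lies in the window, which holds because $\len(s,\tau)\le\len(s,t)\le a$. So each component of $A$ contains elements of the form $(s,\tau)$, and its components correspond to the components of $\{s\le\sigma:(s,\tau)\in\Delta_a^{\Pos}\}$ under the relation induced through $A$. A component avoids $B$ exactly when it admits no window-coface strictly above $\tau$. I expect the only real care to be needed here, in checking that connectivity in $A$ agrees with the connectivity intended in Example \ref{ex:lateral}; the homological step above is routine.
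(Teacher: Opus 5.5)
Your main step is exactly the paper's argument: Proposition \ref{prop:derived-sky} at $i=0$, then $H_0\LC_a=C_a$, then relative $H_0$ as the cokernel of $H_0(\Delta(B_{\sigma,\tau}))\to H_0(\Delta(A_{\sigma,\tau}))$. That part is correct. The paper gives no separate argument for the two ``recovers'' clauses. Your checks of them share a false step: you tacitly assume $s\le\tau$ for every $(s,t)\in A_{\sigma,\tau}$.

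Here is a counterexample. Take the closed edge $\{u,v<e\}$ with $a\ge1$, apex $v$ and $\tau=u$. The set $\{w\in\st_a(v):w\ge u\}=\{e\}$ is nonempty, yet $u\notin\st_a(v)$. So the set does not have least element $\tau$, and it does not force $\tau\in\st_a(v)$. Likewise $(v,e)\in A_{v,u}$ while $(v,u)\notin\ct$, so $\{(v,e)\}$ is a component of $A_{v,u}$ containing no object of the form $(s,\tau)$. Your conclusions survive only because such components lie inside $B$, and the argument has to say so.

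A clean repair is as follows. An object of $A_{\sigma,\tau}$ with $t\ne\tau$ has $t>\tau$, so $A_{\sigma,\tau}\setminus B_{\sigma,\tau}=\{(s,\tau)\in J_a(\sigma)\}\cong\{s\le\sigma:(s,\tau)\in\Delta_a^{\Pos}\}$. This set is down-closed in $A_{\sigma,\tau}$ because $B_{\sigma,\tau}$ is up-closed. Hence a component of $\Delta(A_{\sigma,\tau})$ misses $\Delta(B_{\sigma,\tau})$ if and only if it is a component of $A_{\sigma,\tau}\setminus B_{\sigma,\tau}$ no object of which lies below an object of $B_{\sigma,\tau}$. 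This is the count of Example \ref{ex:lateral}, with ``window-coface strictly above $\tau$'' read as such an object of $B_{\sigma,\tau}$.

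At a minimal $v$, the difference is either empty or $\{(v,\tau)\}$, the latter exactly when $(v,\tau)\in\Delta_a^{\Pos}$. In that case $(v,\tau)$ is least in $A_{v,\tau}$, so there is a single component. It counts if and only if $B_{v,\tau}=\varnothing$, i.e.\ $\tau$ is maximal in $\st_a(v)$, which is Corollary \ref{cor:sky-stalk}.
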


\begin{proof}
$H_0$ of a pair is the cokernel of $H_0(\Delta(B_{\sigma,\tau}))\to H_0(\Delta(A_{\sigma,\tau}))$.
\end{proof}

\begin{example}[Two zeros of $C_1\sky_v$ told apart by $\LC_1$]\label{ex:overglue}
Both are computed by Proposition \ref{prop:derived-sky} at the stalk $\sigma=v$. In both the underived stalk vanishes because the colimit defining $C_1$ identifies and then cancels the surviving generators; whether that cancellation is free is visible only after deriving.
\begin{enumerate}
\item[\textup{(i)}] \emph{Branch.} On $\Lambda=\{v<e_1,\ v<e_2\}$ the window-comma poset at $v$, with the subposet $B_{v,v}$ boxed, is
\[
\begin{tikzcd}[row sep=1.8em,column sep=1.2em]
\boxed{(v,e_1)} & & \boxed{(v,e_2)}\\
& (v,v)\arrow[ul]\arrow[ur] &
\end{tikzcd}
\]
Thus $A_{v,v}=J_1(v)$ has least element $(v,v)$, so $\Delta(A_{v,v})$ is contractible, while $\Delta(B_{v,v})=\{(v,e_1),(v,e_2)\}$ is two points; the pair sequence gives
\[
H_0\bigl((\LC_1\sky_v)(v)\bigr)=0,
\qquad
H_1\bigl((\LC_1\sky_v)(v)\bigr)\cong k .
\]
\item[\textup{(ii)}] \emph{Chain.} On $\{v<e\}$, at every cell the pair $(\Delta(A),\Delta(B))$ consists of a contractible complex and a nonempty contractible subcomplex, so $H_\ast(\Delta(A),\Delta(B))=0$ and $\LC_1\sky_v=0$: the vanishing is genuine and deriving does not recover it.
\end{enumerate}
Thus the passage $C_a\rightsquigarrow\LC_a$ cures the branching collapse (present already in dimension one) but not the composition failure detected by Proposition \ref{prop:diamond-nogo} at diamond intervals.
\end{example}

\begin{proposition}[Exactness threshold]\label{prop:inexact}
Let $\Pos$ be a finite poset and $a\ge1$. The following are equivalent:
\begin{enumerate}
\item[\textup{(i)}] $H_i\bigl(\Delta(A_{\sigma,\tau}),\Delta(B_{\sigma,\tau});k\bigr)=0$ for all $i\ge1$ and all $\sigma,\tau\in \Pos$;
\item[\textup{(ii)}] $H_i\LC_aF=0$ for all $i\ge1$ and all $F\in\Shv(\Pos;k)$.
\end{enumerate}
Either implies that $C_a$ is exact. For $v$ minimal in $\Pos$,
\[
H_i\LC_a\sky_v(v)\;\cong\;\widetilde H_{i-1}\bigl(\Delta(\st_a(v)\setminus\{v\});k\bigr),\qquad i\ge1 ,
\]
which for $a=1$ reduces to
\[
H_1\LC_1\sky_v(v)\;\cong\;k^{\,c(v)-1}\qquad(c(v)\ge1),
\]
$c(v)$ being the number of elements covering $v$; on a face poset the minimal elements are the vertices and $c(v)=\deg v$, the number of $1$-cells containing $v$. Inexactness of $C_1$ itself is visible without deriving: $C_1$ fails to be exact at any minimal element with two distinct covers; on a face poset, at any vertex incident to at least two $1$-cells, already in dimension one and with no diamond interval involved. Conversely $C_a$ is exact on chains.
\end{proposition}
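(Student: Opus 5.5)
The plan is to go stalkwise throughout, using Proposition \ref{prop:bar} and the skyscraper formula of Proposition \ref{prop:derived-sky}, and to reduce arbitrary sheaves to skyscrapers by dévissage.

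\emph{Step 1: (ii)$\Rightarrow$(i).} Apply (ii) to $F=\sky_\tau$. Proposition \ref{prop:derived-sky} identifies $H_i((\LC_a\sky_\tau)(\sigma))$ with the relative homology of the pair $(\Delta(A_{\sigma,\tau}),\Delta(B_{\sigma,\tau}))$, so (i) follows at once.

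\emph{Step 2: (i)$\Rightarrow$(ii).} Every sheaf on the finite poset $\Pos$ has a finite filtration by up-sets whose subquotients are direct sums of skyscrapers: filter by rank, removing minimal elements of the support. Since $\LC_a$ is triangulated, each short exact sequence gives a long exact sequence in $H_\ast$. Vanishing of $H_{\ge1}$ is therefore closed under extensions, because the sequence $H_i(F')\to H_i(F)\to H_i(F'')$ is exact in the middle for $i\ge1$. It is also closed under direct sums with arbitrary multiplicity, as the bar model commutes with sums.

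\emph{Step 3: exactness and the formula at a minimal element.} Exactness of $C_a=H_0\LC_a$ follows from the long exact sequence once $H_1\LC_a$ vanishes on all sheaves. For $v$ minimal, $J_a(v)\cong\st_a(v)$ (as in Corollary \ref{cor:sky-stalk}), so $A_{v,v}=J_a(v)$ has least element $(v,v)$ and is contractible, while $B_{v,v}\cong\st_a(v)\setminus\{v\}$. The long exact sequence of the pair then gives
\[
H_i(A,B)\cong\widetilde H_{i-1}(\Delta(B);k)\qquad (i\ge1).
\]
For $a=1$ the set $\st_1(v)\setminus\{v\}$ is the discrete set of covers, so its reduced $H_0$ is $k^{c(v)-1}$; this is Example \ref{ex:overglue}(i) in general form. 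On a face poset, the covers of a vertex are its incident $1$-cells.

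\emph{Step 4: inexactness of $C_1$ without deriving.} Take a minimal $v$ with covers $e_1\ne e_2$ and the short exact sequence
\[
0\to\sky_{e_1}\oplus\sky_{e_2}\to k_{\st(v)}|_{\{v,e_1,e_2\}}\to\sky_v\to0,
\]
where the middle term is the constant sheaf on $\{v,e_1,e_2\}$, an up-set-closed support after extension by zero on the rest of $\st(v)$ if needed. Then compute stalks at $v$ with Corollary \ref{cor:sky-stalk}: $C_1\sky_{e_j}(v)=k$ since $e_j$ is maximal in $\st_1(v)$, giving $k^2$ on the left. On the middle term, $C_1$ at $v$ is the colimit over $\st_1(v)$ of a diagram that is $k$ on $v,e_1,e_2$ and $0$ on other covers; this is at most $k$. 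So the left map $k^2\to(\le k)$ cannot be injective. The delicate point is choosing the middle term so that its support is an up-set of $\Pos$ (it must be a subsheaf quotient with $\sky_v$ as quotient); I would instead use $F$ supported on $\{v,e_1,e_2\}$, which is legitimate as a functor because restrictions to unlisted elements go to $0$. Then $F\to\sky_v$ has kernel $\sky_{e_1}\oplus\sky_{e_2}$, and injectivity fails.

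\emph{Step 5: chains.} On a chain, each $J_a(\sigma)$ has a terminal object $(x_{i},x_{\min(i+a,n)})$. Hence the colimit is evaluation there, $C_a$ is a shift (Proposition \ref{prop:chain}), and a shift is exact.

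The main obstacle is Step 2: making sure the dévissage is legitimate for unbounded-dimensional stalks (via direct sums) and that the filtration steps are genuine short exact sequences of sheaves.
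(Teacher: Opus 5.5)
Your proof is correct and follows the paper's own argument. You use Proposition~\ref{prop:derived-sky} for (ii)$\Rightarrow$(i), and dévissage along an up-set filtration with skyscraper subquotients, together with direct sums and the long exact sequences of the triangulated $\LC_a$, for (i)$\Rightarrow$(ii); the pair sequence at a minimal element and the terminal object on chains also match. The deviations are cosmetic: you filter by antichains (rank levels) rather than one element at a time along a linear extension, and for the underived witness you use $0\to\sky_{e_1}\oplus\sky_{e_2}\to k_{\{v,e_1,e_2\}}\to\sky_v\to0$ in place of the paper's $0\to k_{\Pos_{>v}}\to k_{\st(v)}\to\sky_v\to0$. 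One correction to your justification: your middle term is a sheaf because $\{v,e_1,e_2\}$ is convex (nothing lies strictly between $v$ and a cover), not merely because restrictions leaving the set are zero.
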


\begin{proof}
That (ii) implies (i) is Proposition \ref{prop:derived-sky} at $F=\sky_\tau$. For the converse, vanishing on skyscrapers suffices. Indeed, enumerate $\Pos$ as $x_1,\dots,x_n$ so that $x_m\le x_j$ in $\Pos$ implies $m\le j$; such an enumeration exists, obtained by repeatedly deleting an element minimal among those not yet listed. Set $U_p=\{x_{p+1},\dots,x_n\}$, up-closed in $\Pos$ because $x_m\in U_p$ and $x_m\le x_j$ give $j\ge m>p$. Write $F_U$ for the subsheaf with $F_U(x)=F(x)$ for $x\in U$ and $0$ otherwise; it is a subfunctor because $U$ is up-closed, $x\in U$ and $x\le y$ forcing $y\in U$. This exhibits every $F$ as a finite filtration
\[
0=F_{U_n}\subseteq F_{U_{n-1}}\subseteq\cdots\subseteq F_{U_0}=F,
\qquad
F_{U_{p-1}}/F_{U_p}\cong\sky_{x_p}\otimes_kF(x_p).
\]
Because $\operatorname{Bar}(J_a(\sigma),q_2^\ast F)$ is degreewise a direct sum of stalk values of $F$, and homology commutes with direct sums of complexes (coproducts in $\mathrm{Vec}_k$ being exact and an exact functor commuting with homology), Proposition \ref{prop:bar} makes each $H_i\LC_a$ preserve arbitrary direct sums of sheaves; with Proposition \ref{prop:derived-sky} and hypothesis (i) this gives $H_i\LC_a\bigl(\sky_{x_p}\otimes_kF(x_p)\bigr)=0$ for $i\ge1$. Since $\LC_a$ is triangulated (Remark \ref{rem:homotopical}), each short exact sequence $0\to F_{U_p}\to F_{U_{p-1}}\to\sky_{x_p}\otimes_kF(x_p)\to0$ gives a long exact sequence in the homology of $\LC_a$, whose outer terms vanish in degrees $\ge1$, the first by descending induction from $F_{U_n}=0$; so the middle one does too, and (ii) follows. Degree one alone would not do: the dimension-shifting identity that would propagate it is available for $\Lan_{q_1^\Delta}$ and not across the restriction (Remark \ref{rem:dimension-shifting}).

Given (ii), let $0\to F'\to F\to F''\to0$ be exact; $C_a$ is right exact (Corollary \ref{cor:exactness}), and the long exact sequence of the associated distinguished triangle reads
\[
   0=H_1\LC_aF''\to H_0\LC_aF'\to H_0\LC_aF\to H_0\LC_aF''\to0 ,
\]
and $H_0\LC_a=C_a$ (Proposition \ref{prop:bar}), so $C_a$ is exact.

For $\sigma=\tau=v$ with $v$ minimal, the condition $s\le\sigma$ forces $s=v$, so $A_{v,v}=J_a(v)\cong\st_a(v)$ has least element $(v,v)$, and its order complex is contractible, while $B_{v,v}\cong\st_a(v)\setminus\{v\}$. The long exact sequence of the pair gives the formula at a minimal element. For $a=1$, $\st_1(v)\setminus\{v\}$ is the set of elements covering $v$, an antichain (no strict comparable pair $a<b$), so its reduced $H_0$ is $k^{\,c(v)-1}$; on a face poset the covers of a vertex are the $1$-cells containing it.

Inexactness of $C_1$ at such a $v$ needs no derived functor. The subsheaf $k_{\Pos_{>v}}\subseteq k_{\st(v)}$ has cokernel $\sky_v$, and over $J_1(v)\cong\st_1(v)$ the colimits of the three are $k^{\,c(v)}$, $k$ and $0$: the first because the value at $v$ is zero and the covers form an antichain, the second because $\st_1(v)$ has least element $v$. So $C_1$ carries that monomorphism to a map $k^{\,c(v)}\to k$, not injective once $c(v)\ge2$. The chain statement is Proposition \ref{prop:chain}(i), where every $J_a(\sigma)$ has a terminal object.
\end{proof}

\begin{proposition}[Chains: the strict flow, on both sides]\label{prop:chain}
Let $\Pos$ be a chain $x_0<x_1<\cdots<x_n$, which for $n\ge1$ is not a face poset (Appendix \ref{ssec:face-posets}); for $\sigma=x_i$ write $\sigma\up a=x_{\min(i+a,n)}$. Then for every $F$:
\begin{enumerate}
\item[\textup{(i)}] $J_a(\sigma)$ has terminal object $(\sigma,\sigma\up a)$, so $\LC_aF$ is the sheaf $\sigma\mapsto F(\sigma\up a)$ concentrated in degree $0$: the upward height shift.
\item[\textup{(ii)}] $(\sigma\up a)\up b=\sigma\up(a+b)$, so $\LC_a\LC_b=\LC_{a+b}$ and $\{\LC_a\}$ is a strict flow. Dually $\RC^a$ is the downward shift and $\{\RC^a\}$ a strict flow.
\end{enumerate}
\end{proposition}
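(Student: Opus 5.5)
The plan is to prove (i) by exhibiting the terminal object and then computing the homotopy colimit, and to deduce (ii) from (i) by composing the shifts.

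\emph{Terminal object.} Fix $\sigma=x_i$. On a chain the rank is the index, so $\len(x_j,x_m)=m-j$. The objects of $J_a(\sigma)$ are the pairs $(x_j,x_m)$ with $j\le i$ and $j\le m\le j+a$, using the convention $m\le n$.

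Given such a pair, we have $x_j\le\sigma$. We also have $m\le\min(j+a,n)\le\min(i+a,n)$, so $x_m\le\sigma\up a$. Moreover $(\sigma,\sigma\up a)$ itself lies in $J_a(\sigma)$, since $\len(\sigma,\sigma\up a)\le a$. Hence $(x_j,x_m)\le(\sigma,\sigma\up a)$ in the product order, and $(\sigma,\sigma\up a)$ is terminal.

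\emph{Computing $\LC_aF$.} Lemma~\ref{lem:support-presentation} then gives $(C_aF)(\sigma)=F(\sigma\up a)$. The corestriction along $x_i\le x_{i'}$ is induced by the inclusion $J_a(x_i)\subseteq J_a(x_{i'})$, which sends the terminal object $(x_i,x_i\up a)$ to itself. Read in $J_a(x_{i'})$, this is the structure map $F(x_i\up a)\to F(x_{i'}\up a)$. So $C_aF$ is the sheaf $\sigma\mapsto F(\sigma\up a)$, that is, the pullback of $F$ along the monotone map $\sigma\mapsto\sigma\up a$.

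For the derived statement, Proposition~\ref{prop:bar} identifies $(\LC_aF)(\sigma)$ with $\hocolim_{J_a(\sigma)}q_2^\ast F$. A homotopy colimit over a poset with a terminal object is evaluation there. To verify this in the bar model, the inclusion $\{(\sigma,\sigma\up a)\}\hookrightarrow J_a(\sigma)$ is final: each comma fibre is an up-set with a least element, hence has a contractible order complex (Lemma~\ref{lem:cone-point}). Alternatively, one can build an extra-degeneracy contraction of $\operatorname{Bar}$ that appends the terminal object to each chain. Either way the higher homology vanishes and $H_0$ is the value at the terminal object, naturally in $\sigma$. Hence $\LC_aF\simeq(\cdot\up a)^\ast F$ is concentrated in degree $0$, and this holds for complexes too, degreewise. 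Exactness of $C_a$ on chains (Proposition~\ref{prop:inexact}) is an immediate consequence, since pullback along a monotone map is exact.

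\emph{Flow.} For (ii), the identity $\min(\min(i+a,n)+b,n)=\min(i+a+b,n)$ gives $(\sigma\up a)\up b=\sigma\up(a+b)$. Pullbacks compose strictly, so $C_aC_b=C_{a+b}$. Since each $\LC_a$ is the exact pullback functor, it coincides with $C_a$ applied degreewise and needs no resolution; therefore $\LC_a\LC_b=\LC_{a+b}$. The transition maps $\eta_{a,b}$ are the structure maps $F(\sigma\up a)\to F(\sigma\up b)$, and the coherence conditions are equalities of composites of restriction maps of $F$.

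\emph{Dual statement.} By Example~\ref{ex:chain-adjoint}, the co-window index poset at $x_i$ has least element $(x_{\max(i-a,0)},x_i)$. So $\RC^a$ is the downward shift, by the dual argument: a homotopy limit over a poset with an initial object is evaluation there. Strictness follows from $\max(\max(i-a,0)-b,0)=\max(i-a-b,0)$.

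The only step needing care is that homotopy colimits over a poset with a terminal object are computed by evaluation in the bar model of Definition~\ref{def:bar-hocolim}. I would settle it by the extra-degeneracy contraction rather than by a general finality theorem.
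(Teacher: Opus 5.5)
Your proposal is correct and follows the paper's own proof. It uses the terminal object $(\sigma,\sigma\up a)$ of $J_a(\sigma)$, the bar model of Proposition~\ref{prop:bar} collapsed onto that object by the extra-degeneracy contraction of Lemma~\ref{lem:terminal-bar}, the identity $\min(\min(i+a,n)+b,n)=\min(i+a+b,n)$, and its dual for the co-window via Example~\ref{ex:chain-adjoint}; your added observation that $\Lan_{q_1^\Delta}$ is exact here, so that $\LC_a$ is pullback along $\sigma\mapsto\sigma\up a$ applied degreewise, makes explicit why the flow is strict on the nose, which the paper leaves implicit.
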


\begin{proof}
(i) On a chain $s\le\sigma$ and $\len(s,t)\le a$ force $t\le\sigma\up a$, and $(\sigma,\sigma\up a)\in J_a(\sigma)$ dominates every object componentwise and is terminal; a homotopy colimit over an index poset with terminal object is evaluation there, in degree $0$ (Proposition \ref{prop:bar} and Lemma \ref{lem:terminal-bar}), giving $F(\sigma\up a)$, with transitions those of $F$ along $\sigma\up a\le\sigma'\up a$. (ii) The identity $\min(\min(i+a,n)+b,n)=\min(i+a+b,n)$ gives $(\sigma\up a)\up b=\sigma\up(a+b)$; by (i), $\LC_a\LC_bF$ is the sheaf $\sigma\mapsto F(\sigma\up(a+b))=\LC_{a+b}F$. The dual identity $\max(\max(i-a,0)-b,0)=\max(i-a-b,0)$ gives the statement for $C^a$.
\end{proof}

\subsection{The meet functor, finality, and the diamond obstruction}

The single window $\LC_{a+b}F$ is modelled by $\hocolim_{J_{a+b}(\sigma)}q_2^\ast F$, and the iterate by Proposition \ref{prop:groth}. The meet gives an assignment between the two index posets, and where that assignment is total the resulting functor produces a comparison map.

\begin{definition}[The meet assignment and the meet functor]\label{def:phi}
The \emph{meet assignment} at $(\sigma;a,b)$ is the partially defined assignment
\[
   \Phi=\Phi_{\sigma,a,b}\colon(s,t,r,u)\longmapsto(s\meet r,\,u)
\]
on $\Ka_{a,b}(\sigma)$, defined at those quadruples for which the meet $s\meet r$ exists in $\Pos$. The poset is added to the subscript $\Phi_{\Pos,\sigma,a,b}$ where it varies. It is \emph{total} if for every $(s,t,r,u)\in\Ka_{a,b}(\sigma)$ the meet $s\meet r$ exists in $\Pos$ and $\len(s\meet r,u)\le a+b$. The length bound is an independent hypothesis, following neither from additivity of $\len$ nor from the existence of all meets; Example \ref{ex:essential-nonsingleton} is a lattice on which it fails.
A total meet assignment gives rise to a monotone, $u$-preserving functor
\[
   \Phi\colon\Ka_{a,b}(\sigma)\longrightarrow J_{a+b}(\sigma),
\]
called the \emph{meet functor} at $(\sigma;a,b)$.
\end{definition}

\begin{definition}[Semimodularity]\label{def:semimodular}
Let $\Pos$ be a finite graded lattice, so that any two elements $x,z$ have a greatest lower bound $x\meet z$ and a least upper bound $x\vee z$. Then $\Pos$ is
\begin{enumerate}
\item[\textup{(a)}] \emph{lower semimodular} if $\rk(x)+\rk(z)\le\rk(x\meet z)+\rk(x\vee z)$ for all $x,z\in\Pos$;
\item[\textup{(b)}] \emph{upper semimodular} if $\rk(x)+\rk(z)\ge\rk(x\meet z)+\rk(x\vee z)$ for all $x,z\in\Pos$;
\item[\textup{(c)}] \emph{modular} if both hold, that is if equality holds throughout.
\end{enumerate}
The word is always qualified below: neither \textup{(a)} nor \textup{(b)} implies the other, and only \textup{(a)} is used in Proposition \ref{prop:total-families}.
\end{definition}

\begin{proposition}[Two families on which the meet assignment is total]\label{prop:total-families}
Let $a,b\ge0$.
\begin{enumerate}
\item[\textup{(i)}] Let $\Sigma$ be a finite simplicial complex and let $\Pos$ be its set of faces, the empty face included, ordered by inclusion and graded by cardinality $\rk(x)=\lvert x\rvert$, the augmented face poset $\widehat\Pos(\Sigma)$ of Definition \ref{def:order-complex}. Then $\Phi$ is total at every $(\sigma;a,b)$.
\item[\textup{(ii)}] Let $\Pos$ be a lower semimodular lattice (Definition \ref{def:semimodular}). Then $\Phi$ is total at every $(\sigma;a,b)$.
\end{enumerate}
\end{proposition}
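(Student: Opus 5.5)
The plan is to verify the two conditions in Definition \ref{def:phi} directly: existence of the meet, and the length bound $\len(s\meet r,u)\le a+b$. Fix $(s,t,r,u)\in\Ka_{a,b}(\sigma)$. By Proposition \ref{prop:groth} this means $s\le\sigma$, $s\le t$, $\len(s,t)\le a$, $r\le t$, $r\le u$ and $\len(r,u)\le b$. The key observation, used in both parts, is that $s$ and $r$ have the common upper bound $t$. This lets us bound the rank of their join (or union) by $\rk(t)$, and submodularity of rank then converts that into the length bound. Once $m:=s\meet r$ exists, $m\le s\le\sigma$ and $m\le r\le u$ hold automatically. Hence $(m,u)\in J_{a+b}(\sigma)$ as soon as the length bound holds. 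Monotonicity and $u$-preservation of $\Phi$ are immediate, since the meet is monotone in each argument.

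\emph{Part (i).} In $\widehat\Pos(\Sigma)$ the meet of two faces is their intersection $s\cap r$. This is a face of $\Sigma$, possibly the empty face, which is why the augmentation matters. Since $s,r\subseteq t$, the union $s\cup r\subseteq t$ is also a face, and inclusion--exclusion gives
\[
\lvert s\cap r\rvert=\lvert s\rvert+\lvert r\rvert-\lvert s\cup r\rvert\ \ge\ \lvert s\rvert+\lvert r\rvert-\lvert t\rvert .
\]
Therefore
\[
\len(s\cap r,u)=\lvert u\rvert-\lvert s\cap r\rvert\le(\lvert u\rvert-\lvert r\rvert)+(\lvert t\rvert-\lvert s\rvert)\le b+a .
\]
We do not need $\widehat\Pos(\Sigma)$ to be a lattice: joins need not exist in general, but $s\cup r$ exists here because it lies below $t$.

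\emph{Part (ii).} Meets and joins exist in a lattice. Lower semimodularity gives
\[
\rk(s)+\rk(r)\le\rk(s\meet r)+\rk(s\vee r),
\]
and $s\vee r\le t$ gives $\rk(s\vee r)\le\rk(t)$, by monotonicity of a rank function on a graded poset. Hence
\[
\rk(u)-\rk(s\meet r)\le\bigl(\rk(u)-\rk(r)\bigr)+\bigl(\rk(s\vee r)-\rk(s)\bigr)\le b+\bigl(\rk(t)-\rk(s)\bigr)\le a+b .
\]

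There is no serious obstacle. The one point to get right is the direction of the semimodular inequality: we need a lower bound on $\rk(s\meet r)$, and that is exactly what (a) of Definition \ref{def:semimodular} supplies once the join is capped by $t$. Upper semimodularity would give the wrong direction, which is consistent with the remark after that definition that only (a) is used.
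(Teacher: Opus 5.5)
Your proof is correct and is essentially the paper's. In part (ii) you use the same chain of inequalities, lower semimodularity combined with $s\vee r\le t$, and in part (i) your inclusion--exclusion bound $\lvert s\cap r\rvert\ge\lvert s\rvert+\lvert r\rvert-\lvert t\rvert$ is the paper's estimate $\lvert r\setminus s\rvert\le\lvert t\setminus s\rvert$ rearranged.
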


\begin{proof}
Let $\kappa=(s,t,r,u)\in\Ka_{a,b}(\sigma)$. In both cases the meet $s\meet r$ exists and satisfies $s\meet r\le s\le\sigma$, so only the bound $\len(s\meet r,u)\le a+b$ is at issue.

\textup{(i)} Meets are intersections of faces, hence faces. Since $s\subseteq t$ and $r\subseteq t$ one has $r\setminus s\subseteq t\setminus s$, so
\[
   \len(s\cap r,u)
   =\lvert u\rvert-\lvert s\cap r\rvert
   =\bigl(\lvert u\rvert-\lvert r\rvert\bigr)+\lvert r\setminus s\rvert
   \le\len(r,u)+\bigl(\lvert t\rvert-\lvert s\rvert\bigr)
   \le b+a .
\]

\textup{(ii)} From $s\le t$ and $r\le t$ we get $s\vee r\le t$, so lower semimodularity gives
\[
   \rk(r)-\rk(s\meet r)\;\le\;\rk(s\vee r)-\rk(s)\;\le\;\rk(t)-\rk(s)=\len(s,t)\le a ,
\]
and adding $\rk(u)-\rk(r)=\len(r,u)\le b$ gives $\len(s\meet r,u)\le a+b$.
\end{proof}

The argument for \textup{(i)} is valid in any finite simplicial poset (Definition \ref{def:simplicial-poset}), not only in an augmented face poset. If $s,r\le t$ then both lie in $\cl(t)$, which is Boolean; transporting along an isomorphism of $\cl(t)$ with a lattice of subsets makes $s\meet r$ an intersection and $\len$ a difference of cardinalities, so $\len(s\meet r,r)\le\len(s,t)\le a$. Together with $s\meet r\le r\le u$ and $\len(r,u)\le b$, additivity gives $\len(s\meet r,u)\le a+b$. An augmented face poset is a simplicial poset that is a meet-semilattice (Appendix \ref{ssec:face-posets}). So totality imposes less than the existence of all meets.

Upper semimodularity does not suffice in \textup{(ii)}. Let $\Pi_4$ be the lattice of set partitions of $\{1,2,3,4\}$ ordered by refinement, graded by $4$ minus the number of blocks, with least element $\hat0=1\vert2\vert3\vert4$ and greatest element $\hat1=1234$; its meet is the common refinement, and it is upper semimodular. Yet $\Phi$ is not total on it: take $\sigma=s=12\vert34$, $t=u=\hat1$ and $r=13\vert24$, so that $(s,t,r,u)\in\Ka_{1,1}(\sigma)$ while $s\meet r=\hat0$ and $\len(\hat0,\hat1)=3>2$.

Neither family exhausts totality, since it holds on every rooted forest, principal down-sets there being chains (Proposition \ref{prop:rooted}), and on the rank-three lattice of Example \ref{ex:essential-nonsingleton}, where $\len\le a+b$ throughout; being a lattice is not by itself enough, as that same example shows.
\begin{definition}[Homotopy and homological finality]\label{def:homotopy-final}
For a functor $U\colon A\to B$, write $b\downarrow U$ for the comma category whose objects are pairs $(a,b\to U(a))$, the \emph{comma fibre} of $U$ over $b$. The functor $U$ is \emph{homotopy final} if $N(b\downarrow U)$ is weakly contractible for every $b\in B$ \cite[Def.~8.5.1]{Riehl}. It is \emph{final} if each $b\downarrow U$ is nonempty and connected, which is what makes $\colim_AU^\ast G\to\colim_BG$ an isomorphism for every diagram $G$ on $B$ \cite[Thm.~IX.3.1]{ML}; homotopy finality implies finality \cite[Lem.~8.5.5]{Riehl}. Thus a total meet functor $\Phi$ is homotopy final if and only if, for every $j=(x,u)\in J_{a+b}(\sigma)$, the order complex of
\[
   j\downarrow\Phi
   =
   \{\kappa\in\Ka_{a,b}(\sigma):j\le \Phi(\kappa)\}
\]
is weakly contractible.

The functor $U$ is \emph{$k$-homologically final} if $N(b\downarrow U)$ is nonempty and
\[
   \widetilde H_q\bigl(N(b\downarrow U);k\bigr)=0
   \qquad(q\ge0)
\]
for every $b\in B$. Homotopy finality implies $k$-homological finality, but not conversely in general. The homological weakening is the one under which Quillen--McCord is classically available for finite posets \cite[Cor.~5.5]{Barmak}; the form needed here is Corollary \ref{cor:final-criterion}.

Because $\meet$ is a greatest lower bound, totality gives the comma fibre of a meet functor the closed form
\begin{equation}\label{eq:comma-fibre}
   j\downarrow\Phi
   \;=\;
   \{(s,t,r,v)\in\Ka_{a,b}(\sigma)\;:\;y\le s,\ y\le r,\ u\le v\}
   \qquad\bigl(j=(y,u)\bigr),
\end{equation}
since $y\le s\meet r$ if and only if $y\le s$ and $y\le r$.
\end{definition}

The converse does hold under a condition on the comma fibres:
if, fibrewise in $j$, the nerves $X=N(j\downarrow\Phi)$ are simply connected and $\widetilde H_\ast(X;k)=0 \Longrightarrow \widetilde H_\ast(X;\mathbb Z)=0$
(e.g. if their reduced integral homology is torsion-free, or if one imposes $\mathbb Z$-homological finality instead), then a first nonzero homotopy group would produce a first nonzero integral homology group, so the classical Hurewicz argument \cite[Thm.~4.32]{Hatcher} tightens $k$-homological finality back to homotopy finality.

\begin{proposition}[The meet comparison]\label{prop:final}
If $\Phi$ is total at $(\sigma;a,b)$ it induces a natural comparison map, the \emph{meet comparison}
$\gamma_\sigma\colon(\LC_a\LC_bF)(\sigma)\to(\LC_{a+b}F)(\sigma)$. If moreover $\Phi$ is $k$-homologically final, then $\gamma_\sigma$ is a quasi-isomorphism for every $F$.
\end{proposition}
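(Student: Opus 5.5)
The comparison will be built at the chain level from the two bar models and then shown to be a quasi-isomorphism by a finality argument.

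By Proposition \ref{prop:groth}, $(\LC_a\LC_bF)(\sigma)$ is modelled by $\operatorname{Bar}(\Ka_{a,b}(\sigma),p^\ast F)$, with coefficient $F(u)$ at $(s,t,r,u)$. By Proposition \ref{prop:bar}, $(\LC_{a+b}F)(\sigma)$ is modelled by $\operatorname{Bar}(J_{a+b}(\sigma),q_2^\ast F)$.

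First, totality makes $\Phi$ a monotone map of posets. If $(s,t,r,u)\le(s',t',r',u')$, then $s\meet r\le s'\meet r'$ and $u\le u'$. The image lies in $J_{a+b}(\sigma)$ because $s\meet r\le s\le\sigma$ and the length bound holds by totality. Since $\Phi$ preserves the $u$-coordinate, $p^\ast F=\Phi^\ast q_2^\ast F$ on the nose. So the induced bar map of Definition \ref{def:bar-hocolim}, taken with identity coefficients,
\[
   \gamma_\sigma:=\Phi_\#\colon\operatorname{Bar}(\Ka_{a,b}(\sigma),p^\ast F)\longrightarrow\operatorname{Bar}(J_{a+b}(\sigma),q_2^\ast F),
\]
is a chain map. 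It is natural in $F$ because both the bar construction and $\Phi_\#$ are functorial in the coefficient diagram. For a bounded complex $F$ the same map applies degreewise and is compatible with the totalization signs.

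Naturality in $\sigma$ comes from the inclusions $\Ka_{a,b}(\sigma)\subseteq\Ka_{a,b}(\sigma')$ and $J_{a+b}(\sigma)\subseteq J_{a+b}(\sigma')$. These commute with $\Phi$ strictly, since $\Phi$ is given by the same formula on both. The equivalence in Proposition \ref{prop:groth} is an identification of the iterate with the Grothendieck bar model, so composing with it defines $\gamma_\sigma$ on $(\LC_a\LC_bF)(\sigma)$.

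For the quasi-isomorphism we need a homological cofinality statement for bar complexes: if $U\colon A\to B$ is a map of finite posets with every comma fibre $b\downarrow U$ $k$-acyclic and nonempty, then $U_\#\colon\operatorname{Bar}(A,U^\ast G)\to\operatorname{Bar}(B,G)$ is a quasi-isomorphism for every $G$. This is what Corollary \ref{cor:final-criterion} is cited for, and we apply it with $U=\Phi$ and $G=q_2^\ast F$. If it has to be proved directly, the plan is as follows:

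\begin{enumerate}
\item Reduce to projective generators of $\Shv(B;k)$. Use the filtration argument of Proposition \ref{prop:inexact}, carried out over $B$, together with the five lemma and the fact that both sides are triangulated in $G$ and commute with direct sums. This reduces to $G=M_{\st(b)}$.
\item For $G=M_{\st(b)}$ the target bar complex is $M\otimes C_\ast(\Delta(\st(b)))$, which is acyclic in positive degrees with $H_0=M$. This uses Lemma \ref{lem:support-restriction} and Lemma \ref{lem:cone-point}.
\item The source is $U^\ast M_{\st(b)}=M_{U^{-1}\st(b)}$, and $U^{-1}\st(b)=b\downarrow U$ is up-closed in $A$. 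So the source bar complex is $M\otimes C_\ast(\Delta(b\downarrow U))$, whose homology is $M$ in degree $0$ and zero above by $k$-acyclicity.
\item The map between them is $M$ tensored with the map of chain complexes induced by $U$. On $H_0$ it sends the class of a point to the class of a point, hence is an isomorphism.
\end{enumerate}

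The main obstacle is step 1, making the reduction rigorous. One must check that both bar functors send short exact sequences of diagrams to short exact sequences of complexes, which holds because they are degreewise direct sums of stalks. One must also check that the filtration by skyscrapers can be replaced by one with projective-type quotients, or else run the same computation with skyscrapers $\sky_b$ using relative order complexes as in Proposition \ref{prop:derived-sky}. In that version the fibre condition enters through the pair $(b\downarrow U,\ b'\downarrow U\text{ for }b'>b)$. I would first try the projective version via a finite projective resolution: $B$ is finite, so $kB$ has finite global dimension, and both sides preserve quasi-isomorphisms. This needs only the acyclic-fibre computation above.
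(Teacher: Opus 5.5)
Your proposal is correct and matches the paper's proof. As there, $\gamma_\sigma$ is the bar map $\Phi_\#$, well defined because $q_2^J\circ\Phi=p$ gives $p^\ast F=\Phi^\ast(q_2^J)^\ast F$, transported through the models of Propositions \ref{prop:bar} and \ref{prop:groth}, and its quasi-isomorphy under $k$-homological finality is exactly Corollary \ref{cor:final-criterion}. Your optional direct proof of that corollary, via a projective resolution and the representables $M_{\st(b)}$, is a valid alternative to the paper's Lemma \ref{lem:updetect}, which instead inducts over up-sets using skyscraper sequences; as you noticed, the filtration of Proposition \ref{prop:inexact} by itself yields skyscraper quotients, not representable ones. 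Your aside on naturality in $\sigma$ also needs totality at the larger apex.
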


\begin{proof}
Let $J=J_{a+b}(\sigma)$ and $K=\Ka_{a,b}(\sigma)$.  Write
\[
   q_2^J\colon J\to\Pos,\qquad q_2^J(x,u)=u,
   \qquad
   p\colon K\to\Pos,\qquad p(s,t,r,u)=u .
\]
The single-window stalk is evaluated on the diagram
\[
   D=(q_2^J)^\ast F\colon J\to\mathrm{Vec}_k,
   \qquad D(x,u)=F(u),
\]
whereas the Grothendieck model for the iterate is evaluated on
\[
   E=p^\ast F\colon K\to\mathrm{Vec}_k,
   \qquad E(s,t,r,u)=F(u).
\]
The sheaf $F$ enters the two stalk models through the following commuting diagram:
\[
\begin{tikzcd}[column sep=huge,row sep=large]
K \arrow[rr,"\Phi"] \arrow[dr,"p"'] \arrow[ddr,bend right=15,"E=p^\ast F"'] &&
J \arrow[dl,"q_2^J"] \arrow[ddl,bend left=15,"D=(q_2^J)^\ast F"] \\
& \Pos \arrow[d,"F"] & \\
& \mathrm{Vec}_k &
\end{tikzcd}
\]
Since $\Phi$ preserves the $u$-coordinate, $q_2^J\circ\Phi=p$, so
\[
   E=p^\ast F=(q_2^J\circ\Phi)^\ast F=\Phi^\ast(q_2^J)^\ast F=\Phi^\ast D .
\]
The functor $\Phi$ together with this identification gives the bar map
\[
   \operatorname{Bar}(K,E)\longrightarrow \operatorname{Bar}(J,D),
\]
denoted $\Phi_\#$. Under Propositions \ref{prop:bar}, \ref{prop:groth}, $\gamma_\sigma$ is the bottom horizontal map in the commutative square
\begin{equation}\label{eq:comparison-bar-square}
\begin{tikzcd}[column sep=huge,row sep=large]
\operatorname{Bar}(K,E) \arrow[r,"\Phi_\#"] \arrow[d,"\simeq"'] &
\operatorname{Bar}(J,D) \arrow[d,"\simeq"] \\
(\LC_a\LC_bF)(\sigma) \arrow[r,"\gamma_\sigma"'] &
(\LC_{a+b}F)(\sigma).
\end{tikzcd}
\end{equation}
Here $\Phi_\#$ carries the summand at $c_0<\cdots<c_n$ to the summand at $\Phi(c_0)<\cdots<\Phi(c_n)$, and to zero when that image is not strict; the identification its $d_0$ face uses is
\[
   E(c_0\le c_1)=D(\Phi(c_0)\le\Phi(c_1)),
\]
which is $E=\Phi^\ast D$.

Assume now that $\Phi$ is $k$-homologically final. Then $\Phi_\#$ is a quasi-isomorphism by Corollary \ref{cor:final-criterion}; so $\gamma_\sigma$ is an isomorphism in $\Db(\mathrm{Vec}_k)$. Homotopy finality is stronger by Definition \ref{def:homotopy-final}, and under it the statement is the $k$-linear counterpart of the Homotopy Finality Theorem \cite[Thm.~8.5.6]{Riehl}, whose poset-indexed form is \cite[Thm.~2.12]{FM}; we used only the homological condition.
\end{proof}

Note that the proposition is stated for a sheaf $F$, whereas Definition \ref{def:tame} asks for an isomorphism of endofunctors of $\Db(\Shv(\Pos;k))$. But testing on sheaves settles the derived statement. Deleting the top term of a bounded complex $X$ leaves a subcomplex $X'$ with that term $X''$, a sheaf placed in a single degree, as quotient, and with it the triangle $X'\to X\to X''\to X'[1]$. Both $\LC_a\LC_b$ and $\LC_{a+b}$ are exact functors of triangulated categories and $\gamma_{a,b}$ is a natural transformation between them, so applying them to that triangle gives
\[
\begin{tikzcd}[column sep=small,row sep=large]
\LC_a\LC_bX'\arrow[r]\arrow[d] &
\LC_a\LC_bX\arrow[r]\arrow[d] &
\LC_a\LC_bX''\arrow[r]\arrow[d] &
\LC_a\LC_bX'[1]\arrow[d]\\
\LC_{a+b}X'\arrow[r] &
\LC_{a+b}X\arrow[r] &
\LC_{a+b}X''\arrow[r] &
\LC_{a+b}X'[1]
\end{tikzcd}
\]
whose vertical maps are the components of $\gamma_{a,b}$, and in which the five lemma makes any one of the first three an isomorphism as soon as the other two are. The objects at which $\gamma_{a,b}$ is an isomorphism include the sheaves and are closed under shifts, so iterating the deletion reaches every object of $\Db(\Shv(\Pos;k))$.

Moreover, neither the bar machinery nor Lemma \ref{lem:bar-filtration}(iii) confines the coefficient to a sheaf, but in every instance below $D=(q_2^J)^\ast F$ is concentrated in internal degree zero and that K\"unneth term reduces.

The meet comparison is not the only natural transformation $\LC_a\LC_b\Rightarrow\LC_{a+b}$, nor is it claimed to dominate the others. Already on a chain, where the two functors agree (Proposition \ref{prop:chain}\textup{(ii)}), scaling by a unit of $k$ gives a second natural isomorphism, one failing the unit identities of Proposition \ref{prop:meet-coherence}\textup{(ii)}. So a $\gamma_\sigma$ that is not a quasi-isomorphism refutes one comparison, and the law $\LC_a\LC_b\simeq\LC_{a+b}$ is refuted only where the two functors are exhibited with different values, as in Propositions \ref{prop:sat-nogo} and \ref{prop:diamond-nogo} and Example \ref{ex:path-zigzag}.

Finality is not the only condition the comparison map can be tested against. The coefficient diagrams available at $\gamma_\sigma$ are the pullbacks $(q_2^J)^\ast F$, constant on each fibre of $q_2^J$, and what they impose is indexed by $\Pos$ rather than by $J$.

\begin{proposition}[What the test sheaves detect]\label{prop:representable-test}
Let $\Phi$ be total at $(\sigma;a,b)$, and keep $J$, $K$, $p$ and $q_2^J$ as in the proof of Proposition \ref{prop:final}. For $u\in\Pos$ put
\[
   W_u=\{(y,v)\in J:\ u\le v\},
   \qquad
   \Phi^{-1}(W_u)=\{\kappa\in K:\ u\le p(\kappa)\},
\]
both up-closed. Then $\gamma_\sigma$ is a quasi-isomorphism for every $F\in\Shv(\Pos;k)$ if and only if, for every $u\in\Pos$, the map
\[
   C_\ast\bigl(\Delta(\Phi^{-1}W_u);k\bigr)\longrightarrow C_\ast\bigl(\Delta(W_u);k\bigr)
\]
induced by $\Phi$ is a quasi-isomorphism; equivalently, if and only if $\gamma_\sigma$ is a quasi-isomorphism on the projective sheaves $k_{\st(u)}$, $u\in\Pos$, of Lemma \ref{lem:enough-projectives}. At $F=k_{\st(u)}$ the square \eqref{eq:comparison-bar-square} identifies $\gamma_\sigma$ with that map of order complexes.
\end{proposition}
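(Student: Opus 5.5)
The plan is to compute $\gamma_\sigma$ on the representable projectives first, and then propagate from them to all sheaves through the projective resolutions of Lemma \ref{lem:enough-projectives}.

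\emph{Step 1: the value at $k_{\st(u)}$.} Take $F=k_{\st(u)}$. Then $D=(q_2^J)^\ast F$ is the constant sheaf $k$ on the up-closed set $W_u\subseteq J$ and zero elsewhere. Since $q_2^J\circ\Phi=p$, the diagram $E=\Phi^\ast D$ is $k$ on $\Phi^{-1}(W_u)$ and zero elsewhere. Both are sheaves of the form $k_{A/B}$ with $B=\varnothing$ and $A$ up-closed. Lemma \ref{lem:bar-pair}\textup{(i)}, or Lemma \ref{lem:support-restriction}\textup{(i)}, identifies $\operatorname{Bar}(J,D)$ with $C_\ast(\Delta(W_u);k)$ and $\operatorname{Bar}(K,E)$ with $C_\ast(\Delta(\Phi^{-1}W_u);k)$. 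Under these identifications the induced bar map $\Phi_\#$, which sends a chain to its image chain and kills the degenerate ones, is the normalized simplicial chain map induced by the monotone map $\Phi\colon\Phi^{-1}W_u\to W_u$. By the square \eqref{eq:comparison-bar-square}, this gives the final sentence of the statement and the equivalence of the two reformulations.

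\emph{Step 2: necessity.} The sheaves $k_{\st(u)}$ belong to $\Shv(\Pos;k)$, so necessity is immediate.

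\emph{Step 3: sufficiency.} Suppose $\gamma_\sigma$ is a quasi-isomorphism on every $k_{\st(u)}$. Both $F\mapsto\operatorname{Bar}(K,p^\ast F)$ and $F\mapsto\operatorname{Bar}(J,(q_2^J)^\ast F)$ are exact functors from sheaves to complexes, since each is degreewise a finite direct sum of stalks of $F$. They also commute with arbitrary direct sums, and $\Phi_\#$ is natural in $F$. First, $\gamma_\sigma$ is a quasi-isomorphism on every $M_{\st(u)}=M\otimes_k k_{\st(u)}$, because both sides are $M\otimes_k(-)$ applied to the $k_{\st(u)}$ case. Second, it is a quasi-isomorphism on every direct sum of these, that is, on every projective. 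Third, take a projective resolution $Q_\bullet\to F$. It is finite because $k\Pos$ has finite global dimension ($\Pos$ is finite), but unbounded length is also handled by a convergent filtration argument. Both bar functors are exact, so they carry $Q_\bullet\to F$ to resolutions, and $\Phi_\#$ gives a map of double complexes that is a quasi-isomorphism in each column. A spectral sequence argument, or the five-lemma dévissage already used after Proposition \ref{prop:final}, then shows that $\Phi_\#$ is a quasi-isomorphism at $F$.

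A shorter alternative avoids resolutions. Filter $F$ by the up-set filtration of Proposition \ref{prop:inexact}. Then each $\sky_x$ is the cokernel of $k_{\st(x)\setminus\{x\}}\to k_{\st(x)}$. But $k_{\st(x)\setminus\{x\}}$ is not representable, so the resolution route is cleaner, and I would use that.

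The main obstacle is Step 1: checking carefully that the identification of Lemma \ref{lem:bar-pair} is compatible with $\Phi_\#$. In particular, the $d_0$ face and the normalization must match the simplicial map on order complexes, including its behaviour on chains whose image is not strict. I would settle this by writing both sides as normalized chains on the nerve with coefficient $k$ and observing that every structure map involved is an identity.
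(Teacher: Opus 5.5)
Your proof is correct. Step 1 is exactly the paper's last sentence: Lemma \ref{lem:support-restriction}\textup{(i)} applied to $(q_2^J)^\ast k_{\st(u)}=k_{W_u}$. That lemma also settles the non-strict-image bookkeeping you flag as the main obstacle.

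For sufficiency the paper takes a different route. It applies Lemma \ref{lem:updetect} with $L=\Pos$ and $\varphi=q_2^J$, and the proof of that lemma is precisely the up-set d\'evissage you set aside. The non-representable $k_{L_{>x}}$ is no obstruction there, because the argument inducts on the cardinality of up-closed sets $U\subseteq L$, and $L_{>x}$ is up-closed and strictly smaller than $\st_L(x)$. The induction then runs in three moves. The five lemma on $0\to k_{L_{>x}}\to k_{\st_L(x)}\to\sky_x\to0$ gives the skyscraper. The sequence $0\to k_{U\setminus\{x\}}\to k_U\to\sky_x\to0$ then gives $k_U$. A final up-set filtration with subquotients $\sky_{x_p}\otimes_kG(x_p)$ gives every $G$.

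Your route replaces this with a different observation. Both bar functors are exact and preserve direct sums, so agreement on the generators $M_{\st(u)}$ propagates along a projective resolution, by a bounded double-complex filtration or by induction on projective dimension. This is more conceptual, but it draws on finite global dimension of $k\Pos$ (or a convergence check). The paper's induction uses nothing beyond the five lemma and finite filtrations.

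Both arguments work verbatim for an arbitrary monotone $\varphi\colon J\to L$. That is the generality in which the paper also obtains Corollary \ref{cor:final-criterion}, at $\varphi=\id$.
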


\begin{proof}
Apply Lemma \ref{lem:updetect} with $L=\Pos$ and $\varphi=q_2^J$, whose $W_u$ is the displayed set and for which $\Phi^{-1}(W_u)=\{\kappa:u\le q_2^J\Phi(\kappa)\}=\{\kappa:u\le p(\kappa)\}$ by $q_2^J\Phi=p$. The coefficient diagrams occurring in \eqref{eq:comparison-bar-square} are the $D=(q_2^J)^\ast F$, and $(q_2^J)^\ast k_{\st(u)}=k_{W_u}$, so the last sentence is Lemma \ref{lem:support-restriction}\textup{(i)}.
\end{proof}

By Corollary \ref{cor:final-criterion} the condition of Proposition \ref{prop:representable-test} is implied by $k$-homological finality, which asks the same over each principal up-set $\st_J(j)$, $j\in J$, and its preimage. The two are compared through the minimal elements: each $W_u$, being up-closed in a finite poset, is a union of the smaller family,
\[
   W_u=\bigcup\{\,\st_J(j)\;:\;j\ \text{minimal in}\ W_u\,\},
\]
so the test sheaves impose over each such union what finality imposes over each of its members. The two conditions coincide as soon as every $\st_J(j)$ is itself one of the $W_u$, which is what a minimal apex supplies (Theorem \ref{thm:minimal-sharp}). Whether the coarser family forces the finer one in general is the open direction of Conjecture \ref{conj:sharp}.

\begin{proposition}[Coherence of the meet comparisons]\label{prop:meet-coherence}
Assume that, for every $\sigma\in\Pos$ and all $a,b\ge0$, the meet assignment $\Phi_{\sigma,a,b}$ is total, so the meet functor $\mathcal K_{a,b}(\sigma)\to J_{a+b}(\sigma)$. Then the meet comparisons $\gamma_{a,b}$ supplied by Proposition \ref{prop:final}
are compatible with the derived transition maps of Proposition \ref{prop:derived-exist}(iv) and satisfy the unit and associativity identities; namely:
\begin{enumerate}
\item[\textup{(i)}] If $a\le a'$ and $b\le b'$, then the square of natural transformations
\[
\begin{tikzcd}[column sep=huge,row sep=large]
\LC_a\LC_b\arrow[r,"\gamma_{a,b}"]\arrow[d,"\LC_a\eta_{b,b'}"'] &
\LC_{a+b}\arrow[dd,"\eta_{a+b,a'+b'}"]\\
\LC_a\LC_{b'}\arrow[d,"\eta_{a,a'}\LC_{b'}"'] & \\
\LC_{a'}\LC_{b'}\arrow[r,"\gamma_{a',b'}"] & \LC_{a'+b'}
\end{tikzcd}
\]
commutes, that is $\eta_{a+b,a'+b'}\circ\gamma_{a,b} = \gamma_{a',b'}\circ(\eta_{a,a'}\LC_{b'}) \circ(\LC_a\eta_{b,b'}).$
\item[\textup{(ii)}] After the identification $\LC_0\simeq\id$, the two unit triangles
\[
\begin{tikzcd}[column sep=large,row sep=large]
\LC_a\arrow[r,"\LC_a\eta_b"]\arrow[dr,"\eta_{a,a+b}"'] &
\LC_a\LC_b\arrow[d,"\gamma_{a,b}"]\\
& \LC_{a+b}
\end{tikzcd}
\qquad\qquad
\begin{tikzcd}[column sep=large,row sep=large]
\LC_b\arrow[r,"\eta_a\LC_b"]\arrow[dr,"\eta_{b,a+b}"'] &
\LC_a\LC_b\arrow[d,"\gamma_{a,b}"]\\
& \LC_{a+b}
\end{tikzcd}
\]
commute, that is $\gamma_{a,b}\circ\LC_a(\eta_b)=\eta_{a,a+b},\,\gamma_{a,b}\circ(\eta_a\LC_b)=\eta_{b,a+b}.$
\item[\textup{(iii)}] For all $a,b,c$ the natural associativity square
\[
\begin{tikzcd}[column sep=huge,row sep=large]
\LC_a\LC_b\LC_c\arrow[r,"\gamma_{a,b}\LC_c"]\arrow[d,"\LC_a\gamma_{b,c}"'] &
\LC_{a+b}\LC_c\arrow[d,"\gamma_{a+b,c}"]\\
\LC_a\LC_{b+c}\arrow[r,"\gamma_{a,b+c}"] & \LC_{a+b+c}
\end{tikzcd}
\]
commutes, that is $\gamma_{a+b,c}\circ(\gamma_{a,b}\LC_c)=\gamma_{a,b+c}\circ(\LC_a\gamma_{b,c}).$
\end{enumerate}
Parts \textup{(i)}, \textup{(ii)} and \textup{(iii)} are the naturality of $\mu_{a,b}$ in $(a,b)$, the unit coherence and the associativity coherence of Definition \ref{def:flow}, so the choice $\mu_{a,b}=\gamma_{a,b}$ makes $\{\LC_a\}$ a flow indexed by $(\mathbb Z_{\ge0},\le,+)$, strong on any domain where the $\gamma_{a,b}$ are quasi-isomorphisms; that is the coherence used in Section \ref{sec:metric}.
\end{proposition}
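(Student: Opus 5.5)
The plan is to prove all three identities at the chain level, stalk by stalk, using the bar models. Each natural transformation involved is, by construction, an induced bar map $\Psi_\#$ of Definition \ref{def:bar-hocolim} along a monotone map of finite index posets, with identity coefficients. For any two monotone maps $\Psi\colon I'\to I$ and $\Psi'\colon I''\to I'$ one has $(\Psi\Psi')_\#=\Psi_\#\Psi'_\#$: both send the summand at a strict chain $c_0<\cdots<c_n$ to the summand at its image, and to zero when some intermediate or final image is not strict. So commutativity of each diagram reduces to commutativity of a diagram of monotone maps between the posets $J_\bullet(\sigma)$, $\Ka_{\bullet,\bullet}(\sigma)$ and the triple Grothendieck construction. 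The dictionary I intend to use is the following.
\begin{itemize}
\item $\eta_{a,b}$ is induced by the inclusion $J_a(\sigma)\subseteq J_b(\sigma)$.
\item $\gamma_{a,b}$ is induced by $\Phi_{\sigma,a,b}$, via the square \eqref{eq:comparison-bar-square}.
\item $\LC_a\eta_{b,b'}$ and $\eta_{a,a'}\LC_{b'}$ are induced by the componentwise inclusions $\Ka_{a,b}(\sigma)\subseteq\Ka_{a,b'}(\sigma)\subseteq\Ka_{a',b'}(\sigma)$.
\end{itemize}
The last item follows from Lemma \ref{lem:groth-hocolim} once its identification is checked to be natural in the diagram $X$ and in the base $I$.

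For (i), the two composites are then induced by $(s,t,r,u)\mapsto(s\meet r,u)$ followed by the inclusion $J_{a+b}(\sigma)\subseteq J_{a'+b'}(\sigma)$, and by the inclusion $\Ka_{a,b}(\sigma)\subseteq\Ka_{a',b'}(\sigma)$ followed by the same meet formula. These are literally the same function, since $\Phi$ is given by one formula independent of the windows.

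For (ii), the unit $\LC_a\eta_b$ is modelled by the inclusion $J_a(\sigma)\cong\int_{J_a(\sigma)}J_0(t)\hookrightarrow\Ka_{a,b}(\sigma)$, $(s,t)\mapsto(s,t,t,t)$. Composing with $\Phi$ gives $(s\meet t,t)=(s,t)$, which is the inclusion $J_a(\sigma)\subseteq J_{a+b}(\sigma)$, that is $\eta_{a,a+b}$. The other unit $\eta_a\LC_b$ is modelled by $(r,u)\mapsto(r,r,r,u)$ on $J_b(\sigma)\cong\int_{J_0(\sigma)}J_b(t)$. Here $J_0(\sigma)=\{(s,s):s\le\sigma\}$, so I must check that the identification $\LC_0\simeq\id$ corresponds to restricting along the terminal object $(\sigma,\sigma)$ in each fibre. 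Lemma \ref{lem:terminal-bar} gives a quasi-isomorphism rather than an equality, so the claim should be read in $\Db$; the fibre over $(\sigma,\sigma)$ is $J_b(\sigma)$, and $\Phi(\sigma,\sigma,r,u)=(\sigma\meet r,u)=(r,u)$.

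For (iii), I form the triple Grothendieck construction $\Ka_{a,b,c}(\sigma)=\{(s,t,r,u,p,w)\}$ modelling $\LC_a\LC_b\LC_c$, by two applications of Lemma \ref{lem:groth-hocolim}. The two composites are then induced by the two maps to $J_{a+b+c}(\sigma)$:
\[
(s,t,r,u,p,w)\mapsto\bigl((s\meet r)\meet p,\,w\bigr),
\qquad
(s,t,r,u,p,w)\mapsto\bigl(s\meet(r\meet p),\,w\bigr).
\]
These agree by associativity of the meet. The intermediate maps $\Ka_{a,b,c}\to\Ka_{a+b,c}$ and $\Ka_{a,b,c}\to\Ka_{a,b+c}$ exist and are monotone by totality of the relevant $\Phi$ and the length bounds. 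The main obstacle is not the combinatorics but the identification of the whiskered transformations $\gamma_{a,b}\LC_c$ and $\LC_a\gamma_{b,c}$ with these induced bar maps. This requires the Thomason-type equivalence of Lemma \ref{lem:groth-hocolim} to be natural with respect to maps of $\mathbf{Cat}$-valued diagrams covering maps of bases, and compatible with iterating. I would establish this first as a naturality lemma, working at the level of simplicial replacements before normalization so that degenerate images are handled uniformly, and then read off (i)--(iii).
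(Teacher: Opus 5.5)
Your proposal is correct and follows the paper's proof essentially step for step. It uses the same reduction to strict equalities of monotone index maps on the bar models, the same unit maps $(s,t)\mapsto(s,t,t,t)$ and (in your final form) $(r,u)\mapsto(\sigma,\sigma,r,u)$, and associativity of the meet on the triple Grothendieck index for (iii). The naturality of the Grothendieck identification that you single out as the main obstacle is used silently by the paper. One small correction: the two Grothendieck constructions over $J_0$ that you write with $\cong$ are not isomorphic to $J_a(\sigma)$ and $J_b(\sigma)$; they only collapse onto them through terminal objects. Your alternative $(r,u)\mapsto(r,r,r,u)$ is homotopic to $(r,u)\mapsto(\sigma,\sigma,r,u)$, being pointwise below it.
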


\begin{proof}
The proof is on the finite bar models of Propositions \ref{prop:bar} and \ref{prop:groth}. Equality of the relevant index maps, with the same coefficient functor to $\Pos$, gives equality of the induced normalized bar maps.

For parameter naturality, fix $\sigma$ and put
\[
   K=\Ka_{a,b}(\sigma),\quad K''=\Ka_{a,b'}(\sigma),\quad K'=\Ka_{a',b'}(\sigma),\quad
   J=J_{a+b}(\sigma),\quad J'=J_{a'+b'}(\sigma).
\]
Let $i_b\colon K\hookrightarrow K''$, $i_a\colon K''\hookrightarrow K'$ and $i_J\colon J\hookrightarrow J'$ be the inclusions. For $\kappa=(s,t,r,u)\in K$,
\[
   i_J\Phi_{\sigma,a,b}(\kappa)
   =(s\meet r,u)
   =
   \Phi_{\sigma,a',b'}i_ai_b(\kappa).
\]
Thus $i_J\circ\Phi_{\sigma,a,b}=\Phi_{\sigma,a',b'}\circ i_a\circ i_b$ strictly. Writing $p_K,p_{K''},p_{K'}$ for the $u$-coordinate projections from $K,K'',K'$ and $q_J,q_{J'}$ for those from $J,J'$.
Functoriality of the normalized bar construction gives the strictly commuting diagram
\[
\begin{tikzcd}[column sep=huge,row sep=large]
\operatorname{Bar}(K,p_K^\ast F) \arrow[r,"{(\Phi_{\sigma,a,b})_\#}"] \arrow[d,"{(i_b)_\#}"'] &
\operatorname{Bar}(J,q_J^\ast F) \arrow[dd,"{(i_J)_\#}"] & \\
\operatorname{Bar}(K'',p_{K''}^\ast F) \arrow[d,"{(i_a)_\#}"'] & & \\
\operatorname{Bar}(K',p_{K'}^\ast F) \arrow[r,"{(\Phi_{\sigma,a',b'})_\#}"'] & \operatorname{Bar}(J',q_{J'}^\ast F).
\end{tikzcd}
\]
The two left vertical arrows are the bar representatives of the inner transition $b\le b'$ and the outer transition $a\le a'$. The right vertical arrow $(i_J)_\#$ is induced by the direct inclusion $J_{a+b}(\sigma)\hookrightarrow J_{a'+b'}(\sigma)$ and represents the transition $a+b\le a'+b'$. Equality of the two paths from $\operatorname{Bar}(K,p_K^\ast F)$ to $\operatorname{Bar}(J',q_{J'}^\ast F)$ is the displayed identity.

For the unit identities, use the terminal-object model of Lemma \ref{lem:unit} for $\LC_0\simeq\id$. For the first identity, an object $(s,t)\in J_a(\sigma)$ is sent by $\LC_a(\eta_b)$ to $(s,t,t,t)\in\Ka_{a,b}(\sigma)$, and then
\[
   \Phi_{\sigma,a,b}(s,t,t,t)=(s\meet t,t)=(s,t),
\]
which is the inclusion $J_a(\sigma)\subseteq J_{a+b}(\sigma)$ representing $\eta_{a,a+b}$. For the second identity, an object $(r,u)\in J_b(\sigma)$ is sent by $\eta_a\LC_b$ to $(\sigma,\sigma,r,u)\in\Ka_{a,b}(\sigma)$, and
\[
   \Phi_{\sigma,a,b}(\sigma,\sigma,r,u)=(\sigma\meet r,u)=(r,u),
\]
since $r\le\sigma$ in $J_b(\sigma)$; this is the inclusion $J_b(\sigma)\subseteq J_{a+b}(\sigma)$ representing $\eta_{b,a+b}$.

For associativity, use the triple Grothendieck index whose objects are
\[
   (s,t,r,u,v,w),
   \qquad
   (s,t)\in J_a(\sigma),\quad (r,u)\in J_b(t),\quad (v,w)\in J_c(u).
\]
The composite $\gamma_{a+b,c}\circ(\gamma_{a,b}\LC_c)$ sends such an object to
\[
   ((s\meet r)\meet v,w)\in J_{a+b+c}(\sigma),
\]
while $\gamma_{a,b+c}\circ(\LC_a\gamma_{b,c})$ sends it to
\[
   (s\meet(r\meet v),w)\in J_{a+b+c}(\sigma).
\]
The totality hypotheses make both displayed meets exist, and both are the greatest lower bound of $\{s,r,v\}$ in $\Pos$, so they are equal. The coefficient value is $F(w)$ on both sides. The two functors from the triple index to $J_{a+b+c}(\sigma)$ are therefore strictly equal, and the induced bar maps agree on every generator. The equalities are natural in $F$ and in $\sigma$, because all restriction maps are induced by the same inclusions of window-comma posets.
\end{proof}

\begin{proposition}[The strong flow fails at every branching length-two interval]\label{prop:diamond-nogo}
Let $b<t$ in $\Pos$ with $\len(b,t)=2$, write
\[
   (b,t)=\{m_1,\dots,m_r\},
\]
an antichain, and suppose $b$ is minimal in $\Pos$ or, more generally, that no $s<b$ satisfies $\len(s,t)\le 2$. Call $[b,t]$ a \emph{branching length-two interval} when $r\ge2$; the term carries this condition on $b$ throughout. The case $r=2$ is a \emph{diamond interval},
\[
\begin{tikzcd}[row sep=large,column sep=large]
& t & \\
m_1\arrow[ur] & & m_2\arrow[ul]\\
& b\arrow[ul]\arrow[ur] &
\end{tikzcd}
\]
Then for $F=\sky_t$ at the apex $\sigma=b$,
\[
   H_0\bigl((\LC_2\sky_t)(b)\bigr)\cong k,
   \qquad
   H_0\bigl((\LC_1\LC_1\sky_t)(b)\bigr)\cong k^{\,r} .
\]
The exponent is the cardinality of the open interval $(b,t)$; the two dimensions therefore differ as soon as $r\ge2$, and then $\LC_1\LC_1\not\simeq\LC_2$ on $\Pos$, no comparison map being involved. If moreover the meet assignment $\Phi$ is total at $(b;1,1)$, then $\gamma_b$ is defined and, for $r\ge2$, is not a quasi-isomorphism, and $\Phi$ fails $k$-homological finality. The strong flow law then fails on the face poset of every finite regular cell complex of dimension $\ge2$.
\end{proposition}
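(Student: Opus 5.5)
The plan is to compute both degree-zero stalks directly from the underived colimits. Proposition~\ref{prop:bar} gives $H_0\bigl((\LC_2\sky_t)(b)\bigr)=(C_2\sky_t)(b)$, and the last assertion of Proposition~\ref{prop:groth} gives $H_0\bigl((\LC_1\LC_1\sky_t)(b)\bigr)=(C_1C_1\sky_t)(b)$. After these two reductions no homotopy colimit needs to be handled explicitly. The remaining claims then follow formally from the dimension count.

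\emph{Single window.} I would use the component count of Corollary~\ref{cor:h0-count} with $\sigma=b$, $\tau=t$, $a=2$. An object $(s,t')\in J_2(b)$ with $t'\ge t$ has $s\le b$.
\begin{itemize}
\item If $s<b$, then $\len(s,t')\ge\len(s,t)>2$ by the hypothesis on $b$, so no such object exists.
\item If $s=b$, then $\len(b,t')\le2=\len(b,t)$ forces $t'=t$.
\end{itemize}
Hence $A_{b,t}=\{(b,t)\}$ and $B_{b,t}=\varnothing$, and the stalk is $k$.

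\emph{Iterate.} Put $G=C_1\sky_t$. First I would compute $G$ at the elements $u$ that can occur as second coordinates in $J_1(b)$, again by Corollary~\ref{cor:h0-count}.
\begin{itemize}
\item At $u=m_i$: the only $s\le m_i$ with $\len(s,t)\le1$ is $s=m_i$, since any $s<m_i$ has $\len(s,t)\ge2$. No $(s,t')$ with $t'>t$, $s\le m_i$ and $\len(s,t')\le1$ exists. So $A=\{(m_i,t)\}$, $B=\varnothing$, and $G(m_i)=k$.
\item At any other $u$ with $(s,u)\in J_1(b)$: we have $\rk u\le\rk b+1$. A nonzero stalk needs some $s'\le u$ with $\len(s',t)\le1$, so $\rk s'\ge\rk b+1$. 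This forces $s'=u$ of rank $\rk b+1$. Then either $u\not\ge b$, which requires $s<b$; but then $\len(s,t)\le 2$ contradicts the hypothesis, since $s\le u\le t$ with $\rk u=\rk s+1$ would give $\len(s,t)=2$. Or $u$ is one of the $m_i$.
\end{itemize}
So over $J_1(b)$ the diagram $(s,u)\mapsto G(u)$ is nonzero exactly at the objects $(b,m_i)$. These objects are maximal in $J_1(b)$, because $(b,u')$ with $u'>m_i$ has length $2$. Moreover, they are pairwise joined only through objects where the diagram vanishes. The colimit is therefore $\bigoplus_i k=k^{r}$. I expect this bookkeeping — ensuring that no lateral object $(s,u)$ with $s<b$ contributes or glues the $m_i$ together — to be the main obstacle. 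It is exactly where the hypothesis ``no $s<b$ with $\len(s,t)\le2$'' must be used, as in Example~\ref{ex:lateral}.

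\emph{Consequences.} For $r\ge2$ the two $H_0$'s have different dimensions, so $\LC_1\LC_1\sky_t\not\simeq\LC_2\sky_t$ already on the stalk at $b$. If $\Phi$ is total at $(b;1,1)$, then $\gamma_b$ is defined, and it cannot be a quasi-isomorphism because its $H_0$ is a map $k^r\to k$. The contrapositive of Proposition~\ref{prop:final} then shows that $\Phi$ is not $k$-homologically final.

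\emph{Face posets.} For a regular cell complex of dimension $\ge2$, I would choose a $2$-cell $T$ and a vertex $v\le T$. Then $\len(v,T)=2$ and $v$ is minimal. The diamond property (Appendix~\ref{ssec:face-posets}, \cite{Bjorner}) gives $(v,T)=\{m_1,m_2\}$, so $r=2$ and the failure applies.
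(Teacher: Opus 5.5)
Your proof is correct. The single window is the paper's computation in another form: the paper applies Lemma \ref{lem:bar-pair}\textup{(ii)} to the skyscraper at $(b,t)$ in $J_2(b)$, and you use the component count. The iterate and the finality clause, however, are argued differently.

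The paper never computes $C_1\sky_t$. It reads $H_0$ of the iterate as the single colimit of $p^\ast\sky_t$ over the Grothendieck poset $\Ka_{1,1}(b)$. A rank count shows the nonzero locus is the $r$ quadruples $\kappa_i=(b,m_i,m_i,t)$. It then checks that no arrow leaves a $\kappa_i$ towards a zero value and that no arrow joins two of them.

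You instead iterate the stalk formula. One application of $C_1$ carries $\sky_t$ to a sheaf which, over the second coordinates occurring in $J_1(b)$, is $k$ exactly at the $m_i$. The second colimit cannot glue these classes, because the objects $(b,m_i)$ are maximal in $J_1(b)$. The two computations give the same colimit (Lemma \ref{lem:groth-hocolim} in degree zero). Your route needs only the underived stalk formula and Corollary \ref{cor:h0-count}, and it makes the mechanism visible: the skyscraper at $t$ splits into one class at each $m_i$, one rank down.

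The paper's route pays off in the finality clause. Its nonzero locus $\{\kappa_1,\dots,\kappa_r\}$ is exactly the comma fibre $(b,t)\downarrow\Phi$. So the same computation exhibits that fibre as an $r$-point antichain with $\widetilde H_0\cong k^{r-1}$, locating and measuring the defect; Proposition \ref{prop:unit-fibre} and Example \ref{ex:diamond-essential} build on this. Your contrapositive of Proposition \ref{prop:final} is valid and shorter, but it only says that finality fails somewhere.

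Finally, the step you flagged as the main obstacle does not need the hypothesis on $b$. In your case $s<b$, the window already gives $\rk u\le\rk s+1\le\rk b$, which contradicts $\rk u=\rk b+1$. In fact the hypothesis holds automatically: additivity gives $\len(s,t)=\len(s,b)+\len(b,t)\ge3$ for every $s<b$.
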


\begin{proof}
\emph{Single window.} By Proposition \ref{prop:bar}, $(\LC_2\sky_t)(b)\simeq\operatorname{Bar}(J_2(b),D)$ with $D(s,x)=\sky_t(x)$. The hypothesis gives $s=b$ for every $(s,x)\in J_2(b)$, so the only object carrying a nonzero value is $(b,t)$ and $D=\sky_{(b,t)}$; and $\len(b,t)=2$ leaves no object of $J_2(b)$ strictly above it. So Lemma \ref{lem:bar-pair}\textup{(ii)} gives $H_0=\widetilde H_{-1}(\Delta(\varnothing);k)=k$, with higher homology zero.

\emph{Iterate.} By Proposition \ref{prop:groth}, $(\LC_1\LC_1\sky_t)(b)\simeq\hocolim_{\Ka_{1,1}(b)}\sky_t(u)$, whose $H_0$ is the ordinary colimit $\colim_{\Ka_{1,1}(b)}\sky_t(u)$. The coefficient $\sky_t(u)$ is nonzero (equal to $k$) at a quadruple $(s,x,\rho,u)\in\Ka_{1,1}(b)$ if and only if $u=t$, i.e.\ $s=b$, $\len(b,x)\le1$, $\rho\le x$, and $\len(\rho,t)\le1$, forcing $\rho\in(b,t)$ and then $x=\rho$. Thus the nonzero locus is the $r$ objects $\kappa_i=(b,m_i,m_i,t)$, $1\le i\le r$. No two are identified and none is killed: an outgoing arrow $\kappa_i\le(s',x',\rho',u')$ has $s'=b$ and $x'=\rho'=m_i$ by the unit-window constraints, and then $\len(m_i,u')\le1$ together with $t\le u'$ forces $u'=t$, so $\sky_t(u')=k$ with the identity coefficient map; and there is no morphism between $\kappa_i$ and $\kappa_j$ for $i\ne j$, their $x$- and $\rho$-coordinates $m_i,m_j$ being incomparable. The colimit is $k^{\,r}$.

Suppose now in addition that $\Phi$ is total at $(b;1,1)$, so that $\gamma_b$ is defined; then it is not a quasi-isomorphism, by the two displayed dimensions. In finality terms, the comma fibre
\[
   (b,t)\downarrow\Phi=\{\kappa\in\Ka_{1,1}(b):(b,t)\le\Phi(\kappa)\}
\]
consists of the quadruples with $u\ge t$ and $b\le s\meet\rho$; the length constraints computed above force $u=t$ and $\kappa\in\{\kappa_1,\dots,\kappa_r\}$, so the fibre is the $r$-point antichain, drawn here for $r=2$:
\[
\begin{tikzcd}[row sep=large,column sep=small]
\kappa_1=(b,m_1,m_1,t)\arrow[dr,"\Phi"'] & & \kappa_2=(b,m_2,m_2,t)\arrow[dl,"\Phi"]\\
& (b,t) &
\end{tikzcd}
\]
There is no morphism between distinct $\kappa_i$, so its reduced $H_0$ is $k^{\,r-1}$, nonzero for $r\ge2$, and $\Phi$ fails $k$-homological finality, and with it homotopy finality.

For the last clause, a $2$-cell $t$ and a vertex $v\le t$ span an interval of length two with $v$ minimal, and there $r=2$ by the diamond property of Appendix \ref{ssec:face-posets} (Example \ref{ex:bigon}).
\end{proof}

\begin{remark}[Where the two obstructions coincide]\label{rem:diamond-totality}
The totality hypothesis in the finality clause is not automatic, and on a face poset it fails at the apex where the proposition is applied. Read as a poset in its own right (graded, but not itself a face poset; Appendix \ref{ssec:face-posets}), a diamond interval $\Pos=[b,t]$ has least element $b$, so $s\meet r=b$ exists for every quadruple of $\Ka_{1,1}(b)$ and $\len(b,u)\le2$ throughout: there $\Phi$ is total and the proposition exhibits the total-but-non-final case. The augmented face poset of a simplicial complex supplies the same case at its empty face, without leaving the face-poset setting (Example \ref{ex:simplicial-apex}). Inside a regular cell complex of dimension $\ge2$ the apex is a vertex $v$ of a $2$-cell $t$, and totality fails: choose an edge $e\le t$ with $v\le e$ and let $w\ne v$ be its other vertex, which exists by regularity. Then $(v,e,w,w)\in\Ka_{1,1}(v)$, and $v\meet w$ does not exist, distinct vertices being minimal in a poset of nonempty cells:
\[
\begin{tikzcd}[row sep=large,column sep=large]
& t & \\
& e\arrow[u] & \\
v\arrow[ur] & & w\arrow[ul]\\
& {?}\arrow[ul,dashed]\arrow[ur,dashed] &
\end{tikzcd}
\]
The dashed element is the lower bound $v\meet w$ that the quadruple $(v,e,w,w)$ requires and that $\Pos$ does not provide. So at such an apex both mechanisms of this section fire simultaneously (the meet is missing \emph{and} the two stalk dimensions differ), and the failure of the strong flow, which is what Proposition \ref{prop:diamond-nogo} asserts unconditionally, is not to be read there as ``totality with non-finality''.
\end{remark}

\begin{proposition}[Adjoint transposition of the flow problem]\label{prop:transposition}
For all $a,b\ge0$ there are adjunctions $\LC_a\LC_b\dashv\RC^b\RC^a$ and $\LC_{a+b}\dashv\RC^{a+b}$ (Proposition \ref{prop:derived-exist}(iii)). A natural isomorphism $\LC_a\LC_b\simeq\LC_{a+b}$ therefore exists if and only if such an isomorphism $\RC^b\RC^a\simeq\RC^{a+b}$ exists: the strong derived-flow law holds for the family $\{\LC_a\}$ if and only if it holds for the family $\{\RC^a\}$.
\end{proposition}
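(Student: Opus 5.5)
The plan is to obtain both adjunctions by composing the adjunctions already available, and then to transport natural isomorphisms between left adjoints and their right adjoints by uniqueness of adjoints.

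\emph{The adjunctions.} Proposition \ref{prop:derived-exist}(iii) gives $\LC_c\dashv\RC^c$ on $\Db(\Shv(\Pos;k))$ for every $c\ge0$. Taking $c=a+b$ gives the second adjunction directly. For the first, compose $\LC_b\dashv\RC^b$ with $\LC_a\dashv\RC^a$. The composite of adjunctions $L'\dashv R'$ and $L\dashv R$ is $LL'\dashv R'R$, with the natural bijection
\[
\Hom(\LC_a\LC_bX,Y)\cong\Hom(\LC_bX,\RC^aY)\cong\Hom(X,\RC^b\RC^aY).
\]
Only the order reversal on the right needs care, and the bijection above already exhibits it.

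\emph{Transport of isomorphisms.} Next I would invoke the standard fact that, given $L\dashv R$ and $L'\dashv R'$, natural transformations $L\Rightarrow L'$ correspond bijectively and contravariantly to natural transformations $R'\Rightarrow R$ (mates). This correspondence respects composition and identities, so it carries isomorphisms to isomorphisms. This is the uniqueness-of-adjoints argument of \cite[Prop.~4.3.1]{RiehlCTIC}, phrased through Yoneda. A natural isomorphism $\theta\colon\LC_a\LC_b\Rightarrow\LC_{a+b}$ induces, naturally in $X$ and $Y$,
\[
\Hom(X,\RC^{a+b}Y)\cong\Hom(\LC_{a+b}X,Y)\xrightarrow{\theta^\ast}\Hom(\LC_a\LC_bX,Y)\cong\Hom(X,\RC^b\RC^aY).
\]
By Yoneda in $X$, this yields a natural isomorphism $\RC^{a+b}Y\cong\RC^b\RC^aY$, natural in $Y$. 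The converse uses the same chain of isomorphisms read in the other variable: apply Yoneda in $Y$ to $\Hom(-,Y)$ on the left adjoints.

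\emph{Conclusion and main point of care.} Running this for every pair $(a,b)$ shows that the strong law for $\{\LC_a\}$ holds if and only if it holds for $\{\RC^a\}$, with the index order reversed as $\RC^b\RC^a$. The only step needing attention is to confirm that the adjunctions really live on $\Db$ as adjunctions of ordinary categories, since Yoneda is applied there. Proposition \ref{prop:derived-exist}(iii) supplies exactly this. No triangulated structure or coherence beyond the bare existence of the isomorphism is needed, because the statement asserts only existence.
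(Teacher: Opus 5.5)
Your proposal is correct and matches the paper's proof: the paper likewise composes the adjunctions of Proposition \ref{prop:derived-exist}(iii) to obtain $\LC_a\LC_b\dashv\RC^b\RC^a$ \cite[Prop.~4.3.4]{RiehlCTIC}, then invokes uniqueness of adjoints up to natural isomorphism \cite[Prop.~4.3.1]{RiehlCTIC}, and your Yoneda argument in each variable simply spells out that step. You also read the ``law'' as the bare existence of a natural isomorphism, with no coherence required, which is exactly what the statement asserts.
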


\begin{proof}
Composites of adjoints are adjoint \cite[Prop.~4.3.4]{RiehlCTIC}, and adjoints are unique up to canonical natural isomorphism \cite[Prop.~4.3.1]{RiehlCTIC}.
\end{proof}

So Proposition \ref{prop:diamond-nogo} transposes to $\RC^1\RC^1\not\simeq\RC^2$ on any poset carrying a branching length-two interval; the right adjoint is the matching companion of $C_a$, useful for hom-side computations as in Example \ref{ex:lambda}, but it does not evade the obstruction. The flow does hold on the rooted forests (Proposition \ref{prop:rooted}), chains among them.

\begin{conjecture}[Homological sharpness of finality]\label{conj:sharp}
For every finite graded poset $\Pos$, every $\sigma\in\Pos$, and every $a,b\ge0$ for which the meet assignment $\Phi_{\Pos,\sigma,a,b}$ is total, the comparison map $\gamma_\sigma$ is a quasi-isomorphism for \emph{every} $F\in\Shv(\Pos;k)$ if and only if $\Phi_{\Pos,\sigma,a,b}$ is $k$-homologically final.
\end{conjecture}

The ``if'' direction is Proposition \ref{prop:final}; the converse is open only in part. Theorem \ref{thm:minimal-sharp} proves it at every minimal $\sigma$, whatever the windows, and Corollary \ref{cor:minimal-unit} makes the criterion there purely order-theoretic when $a=b=1$. Away from a minimal apex, Corollary \ref{cor:sharp-essential} proves it wherever the defect of $\Phi$ is essential, which by Corollary \ref{cor:unit-window-essential} it is at every unit first window. What stays open is the range in which some defect is inessential; Remark \ref{rem:detection-scope} delimits it, and the gap recorded after Proposition \ref{prop:representable-test} is the open direction itself.

At a minimal apex that gap closes, $q_2^J$ being injective there, and the test sheaves see the comma fibres one at a time.

\begin{theorem}[Sharpness at a minimal apex]\label{thm:minimal-sharp}
Let $\Pos$ be a finite graded poset, let $\sigma$ be minimal in $\Pos$, let $a,b\ge0$, and suppose the meet assignment $\Phi=\Phi_{\sigma,a,b}$ is total. Then, writing $p(s,t,r,u)=u$ as in Proposition \ref{prop:representable-test}, $\Phi(\kappa)=(\sigma,p(\kappa))$ for every $\kappa\in\Ka_{a,b}(\sigma)$, the comma fibre over $j=(\sigma,u)$ is
\[
   (\sigma,u)\downarrow\Phi\;=\;\{\kappa\in\Ka_{a,b}(\sigma)\;:\;u\le p(\kappa)\},
\]
and the following are equivalent.
\begin{enumerate}
\item[\textup{(i)}] $\gamma_\sigma$ is a quasi-isomorphism for every $F\in\Shv(\Pos;k)$.
\item[\textup{(ii)}] $\gamma_\sigma$ is a quasi-isomorphism for the sheaves $k_{\st(u)}$, $u\in\st_{a+b}(\sigma)$.
\item[\textup{(iii)}] $\Phi$ is $k$-homologically final.
\end{enumerate}
In particular Conjecture \ref{conj:sharp} holds at every minimal $\sigma\in\Pos$, for all $a,b$ at which $\Phi$ is total.
\end{theorem}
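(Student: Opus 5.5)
The plan is to first record the two structural facts, then close the cycle (iii)$\Rightarrow$(i)$\Rightarrow$(ii)$\Rightarrow$(iii).

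\emph{Structural facts.} Since $\sigma$ is minimal, every $(s,t,r,u)\in\Ka_{a,b}(\sigma)$ has $s\le\sigma$, so $s=\sigma$. Totality provides a meet $\sigma\meet r$. It lies below $\sigma$, so by minimality it equals $\sigma$. Hence $\Phi(\kappa)=(\sigma,p(\kappa))$. Similarly every object of $J=J_{a+b}(\sigma)$ has the form $(\sigma,u)$ with $u\in\st_{a+b}(\sigma)$. So $q_2^J$ is an isomorphism of $J$ onto $\st_{a+b}(\sigma)$, and in particular it is injective. Substituting $y=\sigma$ into the closed form \eqref{eq:comma-fibre}, the conditions $y\le s$ and $y\le r$ become automatic: $s=\sigma$ already, and $r\ge\sigma$ holds because $\sigma\le r$ is forced by $\sigma=\sigma\meet r$. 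The comma fibre over $(\sigma,u)$ is therefore $\{\kappa: u\le p(\kappa)\}$, as displayed.

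\emph{The implications.} (iii)$\Rightarrow$(i) is Proposition \ref{prop:final}, and (i)$\Rightarrow$(ii) is trivial.

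For (ii)$\Rightarrow$(iii), I would use Proposition \ref{prop:representable-test} and its last sentence. At $F=k_{\st(u)}$, the square \eqref{eq:comparison-bar-square} identifies $\gamma_\sigma$ with the map $C_\ast(\Delta(\Phi^{-1}W_u);k)\to C_\ast(\Delta(W_u);k)$. The key observation is that, under the isomorphism $J\cong\st_{a+b}(\sigma)$, the set $W_u$ is exactly the principal up-set $\st_J((\sigma,u))$ whenever $u\in\st_{a+b}(\sigma)$. This set has least element $(\sigma,u)$, so its order complex is a cone and hence contractible (Lemma \ref{lem:cone-point}). Also $\Phi^{-1}W_u$ is precisely the comma fibre $(\sigma,u)\downarrow\Phi$ computed above. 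So hypothesis (ii) says that $C_\ast(\Delta((\sigma,u)\downarrow\Phi);k)\to C_\ast(\mathrm{pt};k)$ is a quasi-isomorphism. That is, the fibre has the $k$-homology of a point: it is nonempty with vanishing reduced homology. Nonemptiness also follows directly, since the fibre contains $(\sigma,\sigma,\sigma,u)$.

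Every $j\in J$ is some $(\sigma,u)$ with $u\in\st_{a+b}(\sigma)$. Running over exactly these $u$ therefore checks every comma fibre, which is $k$-homological finality in the sense of Definition \ref{def:homotopy-final}.

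\emph{Main obstacle.} The only delicate point is the identification of $\gamma_\sigma$ at $k_{\st(u)}$ with the order-complex map, including its augmentation and normalization conventions. I will take this verbatim from Proposition \ref{prop:representable-test} together with Lemma \ref{lem:support-restriction}(i), rather than recompute it. I also need to check that $(q_2^J)^\ast k_{\st(u)}=k_{W_u}$ coincides with $k_{\st_J(j)}$ under the injectivity of $q_2^J$, which is immediate from the first paragraph. The final sentence of the theorem, that Conjecture \ref{conj:sharp} holds at minimal $\sigma$, is then the equivalence (i)$\Leftrightarrow$(iii).
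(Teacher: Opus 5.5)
Your proposal is correct and follows the paper's proof essentially step for step: minimality and totality reduce $\Phi$ and the comma fibre in the same way, $W_u=\st_J(j)$ and $\Phi^{-1}W_u=j\downarrow\Phi$ are identified and fed into Proposition~\ref{prop:representable-test} as in the paper, and the same cone-point argument converts the quasi-isomorphism into nonemptiness plus vanishing reduced homology. One side remark is wrong, though nothing depends on it: $(\sigma,\sigma,\sigma,u)$ lies in $\Ka_{a,b}(\sigma)$ only when $\len(\sigma,u)\le b$ (it fails at the diamond, where $\len(b,t)=2>1$), so take nonemptiness from the quasi-isomorphism as you first do, or use a quadruple $(\sigma,t,t,u)$ with $t$ on a saturated chain from $\sigma$ to $u$ at height $\min(a,\len(\sigma,u))$.
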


\begin{proof}
Minimality of $\sigma$ forces $y=\sigma$ for every $(y,u)\in J:=J_{a+b}(\sigma)$, so $q_2^J$ carries $J$ isomorphically onto $\st_{a+b}(\sigma)$, and it forces $s=\sigma$ for every quadruple of $K:=\Ka_{a,b}(\sigma)$. Where the meet $\sigma\meet r$ exists it is a lower bound of $\sigma$, so it equals $\sigma$; totality therefore gives $\Phi(s,t,r,u)=(\sigma,u)$, that is $\Phi(\kappa)=(\sigma,p(\kappa))$. The closed form \eqref{eq:comma-fibre} of the comma fibre over $(\sigma,u)$ then reduces to the displayed one, the conditions $\sigma\le s$ and $\sigma\le r$ being automatic.

\textup{(i)} implies \textup{(ii)} trivially, and \textup{(iii)} implies \textup{(i)} by Proposition \ref{prop:final}.

Assume \textup{(ii)} and fix $u\in\st_{a+b}(\sigma)$, so that $j=(\sigma,u)\in J$. In the notation of Proposition \ref{prop:representable-test},
\[
   W_u=\{(\sigma,v)\in J:\ u\le v\}=\st_J(j),
\]
the two descriptions agreeing because every object of $J$ has first coordinate $\sigma$; and $\Phi^{-1}(W_u)=\{\kappa:u\le p(\kappa)\}=j\downarrow\Phi$. By Proposition \ref{prop:representable-test} the hypothesis at $F=k_{\st(u)}$ says that
\[
   C_\ast\bigl(\Delta(j\downarrow\Phi);k\bigr)\longrightarrow C_\ast\bigl(\Delta(\st_J(j));k\bigr)
\]
is a quasi-isomorphism. The right-hand complex is quasi-isomorphic to $k$ through its augmentation, $\st_J(j)$ having least element $j$ (Lemma \ref{lem:cone-point}); augmentations being natural for simplicial maps, the displayed map is a quasi-isomorphism if and only if $j\downarrow\Phi$ is nonempty with vanishing reduced $k$-homology. As every object of $J$ is such a $j$, this is \textup{(iii)}.
\end{proof}

\begin{proposition}[At a saturated minimal apex the law itself fails]\label{prop:sat-nogo}
Let $\sigma$ be minimal in $\Pos$, let $a,b\ge0$, suppose the meet assignment $\Phi=\Phi_{\sigma,a,b}$ is total, and let $u\in\Pos$ satisfy $\len(\sigma,u)=a+b$. Then
\[
   (\LC_{a+b}\sky_u)(\sigma)\;\simeq\;k\ \text{in degree }0,
   \qquad
   (\LC_a\LC_b\sky_u)(\sigma)\;\simeq\;C_\ast\bigl(\Delta((\sigma,u)\downarrow\Phi);k\bigr).
\]
If the comma fibre $(\sigma,u)\downarrow\Phi$ is not $k$-acyclic, the two functors take non-isomorphic values at $\sky_u$, and no natural isomorphism $\LC_a\LC_b\cong\LC_{a+b}$ exists, whether or not it is $\gamma_{a,b}$.
\end{proposition}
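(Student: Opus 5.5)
The plan is to compute both sides directly on the bar models and then compare them as objects of $\Db(\mathrm{Vec}_k)$; no comparison map is needed.

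\emph{Single window.} By Proposition \ref{prop:bar}, $(\LC_{a+b}\sky_u)(\sigma)\simeq\operatorname{Bar}(J,D)$ with $J=J_{a+b}(\sigma)$ and $D=(q_2^J)^\ast\sky_u$. Minimality of $\sigma$ makes $q_2^J$ an isomorphism onto $\st_{a+b}(\sigma)$, as in the proof of Theorem \ref{thm:minimal-sharp}. The diagram $D$ is therefore the skyscraper at the single object $(\sigma,u)$. Since $\len(\sigma,u)=a+b$, no object of $J$ lies strictly above $(\sigma,u)$. Proposition \ref{prop:derived-sky} then applies with $A=\{(\sigma,u)\}$ and $B=\varnothing$, or equivalently Lemma \ref{lem:bar-pair}(ii) as in the single-window step of Proposition \ref{prop:diamond-nogo}. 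Either way the stalk is $k$ in degree $0$.

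\emph{Iterate.} By Proposition \ref{prop:groth}, $(\LC_a\LC_b\sky_u)(\sigma)\simeq\operatorname{Bar}(K,p^\ast\sky_u)$ with $K=\Ka_{a,b}(\sigma)$. The diagram $p^\ast\sky_u$ is $k$ on $A'=\{\kappa:p(\kappa)=u\}$ and $0$ elsewhere, with identity maps. We want to write it as $k_{A''/B''}$ with $A''=\{\kappa:u\le p(\kappa)\}$ and $B''=\{\kappa:u<p(\kappa)\}$, both up-closed. The key observation is that $B''=\varnothing$. Indeed, every $\kappa=(s,t,r,v)$ has $\Phi(\kappa)=(\sigma,v)\in J_{a+b}(\sigma)$ by totality, so $\len(\sigma,v)\le a+b$. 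Then $u\le v$ together with $\len(\sigma,u)=a+b$ forces $v=u$. Hence $A''=A'=\{\kappa:u\le p(\kappa)\}$, which is $(\sigma,u)\downarrow\Phi$ by Theorem \ref{thm:minimal-sharp}. Proposition \ref{prop:derived-sky}'s argument, namely Lemma \ref{lem:bar-pair}(i) applied over $K$ instead of $J$, now gives $\operatorname{Bar}(K,p^\ast\sky_u)\cong C_\ast(\Delta(A'');k)/C_\ast(\Delta(\varnothing);k)$, which is the stated chain complex.

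\emph{Conclusion.} An object of $\Db(\mathrm{Vec}_k)$ is determined up to isomorphism by its homology. The homology of $C_\ast(\Delta(X);k)$ is $k$ concentrated in degree $0$ exactly when $X$ is nonempty with vanishing reduced homology, that is, when $X$ is $k$-acyclic. So if the fibre is not $k$-acyclic, the two values at $\sky_u$ are non-isomorphic. Any natural isomorphism $\LC_a\LC_b\cong\LC_{a+b}$ would give an isomorphism of stalks at $\sigma$ after evaluation at $\sky_u$, so none exists, whatever the map.

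The main obstacle is the single step showing $B''=\varnothing$, i.e.\ that $p$ never exceeds $u$ above $u$. This uses totality through the bound $\len(s\meet r,v)\le a+b$ together with minimality ($s\meet r=\sigma$). One also has to check that the Grothendieck model's identification is compatible with reading the coefficient as $k_{A''/B''}$, which is an identification of objects only and so is harmless.
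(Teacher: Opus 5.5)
Your proposal is correct and follows the paper's proof essentially step for step: the single window via Lemma~\ref{lem:bar-pair}\textup{(ii)}, the iterate as the constant diagram on the up-closed set $(\sigma,u)\downarrow\Phi$ in $\Ka_{a,b}(\sigma)$ with no $p(\kappa)>u$, and the conclusion by exactness of stalk functors. The only difference is how you get the bound $\len(\sigma,v)\le a+b$. You read it directly from the length clause of totality, using $s\meet r=\sigma$, whereas the paper first extracts $\sigma\le r$ and then uses additivity of $\len$; both are valid.
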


\begin{proof}
Put $J=J_{a+b}(\sigma)$ and $K=\Ka_{a,b}(\sigma)$. By the first paragraph of the proof of Theorem \ref{thm:minimal-sharp}, minimality of $\sigma$ forces $y=\sigma$ for every $(y,v)\in J$ and $s=\sigma$ for every $(s,t,r,v)\in K$, while totality forces $\sigma\meet r=\sigma$, that is $\sigma\le r$. Additivity of $\len$ then gives $\len(\sigma,v)=\len(\sigma,r)+\len(r,v)\le\len(\sigma,t)+b\le a+b$ at every object of $K$, and every object of $J$ is a $(\sigma,v)$ with $\len(\sigma,v)\le a+b$. Since $\len(\sigma,u)=a+b$, no $v>u$ occurs as a second coordinate in $J$ or as a value of $p$ on $K$.

\emph{Single window.} The diagram $(q_2^J)^\ast\sky_u$ on $J$ is the skyscraper at the object $(\sigma,u)$, and $J_{>(\sigma,u)}=\varnothing$ by the previous paragraph, so Lemma \ref{lem:bar-pair}\textup{(ii)} gives $H_0=\widetilde H_{-1}(\Delta(\varnothing);k)=k$ and nothing above it; Proposition \ref{prop:bar} reads that as the first display.

\emph{Iterate.} The diagram $p^\ast\sky_u$ on $K$ is $k$ on $A=\{\kappa\in K:u\le p(\kappa)\}$, up-closed, and $0$ elsewhere, no $\kappa$ having $p(\kappa)>u$; it is $k_A$ in the notation of Lemma \ref{lem:support-restriction}\textup{(i)}. That part and Proposition \ref{prop:groth} give $(\LC_a\LC_b\sky_u)(\sigma)\simeq C_\ast(\Delta(A);k)$, and $A=\Phi^{-1}(W_u)=(\sigma,u)\downarrow\Phi$ by the proof of Theorem \ref{thm:minimal-sharp}.

A fibre that is not $k$-acyclic leaves $C_\ast(\Delta(A);k)$ not quasi-isomorphic to $k$ in degree $0$: it is $0$ if the fibre is empty, and otherwise carries a nonzero reduced homology class. Stalk functors are exact, so a natural isomorphism of the two functors would make these two objects of $\Db(\mathrm{Vec}_k)$ isomorphic.
\end{proof}

The hypotheses of the proposition and of Proposition \ref{prop:diamond-nogo} are incomparable: the latter asks nothing of totality but treats unit windows only, while totality is what puts $\sigma$ below every $r$ here.

\begin{lemma}[Reduced-chain model for $\operatorname{Def}_\Phi$]\label{lem:defect-reduced}
Fix $\Pos,\sigma,a,b$. Put $J=J_{a+b}(\sigma)$ and $K=\Ka_{a,b}(\sigma)$, assume that the meet assignment $\Phi_{\Pos,\sigma,a,b}$ is total, and write
\[
   \Phi\colon K\to J
\]
for the resulting meet functor. For $j\in J$, set
\[
   R_\Phi(j)=C_\ast\bigl(N(j\downarrow\Phi);k\bigr),
   \qquad
   \operatorname{Def}_\Phi(j)
   =
   \operatorname{Cone}\bigl(R_\Phi(j)\xrightarrow{\varepsilon_j} k\bigr)[-1],
\]
where the map $\varepsilon_j$ is the augmentation. The inclusions $j'\downarrow\Phi\subseteq j\downarrow\Phi$ for $j\le j'$ make $R_\Phi$ and $\operatorname{Def}_\Phi$ contravariant $J$-diagrams; $R_\Phi$ is the \emph{finality-fibre diagram} of $\Phi$ (cf. Proposition \ref{prop:final}). Then:
\begin{enumerate}
\item[\textup{(i)}] Writing $k[-1]$ for the constant contravariant $J$-diagram concentrated in degree $-1$, there is a short exact sequence of contravariant $J$-diagrams of complexes
\[
   0\longrightarrow k[-1]\longrightarrow\operatorname{Def}_\Phi\longrightarrow R_\Phi\longrightarrow 0 ,
\]
degreewise split, whose connecting map is the augmentation $H_0R_\Phi(j)\to k$ up to sign.
\item[\textup{(ii)}] $\operatorname{Def}_\Phi$ is naturally the augmented reduced chain complex
\[
   j\longmapsto \widetilde C_\ast\bigl(N(j\downarrow\Phi);k\bigr).
\]
\end{enumerate}
Here $\widetilde C_\ast$ is the augmented complex, carrying the augmentation term $k$ in degree $-1$; shifts and cones are those fixed in Remark \ref{rem:homotopical}.
This gives
\[
   H_q\operatorname{Def}_\Phi(j)
   \cong
   \widetilde H_q\bigl(N(j\downarrow\Phi);k\bigr)
   \qquad(q\ge0),
\]
and if $j\downarrow\Phi=\varnothing$ then
\[
   H_{-1}\operatorname{Def}_\Phi(j)\cong k.
\]
Thus $\operatorname{Def}_\Phi(j)$ is acyclic if and only if the fibre $j\downarrow\Phi$ is nonempty and $k$-acyclic.
\end{lemma}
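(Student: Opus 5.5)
The plan is to work directly with the cone convention of Remark \ref{rem:homotopical} and identify everything degreewise. First check contravariant functoriality. For $j\le j'$ in $J$, an object $\kappa$ with $j'\le\Phi(\kappa)$ also has $j\le\Phi(\kappa)$, so $j'\downarrow\Phi\subseteq j\downarrow\Phi$ is a full inclusion of subposets of $K$. It induces a simplicial inclusion of nerves, hence a chain map $R_\Phi(j')\to R_\Phi(j)$. This map commutes with the augmentations to $k$, since both send each vertex to $1$, and so it induces a map of cones $\operatorname{Def}_\Phi(j')\to\operatorname{Def}_\Phi(j)$. Composition is strict because the inclusions compose.

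For (i), write out the shifted cone. By the convention, $\operatorname{Cone}(f)_n=X_{n-1}\oplus Y_n$, so $\operatorname{Cone}(\varepsilon_j)[-1]_n=R_\Phi(j)_n\oplus k_{n+1}$, where $k$ sits in degree $0$. Hence $\operatorname{Def}_\Phi(j)_n=R_\Phi(j)_n\oplus(k\text{ if }n=-1)$.

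The inclusion of the second summand and the projection to the first give the degreewise split sequence $0\to k[-1]\to\operatorname{Def}_\Phi\to R_\Phi\to0$, and both maps are chain maps. One must check this against the differential $d(x,y)=(-dx,\varepsilon x+dy)$ after the sign twist $(-1)^{-1}$ coming from the shift. Naturality in $j$ holds because the maps of the previous paragraph are identity on the $k$ summand.

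The connecting map $H_0R_\Phi(j)\to H_{-1}(k[-1])=k$ is computed by the usual recipe. Lift a $0$-cycle $x$ to $(x,0)$, apply $d$, and read off the $k$ component, which is $\pm\varepsilon(x)$. The only care needed is tracking the sign through the $[-1]$.

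For (ii), the computation above shows that $\operatorname{Def}_\Phi(j)$ has exactly the terms of $\widetilde C_\ast(N(j\downarrow\Phi);k)$. Its differential is $\pm\partial$ in degrees $\ge1$ and $\pm\varepsilon$ from degree $0$ to degree $-1$.

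An isomorphism to the augmented complex with its standard signs is obtained by rescaling: multiply the degree-$n$ summand by a suitable sign $(-1)^{\lfloor\cdot\rfloor}$, chosen uniformly in $j$. This is the main point to get right. The rescaling must be independent of $j$ so that it is natural, and since the transition maps preserve the grading summands, any degree-dependent sign works.

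The homology statements then follow from the long exact sequence of (i), or directly from (ii). For $q\ge0$ this gives $\widetilde H_q$. In degree $-1$, the homology is $\operatorname{coker}\varepsilon$, which is $k$ exactly when the fibre is empty, because the augmentation is surjective whenever a vertex exists. Acyclicity is therefore equivalent to the fibre being nonempty with vanishing reduced homology.
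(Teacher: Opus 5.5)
Your proposal is correct and follows essentially the same route as the paper: unwind $\operatorname{Cone}(\varepsilon_j)[-1]$ degreewise, read off the split sequence with the $k$ in degree $-1$ as a subcomplex, and pass to the augmented complex by a degree-dependent sign change that commutes with the transition maps. The one difference is that the paper carries out the sign check you defer. The differential is $+\partial$ in degrees $\ge 1$, because the shift's $(-1)^{-1}$ cancels the cone's $-d_X$, and it is $-\varepsilon_j$ from degree $0$ to degree $-1$. So the isomorphism in (ii) is simply negation of the degree $-1$ summand, and the connecting map in (i) is exactly $-\varepsilon_j$.
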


\begin{proof}
For $j\le j'$ the defining condition for the comma fibre is contravariant in $j$, so $j'\downarrow\Phi\subseteq j\downarrow\Phi$. Taking nerves and simplicial chains gives the contravariant $J$-diagram $R_\Phi$, and these inclusions commute with the augmentations to $k$, and so give a contravariant $J$-diagram $\operatorname{Def}_\Phi$ as well.

(i) Unwinding the conventions of Remark \ref{rem:homotopical}, $\operatorname{Def}_\Phi(j)_n=R_\Phi(j)_n$ for $n\ge0$ and $\operatorname{Def}_\Phi(j)_{-1}=k$, with differential that of $R_\Phi(j)$ in degrees $\ge1$ and $-\varepsilon_j$ in degree $0$. The summand $k$ in degree $-1$ receives the differential but has nothing below it, so it is a subcomplex, and the quotient is $R_\Phi(j)$ with its own differential. The inclusion and projection are natural in $j$ because the fibre inclusions commute with the augmentations, giving (i); the sequence is degreewise split because every term is a $k$-vector space. Since $H_q(k[-1])=k$ for $q=-1$ and $0$ otherwise, the long exact sequence collapses to $H_q\operatorname{Def}_\Phi(j)\cong H_qR_\Phi(j)$ for $q\ge1$ together with
\[
   0\to H_0\operatorname{Def}_\Phi(j)\to H_0R_\Phi(j)\xrightarrow{\ -\varepsilon_j\ }k\to H_{-1}\operatorname{Def}_\Phi(j)\to 0,
\]
whose middle map is $-\varepsilon_j$ by sign convention, induced by the two-row short exact sequence
\[
\begin{tikzcd}[row sep=1.8em]
0 \arrow[d,"0"] \arrow[r,"0"] & R_\Phi(j)_0\oplus 0 \arrow[d,"(0{,}-\varepsilon_j(x))"] \arrow[r,equals] & R_\Phi(j)_0 \arrow[d,"0"] \\
k \arrow[r,equals] & 0\oplus k \arrow[r] & 0.
\end{tikzcd}
\]

(ii) For a simplicial set $X$, with $C_\ast(X;k)$ normalized as in Lemma \ref{lem:order-nerve}, let $\widetilde C_\ast(X;k)$ be the augmented chain complex
\[
   \cdots\longrightarrow C_1(X;k)\longrightarrow C_0(X;k)\xrightarrow{\ \varepsilon\ }k
\]
with $k$ in degree $-1$. By the computation just made, applied to $\varepsilon\colon C_\ast(X;k)\to k$, the complex $\operatorname{Cone}(\varepsilon)[-1]$ agrees with $\widetilde C_\ast(X;k)$ in every degree and differs from it only in the sign of the degree-$0$ differential; negating the degree-$(-1)$ summand is therefore an isomorphism of complexes, natural in $X$ and so natural for the fibre inclusions above. Taking $X=N(j\downarrow\Phi)$ gives (ii). The homology of $\widetilde C_\ast(X;k)$ is reduced homology in degrees $q\ge0$, and for $X=\varnothing$ the complex is $k$ in degree $-1$; these are the two displays in the statement.
\end{proof}

\begin{lemma}[Only unsaturated pairs can be defective]\label{lem:unsaturated}
Let $\Phi$ be total at $(\sigma;a,b)$ and let $j=(y,u)\in J_{a+b}(\sigma)$. If $\len(y,u)\le b$, then $(y,y,y,u)$ is a least element of $j\downarrow\Phi$, so $N(j\downarrow\Phi)$ is contractible and $\operatorname{Def}_\Phi(j)$ is acyclic. The \emph{defective} locus therefore satisfies
\[
   \{j\in J_{a+b}(\sigma):H_\ast\operatorname{Def}_\Phi(j)\ne0\}
   \;\subseteq\;
   \{(y,u)\;:\;b<\len(y,u)\le a+b\} .
\]
\end{lemma}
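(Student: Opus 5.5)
The plan is to show directly that $\kappa_0:=(y,y,y,u)$ is an object of $j\downarrow\Phi$ lying below every other object, and then to read off the remaining claims from earlier results.

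\emph{Membership.} Since $j=(y,u)\in J_{a+b}(\sigma)$, we have $y\le\sigma$ and $y\le u$. We check the defining conditions of $\Ka_{a,b}(\sigma)$ in Proposition~\ref{prop:groth} one by one:
\begin{itemize}
\item $s=y\le\sigma$ holds.
\item $(y,y)\in\Delta_a^{\Pos}$ because $\len(y,y)=0\le a$.
\item $r=y\le t=y$ holds.
\item $(y,u)\in\Delta_b^{\Pos}$ is exactly the hypothesis $\len(y,u)\le b$.
\end{itemize}
So $\kappa_0\in\Ka_{a,b}(\sigma)$. Moreover $\Phi(\kappa_0)=(y\meet y,u)=(y,u)=j$, so $j\le\Phi(\kappa_0)$ and $\kappa_0\in j\downarrow\Phi$.

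\emph{Least element.} Let $(s,t,r,v)\in j\downarrow\Phi$. By the closed form \eqref{eq:comma-fibre}, this means $y\le s$, $y\le r$ and $u\le v$. We also have $y\le s\le t$. The order on $\Ka_{a,b}(\sigma)$ is componentwise (Definition~\ref{def:grothendieck}), so these inequalities give $(y,y,y,u)\le(s,t,r,v)$. Hence $\kappa_0$ is the least element of $j\downarrow\Phi$.

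\emph{Consequences.} A poset with a least element has contractible nerve, since it is a cone (Lemma~\ref{lem:cone-point}). Its reduced $k$-homology therefore vanishes and it is nonempty. Lemma~\ref{lem:defect-reduced} then gives that $\operatorname{Def}_\Phi(j)$ is acyclic. Taking the contrapositive, any $j=(y,u)$ with $H_\ast\operatorname{Def}_\Phi(j)\ne0$ must have $\len(y,u)>b$. Membership $j\in J_{a+b}(\sigma)$ supplies $\len(y,u)\le a+b$, which yields the displayed inclusion.

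I expect no real obstacle. The only point needing care is the use of the closed form of the comma fibre, which relies on totality to make the meet a genuine greatest lower bound.
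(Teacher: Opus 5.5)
Your proof is correct and follows essentially the same route as the paper's. Both check that $(y,y,y,u)$ lies in $\Ka_{a,b}(\sigma)$ and in the fibre, and both use the closed form \eqref{eq:comma-fibre} with the componentwise order to show it is least. Both then pass through Lemma~\ref{lem:cone-point} and Lemma~\ref{lem:defect-reduced} and take the contrapositive, using $\len(y,u)\le a+b$. The one cosmetic difference is that you verify fibre membership by computing $\Phi(\kappa_0)=j$ directly, whereas the paper reads it off the closed form.
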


\begin{proof}
The quadruple $\kappa_0=(y,y,y,u)$ lies in $\Ka_{a,b}(\sigma)$: $y\le\sigma$ because $j\in J_{a+b}(\sigma)$, $\len(y,y)=0\le a$, $y\le y$, and $\len(y,u)\le b$ by hypothesis. It lies in \eqref{eq:comma-fibre} because $y\le y$ and $u\le u$. For any $(s,t,r,v)$ in \eqref{eq:comma-fibre} one has $y\le s$, $y\le t$ (as $y\le s\le t$), $y\le r$ and $u\le v$, so $\kappa_0\le(s,t,r,v)$ componentwise. So $\widetilde H_\ast\bigl(N(j\downarrow\Phi);k\bigr)=0$ by Lemma \ref{lem:cone-point}, and Lemma \ref{lem:defect-reduced} gives acyclicity. The displayed containment is the contrapositive, together with $\len(y,u)\le a+b$ for $j\in J_{a+b}(\sigma)$.
\end{proof}

\begin{corollary}[The defect needs both windows short]\label{cor:saturated}
Let $\Phi$ be total at $(\sigma;a,b)$. In either of the following cases $\operatorname{Def}_\Phi$ is acyclic at every object, $\Phi$ is homotopy final, and $\gamma_\sigma$ is a quasi-isomorphism for every $F\in\Shv(\Pos;k)$.
\begin{enumerate}
\item[\textup{(i)}] $\len(y,u)\le b$ for every $(y,u)\in J_{a+b}(\sigma)$, as happens whenever $b$ is at least the length of $\Pos$.
\item[\textup{(ii)}] $\Pos$ has a greatest element $\hat1$ and $\len(\sigma,\hat1)\le a$.
\end{enumerate}
\end{corollary}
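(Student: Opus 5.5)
Both cases reduce to Lemma \ref{lem:unsaturated}: the plan is to show that every object $j=(y,u)\in J_{a+b}(\sigma)$ has a least element in its comma fibre, so that $N(j\downarrow\Phi)$ is contractible. Three conclusions then follow together. Acyclicity of $\operatorname{Def}_\Phi$ at every object comes from Lemma \ref{lem:defect-reduced}. Homotopy finality holds because every comma fibre is a nonempty poset with a least element, whose nerve is a cone (Lemma \ref{lem:cone-point}), hence weakly contractible. Contractibility also gives $k$-homological finality, so Proposition \ref{prop:final} makes $\gamma_\sigma$ a quasi-isomorphism for every $F$.

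\textbf{Case (i).} This is immediate from Lemma \ref{lem:unsaturated}, since the hypothesis $\len(y,u)\le b$ holds at every object. For the parenthetical claim, note that if $b$ is at least the length of $\Pos$ then $\len(y,u)=\rk(u)-\rk(y)$ is bounded by the length of a maximal chain through $y\le u$, and hence by $b$.

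\textbf{Case (ii).} Here I would not use Lemma \ref{lem:unsaturated} directly. Instead I exhibit a different least element of the fibre. Given $j=(y,u)$, take the candidate $\kappa_1=(y,\hat1,u,u)$. The key step is membership in $\Ka_{a,b}(\sigma)$, which needs four checks:
\begin{itemize}
\item $y\le\sigma$ holds because $j\in J_{a+b}(\sigma)$.
\item $(y,\hat1)\in\Delta_a^{\Pos}$ requires $\len(y,\hat1)\le a$. This is the delicate point: we only know $\len(\sigma,\hat1)\le a$, and $\len(y,\hat1)=\len(y,\sigma)+\len(\sigma,\hat1)$ may exceed $a$ when $y<\sigma$.
\item $u\le\hat1$ holds trivially.
\item $\len(u,u)=0\le b$ holds trivially.
\end{itemize}

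Because of the second check, I would instead use $\kappa_1=(\sigma,\hat1,u,u)$, which puts the first coordinate at $\sigma$. Membership now needs $\len(\sigma,\hat1)\le a$, which is the hypothesis, together with $u\le\hat1$ and $\len(u,u)=0$. The fibre condition \eqref{eq:comma-fibre} requires $y\le\sigma$ and $y\le u$, both of which hold.

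However, $\kappa_1$ is not least, since other fibre elements have first coordinate $s<\sigma$. So rather than a least element I would look for a \emph{greatest} element. For any $(s,t,r,v)$ in the fibre one has $s\le\sigma$, $t\le\hat1$ and $r\le v$. To dominate it, take $\kappa^\ast=(\sigma,\hat1,v^\ast,v^\ast)$ with $v^\ast$ the maximal reachable coordinate; since $v\le\hat1$, use $\kappa^\ast=(\sigma,\hat1,\hat1,\hat1)$. This is in $\Ka_{a,b}(\sigma)$: $\len(\sigma,\hat1)\le a$, $\hat1\le\hat1$, and $\len(\hat1,\hat1)=0$. It is in the fibre because $y\le\sigma$, $y\le\hat1$ and $u\le\hat1$. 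It dominates every fibre element componentwise. A poset with a greatest element also has a conical nerve, so Lemma \ref{lem:cone-point} applies to the opposite poset and gives contractibility.

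The expected obstacle is exactly this switch from least to greatest element in case (ii): checking that the fibre is nonempty and has a maximum that lies in the window.
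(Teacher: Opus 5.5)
Your proof is correct and is the paper's argument: case (i) is the empty defective locus of Lemma~\ref{lem:unsaturated}, and case (ii) uses the same greatest element $(\sigma,\hat1,\hat1,\hat1)$, with Lemma~\ref{lem:cone-point} applied to a greatest rather than a least element. In a final write-up, drop the abandoned candidates $(y,\hat1,u,u)$ and $(\sigma,\hat1,u,u)$, and rephrase the opening sentence, which promises a least element in both cases.
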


\begin{proof}
In case \textup{(i)} the locus displayed in Lemma \ref{lem:unsaturated} is empty, so every comma fibre has a least element. In case \textup{(ii)} the quadruple $\kappa_{\hat1}=(\sigma,\hat1,\hat1,\hat1)$ lies in $\Ka_{a,b}(\sigma)$, since $\sigma\le\sigma$, $\len(\sigma,\hat1)\le a$ and $\len(\hat1,\hat1)=0\le b$; it lies in every comma fibre \eqref{eq:comma-fibre}, since $y\le\sigma$, $y\le\hat1$ and $u\le\hat1$; and it dominates $\Ka_{a,b}(\sigma)$ componentwise, every quadruple there having first coordinate $\le\sigma$ and the other three $\le\hat1$. So each fibre has a greatest element instead of a least one. Either way Lemma \ref{lem:cone-point} applies, and Lemma \ref{lem:defect-reduced} with Proposition \ref{prop:final} gives the rest.
\end{proof}

At a unit first window the whole defective locus is saturated: the containment of Lemma \ref{lem:unsaturated} reads $b<\len(y,u)\le b+1$ when $a=1$, leaving $\len(y,u)=b+1$ alone. So at a minimal apex with $a=1$ Proposition \ref{prop:sat-nogo} reaches every defective object, and there the failure of $\gamma_\sigma$ is the failure of the law $\LC_1\LC_b\simeq\LC_{1+b}$.

A minimal apex and two unit windows leave one free rank in the comma fibre, and that rank is the whole of it: the fibre is the open interval between $\sigma$ and $u$, and the defect there measures branching and nothing else.

\begin{proposition}[Unit windows at a minimal apex: the fibre is an open interval]\label{prop:unit-fibre}
Let $\Pos$ be a finite graded poset, let $\sigma$ be minimal in $\Pos$, let $a=b=1$, and suppose $\Phi$ is total at $(\sigma;1,1)$. Every $j\in J_2(\sigma)$ has the form $(\sigma,u)$, and:
\begin{enumerate}
\item[\textup{(i)}] if $\len(\sigma,u)\le1$ then $j\downarrow\Phi$ has the least element $(\sigma,\sigma,\sigma,u)$, so $\operatorname{Def}_\Phi(j)$ is acyclic;
\item[\textup{(ii)}] if $\len(\sigma,u)=2$ then $\kappa\mapsto t$ is an isomorphism of posets
\[
   j\downarrow\Phi\;\xrightarrow{\ \sim\ }\;(\sigma,u),
\]
and the open interval $(\sigma,u)$ is an antichain.
\end{enumerate}
Then $\operatorname{Def}_\Phi$ is concentrated in homological degree $0$,
\[
   H_q\operatorname{Def}_\Phi(j)=0\ \ (q\ge1),
   \qquad
   H_0\operatorname{Def}_\Phi(j)\cong k^{\,\lvert(\sigma,u)\rvert-1},
\]
and $j$ is defective if and only if $\len(\sigma,u)=2$ and the open interval $(\sigma,u)$ has at least two elements.
\end{proposition}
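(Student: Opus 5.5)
The plan is to reduce everything to the description of the comma fibre already obtained in Theorem \ref{thm:minimal-sharp}, and then read off the reduced homology through Lemma \ref{lem:defect-reduced}. Minimality of $\sigma$ puts every object of $J_2(\sigma)$ in the form $(\sigma,u)$ with $\len(\sigma,u)\le2$, and every quadruple of $\Ka_{1,1}(\sigma)$ in the form $(\sigma,t,r,v)$. Totality forces $\sigma\meet r=\sigma$, that is $\sigma\le r$; this is the first paragraph of the proof of Proposition \ref{prop:sat-nogo}. By Theorem \ref{thm:minimal-sharp} the fibre over $j=(\sigma,u)$ is $\{\kappa:u\le p(\kappa)\}$.

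Part (i) is Lemma \ref{lem:unsaturated} with $b=1$ and $y=\sigma$. There the least element is $(\sigma,\sigma,\sigma,u)$, so the fibre is contractible by Lemma \ref{lem:cone-point} and $\operatorname{Def}_\Phi(j)$ is acyclic.

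For part (ii), take $\kappa=(\sigma,t,r,v)$ in the fibre with $\len(\sigma,u)=2$. The chain $\sigma\le r\le t$ gives $\len(\sigma,r)\le\len(\sigma,t)\le1$. We also have $u\le v$ and $\len(r,v)\le1$, and additivity gives $\len(\sigma,v)\le2=\len(\sigma,u)$, which forces $v=u$. Hence $\len(r,u)\ge1$, so $\len(r,u)=1$ and $\len(\sigma,r)=1$. This gives $r=t$, since $r\le t$ with $\len(\sigma,t)\le1$. So $\kappa=(\sigma,t,t,u)$ with $t\in(\sigma,u)$. Conversely, every $t\in(\sigma,u)$ gives such a quadruple. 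Rank-one elements of $(\sigma,u)$ all have the same rank, so $(\sigma,u)$ is an antichain. The map $\kappa\mapsto t$ is then a bijection between antichains, hence an isomorphism of posets, with order compatibility componentwise.

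The homology statement follows from Lemma \ref{lem:defect-reduced}. In case (i) the complex is acyclic. In case (ii) the nerve of an $n$-point antichain is $n$ discrete points, so $\widetilde H_0\cong k^{n-1}$ and higher reduced homology vanishes, with $n=\lvert(\sigma,u)\rvert$. One point needs checking: if $(\sigma,u)=\varnothing$ at length two, then $H_{-1}$ would be nonzero. But gradedness gives a maximal chain of length two in $[\sigma,u]$, so $n\ge1$ and the degree $-1$ term vanishes. Acyclic in (i) is consistent with the formula, since the interval there has at most one interior element and contributes $k^0$. Finally, $j$ is defective if and only if $n-1\ge1$ in case (ii).

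The main care point is the length bookkeeping in (ii): all of $v=u$, $r=t$ and the rank-one conclusion come from additivity of $\len$ together with $\sigma\le r$, which is supplied by totality.
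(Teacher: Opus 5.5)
Your proof is correct and is essentially the paper's: (i) is Lemma \ref{lem:unsaturated} at $b=1$, and in (ii) the same length count forces $v=u$, $r=t$ and $\sigma\lessdot t\lessdot u$. The paper writes this count as the rank chain $\rk(\sigma)+1\ge\rk(t)\ge\rk(r)\ge\rk(v)-1\ge\rk(u)-1$, which does not even need $\sigma\le r$, and your nonemptiness check via gradedness makes explicit a point the paper's proof leaves implicit.

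There is one slip, in your aside on case (i). There $(\sigma,u)$ is empty, not a singleton, so the exponent $\lvert(\sigma,u)\rvert-1$ would be $-1$ rather than $0$. The displayed formula should therefore be read for $\len(\sigma,u)=2$ only, and $H_0\operatorname{Def}_\Phi(j)=0$ in case (i) comes directly from acyclicity.
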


\begin{proof}
Minimality of $\sigma$ forces $y=\sigma$ for every $(y,u)\in J_2(\sigma)$, and $s=\sigma$ for every quadruple of $\Ka_{1,1}(\sigma)$. Part (i) is Lemma \ref{lem:unsaturated} with $b=1$.

For (ii) write $\rho=\rk(\sigma)$, so $\rk(u)=\rho+2$, and let $(\sigma,t,r,v)$ lie in \eqref{eq:comma-fibre}. Membership gives $\len(\sigma,t)\le1$, $r\le t$, $\len(r,v)\le1$ and $u\le v$, and in ranks these leave no slack:
\[
   \rho+1\;\ge\;\rk(t)\;\ge\;\rk(r)\;\ge\;\rk(v)-1\;\ge\;\rk(u)-1\;=\;\rho+1 .
\]
Every inequality is an equality, and two of them carry a relation as well: $r\le t$ with $\rk(r)=\rk(t)$ gives $r=t$, and $u\le v$ with $\rk(v)=\rk(u)$ gives $v=u$. The fibre therefore consists of the quadruples $(\sigma,t,t,u)$, in which $\sigma\le t$ and $t\le u$ cross one rank each, so $\sigma\lessdot t\lessdot u$ and $t\in(\sigma,u)$. Each such $t$ conversely gives one, since $\len(\sigma,t)=\len(t,u)=1$ places $(\sigma,t,t,u)$ in $\Ka_{1,1}(\sigma)$ and in \eqref{eq:comma-fibre}. The correspondence inverts $t\mapsto(\sigma,t,t,u)$ and is monotone both ways, the order on $\Ka_{1,1}(\sigma)$ being componentwise. The interval $(\sigma,u)$ carries the single rank $\rho+1$, so it is an antichain.

An antichain of $m$ elements has discrete order complex, with $\widetilde H_0=k^{m-1}$ and $\widetilde H_q=0$ for $q\ge1$; Lemma \ref{lem:defect-reduced} converts this into the two displays, and the description of the defective locus follows since $m\ge2$ is the condition $\widetilde H_0\ne0$.
\end{proof}

\begin{remark}[What the defect depends on]\label{rem:weight-independent}
The window $\Delta_a^{\Pos}$ is cut out of $\ct$ by the condition $\len\le a$, and the index posets $J_{a+b}(\sigma)$ and $\Ka_{a,b}(\sigma)$, the meet functor $\Phi(s,t,r,u)=(s\meet r,u)$, its comma fibres and with them $\operatorname{Def}_\Phi$ are constructed from the order of $\Pos$ alone. A weight $\Wt$ in the sense of Remark \ref{rem:weighted} is a functor \emph{on} $\Delta_a^{\Pos}$ and does not change it. The finality defect of $\Phi$ is therefore an invariant of $(\Pos,\sigma,a,b)$, and Lemma \ref{lem:unsaturated}, Proposition \ref{prop:unit-fibre} and Corollary \ref{cor:minimal-unit} are statements about that invariant.

What a weight does change is the passage from the invariant to a flow statement. Proposition \ref{prop:final} uses that the two coefficient diagrams agree, $E=\Phi^\ast D$, which holds because both are $u\mapsto F(u)$ and $\Phi$ preserves $u$; for a general weight the single window carries $\Wt_{a+b}(x,u)\otimes F(u)$ and the iterate $\Wt_a(s,t)\otimes\Wt_b(r,u)\otimes F(u)$, and a comparison $\Wt_a\otimes\Wt_b\to\Wt_{a+b}$ along $\Phi$ must be supplied as extra data. So the obstruction is intrinsic while its sufficiency for the flow is not, and the two questions separate cleanly.
\end{remark}
\begin{definition}[The defect module]\label{def:defect-module}
Keep the data of Lemma \ref{lem:defect-reduced} and write $q_2^J\colon J\to\Pos$, $q_2^J(y,u)=u$. Put
\[
   S=\{j\in J: H_\ast\operatorname{Def}_\Phi(j)\ne0\},
\]
the locus bounded in Lemma \ref{lem:unsaturated} and computed in Proposition \ref{prop:unit-fibre},
and for $u\in\Pos$ let
\[
   \Theta_u=(q_2^J)^{-1}(u)
   =\{y\in\Pos: y\le\sigma,\ y\le u,\ \len(y,u)\le a+b\},
\]
the identification being $y\mapsto(y,u)$. As $y$ descends in $\Theta_u$ the defining condition $y\le s\meet r$ of the comma fibre $(y,u)\downarrow\Phi$ weakens, so the fibre \emph{grows}, and the structure map
\[
   \operatorname{Def}_\Phi(y,u)\longrightarrow\operatorname{Def}_\Phi(y',u)
   \qquad(y'\le y)
\]
is the map on augmented reduced chains induced by the inclusion of nerves. The order in $\Theta_u$ and the defect-map direction are opposite:
\[
\begin{tikzcd}[row sep=large,column sep=huge]
(y,u) & (y,u)\downarrow\Phi\arrow[d,hook] & \operatorname{Def}_\Phi(y,u)\arrow[d]\\
(y',u)\arrow[u] & (y',u)\downarrow\Phi & \operatorname{Def}_\Phi(y',u)
\end{tikzcd}
\qquad(y'\le y).
\]
Thus $\operatorname{Def}_\Phi|_{\Theta_u}$ is a diagram on $\Theta_u^{op}$, and for each $q$
\[
   \mathsf P_{u,q}
   :=
   H_q\bigl(\operatorname{Def}_\Phi|_{\Theta_u}\bigr)
   \colon\Theta_u^{op}\longrightarrow\mathrm{Vec}_k
\]
is a persistence module on the finite poset $\Theta_u^{op}$, the \emph{defect module of $\Phi$ at $u$}. Its values are finite-dimensional, the nerves involved being finite; no finiteness of the stalks of any sheaf is at issue.
\end{definition}

\begin{definition}[Essential defect]\label{def:essential-defect}
Keep the notation of Definition \ref{def:defect-module}. Say that $\Phi$ has \emph{essential defect} if for every $u$ maximal in $q_2^J(S)$ some defect class survives the passage to the colimit along the growth of the comma fibre, that is, if the derived colimit of the defect module does not vanish:
\[
   \hocolim_{\Theta_u^{op}}\operatorname{Def}_\Phi|_{\Theta_u}
   \;=\;
   \operatorname{Def}_\Phi|_{\Theta_u}\otimes^{\mathbb L}_{k\Theta_u^{op}}k
   \;\ne\;0
   \quad\text{in }\Db(\mathrm{Vec}_k),
\]
equivalently $\operatorname{Tor}^{k\Theta_u^{op}}_\ast\bigl(\operatorname{Def}_\Phi|_{\Theta_u},k\bigr)\ne0$; the covariant factor stands first, as fixed in Definition \ref{def:incidence-algebra}. The condition is vacuous when $S=\varnothing$, and it holds whenever every fibre $\Theta_u$ is a single point (as at a diamond interval with minimal apex, where $y\le\sigma=b$ forces $y=b$), the derived colimit then being the value itself.

Essentiality does not follow from nonvanishing of the defect module (Remark \ref{rem:bar-cancellation} exhibits diagrams with nonzero homology and vanishing derived colimit), so it is imposed and not deduced. It is a condition on one fibre at a time, for which Lemma \ref{lem:essential-certificates} gives two checkable sufficient forms.
\end{definition}

\begin{lemma}[Two certificates for essentiality]\label{lem:essential-certificates}
Keep the notation of Definition \ref{def:defect-module} and fix $u\in q_2^J(S)$. Write
\[
   T_u=\{y\in\Theta_u: H_\ast\operatorname{Def}_\Phi(y,u)\ne0\}
\]
for the support, a nonempty subset of $\Theta_u$. Either of the following implies that the derived colimit at $u$ is nonzero.
\begin{enumerate}
\item[\textup{(i)}] \emph{(Essential bottom class.)} Let $q_0=\min\{q:\mathsf P_{u,q}\ne0\}$. Then
\[
   H_{q_0}\Bigl(\hocolim_{\Theta_u^{op}}\operatorname{Def}_\Phi|_{\Theta_u}\Bigr)
   \;\cong\;
   \colim_{\Theta_u^{op}}\mathsf P_{u,q_0},
\]
so it suffices that the lowest-degree defect module have nonzero colimit; dually, the values being finite-dimensional, that $\mathsf P_{u,q_0}^{\,\ast}$ admit a nonzero global section, i.e.\ a family of functionals on the $H_{q_0}\operatorname{Def}_\Phi(y,u)$, not all zero, compatible with every fibre enlargement.
\item[\textup{(ii)}] \emph{(Terminal support.)} If $T_u$ is down-closed in $\Theta_u$ and has a least element $y_0$, then
\[
   \hocolim_{\Theta_u^{op}}\operatorname{Def}_\Phi|_{\Theta_u}
   \;\simeq\;
   \operatorname{Def}_\Phi(y_0,u)\;\ne\;0 .
\]
\end{enumerate}
\end{lemma}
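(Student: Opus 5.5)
The plan is to work entirely with the hocolim of a bounded-below diagram of complexes on the finite poset $\Theta_u^{op}$. I will compute it by the totalized bar complex of Definition \ref{def:bar-hocolim}, which is $\operatorname{Bar}(\Theta_u^{op},\operatorname{Def}_\Phi|_{\Theta_u})$, equivalently $\operatorname{Def}_\Phi|_{\Theta_u}\otimes^{\mathbb L}_{k\Theta_u^{op}}k$ by Lemma \ref{lem:bar-derived-colim}.

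For \textup{(i)} I will filter the bar totalization by bar degree and use the associated spectral sequence. Because homology commutes with direct sums over a field, its $E^1$ page is the bar complex of the homology diagrams $\mathsf P_{u,q}$. Its $E^2$ page is therefore $E^2_{n,q}=H_n\operatorname{Bar}(\Theta_u^{op},\mathsf P_{u,q})=\mathbb L_n\!\colim\,\mathsf P_{u,q}$, and the sequence converges because the totalization is bounded: the bar direction is bounded by the nerve dimension, and the internal degree lies in $[-1,\dim]$. The internal degree $q$ ranges from $-1$, so I take $q_0$ as the minimum over all $q\ge-1$; the statement's $\min\{q:\mathsf P_{u,q}\ne0\}$ should be read the same way. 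All rows with $q<q_0$ vanish. Total degree $q_0$ then receives only $E^2_{0,q_0}=\colim\mathsf P_{u,q_0}$. That term admits no incoming differentials, since $d^r$ lands in $E_{n-r,q+r-1}$ and negative bar degree is zero. It admits no outgoing differentials either, since they would hit rows $q_0+r-1$ at bar degree $-r$. So $H_{q_0}(\hocolim)\cong\colim\mathsf P_{u,q_0}$.

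The dual reformulation uses finite dimensionality. A linear functional on a colimit of finite-dimensional spaces over a finite index is the same as a compatible family of functionals. Hence $(\colim\mathsf P)^\ast=\lim\mathsf P^\ast$, and nonvanishing of the colimit is equivalent to the existence of a nonzero global section of $\mathsf P^\ast_{u,q_0}$ over $\Theta_u$, with compatibility along each structure map $\operatorname{Def}(y,u)\to\operatorname{Def}(y',u)$.

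For \textup{(ii)} I will compare the diagram with its restriction to the support. Let $Z=\Theta_u\setminus T_u$. Since $T_u$ is down-closed in $\Theta_u$, it is up-closed in $\Theta_u^{op}$, and $Z$ is down-closed in $\Theta_u^{op}$. The values of $\operatorname{Def}_\Phi$ on $Z$ are acyclic by definition of $T_u$. I then form the subdiagram supported on $Z$, and dually the quotient supported on $T_u$. The degreewise support filtration of the proof of Proposition \ref{prop:inexact} supplies these, but with the roles reversed: a covariant diagram on $\Theta_u^{op}$ has subdiagrams on up-closed sets. So the extension-by-zero from the up-closed $T_u$ is the subdiagram, and $Z$ appears as the quotient. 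The quotient piece is pointwise acyclic, and a pointwise quasi-isomorphism induces a quasi-isomorphism of bar complexes by Lemma \ref{lem:bar-invariance}. Hence $\hocolim$ of $\operatorname{Def}$ is quasi-isomorphic to $\hocolim$ of the extension-by-zero from $T_u$. By the support restriction lemma (Lemma \ref{lem:support-restriction}), the latter is $\hocolim_{T_u^{op}}$ of the restriction, since $T_u$ is up-closed in $\Theta_u^{op}$. In $T_u^{op}$ the element $y_0$, least in $T_u$, is terminal, so Lemma \ref{lem:terminal-bar} gives evaluation at $y_0$. The result $\operatorname{Def}_\Phi(y_0,u)$ is nonzero in $\Db$ because $y_0\in T_u$.

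The main obstacle is the orientation bookkeeping in \textup{(ii)}: whether the support piece is a sub- or quotient diagram, and that the support-restriction lemma applies to complex-valued diagrams. If Lemma \ref{lem:support-restriction} is stated only for sheaves, I will apply it degreewise and totalize, since the bar construction acts degreewise.
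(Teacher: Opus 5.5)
Your proposal is correct and follows the paper's route. Part \textup{(i)} is the bar-degree spectral sequence argument of Lemma \ref{lem:bottom-degree}. Part \textup{(ii)} passes to the up-closed support $T_u^{op}$ and then collapses onto the terminal object $y_0$; the paper does the first step with Lemma \ref{lem:support-restriction}\textup{(ii)}, which already covers $\Chb(\mathrm{Vec}_k)$-valued diagrams, so your short-exact-sequence re-derivation is optional.

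Two small corrections. First, no differential enters $E^r_{0,q_0}$ because its sources lie in the vanishing rows $q_0-r+1<q_0$ for $r\ge2$, not because of the bar-degree bound you cite, which only rules out outgoing differentials. Second, it is Lemma \ref{lem:terminal-bar}, not Lemma \ref{lem:support-restriction}, that is stated only for $\mathrm{Vec}_k$-valued diagrams. The paper handles this by applying it row by row to $H_q\operatorname{Def}_\Phi|_{T_u}$ on the $E^1$-page of the bar-degree filtration, and that is where your degreewise fallback should be aimed.
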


\begin{proof}
(i) is Lemma \ref{lem:bottom-degree} applied to $I=\Theta_u^{op}$ and $D=\operatorname{Def}_\Phi|_{\Theta_u}$, whose hypothesis is the minimality of $q_0$; the dual reformulation is the isomorphism $\bigl(\colim\mathsf P\bigr)^\ast\cong\lim\mathsf P^\ast$.

(ii) Since $T_u$ is down-closed in $\Theta_u$, the subposet $T_u^{op}$ is up-closed in $\Theta_u^{op}$, and it contains the homological support of $\operatorname{Def}_\Phi|_{\Theta_u}$ by definition, so Lemma \ref{lem:support-restriction}\textup{(ii)} gives $\hocolim_{\Theta_u^{op}}\operatorname{Def}_\Phi|_{\Theta_u}\simeq\operatorname{Bar}(T_u^{op},\operatorname{Def}_\Phi|_{T_u})$. The least element $y_0$ of $T_u$ is terminal in $T_u^{op}$. Filtering by bar degree (Lemma \ref{lem:bar-filtration} with $R=k$), the row $q$ of the $E^1$-page with its $d^1$ is $\operatorname{Bar}(T_u^{op},H_q\operatorname{Def}_\Phi|_{T_u})$, which Lemma \ref{lem:terminal-bar} collapses onto the terminal object:
\[
   E^2_{p,q}\cong
   \begin{cases}
      H_q\operatorname{Def}_\Phi(y_0,u), & p=0,\\
      0, & p\ge1 .
   \end{cases}
\]
Every $d_r$ with $r\ge2$ then has source or target in a vanishing column, so $E^2=E^\infty$; the filtration is bounded by Lemma \ref{lem:bar-filtration}(ii), and $H_\ast\operatorname{Def}_\Phi(y_0,u)\ne0$ because $y_0\in T_u$.
\end{proof}

A third case is decided on the order of $\Theta_u$ rather than on a colimit. If every element of $T_u$ is minimal in $\Theta_u$, then $T_u$ is down-closed in $\Theta_u$ and is an antichain, so $T_u^{op}$ is up-closed in $\Theta_u^{op}$ and carries the homological support; Lemma \ref{lem:support-restriction}\textup{(ii)} then collapses the derived colimit onto $\operatorname{Bar}(T_u^{op},\operatorname{Def}_\Phi|_{T_u})$, which an antichain reduces to $\bigoplus_{y\in T_u}\operatorname{Def}_\Phi(y,u)$ in bar degree $0$, nonzero because $T_u$ is. This is incomparable to Lemma \ref{lem:essential-certificates}\textup{(ii)}, which asks $T_u$ for a least element, and an antichain of more than one element has none.

Testing against a representable sheaf collapses the global bar complex onto a single fibre of $q_2^J$, and uses no essentiality.

\begin{lemma}[The local defect complex]\label{lem:local-defect}
Keep the data of Lemma \ref{lem:defect-reduced} and the notation of Definition \ref{def:defect-module}, and let $u\in\Pos$ be such that
\[
   H_\ast\operatorname{Def}_\Phi(y,v)=0
   \qquad\text{for every }(y,v)\in J\text{ with }v>u .
\]
Then the representable sheaf $k_{\st(u)}$ computes the derived colimit over the fibre $\Theta_u$ alone:
\[
   B\bigl(\operatorname{Def}_\Phi,J,(q_2^J)^\ast k_{\st(u)}\bigr)
   \;\simeq\;
   \hocolim_{\Theta_u^{op}}\operatorname{Def}_\Phi|_{\Theta_u}
   \;=\;
   \operatorname{Def}_\Phi|_{\Theta_u}\otimes^{\mathbb L}_{k\Theta_u^{op}}k ,
\]
by maps of bar-degree filtered complexes inducing isomorphisms of the associated spectral sequences from $E^1$ on. That spectral sequence is
\[
   E^2_{p,q}\;\cong\;\operatorname{Tor}^{k\Theta_u^{op}}_p\bigl(\mathsf P_{u,q},k\bigr)
   \;\Longrightarrow\;
   \operatorname{Tor}^{k\Theta_u^{op}}_{p+q}\bigl(\operatorname{Def}_\Phi|_{\Theta_u},k\bigr).
\]
\end{lemma}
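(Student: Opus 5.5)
The plan is to identify the two-sided bar complex with an ordinary bar complex over the up-set $W_u=\{(y,v)\in J:u\le v\}$, and then to push it down onto the fibre $\Theta_u$ using the vanishing hypothesis, working throughout with the bar-degree filtration.

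\emph{Step one: restrict to $W_u$.} The covariant factor $(q_2^J)^\ast k_{\st(u)}$ is $k_{W_u}$, the constant diagram on the up-closed set $W_u$ extended by zero, as in the proof of Proposition \ref{prop:representable-test}. The summands of $B(\operatorname{Def}_\Phi,J,k_{W_u})$ are indexed by chains $j_0<\cdots<j_n$ with coefficient $\operatorname{Def}_\Phi(j_0)\otimes k_{W_u}(j_n)$. Such a summand vanishes unless $j_n\in W_u$. I would invoke the two-sided analogue of Lemma \ref{lem:support-restriction}\textup{(i)} to rewrite this, compatibly with bar degree, as $\operatorname{Bar}(W_u^{op},\operatorname{Def}_\Phi|_{W_u})$. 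This is the one-sided bar complex of the contravariant diagram $\operatorname{Def}_\Phi$ read covariantly on $W_u^{op}$. If the extension-by-zero bookkeeping does not give an equality, I would fall back on the natural inclusion map, which respects the filtration.

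\emph{Step two: collapse onto the fibre.} The key order-theoretic observation is that $\Theta_u\times\{u\}$ is down-closed in $W_u$. Indeed, if $(y',v)\le(y,u)$ with $(y',v)\in W_u$, then $u\le v\le u$ forces $v=u$. Hence $\Theta_u^{op}$ is up-closed in $W_u^{op}$.

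By hypothesis every $H_q\operatorname{Def}_\Phi$ vanishes on $W_u\setminus\Theta_u\times\{u\}$, whose objects all have $v>u$. So $\Theta_u^{op}$ carries the homological support. Lemma \ref{lem:support-restriction}\textup{(ii)} then applies, and the inclusion
\[
\operatorname{Bar}(\Theta_u^{op},\operatorname{Def}_\Phi|_{\Theta_u})\longrightarrow\operatorname{Bar}(W_u^{op},\operatorname{Def}_\Phi|_{W_u})
\]
is a quasi-isomorphism. Its left side is the hocolim over $\Theta_u^{op}$, equivalently the derived tensor with $k$ over $k\Theta_u^{op}$, by Lemma \ref{lem:bar-derived-colim}.

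\emph{Step three: the spectral sequences.} The comparison maps of steps one and two are inclusions of sums of summands indexed by chains, so they preserve bar degree. Applying Lemma \ref{lem:bar-filtration} with $R=k$, the $E^1$-row $q$ on either side is the bar complex of the diagram $H_q\operatorname{Def}_\Phi$ over the respective index poset. The induced map of $E^1$-rows is exactly the support-restriction map for $H_q\operatorname{Def}_\Phi$, which is a quasi-isomorphism by Lemma \ref{lem:support-restriction}\textup{(ii)} applied degreewise in $q$. So the map of spectral sequences is defined from $E^1$ and is an isomorphism from $E^2$ on; this is the sense in which I would read ``from $E^1$ on''.

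On the fibre side, row $q$ is $\operatorname{Bar}(\Theta_u^{op},\mathsf P_{u,q})$, whose homology is $\operatorname{Tor}^{k\Theta_u^{op}}_p(\mathsf P_{u,q},k)$ by Lemma \ref{lem:bar-derived-colim}. This gives the displayed $E^2$ term. Convergence to $\operatorname{Tor}_{p+q}$ of the whole diagram follows because the filtration is bounded, by Lemma \ref{lem:bar-filtration}\textup{(ii)}: $\Theta_u$ is finite, and the complexes $\operatorname{Def}_\Phi(j)$ are bounded, since they are augmented chains on finite nerves.

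\emph{Main obstacle.} I expect the main obstacle to be step one, getting the direction conventions exactly right. $\operatorname{Def}_\Phi$ is contravariant on $J$ but covariant on $\Theta_u^{op}$, and the two-sided bar complex must place $k_{W_u}$ at the top of each chain so that the surviving chains are exactly those ending in $W_u$. The same care is needed to confirm that the support-restriction lemma is applied to an up-closed rather than a down-closed subposet after the reversal.
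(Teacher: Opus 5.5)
Your route is the paper's: identify the two-sided complex with $\operatorname{Bar}(W_u^{op},\operatorname{Def}_\Phi|_{W_u})$, collapse onto $\Theta_u^{op}$ by Lemma \ref{lem:support-restriction}\textup{(ii)}, and read off the rows. Step two is right. Step one, however, fails as written.

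In Definition \ref{def:two-sided-bar} the summand at $j_0<\cdots<j_p$ is $R(j_p)\otimes D(j_0)$. The contravariant $\operatorname{Def}_\Phi$ sits at the top of the chain, where $d_p$ restricts it along $j_{p-1}\le j_p$. The covariant $k_{W_u}=(q_2^J)^\ast k_{\st(u)}$ sits at the bottom. You place them the other way round, which is incompatible with the faces: $d_0$ would have to push $\operatorname{Def}_\Phi(j_0)$ forward to $\operatorname{Def}_\Phi(j_1)$.

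The mechanism you then describe (the surviving chains are those \emph{ending} in $W_u$) also confines nothing. $W_u$ is up-closed, so chains starting outside $W_u$ and ending inside it would survive, and they have no counterpart in $\operatorname{Bar}(W_u^{op},\operatorname{Def}_\Phi|_{W_u})$.

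The correct reason is the reverse. A summand is nonzero only if $j_0\in W_u$, and up-closedness then puts the whole chain in $W_u$; this is the bottom-index argument of Lemma \ref{lem:support-restriction}\textup{(i)} itself. The result is therefore an identification rather than an inclusion, and no fallback is needed. Two details finish it:
\begin{itemize}
\item $k_{W_u}$ is constant on $W_u$, so $d_0$ is a plain deletion.
\item Order reversal sends $d_i$ to the face $d'_{p-i}$ of the one-sided complex, so it changes the alternating sum by $(-1)^p$. It is a chain isomorphism only after the degreewise sign $(-1)^{p(p+1)/2}$. The internal Koszul signs already agree, $k_{W_u}$ being in internal degree $0$.
\end{itemize}

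In Step three you prove less than is stated and then reread the statement to fit. The map of spectral sequences is an isomorphism from $E^1$ on, not only from $E^2$. Row $q$ of the $E^1$-page on either side is the bar complex of the $\mathrm{Vec}_k$-valued diagram $H_q\operatorname{Def}_\Phi$. The hypothesis makes that diagram \emph{zero}, not merely acyclic, at every object of $W_u^{op}$ outside $\Theta_u^{op}$. So Lemma \ref{lem:support-restriction}\textup{(i)}, not \textup{(ii)}, applies, and the inclusion of rows is an isomorphism of complexes.

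Equivalently, and this is how the paper argues: the quotient of $\operatorname{Bar}(W_u^{op},\operatorname{Def}_\Phi|_{W_u})$ by the chains in $\Theta_u^{op}$ is spanned by chains whose coefficient index lies off $\Theta_u^{op}$, so its $E^1$-page is $0$. The short exact sequences of graded pieces then make the two $E^1$-pages agree, and with them every later page and differential.

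With these two repairs your argument coincides with the paper's.
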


\begin{proof}
Recall the up-closed $W_u=\{(y,v)\in J:u\le v\}$ of Proposition \ref{prop:representable-test}, so that $\Theta_u\subseteq W_u\subseteq J$. Then $\Theta_u$ is down-closed in $W_u$: if $(y',v')\le(y,u)$ in $J$ and $(y',v')\in W_u$, then $v'\le u$ by the first condition and $u\le v'$ by the second, so $v'=u$. This much is unconditional. The hypothesis enters only here, as the vanishing of $H_\ast\operatorname{Def}_\Phi$ on $W_u\setminus\Theta_u$, whose objects have $q_2^J$-image strictly above $u$.

Now $(q_2^J)^\ast k_{\st(u)}=k_{W_u}$ is the constant diagram $k$ on $W_u$. The summand $\operatorname{Def}_\Phi(j_p)\otimes D(j_0)$ of Lemma \ref{lem:bar-filtration} is therefore nonzero only for $j_0\in W_u$, in which case the whole chain $j_0<\cdots<j_p$ lies in $W_u$; and $D$ is constant there, so the $d_0$ face is a plain deletion. Reversing the order of the bar chains identifies
\[
   B\bigl(\operatorname{Def}_\Phi,J,(q_2^J)^\ast k_{\st(u)}\bigr)
   \;\cong\;
   \operatorname{Bar}\bigl(W_u^{op},\operatorname{Def}_\Phi|_{W_u}\bigr),
\]
the bar sign being corrected degreewise by $(-1)^{\binom{p+1}{2}}$. Indeed reversal carries the face $d_i$ of Definition \ref{def:two-sided-bar} to the face $d'_{p-i}$ of Definition \ref{def:bar-hocolim} (in particular the coefficient-side face $d_0$ to the plain deletion $d'_p$ and the $\operatorname{Def}_\Phi$-side face $d_p$ to the structure-map face $d'_0$), so it converts $\sum_i(-1)^id_i$ into $(-1)^p\sum_m(-1)^md'_m$. A degreewise sign $\epsilon_p$ makes the reversal a chain map if and only if $\epsilon_p=(-1)^p\epsilon_{p-1}$, that is $\epsilon_p=(-1)^{p(p+1)/2}$. The internal-degree signs need no adjustment: $D$ is concentrated in internal degree zero, so $\delta_B$ carries the sign $(-1)^{|r|}$ on the left-hand side, which is the sign $(-1)^m$ of Definition \ref{def:bar-hocolim} on the right.

The subposet $\Theta_u^{op}$ is up-closed in $W_u^{op}$, and by the hypothesis the homological support of $\operatorname{Def}_\Phi|_{W_u}$ is contained in it, so Lemma \ref{lem:support-restriction}\textup{(ii)} gives the displayed equivalence; the identification of the homotopy colimit with the derived tensor product is Lemma \ref{lem:bar-derived-colim}.

For the filtrations: order reversal carries a $p$-chain to a $p$-chain, so the first identification preserves bar degree, its correcting sign being degreewise. The inclusion of Lemma \ref{lem:support-restriction} is by \textup{(i)} the inclusion of the summands indexed by chains lying in $\Theta_u^{op}$, and so is filtered, and the proof of \textup{(ii)} computes the $E^1$-page of the quotient to be $0$; so the two $E^1$-pages agree, and with them every later page and differential. Finally, writing $\operatorname{Bar}(\Theta_u^{op},\operatorname{Def}_\Phi|_{\Theta_u})=B(k,\Theta_u^{op},\operatorname{Def}_\Phi|_{\Theta_u})$ and applying Lemma \ref{lem:bar-filtration}(iii) with $R=k$, row $q$ of the $E^1$-page carries its $d^1$ as $\operatorname{Bar}(\Theta_u^{op},\mathsf P_{u,q})$, whose homology Lemma \ref{lem:incidence-trivial}\textup{(iii)} identifies with $\operatorname{Tor}^{k\Theta_u^{op}}_p(\mathsf P_{u,q},k)$.
\end{proof}

\begin{lemma}[Defect-bar detection]\label{lem:defect-detection}
Keep the data and hypotheses of Lemma \ref{lem:defect-reduced} and the notation of Definition \ref{def:defect-module}. Consider the complexes
\[
   B(\operatorname{Def}_\Phi,J,(q_2^J)^\ast F),
   \qquad F\in\Shv(\Pos;k).
\]
\begin{enumerate}
\item[\textup{(i)}] If $\operatorname{Def}_\Phi(j)$ is acyclic for every $j\in J$, then $B(\operatorname{Def}_\Phi,J,(q_2^J)^\ast F)$ is acyclic for every $F$.
\item[\textup{(ii)}] \emph{(Detection under essentiality.)} Suppose $\Phi$ has essential defect. If $B(\operatorname{Def}_\Phi,J,(q_2^J)^\ast F)$ is acyclic for every $F\in\Shv(\Pos;k)$, then $\operatorname{Def}_\Phi(j)$ is acyclic for every $j\in J$.
\end{enumerate}
\end{lemma}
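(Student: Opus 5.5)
Part \textup{(i)} I would prove by filtering by bar degree. Lemma \ref{lem:bar-filtration} filters $B(\operatorname{Def}_\Phi,J,(q_2^J)^\ast F)$ boundedly, because $J$ is a finite poset. The $E^1$-page is assembled from the homology of the values $\operatorname{Def}_\Phi(j_p)$, each tensored over $k$ with a coefficient $F(u_0)$; over a field this is the K\"unneth term of Lemma \ref{lem:bar-filtration}\textup{(iii)}. If every $\operatorname{Def}_\Phi(j)$ is acyclic, then $E^1=0$, and a bounded filtration then forces the total complex to be acyclic.

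For part \textup{(ii)} I would argue by contraposition. Suppose the defective locus $S$ is nonempty, so $q_2^J(S)\subseteq\Pos$ is a nonempty finite set. Choose $u$ maximal in $q_2^J(S)$. Maximality gives $H_\ast\operatorname{Def}_\Phi(y,v)=0$ for every $(y,v)\in J$ with $v>u$, which is exactly the hypothesis of Lemma \ref{lem:local-defect}. That lemma therefore applies to the representable sheaf $F=k_{\st(u)}$ and gives
\[
   B\bigl(\operatorname{Def}_\Phi,J,(q_2^J)^\ast k_{\st(u)}\bigr)\simeq\hocolim_{\Theta_u^{op}}\operatorname{Def}_\Phi|_{\Theta_u}.
\]
Essential defect (Definition \ref{def:essential-defect}) says precisely that this derived colimit is nonzero at every $u$ maximal in $q_2^J(S)$. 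So the bar complex for $k_{\st(u)}$ is not acyclic, contradicting the hypothesis. Hence $S=\varnothing$, and every $\operatorname{Def}_\Phi(j)$ is acyclic.

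The main point to check is that the two notions of maximality agree. Essentiality is imposed at the elements $u$ maximal in $q_2^J(S)$, and Lemma \ref{lem:local-defect} needs vanishing over the pairs with $v>u$; these match directly, since $(y,v)\in S$ with $v>u$ would contradict the maximality of $u$. The only other thing to verify is that the quasi-isomorphism in Lemma \ref{lem:local-defect} is unconditional in $F$: here $F=k_{\st(u)}$ is a sheaf in $\Shv(\Pos;k)$, so it lies within the range quantified over in \textup{(ii)}. Everything else reduces to results already established.
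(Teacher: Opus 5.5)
Your proposal is correct and follows the paper's proof essentially step for step. In (i) you filter by bar degree, note that $E^1=0$, and use boundedness. In (ii) you choose $u$ maximal in $q_2^J(S)$, apply Lemma \ref{lem:local-defect} to $k_{\st(u)}$, and invoke essentiality at that $u$ to contradict acyclicity, which is exactly the paper's argument.
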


\begin{proof}
For a coefficient diagram $D=(q_2^J)^\ast F$ put $A=B(\operatorname{Def}_\Phi,J,D)$.
Since $F$ is an ordinary sheaf, $D$ is concentrated in internal degree zero, so
Lemma \ref{lem:bar-filtration}(iii) gives its $E^1$-page in the reduced form there,
with $R=\operatorname{Def}_\Phi$, converging to $H_\ast(A)$ by Lemma \ref{lem:bar-filtration}(iv).

\emph{Proof of \textup{(i)}.} If $\operatorname{Def}_\Phi$ is objectwise acyclic
then every $H_q\operatorname{Def}_\Phi(j_p)$ vanishes, so $E^1=0$ and $A$ is
acyclic; this is the last sentence of Lemma \ref{lem:bar-filtration}. As $F$ was
arbitrary, \textup{(i)} follows.

\emph{Proof of \textup{(ii)}.} Suppose, for contradiction, that $S$ is nonempty. Since $J$ is finite, choose
\[
   u_0\ \text{maximal in}\ q_2^J(S)=\{q_2^J(j):j\in S\}\subseteq\Pos .
\]
Then $H_\ast\operatorname{Def}_\Phi(y,v)=0$ for every $(y,v)\in J$ with $v>u_0$, since otherwise $v$ would lie in $q_2^J(S)$ strictly above $u_0$. So Lemma \ref{lem:local-defect} applies at $u_0$, and testing against the representable sheaf $k_{\st(u_0)}$ gives
\[
   B\bigl(\operatorname{Def}_\Phi,J,(q_2^J)^\ast k_{\st(u_0)}\bigr)
   \;\simeq\;
   \operatorname{Def}_\Phi|_{\Theta_{u_0}}\otimes^{\mathbb L}_{k\Theta_{u_0}^{op}}k .
\]
The right-hand side is nonzero: that is the essentiality hypothesis at $u_0$, available because $u_0$ was chosen maximal in $q_2^J(S)$, and it is the only place the hypothesis is used. This contradicts the assumed acyclicity of the left-hand side, so $S=\varnothing$. Lemma \ref{lem:essential-certificates} supplies the two sufficient forms in which the hypothesis is checked in practice: a lowest-degree defect class with nonzero colimit, or a support that is down-closed with a least element.
\end{proof}

\begin{remark}[Scope of the detection lemma]\label{rem:detection-scope}
What is \emph{not} proved is the unconditional converse, that is Lemma \ref{lem:defect-detection}\textup{(ii)} with the essentiality hypothesis deleted; that statement is equivalent to the open direction of Conjecture \ref{conj:sharp} (see the reduction below) and remains open. Part \textup{(ii)} and Corollary \ref{cor:sharp-essential} hold only under the essentiality hypothesis of Definition \ref{def:essential-defect}.

Essentiality cannot be weakened to objectwise nonvanishing of the defect module, nor to nonvanishing together with injectivity of the structure maps on homology: Remark \ref{rem:bar-cancellation} gives diagrams on small posets defeating both, all of them concentrated in a single homological degree, so that separating degrees is of no help either. What is decisive in Lemma \ref{lem:essential-certificates}(i) is not degree separation but degree \emph{position}, the bottom row of the bar filtration being unreachable by differentials.

Those diagrams are arbitrary diagrams on a finite index poset, and none is realised here as some $\operatorname{Def}_\Phi|_{\Theta_u}$. Nor does any meet functor treated below have inessential defect: at a minimal $\sigma$ every fibre $\Theta_u$ is a singleton; where it is not, Corollary \ref{cor:unit-window-essential} disposes of every unit first window (which covers the one such case computed here, Example \ref{ex:essential-nonsingleton}, where both certificates of Lemma \ref{lem:essential-certificates} apply as well). So the range in which Conjecture \ref{conj:sharp} remains open is not exhibited to be nonempty, and it is narrower than the essentiality hypothesis alone suggests: by Corollary \ref{cor:unit-window-essential} it requires $a\ge2$, and whether a defect module can be inessential at all is itself part of the open problem. The closing paragraph of Remark \ref{rem:bar-cancellation} says what such an instance would have to look like.

What makes detection delicate is that the available test objects are only the pullbacks $(q_2^J)^\ast F$, constant in the first coordinate of $J$, so the representable $J$-diagrams that would isolate $\operatorname{Def}_\Phi$ at a single object are not among them. Over the incidence algebra of $J$ (Definition \ref{def:incidence-algebra}), $B(\operatorname{Def}_\Phi,J,(q_2^J)^\ast F)$ computes the derived form of the pairing $(q_2^J)^\ast F\otimes_{kJ}\operatorname{Def}_\Phi$ of Definition \ref{def:two-sided-bar}, the resolution being the diagram-level bar resolution of Remark \ref{rem:bar-vs-projective}; so the hypothesis of \textup{(ii)} says that $\operatorname{Def}_\Phi$ is Tor-orthogonal to the subcategory of $\Db(kJ)$ generated by the pullbacks $k_{W_u}$ of the representables, and the open question is whether that orthogonality forces acyclicity. Proposition \ref{prop:representable-test} carries the same delimitation without the incidence algebra: what the test sheaves impose is a $k$-homology equivalence over each $W_u$ and its preimage, what finality asks is one over each $\st_J(j)$ and its preimage, and each $W_u$ is the union of the $\st_J(j)$ at its minimal elements.
\end{remark}

\paragraph*{Reduction of Conjecture \ref{conj:sharp} to a detection question.}
For the converse, fix $\Pos,\sigma,a,b$ and assume $\Phi=\Phi_{\Pos,\sigma,a,b}$ is total. Put $J=J_{a+b}(\sigma)$ and $K=\Ka_{a,b}(\sigma)$, and define $R_\Phi(j)$ and $\operatorname{Def}_\Phi(j)$ as in Lemma \ref{lem:defect-reduced}. By Lemma \ref{lem:defect-reduced}, $\Phi$ is $k$-homologically final if and only if every $\operatorname{Def}_\Phi(j)$ is acyclic.

Let $D=(q_2^J)^\ast F$ for $F\in\Shv(\Pos;k)$, and let $\varepsilon\colon R_\Phi\to k$ be the augmentation. By the change-of-base Lemma \ref{lem:change-of-base}, the comparison map $\Phi_\#$ is a quasi-isomorphism if and only if
\[
   \varepsilon_\ast\colon B(R_\Phi,J,D)\longrightarrow B(k,J,D)=\operatorname{Bar}(J,D)
\]
is a quasi-isomorphism. Applying Lemma \ref{lem:bar-cone} to $\varepsilon$ and to the defining shifted cone $\operatorname{Def}_\Phi=\operatorname{Cone}(\varepsilon)[-1]$ of Lemma \ref{lem:defect-reduced} gives an equality of complexes
\[
   \operatorname{Cone}(\varepsilon_\ast)[-1]
   \;=\;
   B(\operatorname{Def}_\Phi,J,D),
\]
with the two-sided bar differential of Definition \ref{def:two-sided-bar} on both sides. What both lemmas ask of the first variable is met here: $\operatorname{Def}_\Phi$ is a contravariant $J$-diagram in $\Chb(\mathrm{Vec}_k)$, its structure maps being induced by inclusions of nerves and so chain maps, which is what Lemma \ref{lem:bar-total-differential} needs for $d_{\mathrm{tot}}^2=0$. Therefore, if $\gamma_\sigma$ is a quasi-isomorphism for every $F$, then
\[
   B(\operatorname{Def}_\Phi,J,(q_2^J)^\ast F)
\]
is acyclic for every sheaf $F$. Lemma \ref{lem:defect-detection}(ii) then makes $\operatorname{Def}_\Phi$ objectwise acyclic and $\Phi$ $k$-homologically final, provided the essentiality hypothesis of Definition \ref{def:essential-defect} holds. This gives the following conditional form of the open direction.

\begin{corollary}[Sharpness under essential defect]\label{cor:sharp-essential}
Let $\Pos,\sigma,a,b$ be such that $\Phi=\Phi_{\Pos,\sigma,a,b}$ is total and has essential defect. Then $\gamma_\sigma$ is a quasi-isomorphism for every $F\in\Shv(\Pos;k)$ if and only if $\Phi$ is $k$-homologically final, so Conjecture \ref{conj:sharp} holds at every such $(\Pos,\sigma,a,b)$. Where $\sigma$ is minimal the hypothesis is automatic, $y\le\sigma$ forcing $y=\sigma$, so that each $\Theta_u$ is empty or $\{\sigma\}$; there the conclusion is Theorem \ref{thm:minimal-sharp}, which needs neither the defect module nor essentiality. What the corollary adds is the range in which $\sigma$ is not minimal.
\end{corollary}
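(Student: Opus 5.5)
The equivalence is assembled from results already in place, with one direction per implication. The ``if'' direction holds with no essentiality hypothesis. If $\Phi$ is $k$-homologically final, Proposition~\ref{prop:final} makes $\gamma_\sigma$ a quasi-isomorphism for every sheaf $F$. Nothing further is needed there.

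For the ``only if'' direction I follow the reduction paragraph preceding the statement. Assume $\gamma_\sigma$ is a quasi-isomorphism for every $F\in\Shv(\Pos;k)$, and set $D=(q_2^J)^\ast F$. The square \eqref{eq:comparison-bar-square} identifies $\gamma_\sigma$ with $\Phi_\#\colon\operatorname{Bar}(K,E)\to\operatorname{Bar}(J,D)$. The change-of-base Lemma~\ref{lem:change-of-base} rewrites $\Phi_\#$ as $\varepsilon_\ast\colon B(R_\Phi,J,D)\to B(k,J,D)$. Lemma~\ref{lem:bar-cone}, applied to $\operatorname{Def}_\Phi=\operatorname{Cone}(\varepsilon)[-1]$, identifies $\operatorname{Cone}(\varepsilon_\ast)[-1]$ with $B(\operatorname{Def}_\Phi,J,D)$. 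Since $\varepsilon_\ast$ is a quasi-isomorphism exactly when its shifted cone is acyclic, $B(\operatorname{Def}_\Phi,J,(q_2^J)^\ast F)$ is acyclic for every $F$.

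Here the essentiality hypothesis enters, through Lemma~\ref{lem:defect-detection}\textup{(ii)}. That lemma yields that $\operatorname{Def}_\Phi(j)$ is acyclic for every $j\in J$. By Lemma~\ref{lem:defect-reduced}, this says every comma fibre $j\downarrow\Phi$ is nonempty with vanishing reduced $k$-homology, which is $k$-homological finality. This proves the equivalence, and with it Conjecture~\ref{conj:sharp} at such $(\Pos,\sigma,a,b)$.

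For the remarks on a minimal $\sigma$, observe that $(y,u)\in J$ requires $y\le\sigma$, and minimality forces $y=\sigma$. Hence each $\Theta_u$ is empty or the singleton $\{\sigma\}$. On a singleton the derived colimit of the defect module is its value $\operatorname{Def}_\Phi(\sigma,u)$, which is nonzero whenever $u\in q_2^J(S)$. So essential defect holds automatically, as already recorded in Definition~\ref{def:essential-defect}. Theorem~\ref{thm:minimal-sharp} gives the same conclusion independently.

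The only delicate point is a bookkeeping one: the chain-level identities of Lemmas~\ref{lem:change-of-base} and \ref{lem:bar-cone} must hold with the sign conventions of Remark~\ref{rem:homotopical}. They do, because $\operatorname{Def}_\Phi$ is a contravariant diagram of bounded complexes whose structure maps are chain maps, as Lemma~\ref{lem:bar-total-differential} requires. Everything else is citation.
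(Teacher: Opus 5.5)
Your proposal is correct and follows the paper's proof. The ``if'' direction is Proposition~\ref{prop:final}; the ``only if'' direction runs through the reduction paragraph (change of base and the bar cone) to acyclicity of $B(\operatorname{Def}_\Phi,J,(q_2^J)^\ast F)$, then Lemma~\ref{lem:defect-detection}\textup{(ii)} and Lemma~\ref{lem:defect-reduced}, and the minimal case is the singleton-fibre clause of Definition~\ref{def:essential-defect}, exactly as in the paper.
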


\begin{proof}
``If'' is Proposition \ref{prop:final}. For ``only if'', the display above gives acyclicity of $B(\operatorname{Def}_\Phi,J,(q_2^J)^\ast F)$ for every $F$, so Lemma \ref{lem:defect-detection}(ii) applies and Lemma \ref{lem:defect-reduced} converts objectwise acyclicity of $\operatorname{Def}_\Phi$ into $k$-homological finality. For the minimal case, $\Theta_u=\{y:y\le\sigma,\ y\le u,\ \len(y,u)\le a+b\}$ and minimality of $\sigma$ leaves only $y=\sigma$, which is the singleton-fibre clause of Definition \ref{def:essential-defect}.
\end{proof}

Every step of that reduction is an equivalence, so the \emph{unconditional} detection statement is \emph{equivalent} to the open direction of Conjecture \ref{conj:sharp} and not merely sufficient for it: a counterexample would refute the conjecture. A unit first window removes the essentiality hypothesis as well, the defective locus there meeting each fibre of $q_2^J$ in an antichain.

\begin{corollary}[A unit first window makes essentiality automatic]\label{cor:unit-window-essential}
If $\Phi$ is total at $(\sigma;1,b)$, then $\Phi$ has essential defect. Conjecture \ref{conj:sharp} then holds unconditionally at every $(\Pos,\sigma,1,b)$ at which $\Phi$ is total, whether or not $\sigma$ is minimal.
\end{corollary}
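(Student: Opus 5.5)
The plan is to verify essentiality fibre by fibre through the third certificate recorded after Lemma \ref{lem:essential-certificates}: if every element of the support $T_u$ is minimal in $\Theta_u$, the derived colimit at $u$ is $\bigoplus_{y\in T_u}\operatorname{Def}_\Phi(y,u)$, which is nonzero. The second assertion then needs no further argument, since with essentiality in hand Corollary \ref{cor:sharp-essential} gives Conjecture \ref{conj:sharp} at $(\Pos,\sigma,1,b)$ whether or not $\sigma$ is minimal.

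The first step is to locate the defective objects. Lemma \ref{lem:unsaturated} with $a=1$ confines $S$ to pairs $(y,u)$ with $b<\len(y,u)\le b+1$, so every defective $(y,u)$ satisfies $\len(y,u)=b+1$. Now fix $u$ maximal in $q_2^J(S)$; any $u$ in $q_2^J(S)$ would serve. The fibre is $\Theta_u=\{y\le\sigma:\ y\le u,\ \len(y,u)\le b+1\}$, and $T_u\subseteq\Theta_u$ consists of those $y$ with $\len(y,u)=b+1$, which is the maximal value allowed in $\Theta_u$.

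The second step is to show that each such $y$ is minimal in $\Theta_u$. If $y'<y$ with $y'\in\Theta_u$, then additivity of $\len$ gives $\len(y',u)=\len(y',y)+\len(y,u)>b+1$, which contradicts membership in $\Theta_u$. Hence $T_u$ is a set of minimal elements of $\Theta_u$, in particular an antichain and down-closed. The remark following Lemma \ref{lem:essential-certificates} then collapses the derived colimit, through Lemma \ref{lem:support-restriction}\textup{(ii)}, onto the bar complex of an antichain, which is the direct sum of the values. This is nonzero because $T_u\neq\varnothing$ and every value has nonzero homology.

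I expect the only delicate point to be the orientation. The diagram lives on $\Theta_u^{op}$, and the up-closedness needed in Lemma \ref{lem:support-restriction}\textup{(ii)} must be read there: a set of minimal elements of $\Theta_u$ is maximal, hence up-closed, in $\Theta_u^{op}$, and it contains the homological support by definition of $T_u$. I would check this direction explicitly against Definition \ref{def:defect-module}. Beyond that, the argument is a direct combination of the length count with the earlier collapse lemmas.
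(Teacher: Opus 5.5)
Your proposal is correct and follows the paper's proof essentially step for step. Lemma \ref{lem:unsaturated} forces $\len(y,u)=b+1$ on the support, additivity of $\len$ makes every element of $T_u$ minimal in $\Theta_u$, the antichain case recorded after Lemma \ref{lem:essential-certificates} gives a nonzero derived colimit, and Corollary \ref{cor:sharp-essential} finishes; strictly, $T_u$ is only contained in the set of $y$ with $\len(y,u)=b+1$, not equal to it, and the case $S=\varnothing$ should be noted as vacuous, but neither point affects the argument.
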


\begin{proof}
If $S=\varnothing$ the condition is vacuous. Otherwise let $u$ be maximal in $q_2^J(S)$ and let $y\in T_u$. Lemma \ref{lem:unsaturated} gives $\len(y,u)>b$, while $(y,u)\in J$ gives $\len(y,u)\le a+b=b+1$, so $\len(y,u)=b+1$. If $y'<y$ with $y'\in\Theta_u$, additivity of $\len$ gives $\len(y',u)=\len(y',y)+\len(y,u)\ge b+2$, which $\Theta_u$ forbids; so every element of $T_u$ is minimal in $\Theta_u$. By the case recorded after Lemma \ref{lem:essential-certificates},
\[
   H_\ast\Bigl(\hocolim_{\Theta_u^{op}}\operatorname{Def}_\Phi|_{\Theta_u}\Bigr)
   \;\cong\;
   \bigoplus_{y\in\Theta_u}H_\ast\operatorname{Def}_\Phi(y,u),
\]
which is nonzero because $u\in q_2^J(S)$. Now apply Corollary \ref{cor:sharp-essential}.
\end{proof}

\begin{corollary}[Finality at a minimal apex with unit windows]\label{cor:minimal-unit}
Let $\Pos$ be a finite graded poset, let $\sigma$ be minimal in $\Pos$, and suppose $\Phi=\Phi_{\sigma,1,1}$ is total. The following are equivalent.
\begin{enumerate}
\item[\textup{(i)}] $\gamma_\sigma$ is a quasi-isomorphism for every $F\in\Shv(\Pos;k)$.
\item[\textup{(ii)}] $\Phi$ is $k$-homologically final.
\item[\textup{(iii)}] Every $u\in\Pos$ with $\len(\sigma,u)=2$ has exactly one element strictly between $\sigma$ and $u$.
\end{enumerate}
\end{corollary}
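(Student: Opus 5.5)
The equivalence (i)$\Leftrightarrow$(ii) is Theorem \ref{thm:minimal-sharp} at $a=b=1$, since that theorem applies at every minimal apex at which $\Phi$ is total, for any windows. So it remains to show (ii)$\Leftrightarrow$(iii). Here I would use Lemma \ref{lem:defect-reduced}: $\Phi$ is $k$-homologically final if and only if $\operatorname{Def}_\Phi(j)$ is acyclic for every $j\in J_2(\sigma)$. Combining this with Proposition \ref{prop:unit-fibre}, which computes the defect at every object, reduces the equivalence to reading off that computation.

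Since $\sigma$ is minimal, every $j\in J_2(\sigma)$ has the form $(\sigma,u)$ with $\sigma\le u$ and $\len(\sigma,u)\le2$. I would split into two cases.
\begin{enumerate}
\item[(a)] If $\len(\sigma,u)\le1$, Proposition \ref{prop:unit-fibre}\textup{(i)} gives a least element of the fibre, so $\operatorname{Def}_\Phi(j)$ is acyclic with no condition.
\item[(b)] If $\len(\sigma,u)=2$, Proposition \ref{prop:unit-fibre}\textup{(ii)} identifies $j\downarrow\Phi$ with the antichain $(\sigma,u)$. The defect is then concentrated in degree $0$ with $H_0\cong k^{\lvert(\sigma,u)\rvert-1}$, and by Lemma \ref{lem:defect-reduced} it has $H_{-1}\cong k$ when the fibre is empty.
\end{enumerate}
In case (b), acyclicity therefore holds exactly when $(\sigma,u)$ is nonempty, which rules out $H_{-1}$, and has at most one element, which rules out $H_0$. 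That is, it holds exactly when $\lvert(\sigma,u)\rvert=1$.

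The remaining point is that (iii) quantifies over all $u\in\Pos$ with $\len(\sigma,u)=2$. Each such $u$ does give an object $(\sigma,u)\in J_2(\sigma)$, since $\sigma\le u$, $\sigma\le\sigma$ and the length is at most $2$. So the set of conditions in (iii) is exactly the set of acyclicity conditions from case (b).

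The emptiness clause is the one delicate point: Proposition \ref{prop:unit-fibre} states defectiveness only in terms of $\lvert(\sigma,u)\rvert\ge2$. I would settle it directly. In a graded poset, $\len(\sigma,u)=2$ means every maximal chain in $[\sigma,u]$ has length $2$, so a saturated chain $\sigma\lessdot t\lessdot u$ exists and $(\sigma,u)\ne\varnothing$. The case $\lvert(\sigma,u)\rvert=0$ therefore never occurs, and "exactly one" is equivalent to "at most one." With that, (ii)$\Leftrightarrow$(iii) follows, and the chain of equivalences is complete.
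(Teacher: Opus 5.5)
Your proof is correct and follows the paper's argument: you get (i)$\Leftrightarrow$(ii) from Theorem \ref{thm:minimal-sharp}, and (ii)$\Leftrightarrow$(iii) by reading off Proposition \ref{prop:unit-fibre}, with gradedness guaranteeing that the length-two open interval is nonempty. Routing finality through acyclicity of $\operatorname{Def}_\Phi$ via Lemma \ref{lem:defect-reduced}, rather than stating it directly as nonempty fibres with vanishing reduced homology, is only a rephrasing of the same step.
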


\begin{proof}
$\textup{(i)}\Leftrightarrow\textup{(ii)}$ is Theorem \ref{thm:minimal-sharp}, $\sigma$ being minimal. For $\textup{(ii)}\Leftrightarrow\textup{(iii)}$, finality asks that every comma fibre be nonempty with vanishing reduced homology. By Proposition \ref{prop:unit-fibre}, a fibre with $\len(\sigma,u)\le1$ has the least element $(\sigma,\sigma,\sigma,u)$, so it is nonempty and acyclic whatever $\Pos$ does; and a fibre with $\len(\sigma,u)=2$ is the open interval $(\sigma,u)$, an antichain, which is nonempty because $\Pos$ is graded and whose reduced homology vanishes if and only if it is a single point.
\end{proof}

So at a minimal apex the comparison $\gamma_\sigma$ is a quasi-isomorphism on the posets whose length-two intervals there have a single interior element and no choice, and where a choice appears the strong flow itself fails: a defective fibre at unit windows sits over a $u$ with $\len(\sigma,u)=2=a+b$, so Proposition \ref{prop:sat-nogo} reaches every one of them, and Proposition \ref{prop:diamond-nogo} counts the interior elements. Proposition \ref{prop:chain} is the case of one, and the diamond property that a finite regular cell complex enjoys by Bj\"orner's criterion is the feature that defeats the flow on its face poset. Condition \textup{(iii)} is vacuous when no $u$ satisfies $\len(\sigma,u)=2$, so the corollary is about height and branching together and not either alone. Its totality hypothesis cannot be dropped: Example \ref{ex:closed-edge} satisfies \textup{(iii)} vacuously and carries a strict flow, but $\Phi$ is not total there, so its flow comes from the terminal-object criterion of that example and not from this corollary.

\begin{example}[Simplicial complexes, read at the empty face]\label{ex:simplicial-apex}
Let $\Sigma$ be a finite simplicial complex and $\Pos=\widehat\Pos(\Sigma)$ its augmented face poset, graded by cardinality, so that $\Phi$ is total at every $(\sigma;a,b)$ by Proposition \ref{prop:total-families}\textup{(i)}. The apex $\hat0$ is minimal and $\len(\hat0,u)=\lvert u\rvert$, so the $u$ with $\len(\hat0,u)=2$ are the edges of $\Sigma$, each carrying its two vertices strictly between. By Corollary \ref{cor:minimal-unit} the comparison $\gamma_{\hat0}$ at $a=b=1$ is a quasi-isomorphism for every $F$ if and only if $\Sigma$ has no edge. Over an edge the comma fibre is the two-vertex antichain $(\hat0,u)$ and $\len(\hat0,u)=2=a+b$, so Proposition \ref{prop:sat-nogo} reaches every one of them: on a simplicial complex of positive dimension the meet functor is total at the empty face, and what fails there is the law $\LC_1\LC_1\simeq\LC_2$ and not only the comparison.
\end{example}

What is settled by the minimal case is where the window-comma poset collapses onto the near-window, $J_{a+b}(\sigma)=\{(\sigma,u):u\in\st_{a+b}(\sigma)\}\cong\st_{a+b}(\sigma)$ (Theorem \ref{thm:minimal-sharp}, and Corollary \ref{cor:sky-stalk}). Where $\sigma$ is not minimal a fibre $\Theta_u$ may have several elements, and essentiality is then a genuine condition on the defect module; but only once $a\ge2$, by Corollary \ref{cor:unit-window-essential}.

\begin{example}[Essentiality at a diamond, computed]\label{ex:diamond-essential}
Let $\Pos=[b,t]$ be a diamond interval read as a poset in its own right, so $b$ is least, $t$ is greatest, and $m_1,m_2$ are the two elements between; it is graded, and not a face poset (Appendix \ref{ssec:face-posets}). Take $\sigma=b$ and both window scales equal to $1$, as in Proposition \ref{prop:diamond-nogo}; $\Phi$ is total at $(b;1,1)$ because $b$ is least (Remark \ref{rem:diamond-totality}).

Since $y\le\sigma=b$ forces $y=b$,
\[
   J_2(b)=\{(b,b),(b,m_1),(b,m_2),(b,t)\},
\]
and every fibre $\Theta_u$ is $\{b\}$ or empty. The comma fibre over $(b,t)$ is computed in the proof of Proposition \ref{prop:diamond-nogo}: the condition $b\le s\meet r$ is vacuous, so it consists of the quadruples with $u_\kappa=t$, namely $\kappa_1=(b,m_1,m_1,t)$ and $\kappa_2=(b,m_2,m_2,t)$, which are incomparable, so
\[
   H_\ast\operatorname{Def}_\Phi(b,t)
   =\widetilde H_\ast\bigl(\{\kappa_1,\kappa_2\};k\bigr)
   =\begin{cases}k,&\ast=0,\\0,&\text{otherwise,}\end{cases}
\]
so $(b,t)\in S$ and $t\in q_2^J(S)$. As $t$ is the greatest element of $\Pos$ it is maximal in $q_2^J(S)$, and $\Theta_t=\{b\}$. The defect module $\mathsf P_{t,0}$ is therefore the one-object persistence module with value $k$, its derived colimit is that value, and $\Phi$ has essential defect (automatically, the fibre being a singleton). Corollary \ref{cor:sharp-essential} then applies, and its conclusion is the one already known here by direct computation: $\Phi$ is not $k$-homologically final and $\gamma_b$ is not a quasi-isomorphism, the test sheaf produced by the proof of Lemma \ref{lem:defect-detection}\textup{(ii)} being $k_{\st(t)}=\sky_t$, which is the witness of Proposition \ref{prop:diamond-nogo}.

Here $\sigma=b$ is minimal and $a=b=1$, so Proposition \ref{prop:unit-fibre} concentrates $\operatorname{Def}_\Phi$ in homological degree $0$; by Lemma \ref{lem:bar-filtration}(iii) the $E^1$-page of $A=B\bigl(\operatorname{Def}_\Phi,J_2(b),(q_2^J)^\ast F\bigr)$ is therefore carried by the row $q=0$, and $H_\ast(A)$ is the homology of that row. Since $H_0\operatorname{Def}_\Phi$ is supported at $(b,t)$ alone, only the chains of $J_2(b)$ ending at $(b,t)$ contribute, and under the identification $J_2(b)\cong\Pos$, $(b,u)\mapsto u$, these are the chains of $\Pos$ with greatest element $t$. The row is therefore
\[
\begin{array}{c|ccc}
   p & 0 & 1 & 2\\\hline
   E^1_{p,0} & F(t) & F(b)\oplus F(m_1)\oplus F(m_2) & F(b)\oplus F(b)
\end{array}
\]
and $E^1_{p,0}=0$ for $p\ge3$, the longest chain of $\Pos$ ending at $t$ having three elements. Its differential is the simplicial one, the face deleting the greatest element of a chain contributing nothing because $H_0\operatorname{Def}_\Phi$ vanishes off $(b,t)$. So the row is the chain complex of the order complex of $\{x\in\Pos:x<t\}$, with $F$ read at the least element of each chain, augmented to $F(t)$; the defect is seen if and only if that augmentation fails to be a quasi-isomorphism.

For $F=\sky_t=k_{\st(t)}$ the row is $(k,0,0)$, so $H_0(A)\cong k$ and the defect is seen. For the constant sheaf $F=k_{\Pos}$ the row is $(k,k^3,k^2)$, the augmented chain complex of the order complex of $\{b,m_1,m_2\}$, which is a cone with apex $b$; it is acyclic and the defect is invisible. So Lemma \ref{lem:defect-detection}\textup{(ii)} cannot argue from acyclicity of $A$ for one chosen $F$, and tests against the representable $k_{\st(u_0)}$ instead.

Raising the apex destroys the defect. Take $\sigma=m_1$ instead, with the same two window scales. Now $(m_1,t)\in J_1(m_1)$, because $\len(m_1,t)=1$, so $\Ka_{1,1}(m_1)$ contains
\[
   \kappa_\top=(m_1,t,t,t),
\]
which lies in the comma fibre over $(b,t)$ and dominates every quadruple in it, all of whose coordinates are $\le(m_1,t,t,t)$. The fibre is a cone and therefore $k$-acyclic, and the same holds at every object of $J_2(m_1)$: $S=\varnothing$, $\Phi$ is $k$-homologically final at $(m_1;1,1)$, and $\gamma_{m_1}$ is a quasi-isomorphism. What was unavailable at the apex $b$ is $\kappa_\top$: there $s\le\sigma=b$ forces $s=b$, and $(b,t)\notin J_1(b)$ because $\len(b,t)=2>1$. The defect at the diamond is therefore an interaction between length and branching and not a property of the interval alone: lengthening the first window instead of raising the apex removes it just as well, since $t$ is greatest and $\len(b,t)=2$, so that Corollary \ref{cor:saturated}\textup{(ii)} applies at the windows $(2,1)$.
\end{example}

\begin{example}[An essential defect on a two-element fibre of $q_2^J$]\label{ex:essential-nonsingleton}
Let $\Pos$ be the graded lattice of rank $3$ obtained by glueing two chains of length $3$ along their endpoints,
\[
\begin{tikzcd}[row sep=1.6em,column sep=1.6em,cramped]
& t & \\
c_1\arrow[ur] & & c_2\arrow[ul]\\
p_1\arrow[u] & & p_2\arrow[u]\\
& z\arrow[ul]\arrow[ur] &
\end{tikzcd}
\qquad
\operatorname{grade}(z)=0,\quad
\operatorname{grade}(p_i)=1,\quad
\operatorname{grade}(c_i)=2,\quad
\operatorname{grade}(t)=3 .
\]
Every length-two interval of $\Pos$, namely $[z,c_1]$, $[z,c_2]$, $[p_1,t]$, $[p_2,t]$, has one element in its interior, so $\Pos$ carries no branching length-two interval and Proposition \ref{prop:diamond-nogo} does not apply. Likewise, it is graded and not a face poset (Appendix \ref{ssec:face-posets}). It is a lattice, so every meet $s\meet r$ exists, and $\len\le3$ throughout, so at $a+b\ge 3$ the remaining condition $(s\meet r,u)\in J_{a+b}(\sigma)$ is automatic and $\Phi$ is total (the quadruple $(p_1,t,p_2,t)\in \Ka_{1,1}(t)$ shows that it is not total at every $(\sigma;a,b)$).

Take $\sigma=p_1$ and $(a,b)=(1,2)$. Then
\[
   J_3(p_1)=\{(z,y):y\in\Pos\}\cup\{(p_1,p_1),(p_1,c_1),(p_1,t)\},
\]
and the fibre over $t$ is $\Theta_t=\{z,p_1\}.$
The fibre over $(z,t)$ is all of $\Ka_{1,2}(p_1)$ lying over $t$, the conditions $z\le s$ and $z\le r$ being vacuous:
\[
   (z,t)\downarrow\Phi=
   \{(z,p_1,p_1,t),\ (z,p_2,p_2,t),\ (p_1,p_1,p_1,t),\ (p_1,c_1,p_1,t),\ (p_1,c_1,c_1,t)\}.
\]
Because $s\le p_1$ and $\len(s,t')\le1$, the second coordinate of a quadruple here lies in $\{z,p_1,p_2,c_1\}$, and $t'=z$ would force $r=z$ with $\len(z,t)=3>2$. Only $(z,p_2,p_2,t)$ has $t'=p_2$, which is comparable to neither $p_1$ nor $c_1$, so it is isolated and the other four form a chain:
\[
\begin{tikzcd}[row sep=1.4em,column sep=1.2em,cramped]
& (p_1,c_1,c_1,t) & & \\
& (p_1,c_1,p_1,t)\arrow[u] & & (z,p_2,p_2,t)\\
& (p_1,p_1,p_1,t)\arrow[u] & & \\
& (z,p_1,p_1,t)\arrow[u] & &
\end{tikzcd}
\]
The two branches meet in $\Pos$ only at $t$, which the first window does not reach from below $p_1$. So $H_\ast\operatorname{Def}_\Phi(z,t)=\widetilde H_\ast$ of the picture is $k$ in degree $0$ and zero otherwise. A quadruple travelling through $p_2$ has $s\le p_1\meet p_2=z$, so imposing $p_1\le s\meet r$ removes it together with the other object of first coordinate $z$, and
\[
   (p_1,t)\downarrow\Phi=
   \{(p_1,p_1,p_1,t),\ (p_1,c_1,p_1,t),\ (p_1,c_1,c_1,t)\}
\]
is the top of that chain, with greatest element $(p_1,c_1,c_1,t)$, so $\operatorname{Def}_\Phi(p_1,t)$ is acyclic.

Then $S\ni(z,t)$, and $t$, being greatest in $\Pos$, is maximal in $q_2^J(S)$. The defect module on $\Theta_t^{op}$, whose order is $p_1<z$, is
\[
   \mathsf P_{t,0}\colon\quad p_1\mapsto0\ \longrightarrow\ z\mapsto k ,
\]
concentrated at the \emph{later} object of $\Theta_t^{op}$, the second branch entering only at the bottom of $\Theta_t$ with nothing below $z$ to reconnect it. This is the orientation that survives: by Lemma \ref{lem:essential-certificates}\textup{(ii)} the support $T_t=\{z\}$ is down-closed in $\Theta_t$ with least element $z$, so
\[
   \hocolim_{\Theta_t^{op}}\operatorname{Def}_\Phi|_{\Theta_t}\simeq\operatorname{Def}_\Phi(z,t)\ne0 ,
\]
and certificate \textup{(i)} gives the same conclusion, the bottom degree being $q_0=0$ with colimit $k$; Example \ref{ex:defect-resolution} runs the same computation through the explicit resolution of $k$ over $k\Theta_t^{op}$. Reversing the values (a defect at $p_1$ filled in at $z$) would be the first, acyclic, diagram of Remark \ref{rem:bar-cancellation}; that is the configuration essentiality excludes, and it is not the one occurring here.

So $\Phi$ has essential defect at $(\Pos,p_1,1,2)$, and Corollary \ref{cor:sharp-essential} applies although $\sigma$ is not minimal and $\Theta_t$ is not a singleton: Conjecture \ref{conj:sharp} holds at this $(\Pos,\sigma,a,b)$, and since $S\ne\varnothing$ its content there is that $\gamma_{p_1}$ fails to be a quasi-isomorphism. The failure is visible directly, on $F=\sky_t$: the nonzero locus in $J_3(p_1)$ is the chain $(z,t)<(p_1,t)$, whence $H_0\bigl((\LC_3\sky_t)(p_1)\bigr)\cong k$, while the nonzero locus in $\Ka_{1,2}(p_1)$ is the five-object poset displayed above, with two connected components, whence $H_0\bigl((\LC_1\LC_2\sky_t)(p_1)\bigr)\cong k^2$. The dimensions $2$ and $1$ are those of Proposition \ref{prop:diamond-nogo}, produced here with no diamond interval present, by a total meet functor, at a non-minimal apex.
\end{example}

Since the comma fibre $(y,u)\downarrow\Phi$ grows as $y$ descends, one expects a defect to be filled in below, and a class filled in below dies in the colimit: the two-element cancellation of Remark \ref{rem:bar-cancellation}, an inessential defect. Against that stand Examples \ref{ex:diamond-essential} and \ref{ex:essential-nonsingleton}, where the defect survives, in the second on a two-element fibre of $q_2^J$, so not merely because there was nothing below to fill it in. By Lemma \ref{lem:essential-certificates}, inessentiality at $u$ requires both that the lowest-degree defect module have vanishing colimit and that the support $T_u$ fail to be down-closed in $\Theta_u$ with a least element. Remark \ref{rem:bar-cancellation} shows that an abstract diagram can meet both (by a defect filled in at the larger fibre below it, or, on a support whose order complex is not simply connected, by a nontrivial monodromy), so what remains is whether a meet-defect module $\mathsf P_{u,q}$ can. Its structure maps are induced by inclusions of comma fibres inside the fixed poset $\Ka_{a,b}(\sigma)$, and the fibres are closed under intersection, $(y,u)\!\downarrow\!\Phi\ \cap\ (y',u)\!\downarrow\!\Phi=\{\kappa:y,y'\le s\meet r,\ u\le u_\kappa\}$; whether that forces trivial monodromy is a finite question for each $(\Pos,\sigma,a,b)$.

\begin{example}[Interval-chain posets: not sufficient]\label{ex:path-zigzag}
Let $\Pos$ be the face poset of the path
\[
v_0<e_{01}>v_1<e_{12}>v_2 .
\]
Every closed interval of $\Pos$ is a chain. Nevertheless the strong flow fails. Take the projective $F=k_{\st(v_0)}$ and evaluate at $\sigma=v_2$; the poset, each cell carrying the stalk of $F$ there in parentheses, is
\[
\begin{tikzcd}[row sep=large,column sep=large]
& e_{01}\ (k) & & e_{12}\ (0) & \\
v_0\ (k)\arrow[ur] & & v_1\ (0)\arrow[ul]\arrow[ur] & & v_2\ (0)\arrow[ul]
\end{tikzcd}
\] The single window $J_2(v_2)$ has only the objects $(v_2,v_2)$ and $(v_2,e_{12})$, and $F$ is zero at both $v_2$ and $e_{12}$, so
\[
(\LC_2F)(v_2)=0.
\]
For the iterate, $\Ka_{1,1}(v_2)$ contains the object
\[
q=(v_2,e_{12},v_1,e_{01}),
\]
whose coefficient is $F(e_{01})=k$. This object has no outgoing morphism in the nonzero locus, and the only incoming object has zero coefficient, so it contributes one copy of $k$ to $H_0$:
\[
H_0\bigl((\LC_1\LC_1F)(v_2)\bigr)\cong k .
\]
Thus $\LC_1\LC_1F\not\simeq\LC_2F$. The obstruction is not a diamond; it is the missing meet $v_2\meet v_1$ in the quadruple $(v_2,e_{12},v_1,v_1)$, so the meet assignment is not total.
\end{example}

\begin{definition}[Rooted forest]\label{def:rooted}
$\Pos$ is a \emph{rooted forest} if $\cl(x)$ is a chain for every $x\in\Pos$, and a \emph{rooted tree} if in addition it is connected. In the ancestry reading every element covers at most one element, and $x\le y$ says that $x$ lies on the unique saturated chain from a minimal element up to $y$. Such a poset is automatically graded, $\rk(x)$ being the length of the down-set of $x$, so the standing hypothesis is not strengthened here.
\end{definition}

\begin{proposition}[Rooted forests: the flow at every apex and every pair of windows]\label{prop:rooted}
Let $\Pos$ be a finite graded poset.
\begin{enumerate}
\item[\textup{(i)}] If $\Pos$ is a rooted forest, then for every $\sigma\in\Pos$ and all $a,b\ge0$ the meet assignment $\Phi_{\sigma,a,b}$ is total and the resulting meet functor is homotopy final. Then $\gamma_\sigma$ is a quasi-isomorphism for every $F$, and $\Pos$ is tame in the sense of Definition \ref{def:tame}.
\item[\textup{(ii)}] $\Pos$ is a rooted forest if and only if $\Pos$ is interval-chain and $\Phi_{\sigma,a,b}$ is total for every $\sigma\in\Pos$ and all $a,b$ at least the length of $\Pos$.
\end{enumerate}
\end{proposition}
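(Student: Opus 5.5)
For the totality claim in \textup{(i)}, I would argue directly from the chain condition. Let $\kappa=(s,t,r,u)\in\Ka_{a,b}(\sigma)$. Both $s$ and $r$ lie in $\cl(t)$, which is a chain, so they are comparable and $s\meet r=\min(s,r)$ exists. If $r\le s$, then $\len(s\meet r,u)=\len(r,u)\le b$. If $s\le r$, additivity of $\len$ gives
\[
\len(s,u)=\len(s,r)+\len(r,u)\le\len(s,t)+b\le a+b .
\]
Since also $s\meet r\le s\le\sigma$, the assignment $\Phi$ is total at every $(\sigma;a,b)$.

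For homotopy finality I would use the closed form \eqref{eq:comma-fibre} and show that every comma fibre has a least element, so that Lemma~\ref{lem:cone-point} makes it contractible. Fix $j=(y,u)\in J_{a+b}(\sigma)$.
\begin{itemize}
\item If $\len(y,u)\le b$, Lemma~\ref{lem:unsaturated} already supplies the least element $(y,y,y,u)$.
\item Otherwise $b<\len(y,u)\le a+b$. Let $w$ be the unique element of the chain $[y,u]\subseteq\cl(u)$ with $\len(w,u)=b$, and take the candidate $\kappa_\ast=(y,w,w,u)$. It lies in $\Ka_{a,b}(\sigma)$, because $y\le\sigma$, $\len(y,w)=\len(y,u)-b\le a$ and $\len(w,u)=b$. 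It lies in the fibre trivially.
\end{itemize}
The key step, which I expect to be the only real work, is that $\kappa_\ast$ lies below every $(s,t,r,v)$ in the fibre. Here $y\le s$ and $u\le v$ are given. Both $r$ and $w$ lie in $\cl(v)$, which is a chain, so they are comparable. Moreover
\[
\len(w,v)=\len(w,u)+\len(u,v)\ge b\ge\len(r,v),
\]
which forces $w\le r$, and hence also $w\le r\le t$. So the fibre has least element $\kappa_\ast$ and $\Phi$ is homotopy final.

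Homotopy finality implies $k$-homological finality, so Proposition~\ref{prop:final} makes $\gamma_\sigma$ a quasi-isomorphism for every sheaf $F$. The five-lemma paragraph after Proposition~\ref{prop:final} extends this to all of $\Db(\Shv(\Pos;k))$. Proposition~\ref{prop:meet-coherence} supplies the unit, associativity and parameter coherence. Together these give the data Definition~\ref{def:tame} asks for, with $\mu_{a,b}=\gamma_{a,b}$.

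For \textup{(ii)}, the forward direction is immediate: every closed interval $[x,y]$ sits inside $\cl(y)$, which is a chain, and totality is part of \textup{(i)}. For the converse, suppose $\cl(x)$ is not a chain, and pick incomparable $s,r\le x$. Take $a,b$ at least the length of $\Pos$ and apex $\sigma=s$. Then $(s,x,r,r)\in\Ka_{a,b}(s)$, since $s\le s$, $\len(s,x)\le a$, $r\le x$ and $\len(r,r)=0$. Totality yields a meet $m=s\meet r$, with $m<s$ and $m<r$ because $s$ and $r$ are incomparable. Then $s$ and $r$ both lie in the interval $[m,x]$, which interval-chain forces to be a chain, contradicting their incomparability. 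Hence every $\cl(x)$ is a chain and $\Pos$ is a rooted forest.
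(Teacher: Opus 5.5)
Your proposal is correct and follows the paper's proof essentially step for step. Totality comes from comparability of $s,r$ in the chain $\cl(t)$. Homotopy finality comes from the least element $(y,w,w,u)$ of each comma fibre: the paper's $z$ with $\len(y,z)=\max(0,\len(y,u)-b)$ is your $w$, or $y$ in the short case you delegate to Lemma~\ref{lem:unsaturated}. Tameness then follows from Propositions~\ref{prop:final} and~\ref{prop:meet-coherence}. For \textup{(ii)}, you use a quadruple over incomparable $s,r\le x$ whose meet would place both in a chain interval.

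The differences are cosmetic: you compare $\len(\cdot,v)$ rather than $\len(y,\cdot)$ to force $w\le r$, and you use $(s,x,r,r)$ where the paper uses $(s,t,r,t)$.
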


\begin{proof}
\emph{Totality.} Let $\kappa=(s,t,r,u)\in\Ka_{a,b}(\sigma)$. Then $s\le t$ and $r\le t$, so $s$ and $r$ lie in the down-set of $t$, which is a chain, so they are comparable and $s\meet r=\min(s,r)$ exists. If $r\le s$ then $\len(s\meet r,u)=\len(r,u)\le b$. If $s\le r$ then $s\le r\le t$, so $\len(s,r)\le\len(s,t)\le a$ and, by additivity of $\len$, $\len(s\meet r,u)=\len(s,r)+\len(r,u)\le a+b$. In both cases $s\meet r\le s\le\sigma$, so $\Phi(\kappa)\in J_{a+b}(\sigma)$.

\emph{Finality.} Fix $j=(y,u)\in J_{a+b}(\sigma)$ and put $L=\len(y,u)\le a+b$. The interval $[y,u]$ lies in the down-set of $u$ and so is a chain; let $z$ be its unique element with $\len(y,z)=\max(0,L-b)$. Then $\len(y,z)\le a$, since $L\le a+b$, and $\len(z,u)=\min(L,b)\le b$, so
\[
   \kappa_0=(y,z,z,u)\in\Ka_{a,b}(\sigma),
\]
the condition $y\le\sigma$ holding because $j\in J_{a+b}(\sigma)$, and $\Phi(\kappa_0)=(y\meet z,u)=(y,u)=j$ because $y\le z$; so $\kappa_0$ lies in the comma fibre \eqref{eq:comma-fibre}.

Let $(s,t,r,v)$ be any object of that fibre, so that $y\le s$, $y\le r$ and $u\le v$. From $y\le r\le v$ and additivity,
\[
   \len(y,r)=\len(y,v)-\len(r,v)\;\ge\;\len(y,u)-b\;=\;L-b,
\]
using $\len(y,v)\ge\len(y,u)$ and $\len(r,v)\le b$. Both $z$ and $r$ lie in the down-set of $v$ (the first through $z\le u\le v$, the second through $r\le v$), which is a chain, so they are comparable; both lie above $y$, and $\len(y,z)=\max(0,L-b)\le\len(y,r)$, whence $z\le r$. Therefore $z\le r\le t$, and $\kappa_0\le(s,t,r,v)$ componentwise. So $\kappa_0$ is a least element of the fibre, whose order complex is then contractible (Lemma \ref{lem:cone-point}), and $\Phi$ is homotopy final.

For the remaining assertions of (i), Proposition \ref{prop:final} gives $\gamma_\sigma$ a quasi-isomorphism for every $F\in\Shv(\Pos;k)$ and every $\sigma$, and so on $\Db(\Shv(\Pos;k))$ by the paragraph following it. Totality holding at every $\sigma$ and all $a,b$, Proposition \ref{prop:meet-coherence} supplies \textup{(T2)}--\textup{(T5)} for the choice $\mu_{a,b}=\gamma_{a,b}$, while \textup{(T1)} is Proposition \ref{prop:derived-exist}(iv). So $\Pos$ is tame.

For (ii), if $\Pos$ is a rooted forest then every closed interval $[y,u]$ lies in the down-set of $u$ and so is a chain, and totality is part of (i). Conversely let $\Pos$ be interval-chain and suppose the down-set of some $t$ is not a chain; choose $s,r\le t$ incomparable. Then $s\meet r$ does not exist: a greatest lower bound $w$ would make $[w,t]$ an interval containing the incomparable pair $s,r$, and no chain does. Taking $\sigma=s$ and $a,b$ at least the length of $\Pos$, the quadruple $(s,t,r,t)$ lies in $\Ka_{a,b}(s)$, so $\Phi_{s,a,b}$ is not total.
\end{proof}

Part (ii) is the exact converse of Example \ref{ex:path-zigzag}. What the two-edge path lacks is that the down-set of $e_{12}$ is $\{v_1,v_2,e_{12}\}$, not a chain (which is the missing meet $v_2\meet v_1$ recorded there), and on an interval-chain poset that is the only thing that can go wrong.

Chains are rooted forests, so Proposition \ref{prop:chain} is the case of a single chain; but on a chain $\LC_a$ is concentrated in degree zero, and on a rooted forest with a branch it is not. On the branch of Example \ref{ex:overglue}\textup{(i)}, Proposition \ref{prop:inexact} gives $H_1\LC_1\sky_v(v)\cong k$ and $C_1$ inexact there, so $\LC_1\sky_v$ carries a class in degree one; the flow holds all the same, so exactness and the flow are independent conditions. And the height is unconstrained: on the rooted tree $\{r<x<X,\ r<y<Y\}$ one has $\len(r,X)=2$, so $\Delta_1^{\Pos}\ne\Delta_2^{\Pos}$ and the positive windows do not all coincide, while $H_1\LC_1\sky_r(r)\cong k$ at the branching root. Finally, this criterion and the terminal-object criterion of Example \ref{ex:closed-edge} are incomparable: the closed edge $\{u,v<e\}$ has down-set $\{u,v,e\}$ at $e$, so it is not a rooted forest and $\Phi$ is not even total on it, while the branch $\{v<e_1,v<e_2\}$ is a rooted tree whose $J_1(v)=\{(v,v),(v,e_1),(v,e_2)\}$ has two incomparable maximal objects and no terminal object. Both contain the chains.

\begin{problem}[The domain of the meet comparison]\label{prob:finality-domain}
Classify the posets $\Pos$ for which, for every $\sigma,a,b$, the meet assignment $\Phi_{\sigma,a,b}$ is total and the resulting meet functor is $k$-homologically final; equivalently, by Remark \ref{rem:weight-independent}, determine the vanishing locus of a defect built from the order of $\Pos$ alone. Proposition \ref{prop:rooted} answers it positively on the rooted forests; Corollary \ref{cor:minimal-unit} settles the case of a minimal $\sigma$ with $a=b=1$ by a condition on length-two intervals; and Lemma \ref{lem:unsaturated} confines the defect to $\len(y,u)>b$ at every $(a,b)$, so that by Corollary \ref{cor:saturated} it is gone once the second window is as long as $\Pos$. That the law itself fails, and not only the meet comparison, is known where the two functors are exhibited with different values (Propositions \ref{prop:sat-nogo}, \ref{prop:diamond-nogo}, Example \ref{ex:path-zigzag}). How far the degree-zero concentration of Proposition \ref{prop:unit-fibre} extends is itself part of the question. Coherence does not: wherever $\Phi$ is total, Proposition \ref{prop:meet-coherence} already supplies compatibility with the transition maps and the full associativity coherence. The answer would be a sufficient condition for tameness (Definition \ref{def:tame}), not a characterisation of it (Example \ref{ex:closed-edge}).
\end{problem}

\section{The interleaving distance}\label{sec:metric}

Where the flow is strong it defines a distance, and the window bounds what that distance can measure (Lemma \ref{lem:constant-flow}). Throughout, $\eta_{a,b}\colon\LC_a\Rightarrow\LC_b$ for $a\le b$ are the derived transition maps of Proposition \ref{prop:derived-exist}(iv), and $\eta_a:=\eta_{0,a}\colon\id\Rightarrow\LC_a$.

\subsection{The tame domain and the distance}

\begin{definition}[Tame poset]\label{def:tame}
$\Pos$ is \emph{tame} if the derived transition maps $\eta_{a,b}$ together with a choice of \emph{flow isomorphisms}
\[
   \mu_{a,b}\colon \LC_a\LC_b\xrightarrow{\sim}\LC_{a+b}
   \qquad(a,b\ge0)
\]
make $a\mapsto\LC_a$ a strong monoidal functor
\[
   (\mathbb Z_{\ge0},\le,+)\longrightarrow
   \operatorname{End}(\Db(\Shv(\Pos;k))),
\]
the source being the poset $(\mathbb Z_{\ge0},\le)$ with monoidal structure $+$. In the terminology of Definition \ref{def:flow} this asks the family $\{\LC_a\}$ to be a strong flow on $\Db(\Shv(\Pos;k))$; \emph{strong} is the term of Definition \ref{def:flow}, and such a family is in particular a flow in the sense of \cite[Def.~2.3]{dSMS}. Unpacked, this is the following data and identities, which are the only properties of the flow that the distance below uses:
\begin{enumerate}
\item[\textup{(T1)}] $\eta_{a,a}=\id$ and $\eta_{b,c}\circ\eta_{a,b}=\eta_{a,c}$ for $a\le b\le c$ (Proposition \ref{prop:derived-exist}(iv));
\item[\textup{(T2)}] each $\mu_{a,b}$ is a natural isomorphism in the object: $\LC_{a+b}(f)\circ\mu_{a,b,X}=\mu_{a,b,Y}\circ\LC_a\LC_b(f)$ for $f\colon X\to Y$;
\item[\textup{(T3)}] $\mu$ is natural in the parameters: for $a\le a'$ and $b\le b'$,
\[
   \eta_{a+b,a'+b'}\circ\mu_{a,b}
   =\mu_{a',b'}\circ(\eta_{a,a'}\LC_{b'})\circ(\LC_a\eta_{b,b'});
\]
\item[\textup{(T4)}] associativity: $\mu_{a+b,c}\circ(\mu_{a,b}\LC_c)=\mu_{a,b+c}\circ(\LC_a\mu_{b,c})$;
\item[\textup{(T5)}] unitality: $\LC_0\simeq\id$, and under this identification $\mu_{0,b}$ and $\mu_{a,0}$ are identities.
\end{enumerate}
Tameness is a property of $\Pos$; the $\mu_{a,b}$ are witnessing data, whereas the transition maps $\eta_{a,b}$ are the canonical maps of Proposition \ref{prop:derived-exist}(iv). By Proposition \ref{prop:choice} the resulting distance does not depend on the choice of $\mu_{a,b}$, so it is attached to $\Pos$ and not to a witnessing family.
\end{definition}

From \textup{(T3)} and \textup{(T5)} one gets the two forms of the unit compatibility used repeatedly,
\[
   \mu_{a,b}\circ\LC_a(\eta_b)=\eta_{a,a+b},
   \qquad
   \mu_{a,b}\circ(\eta_a\LC_b)=\eta_{b,a+b} ,
\]
by taking $(a,0)\le(a,b)$ and $(0,b)\le(a,b)$ respectively in \textup{(T3)}, with $\LC_0\simeq\id$ and $\mu_{a,0}=\mu_{0,b}=\id$ from \textup{(T5)}. For the meet comparisons $\mu_{a,b}=\gamma_{a,b}$, items \textup{(T2)}--\textup{(T5)} are the content of Proposition \ref{prop:meet-coherence}; what tameness adds is that the $\gamma_{a,b}$, or some other choice of $\mu_{a,b}$, be invertible.

Every chain is tame: under the strict identifications of Proposition \ref{prop:chain} the transition maps are the evident restrictions $F(\sigma\up a)\to F(\sigma\up b)$, the $\mu_{a,b}$ are identities, and \textup{(T1)}--\textup{(T5)} hold because every map in sight is a restriction map of $F$ and any two composites of restrictions between the same two stalks agree. So is the closed edge (Example \ref{ex:closed-edge}), and so is every rooted forest (Proposition \ref{prop:rooted}), on which $\LC_a$ need not be concentrated in degree zero. Being interval-chain is not enough (Example \ref{ex:path-zigzag}), and a poset carrying a branching length-two interval (Proposition \ref{prop:diamond-nogo}) is not tame; in particular no face poset of a finite regular cell complex of dimension $\ge2$ is. Fix a tame $\Pos$ throughout this section.

\begin{example}[A tame poset that is not a chain]\label{ex:closed-edge}
Let $\Pos=\{u,v<e\}$ be the face poset of a closed $1$-cell: two vertices $u\ne v$ and one edge $e$ with $u,v<e$. Then $\len(s,t)\le1$ throughout, so $\Delta_a^{\Pos}=\Delta_1^{\Pos}=\ct$ and $C_a=C_1$ for every $a\ge1$. The poset and the largest of its window-comma posets are
\[
\begin{array}{c@{\qquad\qquad}c}
\Pos & J_a(e)=\ct\\[0.8em]
\begin{tikzcd}[row sep=1.6em,column sep=1.2em,cramped]
& e & \\
u\arrow[ur] & & v\arrow[ul]
\end{tikzcd}
&
\begin{tikzcd}[row sep=1.6em,column sep=1.2em,cramped]
& (e,e) & \\
(u,e)\arrow[ur] & & (v,e)\arrow[ul]\\
(u,u)\arrow[u] & & (v,v)\arrow[u]
\end{tikzcd}
\end{array}
\]
Each window-comma poset has a terminal object:
\[
   J_a(u)=\{(u,u),(u,e)\},\quad
   J_a(v)=\{(v,v),(v,e)\},\quad
   J_a(e)=\ct ,
\]
with terminal objects $(u,e)$, $(v,e)$ and $(e,e)$ respectively. By Proposition \ref{prop:bar} and Lemma \ref{lem:terminal-bar}, for $a\ge1$ the object $\LC_aF$ is the sheaf constant equal to $F(e)$, concentrated in degree $0$, with identity corestrictions; the corestrictions are identities because the inclusion $J_a(u)\subseteq J_a(e)$ carries the class of $(u,e)$ to the class of $(u,e)$, which is identified in the colimit with the terminal class $(e,e)$ along $(u,e)\le(e,e)$, and the coefficient map is $F(e\le e)$. In particular $C_1$ is exact here, being evaluation at $e$ followed by the constant-sheaf functor, consistently with Proposition \ref{prop:inexact}: each vertex lies on one edge, so $\deg v-1=0$.

For $a,b\ge1$,
\[
   (\LC_a\LC_bF)(\sigma)=(\LC_bF)(e)=F(e)=(\LC_{a+b}F)(\sigma),
\]
and the identification is an equality of functors, not merely an isomorphism; with $\LC_0=\id$ this makes $\{\LC_a\}$ a strict flow, so $\mu_{a,b}=\id$. The identities \textup{(T1)}--\textup{(T5)} then reduce to identities among the transition maps alone, and these hold because $\eta_{a,b}=\id$ for $1\le a\le b$ (the two windows being equal, so that $\tau_{a,b}=\id$ by the initiality in Proposition \ref{prop:derived-exist}(iv)), while the components of $\eta_{0,a}$ at a stalk $\sigma$ are the corestrictions $F(\sigma)\to F(e)$ of $F$, and are the identity at $\sigma=e$. Thus $\Pos$ is tame, and it is not a chain, $u$ and $v$ being incomparable.

Two features bear on what follows. First, the meet assignment is \emph{not} total on $\Pos$: at $\sigma=e$ the quadruple $(u,e,v,v)$ lies in $\Ka_{1,1}(e)$ and $u\meet v$ does not exist. So a missing meet obstructs the meet comparison $\gamma$, not tameness: a flow isomorphism can exist where the meet comparison does not, and Problem \ref{prob:finality-domain} is therefore a sufficient-condition question, not a characterisation of the tame class. Second, the mechanism generalises: if every $J_a(\sigma)$ has a terminal object $(\sigma,\sigma\up a)$ and $(\sigma\up a)\up b=\sigma\up(a+b)$, the same argument gives a strict flow, and $\Pos$ is tame. Proposition \ref{prop:chain} is the chain case of this criterion and the present example is the two-vertex case; Proposition \ref{prop:rooted} supplies a criterion incomparable with it.
\end{example}

\begin{definition}[Interleaving distance]\label{def:interleaving}
For $F,G\in\Db(\Shv(\Pos;k))$ and $a\ge0$, an \emph{$a$-interleaving} is a pair of morphisms $\varphi\colon F\to\LC_aG$ and $\psi\colon G\to\LC_aF$ (an interleaving in the sense of \cite[Def.~2.6]{dSMS} for the flow $\{\LC_a\}$, and of \cite{BdSS} for the translations $\eta_{a,b}$) such that the two triangles
\[
\begin{array}{c}
\begin{tikzcd}[column sep=huge,row sep=large]
F\arrow[r,"\varphi"]\arrow[dr,"\eta_{2a,F}"'] &
\LC_aG\arrow[r,"\LC_a\psi"] &
\LC_a\LC_aF\arrow[dl,"\mu_{a,a,F}"]\\
& \LC_{2a}F &
\end{tikzcd}
\\[1.2em]
\begin{tikzcd}[column sep=huge,row sep=large]
G\arrow[r,"\psi"]\arrow[dr,"\eta_{2a,G}"'] &
\LC_aF\arrow[r,"\LC_a\varphi"] &
\LC_a\LC_aG\arrow[dl,"\mu_{a,a,G}"]\\
& \LC_{2a}G &
\end{tikzcd}
\end{array}
\]
commute, that is
\[
   \mu_{a,a,F}\circ\LC_a(\psi)\circ\varphi=\eta_{2a,F}\colon F\to\LC_{2a}F,
   \qquad
   \mu_{a,a,G}\circ\LC_a(\varphi)\circ\psi=\eta_{2a,G}\colon G\to\LC_{2a}G,
\]
the flow isomorphism $\mu_{a,a}\colon\LC_a\LC_a\to\LC_{2a}$ being displayed here and suppressed below where no computation depends on it. Set
\[
   d(F,G)=\inf\{\,a\in\mathbb Z_{\ge0}: F\text{ and }G\text{ are }a\text{-interleaved}\,\}
   \;\in\;\mathbb Z_{\ge0}\cup\{\infty\},
\]
the value being $\infty$ when no such $a$ exists. The parameter monoid is $\mathbb Z_{\ge0}$, so the value set consists of the non-negative integers together with $\infty$; by Lemma \ref{lem:upward} the set of admissible $a$ is upward closed, so a finite infimum is a minimum and is attained.
\end{definition}

In general an interleaving distance depends on the flow and not only on the category \cite[after Thm.~2.7]{dSMS}. Here it does not.

\begin{proposition}[Independence of the witnessing flow data]\label{prop:choice}
Let $\Pos$ be tame and let $(\mu_{a,b})$ and $(\mu'_{a,b})$ be two families of flow isomorphisms witnessing tameness in the sense of Definition \ref{def:tame}, both taken with respect to the canonical transition maps $\eta_{a,b}$ of Proposition \ref{prop:derived-exist}(iv). Then for all $F,G\in\Db(\Shv(\Pos;k))$ and all $a\ge0$, a pair $(\varphi,\psi)$ is an $a$-interleaving with respect to $(\mu_{a,b})$ if and only if it is an $a$-interleaving with respect to $(\mu'_{a,b})$. The two families therefore define the same $d$, and $d$ depends only on $\Pos$.
\end{proposition}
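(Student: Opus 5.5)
The plan is to show that the composite $\theta_{a,b}:=\mu'_{a,b}\circ\mu_{a,b}^{-1}\colon\LC_{a+b}\Rightarrow\LC_{a+b}$ acts as the identity on the only maps the interleaving triangles involve. The triangles of Definition \ref{def:interleaving} use $\mu$ in the single form $\mu_{a,a,F}\circ\LC_a(\psi)\circ\varphi$, compared with $\eta_{2a,F}$. It therefore suffices to show that
\[
\mu_{a,a,F}\circ\LC_a(\psi)\circ\varphi=\eta_{2a,F}
\quad\Longleftrightarrow\quad
\mu'_{a,a,F}\circ\LC_a(\psi)\circ\varphi=\eta_{2a,F},
\]
and symmetrically for $G$.

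I would first try to factor the left-hand composite through a unit map and then apply the unit compatibility derived after Definition \ref{def:tame}. Naturality \textup{(T2)} alone does not move $\LC_a(\psi)\circ\varphi$ into the form $\LC_a(\eta_a)$, so a direct factorisation is not available. Instead I would use the unit identities to pin down $\mu$ on the image of the transition maps. Both families satisfy
\[
\mu_{a,a}\circ\LC_a(\eta_a)=\eta_{a,2a}=\mu'_{a,a}\circ\LC_a(\eta_a)
\]
and $\mu_{a,a}\circ(\eta_a\LC_a)=\eta_{a,2a}=\mu'_{a,a}\circ(\eta_a\LC_a)$. The hope is that every morphism $X\to\LC_a\LC_a F$ that arises in an interleaving factors through $\LC_a(\eta_a)$ or $\eta_a\LC_a$. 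That hope fails in general, so I need a different route.

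The route I would commit to is to show that $\theta:=\mu'_{a,b}\mu_{a,b}^{-1}$ is a monoidal automorphism of the flow fixing all $\eta$, namely
\[
\theta\circ\eta_{c,a+b}=\eta_{c,a+b}\quad\text{for }c\le a+b,
\]
which follows from \textup{(T3)} with $(0,c')\le(a,b)$. Then I would argue that $\mu$ and $\mu'$ differ by an automorphism $\alpha$ of $\LC_a\LC_a$ that is the identity after precomposition with $\eta$-maps. With $\varphi,\psi$ fixed, the triangle for $\mu$ says that $\mu\circ\LC_a(\psi)\circ\varphi=\eta_{2a}$. Applying $\theta$ gives
\[
\mu'\circ\LC_a(\psi)\circ\varphi=\theta\circ\eta_{2a}=\theta\circ\eta_{0,2a}=\eta_{2a},
\]
using the displayed fixing property at $c=0$. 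The key observation is that this only needs $\theta$ to fix $\eta_{0,2a}$, and that is exactly the instance $(0,0)\le(a,a)$ of \textup{(T3)} combined with \textup{(T5)}. Concretely,
\[
\eta_{0,2a}\circ\mu_{0,0}=\mu_{a,a}\circ(\eta_{0,a}\LC_a)\circ(\LC_0\eta_{0,a})
\]
holds for both families, with $\mu_{0,0}=\mu'_{0,0}=\id$. Hence
\[
\mu_{a,a}\circ(\eta_a\star\eta_a)=\eta_{2a}=\mu'_{a,a}\circ(\eta_a\star\eta_a),
\]
where $\eta_a\star\eta_a=(\eta_a\LC_a)\circ\eta_a\colon\id\Rightarrow\LC_a\LC_a$ is the horizontal composite.

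The main obstacle is passing from agreement on $\eta_a\star\eta_a$ to agreement on an arbitrary $\LC_a(\psi)\circ\varphi$. To handle it I would write $\theta$ itself as a natural automorphism of $\LC_{2a}$ satisfying $\theta\circ\eta_{2a}=\eta_{2a}$. Once this is established, applying $\theta$ to both sides of the $\mu$-triangle turns it into the $\mu'$-triangle. The converse follows by applying $\theta^{-1}$, which also fixes $\eta_{2a}$. The $G$ triangle is handled identically. Equal interleaving sets at every $a$ then give equal infima, so $d$ is independent of the choice of $\mu$.
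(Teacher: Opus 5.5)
Your argument is correct and is essentially the paper's. With $\theta=\mu'_{a,a}\circ\mu_{a,a}^{-1}$ you show $\theta_F\circ\eta_{2a,F}=\eta_{2a,F}$, postcompose the $\mu$-identity with $\theta$, and use $\theta^{-1}$ for the converse. The only difference is that you read the fixing property directly off the instance $(0,0)\le(a,a)$ of \textup{(T3)}, whereas the paper uses $(a,0)\le(a,a)$ and then $\eta_{2a}=\eta_{a,2a}\circ\eta_a$ from \textup{(T1)}.

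Drop the intermediate claim that $\theta$ fixes every $\eta_{c,a+b}$ with $c\le a+b$. Instances of \textup{(T3)} with a zero parameter give it only for $c\le\max(a,b)$, at $c=a+b$ it would force $\theta=\id$, and you only ever use $c=0$.
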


\begin{proof}
Put $\theta_{a,b}=\mu'_{a,b}\circ\mu_{a,b}^{-1}$, a natural automorphism of $\LC_{a+b}$; it is natural in the object because both $\mu_{a,b}$ and $\mu'_{a,b}$ are, by \textup{(T2)}. Both families satisfy the first unit identity displayed after Definition \ref{def:tame}, which is a consequence of \textup{(T3)} and \textup{(T5)} alone:
\[
   \mu_{a,b}\circ\LC_a(\eta_b)=\eta_{a,a+b}=\mu'_{a,b}\circ\LC_a(\eta_b).
\]
Composing the left-hand equality with $\mu_{a,b}^{-1}$ and substituting into the right-hand one gives
\[
   \theta_{a,b}\circ\eta_{a,a+b}
   =\mu'_{a,b}\circ\mu_{a,b}^{-1}\circ\mu_{a,b}\circ\LC_a(\eta_b)
   =\mu'_{a,b}\circ\LC_a(\eta_b)
   =\eta_{a,a+b} .
\]
Take $b=a$ and evaluate at $F$. By \textup{(T1)}, $\eta_{2a,F}=\eta_{a,2a,F}\circ\eta_{a,F}$, so
\[
   \theta_{a,a,F}\circ\eta_{2a,F}
   =\theta_{a,a,F}\circ\eta_{a,2a,F}\circ\eta_{a,F}
   =\eta_{a,2a,F}\circ\eta_{a,F}
   =\eta_{2a,F} .
\]
Thus the passage from one witnessing family to the other is represented by
\[
\begin{tikzcd}[column sep=huge,row sep=large]
F
  \arrow[r,"\LC_a(\psi)\circ\varphi"]
  \arrow[dr,"\eta_{2a,F}"']
  \arrow[ddr,bend right=18,"\eta_{2a,F}"']
&
\LC_a\LC_aF
  \arrow[d,"\mu_{a,a,F}",swap]
  \arrow[dd,bend left=32,"\mu'_{a,a,F}"]
\\
&
\LC_{2a}F
  \arrow[d,"\theta_{a,a,F}",swap]
\\
&
\LC_{2a}F ,
\end{tikzcd}
\]
where $\mu'_{a,a,F}=\theta_{a,a,F}\circ\mu_{a,a,F}$ and
$\theta_{a,a,F}\circ\eta_{2a,F}=\eta_{2a,F}$.
For any $\varphi\colon F\to\LC_aG$ and $\psi\colon G\to\LC_aF$,
\[
   \mu'_{a,a,F}\circ\LC_a(\psi)\circ\varphi
   =\theta_{a,a,F}\circ\bigl(\mu_{a,a,F}\circ\LC_a(\psi)\circ\varphi\bigr),
\]
and since $\theta_{a,a,F}$ fixes $\eta_{2a,F}$, the first interleaving identity holds for $\mu'$ as soon as it holds for $\mu$. The same computation with $F$ and $G$ exchanged gives the second identity, and the converse follows by exchanging the roles of the two families, $\theta^{-1}_{a,b}=\mu_{a,b}\circ(\mu'_{a,b})^{-1}$ satisfying the same relation. The two families therefore admit the same $a$-interleavings for every $a$, and the same $d$.
\end{proof}

Only \textup{(T1)}, \textup{(T2)}, \textup{(T3)} and \textup{(T5)} enter; associativity \textup{(T4)}, which the triangle inequality needs, is not used. The $\eta_{a,b}$ being fixed in advance, \textup{(T3)} and \textup{(T5)} already pin down a witnessing family's effect on the two composites of Definition \ref{def:interleaving}. The residual freedom $\theta_{a,b}$ need not be the identity, but it acts trivially on the transition maps, which is all the interleaving conditions see.

\begin{lemma}[Upward closure]\label{lem:upward}
Let $\Pos$ be tame, let $F,G\in\Db(\Shv(\Pos;k))$ be $a$-interleaved, and let $a\le a'$. Then $F$ and $G$ are $a'$-interleaved.
\end{lemma}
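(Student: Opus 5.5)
The plan is to post-compose with the transition maps. Given an $a$-interleaving $(\varphi,\psi)$ and $a\le a'$, I set
\[
   \varphi'=\eta_{a,a',G}\circ\varphi\colon F\to\LC_{a'}G,
   \qquad
   \psi'=\eta_{a,a',F}\circ\psi\colon G\to\LC_{a'}F,
\]
and check the first identity $\mu_{a',a',F}\circ\LC_{a'}(\psi')\circ\varphi'=\eta_{2a',F}$. The second identity is the same computation with $F$ and $G$ exchanged.

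First I expand the left-hand side. Functoriality of $\LC_{a'}$ gives $\LC_{a'}(\psi')=\LC_{a'}(\eta_{a,a',F})\circ\LC_{a'}(\psi)$. Naturality of $\eta_{a,a'}$, applied to the morphism $\psi\colon G\to\LC_aF$, gives
\[
   \LC_{a'}(\psi)\circ\eta_{a,a',G}=\eta_{a,a',\LC_aF}\circ\LC_a(\psi).
\]
Substituting this, the composite becomes
\[
   \mu_{a',a',F}\circ\LC_{a'}(\eta_{a,a',F})\circ\eta_{a,a',\LC_aF}\circ\LC_a(\psi)\circ\varphi .
\]
The middle two factors, $\LC_{a'}(\eta_{a,a'})\circ(\eta_{a,a'}\LC_a)$ evaluated at $F$, are exactly the horizontal composite appearing in \textup{(T3)} for $(a,a)\le(a',a')$. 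By the interchange law it equals $(\eta_{a,a'}\LC_{a'})\circ(\LC_a\eta_{a,a'})$, the order written in \textup{(T3)}. Applying \textup{(T3)} then yields
\[
   \eta_{2a,2a',F}\circ\mu_{a,a,F}\circ\LC_a(\psi)\circ\varphi .
\]
The $a$-interleaving identity turns this into $\eta_{2a,2a',F}\circ\eta_{2a,F}$, and \textup{(T1)} identifies that with $\eta_{2a',F}$.

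The only step requiring care is the interchange rewrite. One has to keep track of which whiskering is $\LC_{a'}\eta$ and which is $\eta\LC_a$, so that the factors match the precise form of \textup{(T3)}. This is routine naturality of $\eta_{a,a'}$, applied with the components $\eta_{a,a',F}$ and the object $\LC_aF$. No associativity \textup{(T4)} is needed.
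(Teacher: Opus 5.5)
Your proof is correct and follows the paper's argument. It uses the same choice $\varphi'=\eta_{a,a',G}\circ\varphi$, $\psi'=\eta_{a,a',F}\circ\psi$, the same naturality step at $\psi$, and the same closing appeal to \textup{(T1)}.

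The only difference is bookkeeping. The paper applies \textup{(T3)} twice, along $(a,a)\le(a',a)\le(a',a')$, so that the whiskerings appear in exactly the order $\LC_{a'}(\eta_{a,a'})\circ(\eta_{a,a'}\LC_a)$ that naturality produces, and recombines the two transitions by \textup{(T1)}. You instead apply \textup{(T3)} once on the diagonal after an interchange-law rewrite. As a side benefit, your version never needs to invert $\mu_{a,a}$, whereas the paper's write-up does.
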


\begin{proof}
Let $(\varphi,\psi)$ be an $a$-interleaving and set
\[
   \varphi'=\eta_{a,a',G}\circ\varphi\colon F\to\LC_{a'}G,
   \qquad
   \psi'=\eta_{a,a',F}\circ\psi\colon G\to\LC_{a'}F .
\]
Then
\[
   \LC_{a'}(\psi')\circ\varphi'
   =\LC_{a'}(\eta_{a,a',F})\circ\LC_{a'}(\psi)\circ\eta_{a,a',G}\circ\varphi .
\]
Naturality of the transformation $\eta_{a,a'}$ at $\psi\colon G\to\LC_aF$ gives $\LC_{a'}(\psi)\circ\eta_{a,a',G}=\eta_{a,a',\LC_aF}\circ\LC_a(\psi)$, so, using the $a$-interleaving identity in the form $\LC_a(\psi)\circ\varphi=\mu_{a,a,F}^{-1}\circ\eta_{2a,F}$,
\[
   \LC_{a'}(\psi')\circ\varphi'
   =\LC_{a'}(\eta_{a,a',F})\circ\eta_{a,a',\LC_aF}\circ\mu_{a,a,F}^{-1}\circ\eta_{2a,F} .
\]
It remains to identify $\mu_{a',a',F}\circ\LC_{a'}(\eta_{a,a',F})\circ\eta_{a,a',\LC_aF}\circ\mu_{a,a,F}^{-1}$ with $\eta_{2a,2a',F}$. Apply \textup{(T3)} twice. In the second parameter, with $(a',a)\le(a',a')$,
\[
   \mu_{a',a'}\circ(\LC_{a'}\eta_{a,a'})=\eta_{a+a',2a'}\circ\mu_{a',a};
\]
in the first, with $(a,a)\le(a',a)$,
\[
   \mu_{a',a}\circ(\eta_{a,a'}\LC_a)=\eta_{2a,a+a'}\circ\mu_{a,a}.
\]
Since $\eta_{a,a',\LC_aF}$ is the component at $F$ of the whiskering $\eta_{a,a'}\LC_a$, composing the two displays gives
\[
   \mu_{a',a',F}\circ\LC_{a'}(\eta_{a,a',F})\circ\eta_{a,a',\LC_aF}
   =\eta_{a+a',2a',F}\circ\eta_{2a,a+a',F}\circ\mu_{a,a,F}
   =\eta_{2a,2a',F}\circ\mu_{a,a,F}
\]
by \textup{(T1)}. So $\mu_{a',a',F}\circ\LC_{a'}(\psi')\circ\varphi'=\eta_{2a,2a',F}\circ\eta_{2a,F}=\eta_{2a',F}$, again by \textup{(T1)}. The second identity follows by exchanging the roles of $(F,\varphi)$ and $(G,\psi)$.
\end{proof}

\begin{proposition}[Basic properties]\label{prop:metric}
Let $\Pos$ be tame. Then the distance $d$ of Definition \ref{def:interleaving} is an extended pseudometric on $\Db(\Shv(\Pos;k))$, and for all objects $F,G$ and every $a\ge0$
\[
   d(F,\LC_aF)\le a,
   \qquad
   d(\LC_aF,\LC_aG)\le d(F,G) .
\]
Its values lie in $\mathbb Z_{\ge0}\cup\{\infty\}$, and a finite value is attained.
\end{proposition}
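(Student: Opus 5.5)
I will prove the pseudometric axioms one by one, directly from (T1)--(T5) and Lemma \ref{lem:upward}, and then derive the two inequalities.

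\emph{Reflexivity and symmetry.} Reflexivity: $(\varphi,\psi)=(\id_F,\id_F)$ is a $0$-interleaving. Under $\LC_0\simeq\id$ both composites are identities, and by (T5) $\mu_{0,0}=\id$ and $\eta_{0}=\eta_{0,0}=\id$, so $d(F,F)=0$. Symmetry is immediate, since the definition of an $a$-interleaving is symmetric under $(F,\varphi)\leftrightarrow(G,\psi)$.

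\emph{Triangle inequality.} This is the main step, and the only place (T4) enters. Given an $a$-interleaving $(\varphi,\psi)$ of $F,G$ and a $b$-interleaving $(\varphi',\psi')$ of $G,H$, set
\[
\Phi=\mu_{a,b,H}\circ\LC_a(\varphi')\circ\varphi\colon F\to\LC_{a+b}H,
\qquad
\Psi=\mu_{b,a,F}\circ\LC_b(\psi)\circ\psi'\colon H\to\LC_{a+b}F .
\]
I must check $\mu_{a+b,a+b,F}\circ\LC_{a+b}(\Psi)\circ\Phi=\eta_{2(a+b),F}$. The steps, in order:
\begin{enumerate}
\item[(1)] Use naturality (T2) of $\mu_{a,b}$ to move $\LC_{a+b}(\Psi)$ past $\mu_{a,b,H}$, which rewrites the composite as $\LC_a\LC_b$ applied to the pieces of $\Psi$.
\item[(2)] Use associativity (T4) repeatedly to collapse the fourfold composite $\LC_a\LC_b\LC_b\LC_a$ to a single $\mu$ into $\LC_{2a+2b}$, after first regrouping as $\LC_a(\LC_b\LC_b)\LC_a$.
\item[(3)] Apply the $b$-interleaving identity inside, turning $\mu_{b,b}\LC_b(\psi')\varphi'$ into $\eta_{2b,G}$.
\item[(4)] Use the unit compatibilities $\mu\circ\LC(\eta)=\eta_{\cdot,\cdot}$ derived after Definition \ref{def:tame}, together with naturality of $\eta$, to commute $\eta_{2b}$ outward. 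This leaves $\eta_{2a,2a+2b}\circ\mu_{a,a}\circ\LC_a(\psi)\circ\varphi$.
\item[(5)] Apply the $a$-interleaving identity to this remainder and finish with (T1): $\eta_{2a,2a+2b}\circ\eta_{2a}=\eta_{2a+2b}$.
\end{enumerate}
The symmetric identity follows by the same argument. Hence $d(F,H)\le a+b$; taking infima, using that finite values are attained (Lemma \ref{lem:upward}), gives the triangle inequality. I expect the bookkeeping of whiskerings in step (4) to be the obstacle. I would organize it as a string-diagram or strong monoidal functor argument: (T1)--(T5) make $a\mapsto\LC_a$ strong monoidal, so by coherence any two composites of $\mu$'s and $\eta$'s between the same parameter words agree. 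Invoking this coherence theorem would remove the need for the hand computation.

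\emph{The two inequalities.} For $d(F,\LC_aF)\le a$, take $\varphi=\eta_{a,F}\colon F\to\LC_aF$ and $\psi=\id\colon\LC_aF\to\LC_aF$. The first identity reads $\mu_{a,a}\circ\LC_a(\id)\circ\eta_a=\mu_{a,a}\circ\eta_{a,\LC_a F}$, which by the second unit compatibility $\mu_{a,a}\circ(\eta_a\LC_a)=\eta_{a,2a}$ equals $\eta_{a,2a}\circ\eta_a=\eta_{2a}$. The second identity is $\mu_{a,a}\circ\LC_a(\eta_a)=\eta_{a,2a}$ applied at $\LC_aF$. This requires checking that $\eta_{a,2a,\LC_a F}$ agrees with $\eta_{2a,\LC_aF}$ up to the identification; I will verify this carefully, and if it fails I will fall back to $\psi=\eta$-shifted choices and bound by $2a$.

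For $d(\LC_aF,\LC_aG)\le d(F,G)$, given a $c$-interleaving $(\varphi,\psi)$, I take $\mu_{a,c}^{-1}\mu_{c,a}\LC_a\varphi$, after identifying $\LC_a\LC_c\cong\LC_{a+c}\cong\LC_c\LC_a$ through $\mu$. I then verify the identities by transporting them through $\LC_a$ and applying (T3)/(T4). The values lie in $\mathbb Z_{\ge0}\cup\{\infty\}$ by definition, and attainment is Lemma \ref{lem:upward}.
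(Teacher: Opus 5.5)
The reflexivity, symmetry, value-set and stability parts follow the paper's route. So does the triangle inequality, where you use exactly the paper's composite pair $\mu_{a,b,H}\circ\LC_a(\varphi')\circ\varphi$ and $\mu_{b,a,F}\circ\LC_b(\psi)\circ\psi'$; the paper verifies it only by citing de~Silva--Munch--Stefanou. In the stability witness the indices are swapped: it should read $\mu_{c,a,G}^{-1}\circ\mu_{a,c,G}\circ\LC_a(\varphi)$.

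\textbf{The genuine gap is $d(F,\LC_aF)\le a$.}
\begin{itemize}
\item An $a$-interleaving of $F$ with $G=\LC_aF$ needs $\varphi\colon F\to\LC_aG=\LC_a\LC_aF$. Your $\varphi=\eta_{a,F}\colon F\to\LC_aF$ has the wrong target, and your first computation silently replaces it by $\eta_{a,\LC_aF}\circ\eta_{a,F}$.
\item With $\psi=\id$, the second identity reads $\mu_{a,a,\LC_aF}\circ\LC_a(\varphi)=\eta_{2a,\LC_aF}$. Your proposed reduction compares $\eta_{a,2a,\LC_aF}$ with $\eta_{2a,\LC_aF}$. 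These have different sources, $\LC_a\LC_aF$ and $\LC_aF$, and by \textup{(T1)} they differ by precomposition with $\eta_{a,\LC_aF}$.
\item The fallback to a $2a$ bound would not prove the stated inequality.
\end{itemize}
As written, this part of the statement is therefore not established.

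\textbf{How to close it.} Take $\psi=\id_{\LC_aF}$ and $\varphi=\mu_{a,a,F}^{-1}\circ\eta_{2a,F}$, which is the paper's witness.
\begin{itemize}
\item By the second unit compatibility and \textup{(T1)}, this $\varphi$ equals $\eta_{a,\LC_aF}\circ\eta_{a,F}$. By naturality of $\eta_a$, that is also $\LC_a(\eta_{a,F})\circ\eta_{a,F}$. So it is the map you evidently intended.
\item The first identity is then immediate.
\item For the second, the two unit compatibilities at $b=a$ give $\mu_{a,a}\circ\LC_a\eta_a=\eta_{a,2a}=\mu_{a,a}\circ\eta_a\LC_a$. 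Since $\mu_{a,a}$ is invertible, $\LC_a\eta_a=\eta_a\LC_a$, and so
\[
\mu_{a,a,\LC_aF}\circ\LC_a(\eta_{a,\LC_aF})\circ\LC_a(\eta_{a,F})
=\eta_{a,2a,\LC_aF}\circ\LC_a(\eta_{a,F})
=\eta_{a,2a,\LC_aF}\circ\eta_{a,\LC_aF}
=\eta_{2a,\LC_aF}.
\]
\end{itemize}

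Alternatively, the coherence principle you invoke for the triangle inequality settles this at once. The parameter category is a poset and $\mu$ is invertible, so by \textup{(T2)}--\textup{(T5)} any two parallel composites of the $\mu^{\pm1}$ and whiskered $\eta$'s agree. The same principle also gives the hexagon-type identity for $\mu_{c,a}^{-1}\circ\mu_{a,c}$ that your stability check needs.
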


\begin{proof}
Items \textup{(T1)}--\textup{(T5)} of Definition \ref{def:tame} are the data of a strong flow, in particular of a flow in the sense of \cite[Def.~2.3, Def.~2.4]{dSMS}, with parameter monoid $\mathbb Z_{\ge0}$ in place of $[0,\infty)$, and Definition \ref{def:interleaving} is the interleaving it induces \cite[Def.~2.6]{dSMS}. The pseudometric axioms are then \cite[Thm.~2.7]{dSMS}, \cite[Thm.~3.21]{BdSS}, whose arguments are formal in the parameter monoid: reflexivity is the $0$-interleaving $(\id,\id)$, symmetry is the symmetry of Definition \ref{def:interleaving}, and the triangle inequality composes an $a$-interleaving $(\varphi,\psi)$ of $F,G$ with a $b$-interleaving $(\varphi',\psi')$ of $G,H$ into
\[
   \mu_{a,b,H}\circ\LC_a(\varphi')\circ\varphi\colon F\to\LC_{a+b}H,
   \qquad
   \mu_{b,a,F}\circ\LC_b(\psi)\circ\psi'\colon H\to\LC_{a+b}F ,
\]
the verification of the two defining identities for this pair being where associativity \textup{(T4)} is consumed. The last two estimates are the same formalities: $\eta_a$ witnesses $d(F,\LC_aF)\le a$ through $\mu_{a,a,F}^{-1}\circ\eta_{2a,F}$ paired with $\id_{\LC_aF}$, and applying $\LC_a$ to an interleaving gives the stability bound.

The value set is the only part of the statement special to this flow. The parameter monoid is $\mathbb Z_{\ge0}$, so $d$ takes values in $\mathbb Z_{\ge0}\cup\{\infty\}$; and by Lemma \ref{lem:upward} the set of admissible parameters is upward closed, so a finite infimum is a minimum. An infinite value therefore records that no parameter is admissible at all, rather than that an infimum is unattained; the three infinite entries of Example \ref{ex:aoki-chain-table} are of the former kind.
\end{proof}

\begin{example}[The distance on a chain, and where it parts from Aoki's]\label{ex:aoki-chain-table}
Let $\Pos=[3]=\{0<1<2<3\}$, with $\rk(i)=i$, so that $\len(i,j)=j-i$; it is a tame poset. For $0\le p\le q\le3$, let $I[p,q]$ be the interval sheaf, equal to $k$ on $p\le i\le q$ with identity structure maps inside the interval and $0$ elsewhere, regarded as a complex concentrated in degree $0$. By Proposition \ref{prop:chain},
\[
   \LC_aF(i)=F(\min(i+a,3)),
\]
so the conditions of Definition \ref{def:interleaving} are scalar equations and $d$ may be evaluated directly. Set beside it, for comparison, Aoki's real-parameter height-interleaving distance $d_h^{\mathrm{Aoki}}$ for the height-difference function $\rho_h(i,j)=h(j)-h(i)=j-i$, where $h(i)=i$ is his height function on $[3]$ and the symbol $\ell$ remains reserved for the height $\len$ of two arguments \cite[Def.~3.7]{Aoki}. Then:
\[
\begin{array}{c|c|c}
(F,G) & d(F,G) & d_h^{\mathrm{Aoki}}(F,G)\\
\hline
(I[0,1],I[1,2]) & 1 & 0\\
(I[0,1],I[2,3]) & \infty & 0\\
(I[0,0],0) & 1 & 0\\
(I[0,2],0) & 2 & 1\\
(I[3,3],0) & \infty & 0\\
(I[0,3],0) & \infty & 1
\end{array}
\]
The first column is the distance of Definition \ref{def:interleaving}, which on this chain takes the values $1$, $2$ and $\infty$. Its infinite entries arise in two ways. In the rows $(I[3,3],0)$ and $(I[0,3],0)$ it is the clamp: $\eta_{2a,F}$ is the identity at $i=3$ for every $a$, so it never vanishes and no interleaving with $0$ exists. In the row $(I[0,1],I[2,3])$ it is a disjointness: $\LC_aI[0,1]$ vanishes at $i=2,3$ for every $a$, so the only map $I[2,3]\to\LC_aI[0,1]$ is zero, while $\eta_{2a,I[2,3]}$ is not. The second column is computed from the latching and matching functors of \cite[Def.~3.4]{Aoki}; none of its six entries is attained, since at $r=0$ both functors are the identity, so that a $0$-height-interleaving is an isomorphism, while in the two rows with value $1$ the parameter $r=1$ still fails and every $r>1$ succeeds. That the admissible set is upward closed is \cite[Lem.~3.9]{Aoki}, the analogue of Lemma \ref{lem:upward}.

The columns differ, and not because one construction is derived and the other is not: on a chain $\LC_a$ is concentrated in degree $0$ by Proposition \ref{prop:chain}, so both entries are ordinary height shifts. The discrepancy is the boundary convention: the flow used here clamps at the terminal element, so $\eta_{2a,F}$ cannot vanish at the top of a nonzero $F$, whereas Aoki's deep upper windows become empty in the top collar. Even on a finite chain the two distances are related but not identical.
\end{example}

The two examples that follow leave the chain, one on each of the two tameness criteria, and both run on saturation: once $a$ reaches the length of a finite $\Pos$, the window is all of $\ct$ and $\LC_a$ stops moving.

Write $c^\ast\colon\Db(\mathrm{Vec}_k)\to\Db(\Shv(\Pos;k))$ for the constant-object functor. Under the presentation of Section \ref{sec:conventions} it is right adjoint to $\colim_{\Pos}$, and it is exact; so the adjunction derives, and
\begin{equation}\label{eq:const-adjunction}
   \Db(\Shv(\Pos;k))\bigl(F,\,c^\ast H\bigr)
   \;\cong\;
   \Db(\mathrm{Vec}_k)\bigl(\hocolim_{\Pos}F,\,H\bigr),
\end{equation}
naturally, the unit being the canonical map $F\to c^\ast\hocolim_{\Pos}F$.

\begin{lemma}[Above the saturation threshold the distance compares derived colimits]\label{lem:constant-flow}
Let $\Pos$ be tame with $\Delta(\Pos)$ contractible, let $m\ge1$, and suppose that $\Delta_a^{\Pos}=\ct$ for every $a\ge m$ and that for such $a$ there are isomorphisms $\LC_aF\cong c^\ast\hocolim_{\Pos}F$, natural in $F$, carrying $\eta_{0,a,F}$ to the unit of \eqref{eq:const-adjunction}. Then $d$ takes no value outside $\{0,1,\ldots,m\}\cup\{\infty\}$, and for $F,G\in\Db(\Shv(\Pos;k))$
\begin{enumerate}
\item[\textup{(i)}] $d(F,G)=0$ if and only if $F\cong G$;
\item[\textup{(ii)}] $d(F,G)\le m$ if and only if $\hocolim_{\Pos}F\cong\hocolim_{\Pos}G$.
\end{enumerate}
\end{lemma}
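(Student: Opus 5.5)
The plan is to work entirely through the hypothesis that, for $a\ge m$, $\LC_a\cong c^\ast\hocolim_{\Pos}$ with $\eta_{0,a}$ identified with the unit of \eqref{eq:const-adjunction}. An $a$-interleaving with $a\ge m$ is then a pair $\varphi\colon F\to c^\ast\hocolim G$, $\psi\colon G\to c^\ast\hocolim F$. By \eqref{eq:const-adjunction} such a pair is the same as a pair of maps $\bar\varphi\colon\hocolim F\to\hocolim G$ and $\bar\psi\colon\hocolim G\to\hocolim F$ in $\Db(\mathrm{Vec}_k)$.

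The first step is to transport the two interleaving identities along the adjunction. Since $2a\ge m$, the target $\eta_{2a,F}$ corresponds to the unit, that is to $\id_{\hocolim F}$. The composite $\mu_{a,a,F}\circ\LC_a(\psi)\circ\varphi$ corresponds to $\bar\psi\circ\bar\varphi$. This needs a computation of $\LC_a$ on $c^\ast H$: one uses $\hocolim_{\Pos}c^\ast H\simeq H$, which holds because $\Delta(\Pos)$ is contractible, so the bar complex of a constant diagram is $H\otimes C_\ast(\Delta(\Pos);k)\simeq H$. One also needs that $\mu_{a,a}$ is compatible with this. I would not compute $\mu$ directly. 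Instead I would use the unit identities after Definition~\ref{def:tame} together with Proposition~\ref{prop:choice}, and argue that on the saturated range both sides are maps $F\to c^\ast\hocolim F$, determined by their adjoints. The identity then reads $\bar\psi\bar\varphi=\id$, and symmetrically $\bar\varphi\bar\psi=\id$.

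This gives (ii) in one direction: $d(F,G)\le m$ forces, by Lemma~\ref{lem:upward}, an $m$-interleaving, hence $\hocolim F\cong\hocolim G$. Conversely, an isomorphism $\bar\varphi$ with inverse $\bar\psi$ yields an $m$-interleaving through the same dictionary.

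The value restriction follows next. If $d(F,G)=a$ is finite, then $F$ and $G$ are $\max(a,m)$-interleaved by upward closure. If $a>m$, the above shows they are already $m$-interleaved, which contradicts minimality. So finite values lie in $\{0,\dots,m\}$.

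For (i), a $0$-interleaving under $\LC_0\simeq\id$ and $\mu_{0,0}=\id$ (T5) is a pair with $\psi\varphi=\id$ and $\varphi\psi=\id$, hence an isomorphism. The converse is the pair $(\iota,\iota^{-1})$.

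The main obstacle is the middle step: identifying $\mu_{a,a,F}\circ\LC_a(\psi)\circ\varphi$ with the adjoint transpose of $\bar\psi\bar\varphi$. This requires knowing how $\LC_a$ acts on morphisms into $c^\ast H$ and how the flow isomorphism interacts with the given natural isomorphism, neither of which is specified beyond naturality. My first attempt is to apply (T3) with $(0,a)\le(a,a)$, which gives $\mu_{a,a}\circ(\eta_a\LC_a)=\eta_{a,2a}$. Here $\eta_{a,2a}$ is an isomorphism on the saturated range, since both ends are $c^\ast\hocolim$ and the transition is compatible with units. Then $\eta_a\LC_a$ is an isomorphism, and $\mu_{a,a}$ is forced. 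From there I would use naturality of $\eta_a$ at $\psi$ to rewrite $\LC_a(\psi)\circ\eta_{a,G}=\eta_{a,\LC_aF}\circ\psi$, and reduce the composite to $\eta_{a,2a}\circ(\eta_a\LC_a)^{-1}\circ\LC_a(\psi)\circ\varphi$. Its transpose should then be checkable against $\bar\psi\bar\varphi$.
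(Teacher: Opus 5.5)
Your proposal is correct and follows the paper's route. Part (i) comes from \textup{(T1)} and \textup{(T5)}, and part (ii) comes from transporting both interleaving triangles through \eqref{eq:const-adjunction}, using $\hocolim_{\Pos}c^\ast H\simeq H$ from contractibility of $\Delta(\Pos)$. The one structural difference is the bound on values: the paper gets it directly from $\LC_a=\LC_m$ and $\eta_{a,b}=\id$ for $m\le a\le b$, while you read it off the adjunction dictionary at each $a\ge m$.

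Your determination $\mu_{a,a}=\eta_{a,2a}\circ(\eta_a\LC_a)^{-1}$ from the unit identity is a more explicit replacement for the paper's appeal to Proposition~\ref{prop:choice}. The transpose check you leave open does go through. It needs three things: naturality of the given isomorphism $\LC_a\cong c^\ast\hocolim_{\Pos}$, to compute $\LC_a(\psi)$; naturality of the unit; and invertibility of the counit $\hocolim_{\Pos}c^\ast H\to H$ itself, which is the augmentation. That invertibility makes the unit at $\LC_aF$ invertible, with inverse $c^\ast$ of the counit. The naturality of $\eta_a$ at $\psi$ that you invoke is not what closes the argument.
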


\begin{proof}
\textup{(i)} A $0$-interleaving is a pair $\varphi\colon F\to\LC_0G=G$ and $\psi\colon G\to F$ with $\psi\varphi=\eta_{0,F}=\id$ and $\varphi\psi=\id$, by \textup{(T1)} and \textup{(T5)}.

For the rest, the windows $\Delta_a^{\Pos}$ coincide for all $a\ge m$, so $\LC_a=\LC_m$ there and $\eta_{a,b}=\id$ for $m\le a\le b$, the transition map $\tau_{a,b}$ being the identity by the initiality in Proposition \ref{prop:derived-exist}\textup{(iv)}. So $\eta_{2a,F}=\eta_{0,m,F}$ for every $a\ge m$, and an $a$-interleaving with $a\ge m$ is literally the data and the identities of an $m$-interleaving; no finite value above $m$ is attained.

\textup{(ii)} Write $H(F)=\hocolim_{\Pos}F$. Since $\Delta(\Pos)$ is contractible, $\hocolim_{\Pos}k\cong k$ by Lemma \ref{lem:cone-point} and Lemma \ref{lem:order-nerve}\textup{(iii)}, so $H(c^\ast H')\cong H'$ for every $H'$; thus $\LC_m\LC_m\cong\LC_{2m}=\LC_m$ and this identification is $\mu_{m,m}$, which by Proposition \ref{prop:choice} is the only choice the distance can see. By \eqref{eq:const-adjunction} a map $\varphi\colon F\to\LC_mG=c^\ast H(G)$ is the same as $f\colon H(F)\to H(G)$, and $\psi\colon G\to\LC_mF$ the same as $g\colon H(G)\to H(F)$; naturality of \eqref{eq:const-adjunction} identifies $\mu_{m,m,F}\circ\LC_m(\psi)\circ\varphi$ with $g\circ f$, while $\eta_{2m,F}=\eta_{0,m,F}$ is the unit and corresponds to $\id_{H(F)}$. So the two triangles of Definition \ref{def:interleaving} say $gf=\id$ and $fg=\id$.
\end{proof}

The chain of Example \ref{ex:aoki-chain-table} is the case $m=3$: there $\LC_aF$ is constant at $F(3)$ for $a\ge3$, and $\hocolim_{[3]}F=F(3)$ because $3$ is terminal. So on $[3]$ one has $d(F,G)=\infty$ if and only if $F(3)\not\cong G(3)$, and that single condition accounts for all three infinite rows of the table.

\begin{example}[The closed edge: the distance is a stalk comparison]\label{ex:edge-distance}
Let $\Pos=\{u,v<e\}$ be as in Example \ref{ex:closed-edge}. There $\len\le1$, so $\Delta_a^{\Pos}=\ct$ for $a\ge1$; $\Delta(\Pos)$ is contractible, $e$ being greatest (Lemma \ref{lem:cone-point}); $\hocolim_{\Pos}F=F(e)$ for the same reason, the terminal object of $\Pos$ making the colimit an evaluation and the derived colimit that same evaluation; and Example \ref{ex:closed-edge} computes $\LC_aF$, for $a\ge1$, to be the constant object at $F(e)$, with $\eta_{0,a,F}$ the corestrictions $F(\sigma)\to F(e)$, which is the unit. Lemma \ref{lem:constant-flow} applies with $m=1$, and
\[
   d(F,G)\le 1
   \quad\Longleftrightarrow\quad
   F(e)\cong G(e)\ \text{in }\Db(\mathrm{Vec}_k) .
\]
So on the closed edge $d$ is the discrete comparison of the stalk at $e$: it is $0$ on isomorphic objects, $1$ on non-isomorphic objects with isomorphic edge stalk, and $\infty$ otherwise. In particular $d(\sky_u,\sky_v)=1$ (the distance does not separate the two vertices) and $d(\sky_u,0)=1$, a nonzero object at finite distance from zero. Nothing here is derived: $C_1$ is exact on $\Pos$ (Example \ref{ex:closed-edge}), and $\hocolim_{\Pos}$ is an evaluation.
\end{example}

\begin{example}[A tree of height two: a distance of $2$]\label{ex:tree-distance}
Let $\Pos=\{r<x<X,\ r<y<Y\}$ with $\rk(r)=0$, $\rk(x)=\rk(y)=1$ and $\rk(X)=\rk(Y)=2$: the rooted tree with one branching root and two branches of length two, the poset already used after Proposition \ref{prop:rooted}. Every $\cl(z)$ is a chain, so $\Pos$ is a rooted forest and is tame, with $\Phi$ total and homotopy final at every apex and every pair of windows (Proposition \ref{prop:rooted}); it is not a chain, and $J_1(r)=\{(r,r),(r,x),(r,y)\}$ has two incomparable maximal objects, so the terminal-object criterion of Example \ref{ex:closed-edge} does not apply. The length of $\Pos$ is $2$, so $\Delta_a^{\Pos}=\ct$ if and only if $a\ge2$, and $\Delta(\Pos)$ is contractible, $r$ being least.

The subposet $\{r<X,\ r<Y\}$ is homotopy final in $\Pos$: the comma poset over $r$ is all of it and has least element $r$, and over each of $x,X,y,Y$ it is a single object, so
\begin{equation}\label{eq:tree-hocolim}
   \hocolim_{\Pos}F
   \;\simeq\;
   \bigl[\,F(r)\longrightarrow F(X)\oplus F(Y)\,\bigr]
\end{equation}
in bar degrees $1$ and $0$, the map being the two composite corestrictions with a sign. For $a\ge2$ the length condition in $J_a(\sigma)$ is vacuous, so $\pi(z,w)=(r,w)$ carries $J_a(\sigma)$ into itself; it is monotone, idempotent, below the identity, does not move $w$ (so the coefficient diagram is $\pi^\ast$ of the one on its image $J_a(r)\cong\Pos$), and it is homotopy final, the comma poset over $(r,w_0)$ having $(r,w_0)$ itself as least element. So $\LC_aF(\sigma)\simeq\hocolim_{\Pos}F$, the corestrictions being sections of these equivalences, and $\eta_{0,a}$ is the unit. Lemma \ref{lem:constant-flow} therefore applies with $m=2$:
\[
   d(F,G)\le 2
   \quad\Longleftrightarrow\quad
   \hocolim_{\Pos}F\cong\hocolim_{\Pos}G\ \text{in }\Db(\mathrm{Vec}_k).
\]

Take the skyscrapers $\sky_X$ and $\sky_Y$. Both leaves are maximal, so $\st(X)=\{X\}$ and $\sky_X=k_{\st(X)}$ is the representable projective at $X$ (Lemma \ref{lem:enough-projectives}), and likewise at $Y$. By \eqref{eq:tree-hocolim},
\[
   \hocolim_{\Pos}\sky_X\;\simeq\;[\,0\to k\oplus0\,]\;\cong\;k\;\cong\;\hocolim_{\Pos}\sky_Y ,
\]
so $d(\sky_X,\sky_Y)\le2$; and $\sky_X\not\cong\sky_Y$, so the distance is not $0$.

It is not $1$ either. The second coordinates occurring in $J_1(X)$ are those of $\bigcup_{z\le X}\st_1(z)=\{r,x,y\}\cup\{x,X\}\cup\{X\}$, which does not contain $Y$; so the coefficient diagram $w\mapsto\sky_Y(w)$ on $J_1(X)$ is identically zero and $(\LC_1\sky_Y)(X)=0$. Since $\sky_X$ is the projective $k_{\st(X)}$, evaluation gives $\Db(\Shv(\Pos;k))\bigl(\sky_X,H\bigr)\cong H_0\bigl(H(X)\bigr)$ for every $H$, so the only morphism $\varphi\colon\sky_X\to\LC_1\sky_Y$ is zero. The first triangle of Definition \ref{def:interleaving} would then force $\eta_{2,\sky_X}=0$; but $\eta_{2,\sky_X}$ is the unit $\sky_X\to c^\ast\hocolim_{\Pos}\sky_X$, which at the stalk $X$ is the identity of $k$. So no $1$-interleaving exists and
\[
   d(\sky_X,\sky_Y)=2 .
\]

At window scale $1$ the leaf $Y$ is not among the elements any window-comma poset over $X$ reaches; at scale $2$ it is, and both skyscrapers have become the constant object $k$. The distance is the number of ranks the window must climb before the two branches see each other through the root, and here that is the length of $\Pos$. It is not a height shift on a chain: on this poset $\LC_a$ is not concentrated in degree zero, since $J_1(r)$ gives $(\LC_1\sky_r)(r)\simeq[\,k\to0\,]\cong k[1]$, consistently with $H_1\LC_1\sky_r(r)\cong k$ (Proposition \ref{prop:inexact}). The pair realising the distance $2$ is nevertheless an ordinary one: both $\hocolim_{\Pos}\sky_X$ and $\hocolim_{\Pos}\sky_Y$ sit in degree zero.
\end{example}

\begin{remark}[Domain of definition]\label{rem:metric-scope}
The construction of $d$ uses the flow law (Definition \ref{def:interleaving}), so the distance lives only where the flow lives: on the tame posets, delimited at the start of this section by Example \ref{ex:path-zigzag} and Proposition \ref{prop:diamond-nogo}.

The two sufficient criteria there, terminal window-comma objects (Proposition \ref{prop:chain}, Example \ref{ex:closed-edge}) and rooted forests (Proposition \ref{prop:rooted}), are incomparable, and they differ in what they deliver: under the first $\LC_a$ is concentrated in degree zero and the distance compares clamped height shifts, under the second it need not be: on a rooted forest with a branch it is not, so there the distance is defined on genuinely derived objects (Proposition \ref{prop:inexact}, Example \ref{ex:overglue}\textup{(i)}), as Example \ref{ex:tree-distance} records. A disjoint union of tame posets is tame, the index posets $J_a(\sigma)$ and $\Ka_{a,b}(\sigma)$ lying in one component. At a minimal apex with unit windows the surviving condition is that every length-two interval have exactly one interior element (Corollary \ref{cor:minimal-unit}).

Nor can tameness be arranged by restriction, $C_a$ not being local. For a full subposet $Q\subseteq\Pos$ write $C_a^Q$ for the convolution of Definition \ref{def:Ca} formed in $Q$; the full inclusion $J_a^Q(\sigma)\subseteq J_a(\sigma)$ induces $C_a^Q(F|_Q)\to(C_aF)|_Q$, an isomorphism for every $F$ if and only if that inclusion is final. Closure of $Q$ under $\le$ and $\ge$ is sufficient, not necessary: on the closed edge $Q=\{e\}$ is compatible for every $a$, the two index posets sharing the terminal object $(e,e)$, and is not closed. Branching leaves no such accident (on $\{v<e_1,\ v<e_2\}$ no proper full subposet is compatible, although $\{e_1,e_2\}$ is itself tame), so a tame subposet carries no information about the ambient flow, and the distance admits no patching over a decomposition.
\end{remark}

\appendix

\section{Posets, simplicial complexes, and simplicial sets}\label{app:combinatorics}

The conventions are those of Wachs \cite[\S1.1]{Wachs}.

\subsection{Chains, and the Alexandrov topology}\label{ssec:chains-alexandrov}

A \emph{chain} of a finite poset $X$ is a totally ordered subset $C$; written $x_0<\cdots<x_n$ it is a \emph{strict $n$-chain}. It is \emph{saturated} if no element of $X$ lies strictly between two consecutive entries, and \emph{maximal} if no element of $X\setminus C$ is comparable with all of $C$.

The \emph{Alexandrov} topology on $X$, under which Curry's equivalence is read in Section \ref{sec:conventions}, has the up-sets for its open sets. So the smallest open set containing $p$ is the principal up-set $\st(p)$ of Section \ref{sec:conventions}, the closed sets are the down-sets, and \emph{up-closed} and \emph{down-closed} always refer to this topology.

A sheaf on this topology has for stalk at $p$ its sections over $\st(p)$, the value $F(p)$ of the corresponding functor, and $F(U)=\lim_{p\in U}F(p)$ over an arbitrary open $U$. It suffices to verify the sheaf axiom on the sets $\st(p)$, since in any cover $\st(p)=\bigcup_iU_i$ one of the $U_i$ already contains $p$, and with it $\st(p)$.

\subsection{Order complexes and nerves}\label{ssec:order-nerve}

\begin{definition}[Order complex, nerve, face poset]\label{def:order-complex}
Let $X$ be a finite poset and $\Sigma$ a finite abstract simplicial complex. The \emph{order complex} $\Delta(X)$ is the simplicial complex with vertex set $X$ whose faces are the strict chains of $X$ \cite[\S1.1]{Wachs}. The \emph{nerve} $N(X)$ is the nerve of $X$: its $n$-simplices are the weak chains $x_0\le\cdots\le x_n$, with $d_i$ deleting $x_i$ and $s_i$ repeating it. The \emph{face poset} $\Pos(\Sigma)$ is the set of nonempty faces ordered by inclusion, and the \emph{augmented face poset} $\widehat\Pos(\Sigma)=\Pos(\Sigma)\sqcup\{\hat0\}$ adjoins the empty face as a new least element $\hat0$. The same hat carries the same meaning on the face poset $\Pos(K)$ of a finite regular cell complex (Section \ref{sec:conventions}).
\end{definition}

\begin{lemma}[Order complex against nerve]\label{lem:order-nerve}
Let $X$ be a finite poset. On a simplicial set, $C_\ast(-;k)$ means the normalized chain complex of the free simplicial $k$-vector space (Definition \ref{def:normalized-complex}).
\begin{enumerate}
\item[\textup{(i)}] The nondegenerate simplices of $N(X)$ are the strict chains of $X$, that is the faces of $\Delta(X)$; each is totally ordered by the order of $X$, so $\Delta(X)$ carries a canonical vertex ordering, and the choice that the embedding of simplicial complexes into simplicial sets forces in general is absent here.
\item[\textup{(ii)}] The normalized complex of Definition \ref{def:normalized-complex}, applied to the free simplicial $k$-vector space on $N(X)$, is the simplicial chain complex of the order complex:
\[
   \mathcal N\bigl(k[N(X)]\bigr)=C_\ast(\Delta(X);k),
\]
compatibly with the augmentations.
\item[\textup{(iii)}] On the constant diagram that complex is $\operatorname{Bar}(X,k)$ of Definition \ref{def:bar-hocolim}, so bar degree $n$ is indexed by the $n$-faces of $\Delta(X)$.
\item[\textup{(iv)}] $\lvert N(X)\rvert\cong\lvert\Delta(X)\rvert$, and $\widetilde H_\ast(N(X);k)\cong\widetilde H_\ast(\Delta(X);k)$.
\end{enumerate}
A homotopical or homological attribute of a finite poset may therefore be read on either model.
\end{lemma}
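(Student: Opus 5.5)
The plan is to work directly from the definitions in Definition \ref{def:order-complex} and the normalized complex of Definition \ref{def:normalized-complex}, proving the four items in order, each feeding the next.

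For \textup{(i)}, an $n$-simplex of $N(X)$ is a weak chain $x_0\le\cdots\le x_n$. It is degenerate exactly when it lies in the image of some $s_i$, that is, when two consecutive entries coincide. So the nondegenerate simplices are the strict chains, which are the faces of $\Delta(X)$. Antisymmetry makes the strict chain determined by its underlying set, and the order of $X$ lists it in a unique increasing order. This is the canonical vertex ordering, and no auxiliary ordering has to be chosen.

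For \textup{(ii)}, the normalized complex is the quotient of the Moore complex of $k[N(X)]$ by the degenerate part. In degree $n$ I will identify it with the free vector space on the nondegenerate $n$-simplices, that is, on the ordered $n$-faces. The differential is $\sum(-1)^i d_i$, and I will check that each face of a strict chain is again strict, so no term is killed in the quotient. This is then exactly the simplicial boundary of $C_\ast(\Delta(X);k)$ with the ordering fixed in \textup{(i)}. In degree $0$ both augmentations send each vertex to $1$, so they agree.

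For \textup{(iii)}, I will specialise Definition \ref{def:bar-hocolim} to $D=k$ with identity structure maps. Then $d_0$ is a plain deletion, $\operatorname{Bar}_n(X,k)=\bigoplus_{x_0<\cdots<x_n}k$, and the alternating bar differential is the simplicial boundary. Comparing with \textup{(ii)} finishes this item.

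For \textup{(iv)}, I will use the standard fact that the geometric realization of a simplicial set in which every nondegenerate simplex has distinct, totally ordered vertices, and is determined by its vertex set, is the realization of the associated ordered simplicial complex. Nerves of posets satisfy these conditions by \textup{(i)}. The homology statement then follows from \textup{(ii)}: normalized chains compute the homology of $k[N(X)]$ by the Dold--Kan normalization theorem, and the augmentations are compatible, so the reduced homologies agree as well.

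I expect the main obstacle to be the realization homeomorphism in \textup{(iv)}. I would argue it cell by cell. By the Eilenberg--Zilber lemma each point of $|N(X)|$ lies in the interior of exactly one nondegenerate simplex, and nondegenerate simplices are the faces of $\Delta(X)$ glued along faces in the same way. The homology half needs only \textup{(ii)}, so it does not depend on this step, and that half is what the rest of the paper uses.
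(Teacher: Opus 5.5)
Your proposal is correct and follows the paper's proof essentially step for step. In both, a degenerate simplex is one with a repeated consecutive entry, and the normalized quotient is free on the strict chains with the alternating deletion differential, which is at once the ordered simplicial boundary of $\Delta(X)$ and $\operatorname{Bar}(X,k)$. Likewise, $N(X)$ is recognised as the simplicial set of the ordered complex $\Delta(X)$, so the realizations agree and the homology statement follows from (ii). The paper does not spell out the cell-by-cell Eilenberg--Zilber argument you anticipate for (iv). It rests only on the observation that faces of strict chains are strict and that distinct subsets of a chain are distinct simplices, which carries the same content.
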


\begin{proof}
(i) A weak chain is degenerate if and only if two consecutive entries agree, so the nondegenerate simplices are the strict chains, and these are the faces of $\Delta(X)$.

(ii) and (iii) In degree $n$ the degenerate subspace of $k[N(X)]$ is spanned by the degenerate simplices, so the quotient of Definition \ref{def:normalized-complex} is free on the strict $n$-chains with differential $\sum_i(-1)^id_i$, the map $d_i$ deleting the $i$-th entry: the simplicial boundary of $\Delta(X)$ in that vertex order, and equally $\operatorname{Bar}(X,k)$.

(iv) Every face of a strict chain is a strict chain and distinct subsets of a chain are distinct simplices, so $N(X)$ is the simplicial set of the ordered complex $\Delta(X)$ and the two realizations agree; the homology statement follows from (ii).
\end{proof}

\begin{lemma}[Cone points]\label{lem:cone-point}
Let $X$ be a finite poset with a least or a greatest element. Then $\Delta(X)$ is a cone and therefore contractible, so $\widetilde H_\ast(N(X);k)=0$ and the augmented simplicial chain complex of $N(X)$ is acyclic.
\end{lemma}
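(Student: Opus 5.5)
The plan is to exhibit an explicit cone point of the order complex. Say $X$ has a least element $x_0$; the greatest case is the same argument read in $X^{op}$, which has the same order complex. For every strict chain $c$ of $X$, the set $c\cup\{x_0\}$ is again a strict chain, because $x_0\le x$ for every $x\in X$. So every face of $\Delta(X)$ lies in a face that contains $x_0$. That is the definition of $\Delta(X)$ being the cone $x_0\ast\Delta(X\setminus\{x_0\})$. The link of $x_0$ is $\Delta(X\setminus\{x_0\})$, possibly empty, in which case $\Delta(X)$ is the single point $x_0$. The geometric realization of a cone is contractible by the straight-line homotopy to the apex.

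For the homological statements I would not pass through topology. Instead I would write a chain contraction of the augmented complex $\widetilde C_\ast(\Delta(X);k)$ directly. Put $h(c)=x_0\ast c$, the chain with $x_0$ prepended and with vertex order inherited from $X$, whenever $x_0\notin c$, and put $h(c)=0$ when $x_0\in c$. On the degree $-1$ generator set $h(1)=x_0$. The standard cone identity $\partial h+h\partial=\id$ then holds on the augmented complex. The only point needing care is the sign convention: $x_0$ is the $0$-th vertex, so $d_0$ of $x_0\ast c$ returns $c$ and the remaining faces give $-x_0\ast\partial c$.

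With that identity in hand, acyclicity of the augmented complex is immediate, and this gives $\widetilde H_\ast(\Delta(X);k)=0$. Lemma~\ref{lem:order-nerve}\textup{(ii)} and \textup{(iv)} transfer both statements to the normalized chains of $N(X)$, compatibly with the augmentations. I do not expect any real obstacle here; the sign verification in the contraction identity is the only step that needs checking.
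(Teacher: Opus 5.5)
Your proposal is correct and follows the paper's argument: $x_0$ is a cone point because $C\cup\{x_0\}$ is a chain for every chain $C$, the greatest case is read in $X^{op}$, and the passage to $N(X)$ is Lemma~\ref{lem:order-nerve}\textup{(ii)} and \textup{(iv)}. The one difference is that you get acyclicity of the augmented complex from the explicit contraction $h(c)=x_0\ast c$, whereas the paper reads it off the contractibility of the cone, citing \cite[Lem.~8.5.3]{Riehl}; your identity $\partial h+h\partial=\id$ does hold, including the case $c_0=x_0$, where $d_0c$ is the only face that $h$ does not kill.
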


\begin{proof}
If $x_0$ is least then $C\cup\{x_0\}$ is a chain for every chain $C$, so $\Delta(X)$ is the cone with apex $x_0$ over the order complex of $X\setminus\{x_0\}$; equivalently, $X$ has an initial object and its nerve is contractible \cite[Lem.~8.5.3]{Riehl}. A greatest element is a least element of $X^{op}$, and $\Delta(X)=\Delta(X^{op})$. The two homological consequences are Lemma \ref{lem:order-nerve}(ii) and (iv), the augmented complex of $N(X)$ being $\widetilde C_\ast(\Delta(X);k)$.
\end{proof}

Lemma \ref{lem:terminal-bar} is the sharper statement needed when the coefficients are not constant.

The letter $\Delta$ has four roles, fixed by what it is applied to: the window $\Delta_a^{\Pos}$, the order complex $\Delta(X)$, the standard simplex $\Delta^n$ and the simplex category $\mathbf\Delta$. A finite simplicial complex is written $\Sigma$. The letter $N$ has one role, the nerve of Definition \ref{def:order-complex}, applied always to a poset or a small category; the normalization of a simplicial object, which many sources also write $N$, is $\mathcal N$ throughout (Definition \ref{def:normalized-complex}). Part \textup{(ii)} above composes the two.

\subsection{Face posets, subdivision, and simplicial posets}\label{ssec:face-posets}

The order complex $\Delta(-)$ and the face poset $\Pos(-)$ are functors between finite posets with monotone maps and finite (abstract) simplicial complexes with simplicial maps, and neither composite is the identity. One way, $\Delta(\Pos(\Sigma))$ is the barycentric subdivision of $\Sigma$, and $\lvert\Sigma\rvert\cong\lvert\Delta(\Pos(\Sigma))\rvert$ \cite[\S1.1]{Wachs}. The other, $\Pos\Delta(X)$ replaces a chain order by an inclusion order (for instance $\{a<b\}\overset{\Pos\Delta}{\mapsto}\big\{\{a\}<\{a,b\}>\{b\}\big\}$), and so forgets the vertex order.

Bj\"orner's criterion \cite[Def.~2.1, Prop.~3.1]{Bjorner} characterises the face posets of regular CW complexes, augmented by a least element $\hat0$, as the \emph{CW posets}, those with more than one element in which the open interval $(\hat0,x)$ has order complex homeomorphic to a sphere for every $x\ne\hat0$; regularity supplies that sphere, so a cell of positive dimension has at least two proper faces, and a vertex $v$ of a $2$-cell $t$ lies on exactly two of the edges decomposing the circle $\partial t$, making $(v,t)$ a two-element set: the \emph{diamond property}, Bj\"orner's \emph{thinness} \cite[\S4]{Bjorner}, used at Proposition \ref{prop:diamond-nogo}. Chains of length $\ge1$, a diamond interval read as a poset in its own right (Remark \ref{rem:diamond-totality}), and the rank-$3$ lattice of Example \ref{ex:essential-nonsingleton} are graded and are not face posets.

\begin{example}[The smallest regular cell complex with a $2$-cell]\label{ex:bigon}
Let $K$ be the closed disk presented with two vertices $u,v$, two edges $e_1,e_2$, each with the endpoints $u$ and $v$, and one $2$-cell $t$ attached along the circle $e_1\cup e_2$. Each closed cell is an embedded ball, so $K$ is a finite regular cell complex; it is not a simplicial complex, the two edges sharing both endpoints. Its face poset augmented by $\hat0$ is
\[
\begin{tikzcd}[row sep=small,column sep=small]
& t & \\
e_1\arrow[ur] & & e_2\arrow[ul] \\
u\arrow[u]\arrow[urr] & & v\arrow[ull]\arrow[u] \\
& \hat0\arrow[ul]\arrow[ur] &
\end{tikzcd}
\]
with $\rk=\dim+1$ on $\Pos(K)$ and $\rk(\hat0)=0$. The criterion is read off level by level. The intervals $(\hat0,u)$ and $(\hat0,v)$ are empty, the $(-1)$-sphere. Each $(\hat0,e_i)=\{u,v\}$ is a two-element antichain, whose order complex is a pair of points, $S^0$. The strict chains of $(\hat0,t)=\{u,v,e_1,e_2\}$ are the four singletons and the four pairs $u<e_i$, $v<e_i$, so $\Delta\bigl((\hat0,t)\bigr)$ is the cycle
\[
   u<e_1>v<e_2>u ,
\]
a subdivision of the circle $\partial t$ and homeomorphic to $S^1$.

Five nonempty cells is the least a regular cell complex of dimension $\ge2$ can have: the boundary of a $2$-cell is a regular cell complex homeomorphic to a circle, and one vertex with one edge does not qualify, the interval below that edge being a single point rather than $S^0$, so two vertices and two edges are already needed to bound $t$. The interval $(u,t)=\{e_1,e_2\}$ carries the diamond property, and $K$ is therefore the smallest complex to which the last clause of Proposition \ref{prop:diamond-nogo} applies. Both vertices are minimal in $\Pos(K)$, so $u\meet v$ does not exist there.
\end{example}

\begin{definition}[Simplicial poset]\label{def:simplicial-poset}
A finite poset with least element $\hat0$ is a \emph{simplicial poset} if every principal down-set is a \emph{Boolean} lattice, that is isomorphic to the lattice of subsets of a finite set. This is Bj\"orner's \emph{poset of Boolean type} \cite[\S2.3]{Bjorner}.
\end{definition}

Every $\widehat\Pos(\Sigma)$ is a simplicial poset, and is the general case under a meet hypothesis: a simplicial poset is an augmented face poset of a simplicial complex if and only if it is a meet-semilattice, and in any case it is a CW poset \cite[\S2.3]{Bjorner}.

The tame posets of Section \ref{sec:metric} lie outside this class, a principal down-set that is a chain being Boolean only up to two elements.

\section{Deformations, incidence algebras, and bar complexes}\label{app:bar}

\subsection{Left deformations and dimension shifting}

\begin{remark}[Deformations: the two descriptions of a left derived functor]\label{rem:deformation}
Let $T$ be a right exact additive functor on a category of sheaves over a finite poset; the instances used are $\Lan_{q_1^\Delta}$ on $\Shv(\Delta_a^{\Pos};k)$ and the colimit functors $\colim_I$ of Lemma \ref{lem:bar-derived-colim}. Two descriptions of its left derived functor are used. Concretely, $\mathbb LT(X)=T(QX)$ for a projective resolution $QX\to X$. Abstractly, the total left derived functor is a right Kan extension along the localization $\gamma\colon\Chb(\mathcal A)\to\Db(\mathcal A)$ \cite[Def.~2.1.17]{Riehl}. The bridge is a left deformation \cite[Def.~2.2.1, Def.~2.2.4]{Riehl}: a functorial projective replacement $q\colon Q\Rightarrow\id$ on whose image the functor to be derived is homotopical. In $\mathcal A\simeq\mathrm{Mod}\text{-}k\Pos$, standard projective resolutions \cite[Ch.~10]{Weibel} and Lemma \ref{lem:enough-projectives} supply such $Q$; finite global dimension of $k\Pos$, at most the length of $\Pos$ by \cite[Prop.~2.6]{Ladkani} over $\mathrm{Vec}_k$, bounds representatives in $\Db(\mathcal A)$. The same holds over any finite poset, in particular over $\Delta_a^{\Pos}$ and over the index posets of this appendix.

An additive functor preserves chain homotopy equivalences, and a quasi-isomorphism between bounded-below complexes of projectives is a chain homotopy equivalence \cite[Lem.~10.4.6, Cor.~10.4.7]{Weibel}, dual to the injective form stated there. Such a $T$ is therefore left deformable, $TQ$ gives the classical left derived functor in Riehl's deformation formalism \cite[\S2.3, Rem.~2.3.2, Thm.~2.2.8]{Riehl}, and the resulting total derived functor is pointwise \cite[Prop.~2.2.13]{Riehl}. Right exactness supplies none of that; it is instead why deriving on the left is the correction that fits, $H_0\LC_aF\cong C_aF$ for a sheaf $F$ (Proposition \ref{prop:bar}) \cite[Rem.~2.3.1]{Riehl}, and Proposition \ref{prop:inexact} is why the correction is not vacuous. The adjunctions of Proposition \ref{prop:derived-exist}\textup{(iii)} are obtained the same way, the two restrictions being exact \cite[Thm.~2.2.11]{Riehl}.

No Quillen-adjunction assertion is made.\footnote{Such a formulation would require $C^a$ to preserve epimorphisms: on the face poset of $\Delta^2$ one has $(C^1G)(T)=G(e_{01})\times_{G(T)}G(e_{02})\times_{G(T)}G(e_{12})$, and the epimorphism $k_{\Pos}\twoheadrightarrow k_{\Pos\setminus\{T\}}$ is carried to the diagonal $k\to k^3$.} Nor do we assert that $\LC_a\LC_b$ derives $C_aC_b$; a composite of total left derived functors need not derive the composite \cite[Rem.~2.2.10]{Riehl}. Proposition \ref{prop:groth} supplies the single-homotopy-colimit model used instead.
\end{remark}

\begin{remark}[Dimension shifting, and the degrees the criterion runs over]\label{rem:dimension-shifting}
The left derived functors of a right-exact additive functor $T$ on a category with enough projectives satisfy the standard dimension-shifting identity; the instance in play is $T=\Lan_{q_1^\Delta}$ on $\Shv(\Delta_a^{\Pos};k)$. Explicitly, for a short exact sequence $0\to M'\to P\to M\to0$ with $P$ projective,
\[
   L_iTM\cong L_{i-1}TM'\quad(i\ge2),
   \qquad
   L_1TM=\ker\bigl(TM'\to TP\bigr)
\]
\cite[Ex.~2.4.3]{Weibel}; this is the long exact sequence of the homological $\delta$-functor $\{L_iT\}$ \cite[\S2.4]{Weibel} together with $L_iTP=0$ for $P$ projective and $i\ge1$.

The hypothesis in force is that $L_1T$ vanish on \emph{every} object, then the induction is immediate: if $L_{i-1}T=0$ identically for some $i\ge2$, then $L_iTM\cong L_{i-1}TM'=0$ for arbitrary $M$. The identity does not transfer across the restriction in $\LC_a=\mathbb L\Lan_{q_1^\Delta}\circ(q_2^\Delta)^\ast$: the objects $(q_2^\Delta)^\ast F$ do not exhaust $\Shv(\Delta_a^{\Pos};k)$, and $(q_2^\Delta)^\ast$ carries projectives to objects that need not be $\Lan_{q_1^\Delta}$-acyclic (Remark \ref{rem:not-derived}). That is why the criterion of Proposition \ref{prop:inexact} runs over all degrees $i\ge1$ and not over degree one alone.
Only the existence of a projective surjection onto each object is used (Lemma \ref{lem:enough-projectives}).
\end{remark}

\subsection{The incidence algebra and its trivial module}

\begin{definition}[Incidence algebra of a finite poset]\label{def:incidence-algebra}
Let $I$ be a finite poset; see \cite[\S2]{Ladkani} for the presentation used here. The \emph{incidence algebra} $kI$ is the $k$-vector space with basis $\{e_{xy}:x\le y\text{ in }I\}$ and product
\[
   e_{ab}\cdot e_{cd}=
   \begin{cases}
      e_{ad}, & b=c,\\
      0, & \text{otherwise;}
   \end{cases}
\]
reading $e_{xy}$ as the unique arrow $x\to y$, this is composition of composable arrows, $e_{xy}\cdot e_{yz}=e_{xz}$. It is associative, and $1=\sum_{x\in I}e_{xx}$ is a unit, the $e_{xx}$ being orthogonal idempotents. Thus $\dim_kkI$ is the number of pairs $x\le y$ in $I$, and $kI=\bigoplus_{x}e_{xx}kI=\bigoplus_x kIe_{xx}$.

The two-sided bar complex of Definition \ref{def:two-sided-bar} pairs a contravariant $R$ with a covariant $D$, that is a left module with a right module (Lemma \ref{lem:module-diagram}), so the tensor product they admit is $D\otimes_{kI}R$ and not the reverse.
Accordingly $B(R,I,D)$ computes $D\otimes^{\mathbb L}_{kI}R$, and the derived colimit of a covariant $D$ is $D\otimes^{\mathbb L}_{kI}k$.

Let $\mathfrak r=\operatorname{span}\{e_{xy}:x<y\}$, a two-sided ideal, and let $m$ be the \emph{length} of $I$, the largest number of strict steps in a chain of $I$. A nonzero product of $p$ basis elements indexed by strict relations is $e_{xy}$ for some chain of $p$ strict steps from $x$ to $y$, so $\mathfrak r^{\,m+1}=0$. The quotient $kI/\mathfrak r$ has the images of the $e_{xx}$ as orthogonal idempotents with $e_{xx}e_{yy}=0$ for $x\ne y$, and so is $\prod_{x\in I}k$, which is semisimple. So $\mathfrak r$ is the Jacobson radical, and $kI$ is a finite-dimensional split basic algebra.
\end{definition}

\begin{lemma}[Modules are diagrams]\label{lem:module-diagram}
Let $I$ be a finite poset. For a right $kI$-module $M$ put $M_x=Me_{xx}$, with $M_x\to M_y$ right multiplication by $e_{xy}$ for $x\le y$; for a covariant $D\colon I\to\mathrm{Vec}_k$ put $\widetilde D=\bigoplus_{x\in I}D(x)$, with $e_{xy}$ acting as the composite of the projection to $D(x)$, the structure map $D(x\le y)$ and the inclusion of $D(y)$. These assignments are mutually inverse equivalences
\[
\mathrm{Mod}\text{-}kI\;\simeq\;\mathrm{Fun}(I,\mathrm{Vec}_k),
\]
and dually, with $M_x=e_{xx}M$, left $kI$-modules are the contravariant functors.
\end{lemma}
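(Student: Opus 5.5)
The plan is to write down the two assignments and check that they are inverse equivalences, using only the idempotent decomposition $1=\sum_x e_{xx}$ of Definition \ref{def:incidence-algebra}. I would go in the order: right module to diagram, diagram to right module, then the two composites, then the left-module case.

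For a right $kI$-module $M$, orthogonality of the $e_{xx}$ together with $\sum_x e_{xx}=1$ gives $M=\bigoplus_x Me_{xx}$. For $x\le y$ the relation $e_{xx}e_{xy}=e_{xy}=e_{xy}e_{yy}$ shows that right multiplication by $e_{xy}$ carries $Me_{xx}$ into $Me_{yy}$. The product rule $e_{xy}e_{yz}=e_{xz}$ turns these maps into a covariant functor: the right action composes in the same order as the arrows, which is why right modules correspond to covariant functors. The unit condition holds because $e_{xx}$ acts as the identity on $Me_{xx}$. A module map $f$ commutes with right multiplication by each $e_{xx}$ and each $e_{xy}$, so it restricts to a natural transformation of these functors.

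For a diagram $D$, define the action of the basis element $e_{xy}$ on $\widetilde D=\bigoplus_x D(x)$ as in the statement and extend linearly. The one real check is associativity, $(m\cdot e_{ab})\cdot e_{cd}=m\cdot(e_{ab}e_{cd})$. If $b\ne c$, the left side vanishes because the image of $e_{ab}$ lies in $D(b)$, which the projection to $D(c)$ kills, and the right side is zero by the product rule. If $b=c$, both sides equal $m\mapsto D(b\le d)D(a\le b)\pi_a(m)$, which is $D(a\le d)\pi_a(m)$ by functoriality of $D$. The unit acts as $\sum_x\pi_x=\mathrm{id}$. A natural transformation $\alpha$ gives the map $\bigoplus_x\alpha_x$, and naturality of $\alpha$ is exactly compatibility with each $e_{xy}$, hence with the whole action.

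For the two composites, starting from a diagram $D$ one has $\widetilde De_{xx}=D(x)$ with the original structure maps, on the nose. Starting from a module $M$, one gets $\bigoplus_x Me_{xx}\cong M$ as right modules via the decomposition above, and this isomorphism is natural in $M$. These two facts give the equivalence.

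For left modules, set $M_x=e_{xx}M$. Left multiplication by $e_{xy}$ sends $e_{yy}M$ to $e_{xx}M$, because $e_{xy}=e_{xx}e_{xy}e_{yy}$. Left multiplication reverses the order of composition, so these maps form a contravariant functor, and the argument above goes through verbatim with the roles of $x$ and $y$ exchanged. I expect no genuine obstacle anywhere. The only step that needs care is the direction and variance bookkeeping, which is also the orientation $D\otimes_{kI}R$ used in Definition \ref{def:incidence-algebra}.
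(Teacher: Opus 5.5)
Your proposal is correct and follows the paper's proof essentially step for step: the idempotent splitting $M=\bigoplus_x Me_{xx}$, the relations $e_{xx}e_{xy}=e_{xy}=e_{xy}e_{yy}$ and $e_{xy}e_{yz}=e_{xz}$ for covariant functoriality, the check on basis elements that $\widetilde D$ is a right module, and the two natural identifications of the composites. The only difference is cosmetic: you verify the left-module case directly via $e_{xy}=e_{xx}e_{xy}e_{yy}$, whereas the paper obtains it by applying the right-module statement to $kI^{op}=k(I^{op})$.
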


\begin{proof}
Let $M$ be a right $kI$-module. Since $1=\sum_{x\in I}e_{xx}$ is a finite sum of orthogonal idempotents, every $m\in M$ satisfies $m=m1=\sum_xme_{xx}$, and if $\sum_xm_x=0$ with $m_x=m_xe_{xx}$ then right multiplication by $e_{yy}$ gives $m_y=0$; so $M=\bigoplus_xM_x$, a finite direct sum, whatever the dimensions of the $M_x$.
From $e_{xx}e_{xy}=e_{xy}=e_{xy}e_{yy}$ right multiplication by $e_{xy}$ carries $M_x$ into $M_y$; it is the identity of $M_x$ when $x=y$, and $e_{xy}e_{yz}=e_{xz}$ makes the maps composable, so $M$ determines a covariant diagram. Conversely $\widetilde D$ is a right $kI$-module: the displayed action of $e_{xy}$ on $e_{yz}$ composes to that of $e_{xz}$ by functoriality of $D$ and vanishes on the other summands, and $\sum_xe_{xx}$ acts as the identity.

Evaluating the first on $\widetilde D$ gives $\widetilde De_{xx}=D(x)$ with $D(x\le y)$ as structure maps, and applying the second to the diagram of $M$ gives $\bigoplus_xM_x=M$ by the splitting above, both identifications being natural. A homomorphism $\varphi\colon M\to N$ has $\varphi(Me_{xx})\subseteq Ne_{xx}$ and commutes with right multiplication by $e_{xy}$, so it is a natural transformation of the associated diagrams, and conversely. The dual statement is this one applied to $kI^{op}=k(I^{op})$.
\end{proof}

Sheaves are covariant, so $\Shv(\Pos;k)\simeq\mathrm{Mod}\text{-}k\Pos$ at $I=\Pos$, which is the identification of Remark \ref{rem:module-presentation}.

\begin{lemma}[The trivial module and its bar resolution]\label{lem:incidence-trivial}
Let $I$ be a finite poset of length $m$, write $k_I$ for the \emph{trivial} left $kI$-module,
and set $I_{\le x}=\{y\in I:y\le x\}$. Then:
\begin{enumerate}
\item[\textup{(i)}] as a left $kI$-module, $kIe_{xx}$ is the constant contravariant diagram on $I_{\le x}$: it has $(kIe_{xx})(y)=k\cdot e_{yx}$ for $y\le x$ and $0$ otherwise, with identity structure maps. In particular $\dim_kkIe_{xx}=|I_{\le x}|$, and $kIe_{xx}$ is projective, being a direct summand of $kI$;
\item[\textup{(ii)}] put
\[
   P_p=\bigoplus_{y_0<\cdots<y_p\ \text{in}\ I}kIe_{y_0y_0},
   \qquad
   \partial=\sum_{i=0}^p(-1)^id_i,
\]
where for $i\ge1$ the face $d_i$ deletes $y_i$ and is the identity on the displayed summand, and $d_0$ deletes $y_0$ and is right multiplication by $e_{y_0y_1}\in e_{y_0y_0}kIe_{y_1y_1}$. With the augmentation $P_0=\bigoplus_ykIe_{yy}\to k_I$ that is the identity on each value, $P_\bullet\to k_I$ is a projective resolution, and $P_p=0$ for $p>m$;
\item[\textup{(iii)}] for a covariant $I$-diagram $M$ with values in $\Chb(\mathrm{Vec}_k)$, that is a right $kI$-module, there is an equality of complexes
\[
   M\otimes_{kI}P_\bullet=\operatorname{Bar}(I,M),
\]
the right-hand side as in Definition \ref{def:bar-hocolim}; so $\operatorname{Tor}^{kI}_p(M,k)\cong H_p\bigl(\operatorname{Bar}(I,M)\bigr)$, which for $M$ concentrated in internal degree zero is $\operatorname{Tor}$ in the ordinary sense and in general the hyper-$\operatorname{Tor}$ of the complex. The identification of this graded object with $\mathbb L_\ast\!\colim_IM$ is Lemma \ref{lem:bar-derived-colim}\textup{(ii)}.
\end{enumerate}
The resolution in \textup{(ii)} is classical, indexed by the chains of $I$ as the order complex is \cite[Prop.~1.3]{Cibils}; it is reproved here in the normalization and sign conventions of Definition \ref{def:bar-hocolim}, which is what part \textup{(iii)} needs.
\end{lemma}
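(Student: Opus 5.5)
The plan is to verify the three parts in order, working throughout in the presentation of Lemma \ref{lem:module-diagram}, where left modules are contravariant diagrams and right modules are covariant ones.

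\textbf{Part (i).} This is a direct computation from the basis of $kI$. The module $kIe_{xx}$ is spanned by the elements $e_{yx}$ with $y\le x$. By the dual form of Lemma \ref{lem:module-diagram}, its value at $y$ is $e_{yy}kIe_{xx}=k\,e_{yx}$ when $y\le x$, and $0$ otherwise. For $z\le y$, left multiplication by $e_{zy}$ sends $e_{yx}$ to $e_{zx}$, so the structure maps are identities. Projectivity follows from the decomposition $kI=\bigoplus_x kIe_{xx}$, of which $kIe_{xx}$ is a summand.

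\textbf{Part (ii).} Since $P_p$ is a sum of the projectives from (i), it remains to check three things: that $\partial^2=0$, that the augmented complex is exact, and the vanishing above $m$. The vanishing is immediate, because a strict chain has at most $m+1$ elements. For $\partial^2=0$, the faces satisfy the simplicial identities: deletions commute, and $d_0d_0$ is right multiplication by $e_{y_0y_1}e_{y_1y_2}=e_{y_0y_2}$, which equals $d_0d_1$. Exactness is checked objectwise, by evaluating at each $z\in I$, which amounts to applying $e_{zz}\cdot-$. By (i), the value of $P_p$ at $z$ is the free vector space on chains $y_0<\cdots<y_p$ with $z\le y_0$, that is, on the strict chains of the up-set $I_{\ge z}$. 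The augmented complex at $z$ is then the augmented simplicial chain complex of $\Delta(I_{\ge z})$, with the identifications of Lemma \ref{lem:order-nerve}(ii). This uses that the structure maps in (i) are identities, so $d_0$ becomes plain deletion of the first vertex. Because $I_{\ge z}$ has least element $z$, Lemma \ref{lem:cone-point} makes this complex acyclic. One point needs care here: the sign of $d_0$ must match the normalized boundary, and it does, since every face is a deletion with sign $(-1)^i$.

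\textbf{Part (iii).} The computation rests on $M\otimes_{kI}kIe_{yy}\cong Me_{yy}=M(y)$. Under this isomorphism the summand of $M\otimes P_p$ indexed by $y_0<\cdots<y_p$ becomes $M(y_0)$. The face $d_0$ becomes $m\mapsto m e_{y_0y_1}$, which is the structure map $M(y_0\to y_1)$. The faces $d_i$ with $i\ge1$ become deletions. This is exactly the bar complex of Definition \ref{def:bar-hocolim}. For $M$ complex-valued, the tensor product of complexes supplies the Koszul sign $(-1)^m$, which matches $d_{\mathrm{tot}}$.

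Since $P_\bullet$ is a projective resolution of $k_I$ by (ii), the homology of $M\otimes_{kI}P_\bullet$ is $\operatorname{Tor}$, or hyper-$\operatorname{Tor}$ for bounded complexes, because a bounded projective resolution in one variable computes the derived tensor product.

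The main obstacle I expect is bookkeeping rather than mathematics: aligning the left/right conventions and the signs, so that the identity in (iii) is an actual equality of complexes rather than an isomorphism up to sign.
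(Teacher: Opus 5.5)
Your proposal is correct and follows the paper's proof essentially step for step: you compute $e_{yy}kIe_{xx}=k\cdot e_{yx}$ for (i); for (ii) you evaluate $P_\bullet$ at each $z$ and recognise the augmented chain complex of the cone $\Delta(I_{\ge z})$; for (iii) you use $M\otimes_{kI}kIe_{yy}=M(y)$ to match the faces with those of $\operatorname{Bar}(I,M)$, with the Koszul sign coming from the tensor product of complexes. Your explicit verification of $\partial^2=0$ through the simplicial identity $d_0d_0=d_0d_1$ is a small addition; the paper leaves it implicit in the objectwise identification with simplicial chains.
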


\begin{proof}
(i) By the product rule $e_{ab}e_{xx}$ is $e_{ax}$ when $b=x$ and $0$ otherwise, so $kIe_{xx}=\operatorname{span}\{e_{ax}:a\le x\}$; and $e_{yy}e_{ax}$ is $e_{yx}$ when $a=y$ and $0$ otherwise, so $(kIe_{xx})(y)=e_{yy}kIe_{xx}$ is $k\cdot e_{yx}$ for $y\le x$ and $0$ otherwise. For $y'\le y\le x$ the structure map is left multiplication by $e_{y'y}$, and $e_{y'y}e_{yx}=e_{y'x}$, so it is the identity of $k$ on basis vectors.

(ii) Each $P_p$ is a direct sum of summands of $kI$ and so projective, and vanishes for $p>m$ because $I$ has no strict chain with more than $m$ steps. Exactness may be checked after evaluating at each $z\in I$, the evaluation $M\mapsto M(z)=e_{zz}M$ of a left module, in the notation of Lemma \ref{lem:module-diagram}, being exact. By (i), $(kIe_{y_0y_0})(z)$ is $k$ when $z\le y_0$ and $0$ otherwise, so
\[
   P_p(z)=\bigoplus_{z\le y_0<\cdots<y_p}k
\]
is the group of simplicial $p$-chains of the nerve of the up-set $I_{\ge z}=\{y\in I:y\ge z\}$. Under this identification $d_i$ deletes $y_i$ for $i\ge1$, and $d_0$ sends the generator $e_{zy_0}$ to $e_{zy_0}e_{y_0y_1}=e_{zy_1}$, so it too is the face deleting $y_0$; the augmentation is the usual one. Thus $P_\bullet(z)\to k$ is the augmented simplicial chain complex of $N(I_{\ge z})$, which is acyclic by Lemma \ref{lem:cone-point}, $I_{\ge z}$ having least element $z$. Thus $P_\bullet\to k_I$ is exact at every object, and so exact.

(iii) Since $M\otimes_{kI}kIe_{xx}=Me_{xx}=M_x$,
\[
   M\otimes_{kI}P_p=\bigoplus_{y_0<\cdots<y_p}M_{y_0},
\]
which is $\operatorname{Bar}_p(I,M)$. Under this identification $d_i$ for $i\ge1$ is the identity on the coefficient, and $d_0$ is $\id\otimes\,e_{y_0y_1}$, that is the structure map $M_{y_0}\to M_{y_1}$; these are the faces of Definition \ref{def:bar-hocolim}, and the internal differential of $M$ enters with the sign recorded there. The $\operatorname{Tor}$ identification is then (ii).
\end{proof}

\subsection{Colimits, normalized complexes, and algebraic realization}

\begin{lemma}[Colimits by coproducts and coequalizers, after Mac Lane]\label{lem:colim-coeq}
Let $I$ be a small category and $D\colon I\to\mathcal C$ a functor into a category with coequalizers of pairs of arrows and coproducts indexed by $\operatorname{obj}I$ and by $\operatorname{arr}I$. Write $\iota$ for the coproduct injections and let
\[
   f,g\colon\coprod_{u\colon j\to k}D(j)\longrightarrow\coprod_{j\in\operatorname{obj}I}D(j),
   \qquad
   f\circ\iota_u=\iota_j,
   \quad
   g\circ\iota_u=\iota_k\circ D(u).
\]
Then $\colim_ID$ exists and is their coequalizer $c$, with colimiting cocone $\lambda_j=c\circ\iota_j$. This is the dual of \cite[Thm.~V.2.1, Thm.~V.2.2]{ML}.
\end{lemma}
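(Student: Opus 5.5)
The plan is to build the colimit directly out of the two coproducts and the coequalizer, and then check the universal property. First, the parallel pair $f,g$ is well defined: a map out of the coproduct $\coprod_{u}D(\mathrm{dom}\,u)$ is the same as a family of maps indexed by the arrows $u\colon j\to k$. Then $f$ is given by the family $\iota_j$ and $g$ by the family $\iota_k\circ D(u)$. Let $c\colon\coprod_jD(j)\to C$ be their coequalizer, which exists by hypothesis, and set $\lambda_j=c\circ\iota_j$.

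\emph{The maps $\lambda_j$ form a cocone.} For $u\colon j\to k$, precompose the identity $c\circ f=c\circ g$ with $\iota_u$. This gives $c\circ\iota_j=c\circ\iota_k\circ D(u)$, that is, $\lambda_j=\lambda_k\circ D(u)$.

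\emph{Universality.} Let $(\mu_j\colon D(j)\to X)$ be any cocone. By the coproduct property there is a unique $m\colon\coprod_jD(j)\to X$ with $m\circ\iota_j=\mu_j$. Check $m\circ f=m\circ g$ on each injection $\iota_u$: the two sides are $\mu_j$ and $\mu_k\circ D(u)$, and these agree because $\mu$ is a cocone. The coequalizer property then gives a unique $h\colon C\to X$ with $h\circ c=m$, hence $h\circ\lambda_j=\mu_j$.

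\emph{Uniqueness of $h$.} Suppose $h'$ also satisfies $h'\circ\lambda_j=\mu_j$ for every $j$. Then $h'\circ c$ agrees with $m$ on every injection $\iota_j$, so $h'\circ c=m$ by the uniqueness clause of the coproduct. Since a coequalizer is an epimorphism, $h'=h$.

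Nothing here is a real obstacle. The only point needing care is that every comparison is made componentwise through the coproduct injections, which is exactly what the hypotheses on coproducts indexed by $\operatorname{obj}I$ and by $\operatorname{arr}I$ supply. This is the dual of the argument in Mac Lane's Theorem V.2.1.
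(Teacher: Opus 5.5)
Your proof is correct and is essentially the paper's approach. The paper gives no argument of its own and simply cites the dual of Mac Lane's Theorem V.2.1; your componentwise verification of the cocone identity, of existence via $m\circ f=m\circ g$, and of uniqueness through the coproduct and coequalizer universal properties is exactly that dualized argument written out.
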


If $\mathcal C$ is additive the coequalizer is $\operatorname{coker}(g-f)$; if $I$ is a poset the left coproduct is indexed by the relations $j\le k$, the identities among them contributing nothing. Both specialisations are used at $I=J_a(\sigma)$ in the proof of Proposition \ref{prop:bar}, where the bar complex is normalized and so has its degree one indexed by the strict relations alone.

\begin{definition}[Normalized complex]\label{def:normalized-complex}
We follow \cite[\S8.3]{Weibel}. Let $\mathcal{A}$ be an abelian category and let $X_\bullet$ be a simplicial object of $\mathcal{A}$. Its \emph{Moore complex}, or unnormalized chain complex, is the object $\mathcal M(X)$ of $\Ch_{\ge0}(\mathcal{A})$ with $\mathcal M_n(X)=X_n$ and simplicial differential
\[
   \partial=\sum_{i=0}^n(-1)^i d_i ;
\]
the simplicial identities give $\partial^2=0$. Let
\[
   S_n(X)=\sum_{i=0}^{n-1}\operatorname{im}\bigl(s_i\colon X_{n-1}\to X_n\bigr)
\]
be the degenerate subobject. The simplicial identities make $S_\bullet(X)$ a subcomplex, i.e.
\[
\partial s_j=\sum^{n-2}_{i=0,i<j}(-1)^is_{j-1}d_i+\sum^n_{i=2,i>j+1}(-1)^is_j d_{i-1}:X_{n-1}\to S_{n-1}(X).
\]
 The \emph{normalized complex} is
\[
   \mathcal N_n(X)=\mathcal M_n(X)/S_n(X),
\]
again an object of $\Ch_{\ge0}(\mathcal{A})$, with the induced differential. Equivalently,
\[
   \mathcal N_n(X)\cong \bigcap_{i=0}^{n-1}\ker\bigl(d_i\colon X_n\to X_{n-1}\bigr),
\]
with simplicial differential $(-1)^n d_n$ under the decomposition $\mathcal M_\bullet(X)=\mathcal N_\bullet(X)\oplus S_\bullet(X)$ \cite[\S8.3, Lemma 7]{Weibel}. Weibel writes $N$ for this functor; it is written $\mathcal N$ here because $N$ is the nerve of Definition \ref{def:order-complex}, and the two meet in Lemma \ref{lem:order-nerve}\textup{(ii)}. A map $f\colon X_\bullet\to Y_\bullet$ of simplicial objects commutes with the faces, so it is a chain map $\mathcal M(X)\to\mathcal M(Y)$, and it commutes with the degeneracies, so it carries $S_\bullet(X)$ into $S_\bullet(Y)$. Both $\mathcal M$ and $\mathcal N$ are therefore functorial, the map induced on $\mathcal N$ being the one between the quotients.

Two instances occur below. For $\mathcal{A}=\mathrm{Vec}_k$ this is the normalized chain complex of a simplicial vector space, applied in Lemma \ref{lem:algebraic-realization} to the free simplicial vector space on the standard simplices. For $\mathcal{A}=\Chb(\mathrm{Vec}_k)$ an object of $\Ch_{\ge0}(\mathcal{A})$ is a bicomplex, the simplicial direction commuting with the internal differential of the entries; the \emph{normalized total complex}
\[
\bigl(\operatorname{Tot}\mathcal N(X_\bullet)\bigr)_n:=\bigoplus_{p+q=n}\mathcal N_p(X)_q,
\]
with $p$ the simplicial degree and $q$ the internal one, is its total complex, formed with the Koszul sign fixed in Definition \ref{def:bar-hocolim}, where quotienting by degenerate simplices is represented by the displayed direct sum over strict chains $i_0<\cdots<i_n$.
\end{definition}

\begin{lemma}[Algebraic realization of a simplicial object]\label{lem:algebraic-realization}
Write $\mathbf\Delta$ for the simplex category (finite nonempty ordinals and monotone maps), as in Appendix \ref{app:combinatorics}. Let $\mathcal N_\ast(\Delta^n;k)$ denote the normalized simplicial chain complex of the standard $n$-simplex, i.e.\ $\mathcal N$ applied to the free simplicial $k$-vector space $k[\mathbf\Delta(-,[n])]$; it is a cosimplicial object of $\Ch_{\ge0}(\mathrm{Vec}_k)$. Let $X_\bullet$ be a simplicial object of $\Chb(\mathrm{Vec}_k)$. Then there is an isomorphism of chain complexes, natural in $X_\bullet$,
\[
   \int^{[n]\in\mathbf\Delta}\mathcal N_\ast(\Delta^n;k)\otimes_kX_n
   \;\cong\;
   \operatorname{Tot}\mathcal N(X_\bullet),
\]
the coend being formed in $\Ch(\mathrm{Vec}_k)$, $\otimes_k$ the tensor product of complexes with its Koszul sign, and the right-hand side the normalized total complex of Definition \ref{def:normalized-complex}. In particular, for $X_\bullet=\srep(D)$ with $D\colon I\to\Chb(\mathrm{Vec}_k)$ on a finite poset $I$, the left-hand side is $\operatorname{Bar}(I,D)$.

The simplicially enriched counterpart of the displayed formula is \cite[Thm.~6.6.1]{Riehl}, and the identification of realization with totalization under Dold--Kan is \cite[\S2.1]{Arakawa}; the proof below is what fixes the sign convention of Definition \ref{def:bar-hocolim}.
\end{lemma}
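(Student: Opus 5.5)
The plan is to prove the isomorphism degreewise by the co-Yoneda (density) formula, and then check that the differentials match. In each bidegree, $\mathcal N_p(\Delta^n;k)$ is the free vector space on the nondegenerate $p$-simplices of $\Delta^n$, that is on the injective monotone maps $[p]\hookrightarrow[n]$. The first step is to rewrite the coend as a quotient of the unnormalized version. For the free simplicial vector space $k[\mathbf\Delta(-,[n])]$, co-Yoneda gives
\[
\int^{[n]}k[\mathbf\Delta([p],[n])]\otimes_k X_n\cong X_p ,
\]
naturally in $[p]$. The unnormalized coend $\int^{[n]}\mathcal M_\ast(\Delta^n;k)\otimes X_n$ is therefore the Moore bicomplex $\bigoplus_p X_p$. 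I will check that, under this identification, the simplicial differential of $\mathcal M_\ast(\Delta^n)$ passes to $\sum(-1)^i d_i$ on $X_\bullet$. The reason is that the faces of $\Delta^n$ are precomposition with the cofaces $\delta^i$, and the coend relation converts precomposition into the simplicial operator $d_i$ on $X$.

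The second step handles normalization. $\mathcal N_\ast(\Delta^n)$ is the quotient of $\mathcal M_\ast(\Delta^n)$ by the degenerate simplices, i.e.\ by the images of the $s_j$. The coend is a colimit, so it is right exact in the first variable degreewise. It therefore carries the quotient $\mathcal M\to\mathcal N$ to a quotient of $\bigoplus_pX_p$, namely by the coend of the degenerate parts. Applying co-Yoneda again, the coend of the span of $\sigma\circ s^j$ over $[n]$ is identified with $\sum_j\operatorname{im}(s_j)=S_p(X)$. This gives $\mathcal N_p(X)_q$ in bidegree $(p,q)$.

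The Koszul sign then takes care of itself. The tensor product of complexes puts $(-1)^p$ on the internal differential $d_X$ when it passes a simplicial degree $p$. Definition \ref{def:bar-hocolim} places the sign on $\partial_{\mathrm{bar}}$ instead, as $(-1)^m$. I will reconcile the two conventions by the degreewise automorphism $(-1)^{pq}$, or equivalently by ordering the tensor factors $X_n\otimes\mathcal N(\Delta^n)$. I choose whichever makes the formula match Definition \ref{def:bar-hocolim} literally, and I record that choice as the one that fixes the convention. Naturality in $X_\bullet$ is clear, because every identification used is co-Yoneda.

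For the final clause, set $X_\bullet=\srep(D)$. Then $\operatorname{Tot}\mathcal N(\srep D)$ is $\operatorname{Bar}(I,D)$, because the degenerate summands of $\srep(D)_n$ are exactly those indexed by non-strict chains. This holds on a poset, where degeneracies repeat an index, so the quotient is the direct sum over strict chains $i_0<\cdots<i_n$ with the faces of Definition \ref{def:bar-hocolim}.

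I expect the main obstacle to be the second step. It has to be shown carefully that the coend of the degenerate subcomplexes is exactly $S_\bullet(X)$, and neither larger nor smaller, so that exactness and the identification commute. I would verify this on generators, using the Eilenberg--Zilber decomposition of each simplex of $\Delta^n$ as a degeneracy of a unique nondegenerate one.
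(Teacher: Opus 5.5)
Your proof is correct, and it takes a more hands-on route than the paper's.

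\emph{The paper's route.} The paper reduces to one internal degree. It presents a simplicial vector space $Y_\bullet$ as the co-Yoneda coend $\int^{[n]}k[\mathbf\Delta(-,[n])]\otimes_kY_n$ in simplicial vector spaces, and then applies $\mathcal N$. This functor commutes with the coend because it is one half of the Dold--Kan equivalence. Normalization and the matching of differentials then come from naturality.

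\emph{Your route.} You apply co-Yoneda to the Moore complexes and verify by hand that precomposition with $\delta^i$ becomes $d_i$. You then pass to the normalized quotient by right exactness of the coend in its cosimplicial variable. So all you need is that the image of the degenerate part in $X_p$ is exactly $S_p(X)$.

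That step is easier than you expect. A non-injective monotone map $[p]\to[n]$ factors as $\tau\circ s^j$, so $(\tau\circ s^j)\otimes x\mapsto s_j(X(\tau)x)$. Conversely, $s^j\otimes y$ at $n=p-1$ hits $s_jy$. No Eilenberg--Zilber uniqueness is needed. State the check as a claim about that image, not about the coend of the degenerate parts itself.

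Your approach avoids Dold--Kan and makes the face computation explicit; the paper's is shorter.

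\emph{Signs.} Here you are more accurate than the paper. With the factors ordered $\mathcal N_\ast(\Delta^n;k)\otimes_kX_n$, as in the statement, the Koszul rule puts $(-1)^p$ on the internal differential. It does not give the $(-1)^m$ on $\partial_{\mathrm{bar}}$ of Definition \ref{def:bar-hocolim}; that sign belongs to the order $X_n\otimes_k\mathcal N_\ast(\Delta^n;k)$. The paper's proof asserts that the two signs coincide. In fact they are intertwined by the natural twist $(-1)^{pm}$, which is the symmetry isomorphism of the tensor product. That twist is exactly the correction you propose, and either of your two options realises it.
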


\begin{proof}
Both sides are computed one internal degree at a time. Colimits in $\Ch(\mathrm{Vec}_k)$ are formed degreewise, and for each fixed $n$ the complex $\mathcal N_\ast(\Delta^n;k)\otimes_kX_n$ has degree-$d$ term $\bigoplus_{p+q=d}\mathcal N_p(\Delta^n;k)\otimes_k(X_n)_q$. Since a coend is a colimit and commutes with the direct sum, the degree-$d$ term of the left-hand side is
\[
   \bigoplus_{p+q=d}\ \int^{[n]\in\mathbf\Delta}\mathcal N_p(\Delta^n;k)\otimes_k(X_n)_q ,
\]
so it suffices to prove the isomorphism for a simplicial $k$-vector space $Y_\bullet$ (here $Y_\bullet=(X_\bullet)_q$ for fixed $q$), in the form $\int^{[n]}\mathcal N_\ast(\Delta^n;k)\otimes_kY_n\cong \mathcal N(Y_\bullet)$.

For that, the co-Yoneda (density) formula \cite[Thm.~6.5.7]{RiehlCTIC}, in its $\mathrm{Vec}_k$-enriched form, presents $Y_\bullet$ as the coend
\[
   Y_\bullet\;\cong\;\int^{[n]\in\mathbf\Delta}k[\mathbf\Delta(-,[n])]\otimes_kY_n
\]
in simplicial $k$-vector spaces. The normalization functor $\mathcal N$ is one half of the Dold--Kan equivalence $\mathrm{sVec}_k\simeq\Ch_{\ge0}(\mathrm{Vec}_k)$ \cite[\S8.4]{Weibel}, and so preserves all colimits, in particular coends and the tensorings by $k$-vector spaces appearing in them. Applying $\mathcal N$ to the display and using $\mathcal N(k[\mathbf\Delta(-,[n])])=\mathcal N_\ast(\Delta^n;k)$ gives the claim.

Reassembling over $p+q=d$ recovers the total complex, and the sign attached to the two directions is the tensor-product sign of $\mathcal N_\ast(\Delta^n;k)\otimes_kX_n$, which is the sign $(-1)^m$ recorded in Definition \ref{def:bar-hocolim}. Finally, $\mathcal N(\srep(D))$ in bar degree $n$ is the sum over strict chains $i_0<\cdots<i_n$ of $D(i_0)$, which is $\operatorname{Bar}(I,D)$ by definition.
\end{proof}

\subsection{The two-sided bar complex}

\begin{definition}[Two-sided bar differential]\label{def:two-sided-bar}
Let $J$ be a finite poset, let $R\colon J^{op}\to\Chb(\mathrm{Vec}_k)$ be a contravariant $J$-diagram, and let $D\colon J\to\Chb(\mathrm{Vec}_k)$ be a covariant one; over the incidence algebra these are respectively a left and a right $kJ$-module (Definition \ref{def:incidence-algebra}), so the pairing they admit is $D\otimes_{kJ}R$. The two-sided bar complex
\[
   B(R,J,D)
\]
is the normalized total complex of the simplicial object with
\[
   B_n(R,J,D)=
   \bigoplus_{j_0\le\cdots\le j_n}R(j_n)\otimes D(j_0).
\]
Equivalently, in normalized bar degree $p$,
\[
   B_p(R,J,D)=
   \bigoplus_{j_0<\cdots<j_p}R(j_p)\otimes D(j_0).
\]
If $R=k$, the constant contravariant $J$-diagram with value $k$ in internal degree zero and identity structure maps, the factor $R(j_p)$ contributes nothing and what remains is the one-sided complex of Definition \ref{def:bar-hocolim}:
\[
   \operatorname{Bar}(J,D)=B(k,J,D).
\]
The two-argument $\operatorname{Bar}$ is the abbreviation for a constant first variable, and is the notation used wherever the object computed is $\mathbb L\!\colim_JD$; the three-argument $B$ is used where the first variable varies, and only there.
For $p>0$ and a homogeneous generator $r\otimes x\in R(j_p)_{|r|}\otimes D(j_0)_{|x|}$ indexed by $j_0<\cdots<j_p$, the bar faces are
\[
\begin{aligned}
   d_0(r\otimes x)
      &=r\otimes D(j_0\le j_1)x & \text{indexed by } j_1<\cdots<j_p,\\
   d_i(r\otimes x)
      &=r\otimes x \quad (0<i<p) & \text{indexed by } j_0<\cdots<\widehat{j_i}<\cdots<j_p,\\
   d_p(r\otimes x)
      &=R(j_{p-1}\le j_p)r\otimes x & \text{indexed by } j_0<\cdots<j_{p-1}.
\end{aligned}
\]
The bar differential is
\[
   \partial_{\mathrm{bar}}=\sum_{i=0}^p(-1)^i d_i .
\]
In total degree we use
\[
   d_{\mathrm{tot}}
   =
   d_R\otimes 1+(-1)^{|r|}1\otimes d_D
   +(-1)^{|r|+|x|}\partial_{\mathrm{bar}}
   \;=\;\delta_R+\delta_D+\delta_B,
\]
naming its three parts
\[
\begin{aligned}
   \delta_R(r\otimes x)&=d_Rr\otimes x,\\
   \delta_D(r\otimes x)&=(-1)^{|r|}r\otimes d_Dx,\\
   \delta_B(r\otimes x)&=(-1)^{|r|+|x|}\partial_{\mathrm{bar}}(r\otimes x).
\end{aligned}
\]
For $p=0$ the bar part is absent. This is the ordinary total-complex differential of the multicomplex underlying the simplicial bar object, whose three directions are the internal differentials of $R$ and of $D$ together with the simplicial bar direction; for the totalization convention see McCleary \cite[\S2.4, pp.~47--48]{McCleary}, and for the normalized alternating bar differential Weibel \cite[\S6.5, pp.~177--178]{Weibel}. That $d_{\mathrm{tot}}^2=0$ is the next lemma.
\end{definition}

\begin{lemma}[The total bar differential squares to zero]\label{lem:bar-total-differential}
In the notation of Definition \ref{def:two-sided-bar}, $d_{\mathrm{tot}}^2=0$ on $B(R,J,D)$.
\end{lemma}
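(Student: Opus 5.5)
The plan is to expand $d_{\mathrm{tot}}^2=(\delta_R+\delta_D+\delta_B)^2$ into its nine terms and group them into squares and graded commutators. The identity to establish is
\[
d_{\mathrm{tot}}^2=\delta_R^2+\delta_D^2+\delta_B^2+[\delta_R,\delta_D]+[\delta_R,\delta_B]+[\delta_D,\delta_B],
\]
where $[\alpha,\beta]=\alpha\beta+\beta\alpha$. Each of the six terms will be shown to vanish separately on a homogeneous generator $r\otimes x$ indexed by $j_0<\cdots<j_p$.

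The three squares come first. The squares $\delta_R^2$ and $\delta_D^2$ vanish because $d_R^2=0$ and $d_D^2=0$; the sign $(-1)^{|r|}$ appears twice in $\delta_D^2$ and $r$ is unchanged, so it cancels. For $\delta_B^2$, both $\delta_B$ applications preserve $|r|$ and $|x|$, because the faces are built from structure maps of $R$ and $D$, which are chain maps of degree $0$. The sign therefore squares to $1$, and $\delta_B^2=\partial_{\mathrm{bar}}^2$. That square vanishes by the simplicial identities $d_id_j=d_{j-1}d_i$ for $i<j$. These identities hold on the normalized quotient: the composites of $d_0$ with $d_0$ use functoriality of $D$, the composites of $d_p$ with $d_{p-1}$ use functoriality of $R$, and the mixed case uses that $D$ acts on the left factor and $R$ on the right. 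Passing to normalized chains is legitimate by Definition \ref{def:normalized-complex}. I would also check the degenerate case $p=1$, where $d_0$ and $d_p$ act on different tensor factors.

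Next come the mixed terms. The term $[\delta_R,\delta_D]$ vanishes by the Koszul sign: $\delta_D\delta_R(r\otimes x)$ carries $(-1)^{|r|-1}$, while $\delta_R\delta_D$ carries $(-1)^{|r|}$. For $[\delta_R,\delta_B]$, the face maps commute with $d_R\otimes1$, since each face is the identity on $r$ or applies the chain map $R(j_{p-1}\le j_p)$. So $\partial_{\mathrm{bar}}$ commutes with $\delta_R$, and the sign $(-1)^{|r|+|x|}$ changes parity when $\delta_R$ lowers $|r|$, which makes the two composites cancel. The term $[\delta_D,\delta_B]$ works the same way, using that $D(j_0\le j_1)$ is a chain map and that $d_p$ and the middle faces leave $x$ alone. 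The extra factor $(-1)^{|r|}$ in $\delta_D$ is unaffected by $\delta_B$, since bar faces do not change $|r|$.

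The main obstacle is bookkeeping: confirming that every face $d_i$, and not only $d_0$, commutes with $d_R\otimes1$ and with $1\otimes d_D$ including their signs. The point that needs care is $d_p$, which applies $R(j_{p-1}\le j_p)$ to the left factor, so the Koszul sign attached to $\delta_D$ must pass through it unchanged. I would check this case by case for $i=0$, $0<i<p$ and $i=p$, then sum. Finally, by the compatibility argument already used in Definition \ref{def:normalized-complex}, the face maps preserve degenerate subobjects, so the vanishing computed on the Moore complex descends to the normalized total complex.
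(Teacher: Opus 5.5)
Your proposal is correct and is essentially the paper's proof. The paper also reduces $d_{\mathrm{tot}}^2=0$ to $\delta_R^2=\delta_D^2=\delta_B^2=0$ together with anticommutation of the three pairs, using that each face commutes with the internal differentials (the structure maps being chain maps) and preserves internal degree, so that the sign of $\delta_B$ flips against $\delta_R$ and $\delta_D$ while $\delta_B^2=\partial_{\mathrm{bar}}^2=0$ by the simplicial identities. One small correction to your last step: individual faces do not preserve the degenerate subobject (since $d_is_i=\id$); only the alternating differential and the internal differentials do, which is all the descent needs, and in fact no descent is required here because every face of a strict chain is again a strict chain.
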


\begin{proof}
It suffices that
\[
   \delta_R^2=\delta_D^2=\delta_B^2=0
\]
and that the three pairs anticommute:
\[
   \delta_R\delta_D+\delta_D\delta_R=0,\qquad
   \delta_R\delta_B+\delta_B\delta_R=0,\qquad
   \delta_D\delta_B+\delta_B\delta_D=0.
\]
The first relation is the ordinary tensor-product sign rule:
\[
   \delta_R\delta_D(r\otimes x)
      =(-1)^{|r|}d_Rr\otimes d_Dx,\qquad
   \delta_D\delta_R(r\otimes x)
      =(-1)^{|r|-1}d_Rr\otimes d_Dx .
\]
For the two bar interactions, fix the strict chain
$\mathbf j=(j_0<\cdots<j_p)$ and write
$\partial_i\mathbf j=(j^{(i)}_0<\cdots<j^{(i)}_{p-1})$ for the chain obtained
by deleting $j_i$. Let
\[
   \alpha_i^{\mathbf j}\colon R(j_p)\otimes D(j_0)\longrightarrow
   R(j^{(i)}_{p-1})\otimes D(j^{(i)}_0)\hookrightarrow B_{p-1}(R,J,D)
\]
denote the $i$-th summand map from the $\mathbf j$-summand to the
$\partial_i\mathbf j$-summand:
\[
\begin{aligned}
   \alpha_0^{\mathbf j}&=1\otimes D(j_0\le j_1),\\
   \alpha_i^{\mathbf j}&=1\otimes1\qquad(0<i<p),\\
   \alpha_p^{\mathbf j}&=R(j_{p-1}\le j_p)\otimes1.
\end{aligned}
\]
Thus
$\partial_{\mathrm{bar}}|_{R(j_p)\otimes D(j_0)}
 =\sum_i(-1)^i\alpha_i^{\mathbf j}$, with the direct-sum inclusion of each
target summand understood, and the maps $\alpha_i^{\mathbf j}$ do not affect
the internal chain grading of the tensor factors.

Each $\alpha_i^{\mathbf j}$ commutes with both internal differentials. Only one
face acts non-trivially on each tensor factor (the last on $R$, the first on
$D$), and there the statements used are the chain-map identities
\[
   d_{R(j_{p-1})}\bigl(R(j_{p-1}\le j_p)r\bigr)
      =R(j_{p-1}\le j_p)d_{R(j_p)}r,
   \qquad
   d_{D(j_1)}\bigl(D(j_0\le j_1)x\bigr)
      =D(j_0\le j_1)d_{D(j_0)}x ;
\]
for every other $i$ the factor in question is left unchanged by
$\alpha_i^{\mathbf j}$ and the comparison is trivial, so
\[
\begin{aligned}
   \delta_R\delta_B(r\otimes x)
      &=(-1)^{|r|+|x|}\sum_i(-1)^i\alpha_i^{\mathbf j}(d_{R(j_p)}r\otimes x),\\
   \delta_B\delta_R(r\otimes x)
      &=(-1)^{|r|-1+|x|}\sum_i(-1)^i\alpha_i^{\mathbf j}(d_{R(j_p)}r\otimes x),\\
   \delta_D\delta_B(r\otimes x)
      &=(-1)^{|x|}\sum_i(-1)^i\alpha_i^{\mathbf j}(r\otimes d_{D(j_0)}x),\\
   \delta_B\delta_D(r\otimes x)
      &=(-1)^{|x|-1}\sum_i(-1)^i\alpha_i^{\mathbf j}(r\otimes d_{D(j_0)}x),
\end{aligned}
\]
and each pair cancels, because applying the internal differential lowers the
degree carried in the sign of $\delta_B$ by one before $\delta_B$ is applied.
Finally $\delta_B$ carries the sign $(-1)^{|r|+|x|}$, which the
degree-preserving maps $\alpha_i^{\mathbf j}$ leave inert, so
$\delta_B^2=\partial_{\mathrm{bar}}^2$; and
$\partial_{\mathrm{bar}}^2=0$ is the simplicial identity
$d_id_j=d_{j-1}d_i$ for $i<j-1$ together with the passage to the normalized
quotient \cite[Ex.~6.5.1(1)--(3)]{Weibel}, stated there for the bar
resolution of a group over $\mathbb ZG$ and formal in the face maps.
\end{proof}

\begin{lemma}[Shifted cones in the first bar variable]\label{lem:bar-cone}
Let $J$ be a finite poset, let $D\colon J\to\Chb(\mathrm{Vec}_k)$ be a covariant $J$-diagram, and let $f\colon R\to S$ be a map of contravariant $J$-diagrams in $\Chb(\mathrm{Vec}_k)$, with induced map $B(f,J,D)\colon B(R,J,D)\to B(S,J,D)$. Then the identification of underlying graded objects is an \emph{equality} of complexes
\[
   B\bigl(\operatorname{Cone}(f)[-1],J,D\bigr)
   =
   \operatorname{Cone}\bigl(B(f,J,D)\bigr)[-1] ,
\]
in the conventions of Remark \ref{rem:homotopical} and Definition \ref{def:two-sided-bar}. No sign twist is needed.
\end{lemma}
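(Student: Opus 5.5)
The plan is to compare both sides degree by degree and then differential by differential. Fix a strict chain $j_0<\cdots<j_p$ in $J$. By the convention of Remark \ref{rem:homotopical}, the value of $\operatorname{Cone}(f)[-1]$ at $j$ has degree $n$ term
\[
\bigl(\operatorname{Cone}(f)[-1]\bigr)(j)_n=R(j)_{n}\oplus S(j)_{n+1}.
\]
Its differential is
\[
d(r,s)=\bigl(d_Rr,\ -f(r)-d_Ss\bigr),
\]
where the factor $(-1)$ coming from the shift $[-1]$ multiplies both components of $d_{\operatorname{Cone}}(r,s)=(-d_Rr,\,f(r)+d_Ss)$.

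Tensoring with $D(j_0)$ and summing over chains then gives the graded identity
\[
B_p(\operatorname{Cone}(f)[-1],J,D)=B_p(R,J,D)\oplus B_p(S,J,D)[-1].
\]
Both summands are placed in total degree $p+|r|+|x|$, where the internal degree of $s$ is shifted by one. This is exactly the graded object underlying $\operatorname{Cone}(B(f,J,D))[-1]$, and it will be the identification used throughout.

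The main step is to check that the three parts $\delta_R,\delta_D,\delta_B$ of $d_{\mathrm{tot}}$ from Definition \ref{def:two-sided-bar} agree on the two sides, summand by summand.

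\emph{The $R$-summand.} For $r\otimes x$ with $r\in R(j_p)$, the left side gives $\delta_R=(d_Rr,-f(r))\otimes x$. The signs $(-1)^{|r|}$ and $(-1)^{|r|+|x|}$ in $\delta_D$ and $\delta_B$ use the cone degree of $(r,0)$, which is $|r|$. Moreover the bar faces act on the $R$-component by $R(j_{p-1}\le j_p)$ and commute with $f$. So the left side equals $\bigl(d_{B(R)}(r\otimes x),\,-B(f)(r\otimes x)\bigr)$, which is the shifted-cone differential on the right.

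\emph{The $S$-summand.} For $s\in S(j_p)_{n+1}$ sitting in cone degree $n$, the left side gives the term $-d_Ss\otimes x$ from $\delta_R$. From $\delta_D$ and $\delta_B$ it gives $(-1)^{n}$ and $(-1)^{n+|x|}$ times the corresponding terms. On the right, $-d_{B(S)}$ is applied to $s\otimes x$, where $s\otimes x$ has degree $n+1$ in $B(S)$. This produces
\[
-\bigl(d_Ss\otimes x+(-1)^{n+1}s\otimes d_Dx+(-1)^{n+1+|x|}\partial_{\mathrm{bar}}\bigr),
\]
which matches the left side term for term.

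The expected obstacle is only this sign bookkeeping. In particular one must confirm that the Koszul signs in $\delta_D$ and $\delta_B$ are computed from the degree of the element in the shifted cone and not from the internal degree of $s$. That choice is exactly what absorbs the $(-1)$ of $[-1]$, and it is why no sign twist is needed. Finally, naturality of $f$ (the identity $f\circ R(j\le j')=S(j\le j')\circ f$) is used to see that the face $d_p$ commutes with the off-diagonal component $-f$, so the mixed term $-B(f)$ is a well-defined map of the bar complexes.
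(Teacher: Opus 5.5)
Your proposal is correct and follows the paper's proof essentially step for step. You identify $B(T,J,D)_n=B(R,J,D)_n\oplus B(S,J,D)_{n+1}$ with $T=\operatorname{Cone}(f)[-1]$. On $R$-generators you read off $\bigl(d_{B(R,J,D)},\,-B(f,J,D)\bigr)$. On $S$-generators you observe that all three parts of $d_{\mathrm{tot}}$ are negated, because the Koszul signs are taken with the $T$-degree $n$ rather than the $S$-degree $n+1$, which is exactly the point the paper flags.
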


\begin{proof}
Write $T=\operatorname{Cone}(f)[-1]$. By the shift and cone conventions of Remark \ref{rem:homotopical},
\[
   T(j)_n=R(j)_n\oplus S(j)_{n+1},
   \qquad
   d_T(r,s)=\bigl(d_Rr,\ -f(r)-d_Ss\bigr).
\]
Summing the summands $T(j_p)\otimes D(j_0)$ over strict chains and over bar degrees therefore gives
\[
   B(T,J,D)_n=B(R,J,D)_n\oplus B(S,J,D)_{n+1}
   =\Bigl(\operatorname{Cone}\bigl(B(f,J,D)\bigr)[-1]\Bigr)_n ,
\]
which is the identification of graded objects. Only the differentials remain, and the single point to watch is that an element of $S$-degree $n+1$ has $T$-degree $n$, so the sign $(-1)^{|r|+|x|}$ of Definition \ref{def:two-sided-bar} is computed with $n$ on that summand.

Let $r\otimes x$ have $r\in R(j_p)$, whose $T$-degree equals its $R$-degree. The three parts of $d_{\mathrm{tot}}$ on $B(T,J,D)$ are
\[
   d_Tr\otimes x=d_Rr\otimes x-f(r)\otimes x,
   \qquad
   (-1)^{|r|}r\otimes d_Dx,
   \qquad
   (-1)^{|r|+|x|}\partial_{\mathrm{bar}}(r\otimes x).
\]
The component in the $R$-summand, together with the last two terms, is $d_{B(R,J,D)}(r\otimes x)$; the component in the $S$-summand is $-f(r)\otimes x=-B(f,J,D)(r\otimes x)$.

Let now $s\otimes x$ have $s\in S(j_p)$ of $S$-degree $n+1$ and so of $T$-degree $n$. The three parts read
\[
   -d_Ss\otimes x,
   \qquad
   (-1)^{n}s\otimes d_Dx,
   \qquad
   (-1)^{n+|x|}\partial_{\mathrm{bar}}(s\otimes x),
\]
whereas the corresponding parts of $d_{B(S,J,D)}(s\otimes x)$ are $d_Ss\otimes x$, $(-1)^{n+1}s\otimes d_Dx$ and $(-1)^{n+1+|x|}\partial_{\mathrm{bar}}(s\otimes x)$. Each of the three is negated, so the $S$-summand carries $-d_{B(S,J,D)}$. Altogether $d_{\mathrm{tot}}$ acts on $B(T,J,D)$ by
\[
   (a,b)\longmapsto\bigl(d_{B(R,J,D)}a,\ -B(f,J,D)(a)-d_{B(S,J,D)}b\bigr),
\]
which is the differential of $\operatorname{Cone}(B(f,J,D))[-1]$ in the conventions of Remark \ref{rem:homotopical}.
\end{proof}

\subsection{Terminal objects, change of base, and homological support}

The next two lemmas pass between the bar complex over $\Ka_{a,b}(\sigma)$ and a two-sided bar complex over $J_{a+b}(\sigma)$. Both are stated for a diagram $D$ of vector spaces, that is, concentrated in internal degree zero.

\begin{lemma}[Bar complexes over a poset with terminal object]\label{lem:terminal-bar}
Let $I$ be a finite poset with terminal object $t$ and let $D\colon I\to\mathrm{Vec}_k$. Then the augmentation
\[
   \mathrm{aug}\colon\operatorname{Bar}(I,D)\longrightarrow D(t),
   \qquad
   [i_0]\otimes d\longmapsto D(i_0\le t)d,
\]
zero in bar degrees $\ge1$, is a chain homotopy equivalence, with homotopy inverse the inclusion $\iota$ of the summand indexed by $t$ in bar degree $0$. In particular $\hocolim_ID\simeq D(t)$, concentrated in degree $0$.

The homotopy $s$ constructed below is the extra degeneracy of \cite[Lem.~4.5.1]{Riehl}; \cite[Cor.~2.5]{FM} is the analogue for a diagram of finite posets indexed by a poset with a maximum, where the conclusion is a collapse onto the value there.
\end{lemma}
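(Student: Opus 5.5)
The proof will build an explicit contracting homotopy from the extra degeneracy that the terminal object $t$ supplies. First I check that $\mathrm{aug}$ is a chain map. In bar degree $1$, a generator indexed by $i_0<i_1$ with coefficient $d\in D(i_0)$ has boundary $d_0-d_1$. This equals $[i_1]\otimes D(i_0\le i_1)d-[i_0]\otimes d$. Its image under $\mathrm{aug}$ is $D(i_1\le t)D(i_0\le i_1)d-D(i_0\le t)d=0$ by functoriality, so $\mathrm{aug}\circ\partial_{\mathrm{bar}}=0$ and $\mathrm{aug}$ is a chain map to $D(t)$ in degree $0$. The composite $\mathrm{aug}\circ\iota$ is $D(t\le t)=\id$, so what remains is a homotopy $\id-\iota\circ\mathrm{aug}=\partial s+s\partial$ on $\operatorname{Bar}(I,D)$.

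For the homotopy I define $s$ by appending $t$ at the top of each chain. On a generator indexed by $i_0<\cdots<i_n$ with $i_n\ne t$, put
\[
s\bigl([i_0<\cdots<i_n]\otimes d\bigr)=(-1)^{n+1}[i_0<\cdots<i_n<t]\otimes d .
\]
When $i_n=t$, put $s=0$; this is the normalized form of appending a repeated $t$, which is degenerate. The coefficient remains in $D(i_0)$, because the bottom of the chain is unchanged and only the deletion faces act on it.

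The key verification is the formula for $\partial s+s\partial$, which I split into two cases according to whether $i_n=t$. If $i_n\ne t$, the faces $d_0,\dots,d_n$ of the extended chain reproduce $s\partial$ up to the chosen sign. The last face $d_{n+1}$ deletes $t$ and returns the original generator. The case $n=0$ is special, because there $d_0$ applies $D(i_0\le t)$. This produces $-\iota\circ\mathrm{aug}$, the only place where the coefficient map enters. If $i_n=t$ and $n\ge1$, then $s$ vanishes on the generator, and $s\partial$ picks up only the face $d_n$, which deletes $t$ and then re-appends it. All other faces keep $t$ on top and are killed by $s$. The result is the identity with the correct sign. If $[t]$ is taken in degree $0$, then $\id-\iota\,\mathrm{aug}$ vanishes on it and both sides are zero.

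The main obstacle will be this sign and normalization bookkeeping: confirming that the sign $(-1)^{n+1}$ matches $\partial_{\mathrm{bar}}=\sum(-1)^jd_j$ in both cases. If that sign fails, I would instead take the augmented Moore complex with $s_{-1}$ appending $t$. I would then invoke the standard extra-degeneracy lemma \cite[Lem.~4.5.1]{Riehl}, checking that the map is compatible with degeneracies so that it descends to $\mathcal N$.

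The final clause then follows from Proposition~\ref{prop:bar}'s identification $\operatorname{Bar}(I,D)\simeq\hocolim_ID$ in Definition~\ref{def:bar-hocolim}.
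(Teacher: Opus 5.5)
Your proposal is correct and is essentially the paper's proof: the same contracting homotopy $s$ that appends $t$ with sign $(-1)^{n+1}$ and vanishes on chains already ending at $t$, and the same three cases ($i_n\ne t$, $i_n=t$ with $n\ge1$, and bar degree $0$, where the face $d_0$ of $[i_0<t]$ produces $-\iota\circ\mathrm{aug}$). The sign bookkeeping you flag as the main obstacle does check out, so the fallback to the augmented Moore complex is not needed. The face deleting the appended $t$ carries $(-1)^{n+1}$, which cancels the prefactor. Every face of a chain with top element $\ne t$ again has top element $\ne t$, so $s\partial$ lands in the first branch with prefactor $(-1)^{n}$ and cancels the remaining faces of $\partial s$. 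In particular the coefficient map does enter through $d_0$ in every degree, but it survives uncancelled only at $n=0$.
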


\begin{proof}
The chain-map condition for $\mathrm{aug}$ reduces to the identity $D(i_1\le t)\,D(i_0\le i_1)=D(i_0\le t)$ applied to the two faces in bar degree $1$, and $\mathrm{aug}\circ\iota=\id_{D(t)}$ is immediate. Define $s\colon\operatorname{Bar}_n(I,D)\to\operatorname{Bar}_{n+1}(I,D)$ by
\[
   s\bigl([i_0<\cdots<i_n]\otimes d\bigr)
   =
   \begin{cases}
      (-1)^{n+1}[i_0<\cdots<i_n<t]\otimes d, & i_n\ne t,\\[2pt]
      0, & i_n=t,
   \end{cases}
\]
which is defined because $i\le t$ for every $i\in I$. We claim $\partial s+s\partial=\id-\iota\circ\mathrm{aug}$.

Let $n\ge1$ and $w=[i_0<\cdots<i_n]\otimes d$, so that $\iota\,\mathrm{aug}(w)=0$. If $i_n\ne t$, the face of $[i_0<\cdots<i_n<t]$ deleting the final entry $t$ carries the sign $(-1)^{n+1}$, which cancels the prefactor of $s$, so
\[
   \partial s(w)
   =(-1)^{n+1}\sum_{m=0}^{n}(-1)^m[i_0<\cdots\widehat{i_m}\cdots<i_n<t]\otimes d\;+\;w .
\]
Each chain appearing in $\partial w$ still has final entry $<t$: for $m<n$ it is $i_n$, and for $m=n$ it is $i_{n-1}<i_n\le t$. So $s$ is given on all of them by the first branch, in bar degree $n-1$, and
\[
   s\partial(w)=(-1)^{n}\sum_{m=0}^{n}(-1)^m[i_0<\cdots\widehat{i_m}\cdots<i_n<t]\otimes d .
\]
The two sums cancel and $\partial s(w)+s\partial(w)=w$. If instead $i_n=t$, then $s(w)=0$, and in $\partial w$ every face with $m<n$ again ends in $t$ and is killed by $s$, while the face $m=n$ contributes $(-1)^n\cdot(-1)^{n}[i_0<\cdots<i_{n-1}<t]\otimes d=w$; so $s\partial(w)=w$ again.

For $n=0$ we have $\partial w=0$. If $w=[i_0]\otimes d$ with $i_0\ne t$, then
\[
   \partial s(w)=-\bigl([t]\otimes D(i_0\le t)d-[i_0]\otimes d\bigr)=w-\iota\,\mathrm{aug}(w),
\]
and if $i_0=t$ then $s(w)=0$ while $w-\iota\,\mathrm{aug}(w)=0$ as well.
\end{proof}

\begin{lemma}[Change of base along a monotone map]\label{lem:change-of-base}
Let $\Phi\colon K\to J$ be a monotone map of finite posets, let $D\colon J\to\mathrm{Vec}_k$, and let
\[
   \Phi_\#\colon\operatorname{Bar}(K,\Phi^\ast D)\longrightarrow\operatorname{Bar}(J,D)
\]
be the induced bar map of Definition \ref{def:bar-hocolim}. Let $R_\Phi(j)=C_\ast\bigl(N(j\downarrow\Phi);k\bigr)$ be the finality-fibre diagram of Lemma \ref{lem:defect-reduced}, with augmentation $\varepsilon\colon R_\Phi\to k$, that is $\varepsilon_j[c]=1$ on each basis element $[c]\in R_\Phi(j)_0$, indexed by the $c\in K$ with $j\le\Phi(c)$, and $\varepsilon_j=0$ in positive internal degree. Write
\[
   \varepsilon_\ast=B(\varepsilon,J,D)\colon B(R_\Phi,J,D)\longrightarrow B(k,J,D)=\operatorname{Bar}(J,D).
\]
Then:
\begin{enumerate}
\item[\textup{(i)}] $B(R_\Phi,J,D)$ is the total complex of the double complex
\[
   W_{p,q}=\bigoplus_{\substack{j_0<\cdots<j_p\ \text{in}\ J\\ c_0<\cdots<c_q\ \text{in}\ K\\ j_p\le\Phi(c_0)}}D(j_0),
\]
with $p$ the bar degree in $J$ and $q$ the internal degree of $R_\Phi$, that is the chain degree in $K$. Its $p$-differential is $(-1)^q$ times the bar differential of $J$, and its $q$-differential is the simplicial differential of the nerve. A generator of the summand indexed by $j_0<\cdots<j_p$ and $c_0<\cdots<c_q$ is written
\[
   [j_0<\cdots<j_p\mid c_0<\cdots<c_q]\otimes x,
   \qquad j_p\le\Phi(c_0),\quad x\in D(j_0);
\]
it is the generator $r\otimes x\in R_\Phi(j_p)_q\otimes D(j_0)_0$ of Definition \ref{def:two-sided-bar}, with $r=[c_0<\cdots<c_q]$ the corresponding $q$-chain of $N(j_p\downarrow\Phi)$.
\item[\textup{(ii)}] Define the collapse map of total complexes $\pi\colon B(R_\Phi,J,D)\longrightarrow\operatorname{Bar}(K,\Phi^\ast D)$ by
\[
   \pi\bigl([j_0\mid c_0<\cdots<c_q]\otimes x\bigr) = \begin{cases}
   [c_0<\cdots<c_q]\otimes D\bigl(j_0\le\Phi(c_0)\bigr)x \quad \text{if }p=0,\\
   0 \quad \text{otherwise},
   \end{cases}
\]
where the target $\operatorname{Bar}(K,\Phi^\ast D)$ is graded by the chain degree in $K$ alone, carrying no $J$-bar direction; moreover the total degree is the chain degree $q$ of the image.
Then $\pi$ is a quasi-isomorphism.
\item[\textup{(iii)}] $\Phi_\#\circ\pi$ and $\varepsilon_\ast$ are chain homotopic.
\end{enumerate}
That is, the triangle
\[
\begin{tikzcd}[column sep=huge,row sep=large]
B(R_\Phi,J,D)\arrow[r,"\pi\;(\simeq)"]\arrow[dr,"\varepsilon_\ast"'] &
\operatorname{Bar}(K,\Phi^\ast D)\arrow[d,"\Phi_\#"]\\
& \operatorname{Bar}(J,D)
\end{tikzcd}
\]
commutes up to chain homotopy, with $\pi$ a quasi-isomorphism. Thus $\Phi_\#$ is a quasi-isomorphism if and only if $\varepsilon_\ast$ is.
\end{lemma}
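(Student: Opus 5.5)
The plan takes the three parts in order.

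For (i) we unwind Definition \ref{def:two-sided-bar} with $R=R_\Phi$. Since $D$ sits in internal degree zero, a generator of $B_p(R_\Phi,J,D)$ is $r\otimes x$, where $r$ is a basis $q$-chain $[c_0<\cdots<c_q]$ of $N(j_p\downarrow\Phi)$ and $x\in D(j_0)$. The condition $j_p\le\Phi(c_0)$ is equivalent to $j_p\le\Phi(c_i)$ for all $i$, by monotonicity of $\Phi$, so it is exactly membership of the chain in the comma fibre. The $d_p$ face, $R_\Phi(j_{p-1}\le j_p)$, is the inclusion of fibres and acts as the identity on the chain label. The differential $d_{\mathrm{tot}}$ then splits as the simplicial differential in $q$ (coming from $\delta_R$, with $\delta_D=0$) plus $(-1)^q\partial_{\mathrm{bar}}$, which is the display.

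For (ii) we first check that $\pi$ is a chain map; this is a direct computation on generators with $p\le1$. The differential of a $p=1$ generator $[j_0<j_1\mid c]\otimes x$ has two $p=0$ components. They carry $x$ to $c_0$ by two routes, directly and through $j_1$, and they agree by functoriality of $D$, so with the sign $(-1)^q$ they cancel under $\pi$. On $p=0$ the $q$-differential matches the bar differential of $K$ applied to $\Phi^\ast D$; here the face $d_0$ of $K$ uses $D(\Phi(c_0)\le\Phi(c_1))$, and again functoriality matches it.

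For the quasi-isomorphism we filter by $q$, the chain degree in $K$, and equivalently reorganise $W$ by fixing the $K$-chain $c_0<\cdots<c_q$. For fixed $c$, the $p$-direction is $\operatorname{Bar}(J_{\le\Phi(c_0)},D|)$, the bar complex over the down-set $\{j\le\Phi(c_0)\}$. That down-set has terminal object $\Phi(c_0)$, so by Lemma \ref{lem:terminal-bar} its homology is $D(\Phi(c_0))$ in degree $0$, and the augmentation is exactly the $p=0$ formula for $\pi$. The $E^1$ page of the spectral sequence of the filtration by $q$ (column filtration, bounded since all posets are finite) is therefore $\operatorname{Bar}(K,\Phi^\ast D)$, and $\pi$ induces an isomorphism on $E^1$. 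Comparison of bounded spectral sequences then gives the quasi-isomorphism. A cleaner alternative is to build a contracting homotopy by perturbation from the explicit homotopy $s$ of Lemma \ref{lem:terminal-bar}, which adjoins $\Phi(c_0)$ at the top of the $J$-chain.

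For (iii) we need a homotopy $h$ with $\varepsilon_\ast-\Phi_\#\pi=\partial h+h\partial$. The map $\varepsilon_\ast$ is nonzero only on $q=0$, where it sends $[j_0<\cdots<j_p\mid c_0]\otimes x$ to $[j_0<\cdots<j_p]\otimes x$. The map $\Phi_\#\pi$ is nonzero only on $p=0$, where it sends the generator to $[\Phi(c_0)<\cdots<\Phi(c_q)]\otimes D(j_0\le\Phi(c_0))x$, or to $0$ if that image chain is not strict. The natural candidate is a prism operator built from the concatenated weak chain $j_0\le\cdots\le j_p\le\Phi(c_0)\le\cdots\le\Phi(c_q)$ in $J$: set
\[
h\bigl([j\mid c]\otimes x\bigr)=\sum_{i}\pm\,[j_0\le\cdots\le j_{i}\le\Phi(c_0)\le\cdots\le\Phi(c_q)]\otimes x,
\]
normalised, with the standard shuffle signs. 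The $q=0$, $p$-arbitrary part reduces to appending $\Phi(c_0)$, as in Lemma \ref{lem:terminal-bar}.

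The main obstacle is the sign bookkeeping in (iii), together with verifying that degenerate terms vanish consistently in the normalized quotient. We would try first to sidestep it. Both maps are natural in $D$, and both sides are computed by projective objects: replace $D$ by the representables $k_{\st(j)}$, where the statement reduces to a comparison of order complexes, and use that the two maps induce the same map on $H_0$ of the row complexes. If that reduction fails to give a chain homotopy directly, we fall back on writing out $h$ explicitly and checking it on the generators in bar degrees $(p,q)$ with $p+q\le2$. The general case then follows by the simplicial identities.
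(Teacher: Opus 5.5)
Your parts (i) and (ii) are correct and follow the paper's own argument. The unwinding of Definition~\ref{def:two-sided-bar} is the same. So is the filtration by the $K$-chain degree, under which the column of a fixed chain $c_0<\cdots<c_q$ is $\operatorname{Bar}(J_{\le\Phi(c_0)},D)$ augmented to $D(\Phi(c_0))$ and collapsed by Lemma~\ref{lem:terminal-bar}. The paper phrases this as acyclicity of the augmented double complex, i.e.\ of the shifted cone of $\pi$, which is equivalent to your comparison of $E^1$-pages.

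Part (iii), however, is not proved. The operator you write down is not of degree one. The term $[j_0\le\cdots\le j_i\le\Phi(c_0)\le\cdots\le\Phi(c_q)]$ lies in bar degree $i+q+1$, so only $i=p$ has the degree $p+q+1$ a homotopy requires. The terms with $i<p$ also simply discard $j_{i+1},\dots,j_p$. Shuffle sums belong to products of simplices. Here the operation is the join of a $J$-chain with the $\Phi$-image of a $K$-chain, and the homotopy is the single concatenation, defined on the unnormalized objects:
\[
   h\bigl([j_0\le\cdots\le j_p\mid c_0\le\cdots\le c_q]\otimes x\bigr)=(-1)^{p(q+1)}\,[j_0\le\cdots\le j_p\le\Phi(c_0)\le\cdots\le\Phi(c_q)]\otimes x .
\]
Your own remark that the case $q=0$ ``reduces to appending $\Phi(c_0)$'' already describes this operator, and it works in every bidegree. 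The verification runs as follows:
\begin{itemize}
\item For $p\ge1$, the faces deleting some $j_m$ cancel against $h\delta_B$, since $(-1)^{p(q+1)}+(-1)^q(-1)^{(p-1)(q+1)}=0$.
\item For $q\ge1$, the faces deleting some $\Phi(c_l)$ cancel against $h\delta_R$, since $(-1)^{p(q+1)+p+1}+(-1)^{pq}=0$.
\item When $p=0$, the face deleting $j_0$ applies $D(j_0\le\Phi(c_0))$ and gives $\Phi_\#\pi$.
\item When $q=0$, the face deleting $\Phi(c_0)$ gives $-\varepsilon_\ast$.
\end{itemize}
Repeated entries go to repeated entries, so $\partial h+h\,d_{\mathrm{tot}}=\Phi_\#\pi-\varepsilon_\ast$ descends to the normalized complexes.

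Neither of your fallbacks substitutes for this. Homotopies found separately at each representable $k_{\st(j)}$ do not assemble into one for a general $D$ unless they are natural in the representable. That means you need a homotopy of complexes of left $kJ$-modules. Such a version does exist: both maps are $D\otimes_{kJ}-$ applied to $kJ$-linear maps from a bounded complex of projective left modules into the resolution $P_\bullet\to k_J$ of Lemma~\ref{lem:incidence-trivial}. By the comparison theorem these are homotopic once they agree after augmentation in total degree $0$, which they do on each $[j_0\mid c_0]$. You would have to set this up explicitly, though. Checking generators with $p+q\le2$ says nothing about higher degrees.
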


\begin{proof}
(i) Because $D$ is concentrated in internal degree zero as a complex, the summand of $B_p(R_\Phi,J,D)$ indexed by $j_0<\cdots<j_p$ has internal degree $q$ part $\bigl(R_\Phi(j_p)\otimes D(j_0)\bigr)_q\cong R_\Phi(j_p)_q\otimes D(j_0)$, and $R_\Phi(j_p)_q$ is by definition free on the strict $q$-chains of $N(j_p\downarrow\Phi)$. A chain $c_0<\cdots<c_q$ of $K$ suffices $c_i\in j_p\downarrow\Phi$ exactly when $j_p\le\Phi(c_0)$, since $\Phi$ is monotone. This shows that $B_p(R_\Phi,J,D)_q=W_{p,q}$, with the generators as named. The differentials are those of Definition \ref{def:two-sided-bar} at $|r|=q$ and $|x|=0$: the internal part $\delta_D$ vanishes, $\delta_R$ is the nerve differential
\[
   \delta_R\bigl([j_0<\cdots<j_p\mid c_0<\cdots<c_q]\otimes x\bigr)
   =\sum_{i=0}^{q}(-1)^i\,[j_0<\cdots<j_p\mid c_0<\cdots\widehat{c_i}\cdots<c_q]\otimes x ,
\]
and $\delta_B=(-1)^q\partial_{\mathrm{bar}}$ is the bar differential of $J$ up to that sign, which is the assertion.

Both $\pi$ and $\varepsilon_\ast$ are chain maps. On $W_{p,q}$ the map $\varepsilon_\ast$ is $\varepsilon_{j_p}\otimes 1$, so it annihilates the summands with $q>0$ and sends $[j_0<\cdots<j_p\mid c_0]\otimes x$ to $[j_0<\cdots<j_p]\otimes x$. Against $\delta_R$ both composites vanish, the target carrying the zero internal differential while $\varepsilon$ annihilates nerve boundaries:
\[
   \varepsilon_\ast\delta_R\bigl([j_0<\cdots<j_p\mid c_0<\cdots<c_q]\otimes x\bigr)
   =\sum_{i=0}^{q}(-1)^i\,\varepsilon\bigl([c_0<\cdots\widehat{c_i}\cdots<c_q]\bigr)\otimes x
   =0 ,
\]
where every term being zero for $q\ge2$,
and the two terms cancelling as $1\otimes x-1\otimes x$ for $q=1$; for $q=0$ the differential $\delta_R$ is itself zero. Against $\delta_B$, both composites vanish for $q>0$, and on the surviving summands $q=0$ the sign $(-1)^{|r|+|x|}$ of Definition \ref{def:two-sided-bar} is trivial, so $\varepsilon_\ast\delta_B=\delta_B\varepsilon_\ast$.
For $\pi$, both composites vanish on the summands with $p\ge2$; on $p=0$ and $q\ge1$, we have
\[
\begin{aligned}
d_{\mathrm{tot}}\pi(r\otimes x) &= [c_1<\cdots<c_q]\otimes D(j_0\le \Phi(c_1))x \\
&+ \sum_{i=1}^{q}(-1)^i\,[c_0<\cdots\widehat{c_i}\cdots<c_q]\otimes D(j_0\le \Phi(c_0))x \\
&= \pi d_{\mathrm{tot}}(r\otimes x),
\end{aligned}
\]
the last equality because $\partial_{\mathrm{bar}}$ is absent in bar degree $0$; both sides vanish for $q=0$. On $p=1$ both sides vanish as well, since $\pi$ annihilates the image of $\delta_R$ while $(-1)^q\pi\partial_{\mathrm{bar}}(r\otimes x)$ cancels in the form $[c]\otimes D(\cdot)x-[c]\otimes D(\cdot)x$.

(ii) Let $\widetilde W$ be the augmented double complex with $\widetilde W_{-1,q}=\operatorname{Bar}_q(K,\Phi^\ast D)$, $\widetilde W_{p,q}=W_{p,q}$ for $p\ge0$, and $\pi$ as the augmentation. Filter $\operatorname{Tot}\widetilde W$ by the nerve degree $q$. The filtration is bounded, since $J$ and $K$ are finite. For fixed $q$ the associated graded piece is the direct sum, over strict $q$-chains $c_0<\cdots<c_q$ of $K$, of
\[
   \cdots\longrightarrow\operatorname{Bar}_1(J_{\le\Phi(c_0)},D)\longrightarrow\operatorname{Bar}_0(J_{\le\Phi(c_0)},D)\longrightarrow D(\Phi(c_0))\longrightarrow0,
   \qquad
   J_{\le u}=\{j\in J: j\le u\},
\]
with differential $(-1)^q$ times the one displayed: indeed the constraint $j_p\le\Phi(c_0)$ together with $j_0<\cdots<j_p$ says that the chain lies in $J_{\le\Phi(c_0)}$. Each such complex is acyclic, because $J_{\le\Phi(c_0)}$ has terminal object $\Phi(c_0)$ and Lemma \ref{lem:terminal-bar} makes its augmentation a chain homotopy equivalence. So $E^1=0$, and by convergence of the bounded filtration \cite[\S2.2, Thm.~2.6]{McCleary} the complex $\operatorname{Tot}\widetilde W$ is acyclic. Since $\operatorname{Tot}\widetilde W$ is the shifted cone of $\pi$, this shows (ii).

(iii) Write $\widehat{\operatorname{Bar}}$ and $\widehat W$ for the unnormalized objects, indexed by weakly increasing chains; the normalized objects are their quotients by the subcomplexes spanned by chains with a repeated entry. All of $\pi$, $\varepsilon_\ast$, $\Phi_\#$ are defined on the unnormalized objects and carry repetitions to repetitions, so it suffices to produce the homotopy upstairs. For a generator
\[
   \widehat w=[j_0\le\cdots\le j_p\mid c_0\le\cdots\le c_q]\otimes x,
   \qquad j_p\le\Phi(c_0),\quad x\in D(j_0),
\]
set
\[
   h(\widehat w)=(-1)^{p(q+1)}\,[j_0\le\cdots\le j_p\le\Phi(c_0)\le\cdots\le\Phi(c_q)]\otimes x
   \;\in\;\widehat{\operatorname{Bar}}_{p+q+1}(J,D),
\]
which is a weakly increasing chain because $j_p\le\Phi(c_0)$. We claim
\[
   \partial h+h\,d_{\mathrm{tot}}=\Phi_\#\circ\pi-\varepsilon_\ast .
\]
In $\partial h(\widehat w)$ the faces $m=0,\dots,p$ delete $j_m$ and the faces $m=p+1+l$, $l=0,\dots,q$, delete $\Phi(c_l)$. Deleting $c_l$ before concatenating deletes $\Phi(c_l)$ after concatenating, and likewise for $j_m$, so these faces match the terms of $h(d_{\mathrm{tot}}\widehat w)$ term by term, with two exceptions noted below. The signs cancel: for the $j$-terms,
\[
   (-1)^{p(q+1)}+(-1)^{q}(-1)^{(p-1)(q+1)}=(-1)^{p(q+1)}+(-1)^{p(q+1)-1}=0,
\]
the second summand being the sign carried by $h$ in bidegree $(p-1,q)$ against the factor $(-1)^q$ of $\delta_B$; and for the $c$-terms,
\[
   (-1)^{p(q+1)}(-1)^{p+1}+(-1)^{pq}=(-1)^{pq+1}+(-1)^{pq}=0 .
\]
The two exceptions are the endpoints. When $p=0$ the face $m=0$ deletes $j_0$ and applies $D(j_0\le\Phi(c_0))$, giving $\Phi_\#\pi(\widehat w)$, and $\delta_B$ is absent in bar degree $0$, so nothing cancels it. When $q=0$ the face $m=p+1$ deletes the final entry $\Phi(c_0)$, giving $(-1)^{p}(-1)^{p+1}\varepsilon_\ast(\widehat w)=-\varepsilon_\ast(\widehat w)$, and $\delta_R$ vanishes on nerve degree $0$, so nothing cancels that either. This proves the claim, and the identity descends to the normalized quotients.

The final assertion follows from (ii) and (iii): $\Phi_\#$ is a quasi-isomorphism if and only if $\Phi_\#\pi$ is, and $\Phi_\#\pi$ induces the same map on homology as $\varepsilon_\ast$.
\end{proof}

\begin{lemma}[Restriction to the homological support]\label{lem:support-restriction}
Let $I$ be a finite poset, let $G\colon I\to\Chb(\mathrm{Vec}_k)$, and let $A\subseteq I$ be an \emph{up-closed} full subposet.
\begin{enumerate}
\item[\textup{(i)}] The summands of $\operatorname{Bar}(I,G)$ whose bottom index lies in $A$ are those indexed by chains contained in $A$, and no others; they span a subcomplex, namely $\operatorname{Bar}(A,G|_A)$, and the quotient is spanned by the summands whose bottom index lies in $I\setminus A$. If $G(i)=0$ for every $i\in I\setminus A$, the inclusion
\[
   \operatorname{Bar}(A,G|_A)\longrightarrow\operatorname{Bar}(I,G)
\]
is an isomorphism of complexes. Write $k_A\colon I\to\mathrm{Vec}_k$ for the diagram constant $k$ on $A$ and $0$ elsewhere, a diagram because an ascending chain starting in $A$ stays in $A$. Then
\[
   \operatorname{Bar}(I,k_A)\;=\;C_\ast\bigl(\Delta(A);k\bigr),
\]
and for a monotone $\Psi\colon I'\to I$ one has $\Psi^\ast k_A=k_{\Psi^{-1}A}$ with $\Psi^{-1}A$ up-closed in $I'$, the bar map $\Psi_\#$ being the map $C_\ast(\Delta(\Psi^{-1}A);k)\to C_\ast(\Delta(A);k)$ induced by $\Psi$ on order complexes.
\item[\textup{(ii)}] If $H_\ast G(i)=0$ for every $i\in I\setminus A$, that inclusion is a quasi-isomorphism.
\end{enumerate}
The coefficient of $\operatorname{Bar}(I,G)$ sits at the \emph{bottom} index of a chain (Definition \ref{def:bar-hocolim}), so up-closedness of $A$ is what makes the corresponding summands a subcomplex; both parts fail for down-closed $A$.
\end{lemma}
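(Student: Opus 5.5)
Part \textup{(i)} is a direct reading of the bar complex. A strict chain $i_0<\cdots<i_n$ has its bottom index $i_0$ below every other entry. So if $i_0\in A$ and $A$ is up-closed, the whole chain lies in $A$. Conversely, a chain contained in $A$ has its bottom in $A$. This identifies the summands with bottom in $A$ with $\operatorname{Bar}_n(A,G|_A)$.

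To see that these summands form a subcomplex, I check each face. For $i\ge1$ the face $d_i$ deletes an entry and keeps $i_0$. The face $d_0$ replaces $i_0$ by $i_1\ge i_0$, which again lies in $A$, and applies the structure map of $G$. The internal differential of $G$ preserves each summand. So $d_{\mathrm{tot}}$ preserves the span, and the compatible structure maps make it exactly $\operatorname{Bar}(A,G|_A)$. The complementary summands, those with bottom in $I\setminus A$, then span the quotient.

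If $G$ vanishes off $A$, the complementary summands are zero and the inclusion is an isomorphism. For $G=k_A$ the resulting complex is free on the strict chains of $A$, with all faces plain deletions. By Lemma \ref{lem:order-nerve}\textup{(ii)} this is $C_\ast(\Delta(A);k)$. For a monotone $\Psi$, the preimage $\Psi^{-1}A$ is up-closed because $x\le y$ gives $\Psi x\le\Psi y$. The description of the induced bar map in Definition \ref{def:bar-hocolim} (a chain goes to its image chain, or to zero if the image is not strict) is then the simplicial map of order complexes, normalized.

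For \textup{(ii)} I would filter the quotient $Q=\operatorname{Bar}(I,G)/\operatorname{Bar}(A,G|_A)$ by bar degree and show it is acyclic, since the inclusion is then a quasi-isomorphism by the long exact sequence. The filtration is bounded because $I$ is finite. On the associated graded, only the internal differential of $G$ survives, so the $E^0$ page in bar degree $p$ is $\bigoplus G(i_0)$ over strict $p$-chains with $i_0\notin A$. Its homology is $\bigoplus H_\ast G(i_0)$ over the same chains, and this vanishes by hypothesis, so $E^1=0$. By the convergence of bounded filtrations (as in the proof of Lemma \ref{lem:change-of-base}\textup{(ii)}), $Q$ is acyclic.

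The one point to watch is that the bar-degree filtration really is a filtration of $Q$ with that associated graded. The face maps lower bar degree by one, and $d_0$ may carry a summand into $\operatorname{Bar}(A,G|_A)$, but that component is already quotiented away. The degree-zero part of the differential is the internal one alone, consistent with the Koszul sign of Definition \ref{def:bar-hocolim}. I expect this to be routine; no real obstacle is anticipated.

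Finally, the closing remark follows because for a down-closed $A$ the face $d_0$ can move the bottom of a chain from $A$ into $I\setminus A$, so the span fails to be a subcomplex.
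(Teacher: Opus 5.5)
Your proofs of (i) and (ii) follow the paper's. For (i) you read off the bottom index and check the faces one by one; for (ii) you filter the quotient $Q$ by bar degree, so that $E^1=\bigoplus_{i_0\notin A}H_\ast G(i_0)=0$, and conclude by boundedness.

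The one place that falls short is the closing clause, that both parts fail for down-closed $A$. Your reason is that $d_0$ can move the bottom of a chain out of $A$, so the span of the summands with bottom in $A$ is not a subcomplex. That speaks only to that particular span, and it does not reach (ii). The summands indexed by chains \emph{contained} in $A$ span a subcomplex for every full subposet $A$, since every face of a chain in $A$ stays in $A$. So the inclusion $\operatorname{Bar}(A,G|_A)\to\operatorname{Bar}(I,G)$ is still a well-defined chain map when $A$ is down-closed. Refuting (ii) means showing that this map can fail to be a quasi-isomorphism while the hypothesis of (ii) holds, and that needs an example.

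The paper supplies one: take $I=\{i_0<i_1\}$, $A=\{i_0\}$, $G(i_0)=k$ and $G(i_1)=0$. Then $G$ vanishes off $A$, but $\operatorname{Bar}(I,G)$ has $k$ in bar degrees $0$ and $1$. These are joined by the face deleting $i_1$, which is an isomorphism, so the complex is acyclic. Meanwhile $\operatorname{Bar}(A,G|_A)=k$, so the inclusion is neither an isomorphism nor a quasi-isomorphism.

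In this example the span of the summands with bottom in $A$ is in fact a subcomplex, because the $d_0$-component lands in $G(i_1)=0$. So the mechanism you name is not what produces the failure. What does is that, for down-closed $A$, the summands with bottom in $A$ strictly contain $\operatorname{Bar}(A,G|_A)$, and the extra summand cancels the homology.
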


\begin{proof}
(i) Write $\operatorname{Bar}_p(I,G)=\bigoplus_{i_0<\cdots<i_p}G(i_0)$. If $i_0\in A$ then every $i_j\ge i_0$ lies in $A$, so the summands with $i_0\in A$ are those indexed by chains contained in $A$, and they span $\operatorname{Bar}(A,G|_A)$. This span is a subcomplex: the face $d_0$ deletes $i_0$ and leaves the bottom index $i_1\in A$, and the faces $d_j$ with $j>0$ retain $i_0$. The complementary summands, those with $i_0\in I\setminus A$, therefore span the quotient; and if $G$ vanishes on $I\setminus A$ they are zero, so the inclusion is an isomorphism. For $G=k_A$ the coefficients are constant and the identification with $C_\ast(\Delta(A);k)$ is Lemma \ref{lem:order-nerve}\textup{(iii)}. The last sentence is immediate: $\Psi_\#$ sends a chain to its image, zero if degenerate, with identity coefficient maps.

(ii) Let $Q$ be the quotient, spanned by the summands with $i_0\in I\setminus A$ by (i). Filter $Q$ by bar degree. All faces lower the bar degree, so the induced differential on the graded pieces is the internal differential of the coefficients, and
\[
   E^1_{p,q}
   \cong
   \bigoplus_{\substack{i_0<\cdots<i_p\\ i_0\in I\setminus A}}H_qG(i_0)
   =0
\]
by hypothesis, the coefficient of a summand depending only on $i_0$. The filtration is bounded because $I$ is finite and $G$ takes bounded values, so $Q$ is acyclic \cite[\S2.2, Thm.~2.6]{McCleary} and the inclusion is a quasi-isomorphism.

For the final sentence, take $I=\{i_0<i_1\}$ with $A=\{i_0\}$ down-closed, $G(i_0)=k$ and $G(i_1)=0$. Then $H_\ast G$ vanishes off $A$, but $\operatorname{Bar}(I,G)$ has $G(i_0)$ in bar degrees $0$ and $1$ with the face deleting $i_1$ an isomorphism between them, so it is acyclic, while $\operatorname{Bar}(A,G|_A)=G(i_0)\ne0$.
\end{proof}

\begin{lemma}[Pairs of up-sets, and skyscrapers]\label{lem:bar-pair}
Let $I$ be a finite poset and let $B\subseteq A$ be up-closed subposets. Write $k_{A/B}$ for the diagram equal to $k$ on $A\setminus B$ and $0$ elsewhere, with identity structure maps between nonzero values, and $\sky_x$ for the skyscraper diagram at $x\in I$, equal to $k$ at $x$ and $0$ elsewhere.
\begin{enumerate}
\item[\textup{(i)}] $\operatorname{Bar}(I,k_{A/B})\cong C_\ast(\Delta(A);k)/C_\ast(\Delta(B);k)$, so
\[
   H_n\operatorname{Bar}(I,k_{A/B})\cong H_n\bigl(\Delta(A),\Delta(B);k\bigr).
\]
\item[\textup{(ii)}] For every $x\in I$,
\[
   H_n\operatorname{Bar}(I,\sky_x)\;\cong\;\widetilde H_{n-1}\bigl(\Delta(I_{>x});k\bigr),
   \qquad
   I_{>x}=\{i\in I: i>x\},
\]
with $\widetilde H_{-1}(\Delta(\varnothing);k)=k$.
\end{enumerate}
\end{lemma}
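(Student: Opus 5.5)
For \textup{(i)} I would build on Lemma \ref{lem:support-restriction}\textup{(i)} and keep everything at chain level. Because $A$ is up-closed in $I$, that lemma gives $\operatorname{Bar}(I,k_A)=C_\ast(\Delta(A);k)$; because $B$ is up-closed, it gives $\operatorname{Bar}(I,k_B)=C_\ast(\Delta(B);k)$. The diagram $k_B$ is a subdiagram of $k_A$: on each relation $i\le i'$ the structure map is either an identity or zero, and any chain that starts in $B$ remains in $B$. This gives a short exact sequence of $I$-diagrams
\[
0\to k_B\to k_A\to k_{A/B}\to0 .
\]
The bar construction is built degreewise from direct sums of coefficient values, so it is exact in the diagram variable. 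Hence $\operatorname{Bar}(I,k_{A/B})$ is the quotient complex $C_\ast(\Delta(A);k)/C_\ast(\Delta(B);k)$, and the map induced by $\operatorname{Bar}(I,k_B)\to\operatorname{Bar}(I,k_A)$ is the inclusion of order complexes. Taking homology then gives relative simplicial homology, by definition.

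The one step I want to check with care is the identification of the quotient structure maps. In $k_{A/B}$, a relation from an element of $A\setminus B$ to an element of $B$ must act by zero. This does hold: the quotient $k_A/k_B$ vanishes on $B$, so the value at the target is $0$. The $d_0$ faces in the quotient complex then agree with those of $\operatorname{Bar}(I,k_{A/B})$.

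For \textup{(ii)} I would take $A=I_{\ge x}$ and $B=I_{>x}$. Both are up-closed, and $k_{A/B}=\sky_x$, because the structure maps between nonzero values are only the identity at $x$. By \textup{(i)}, $H_n\operatorname{Bar}(I,\sky_x)\cong H_n(\Delta(I_{\ge x}),\Delta(I_{>x});k)$. The poset $I_{\ge x}$ has least element $x$, so Lemma \ref{lem:cone-point} makes its augmented chain complex acyclic.

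I would then run the long exact sequence of the pair in reduced homology, using augmented complexes so that $B=\varnothing$ is covered as well. This yields $H_n(\Delta(A),\Delta(B))\cong\widetilde H_{n-1}(\Delta(B))$ for every $n\ge0$. When $B=\varnothing$, the convention $\widetilde H_{-1}(\Delta(\varnothing);k)=k$ gives $H_0=k$, which matches the direct computation.

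I expect the only delicate point to be degree $0$. There I would run the sequence on augmented complexes, i.e.\ quotient by $\widetilde C_\ast(\Delta(B);k)\to\widetilde C_\ast(\Delta(A);k)$, so that the relative complex is unchanged and the empty case is handled uniformly.
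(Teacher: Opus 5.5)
Your proposal is correct and follows the paper's proof essentially step for step: the short exact sequence $0\to k_B\to k_A\to k_{A/B}\to0$, degreewise exactness of $\operatorname{Bar}(I,-)$, Lemma \ref{lem:support-restriction}\textup{(i)} to identify the two subcomplexes, and then $A=I_{\ge x}$, $B=I_{>x}$ with the cone point and the long exact sequence of the pair. Running that sequence on augmented complexes, so that the case $B=\varnothing$ is covered uniformly, makes explicit what the paper leaves implicit.
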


\begin{proof}
(i) The assignment $k_{A/B}$ is a diagram: $A$ being up-closed, no element of $A\setminus B$ maps to one outside $A$, and $B$ is up-closed, so the nonzero values sit at $A\setminus B$ with a possible map to $0$ along $A\setminus B\to B$. It is the cokernel of $k_B\hookrightarrow k_A$, and $\operatorname{Bar}(I,-)$ carries a pointwise short exact sequence of diagrams to a short exact sequence of complexes, being degreewise a direct sum of values; Lemma \ref{lem:support-restriction}\textup{(i)} identifies the two subcomplexes. (ii) Take $A=\st_I(x)$ and $B=I_{>x}$, so that $A\setminus B=\{x\}$ and $k_{A/B}=\sky_x$. The order complex $\Delta(A)$ is contractible, $x$ being least (Lemma \ref{lem:cone-point}), so the long exact sequence of the pair gives $H_n(\Delta(A),\Delta(B))\cong\widetilde H_{n-1}(\Delta(B))$ in every degree.
\end{proof}

\subsection{The bar-degree filtration}

\begin{lemma}[The bar-degree filtration and its spectral sequence]\label{lem:bar-filtration}
Let $J$, $R$, $D$ be as in Definition \ref{def:two-sided-bar} and put $A=B(R,J,D)$. Write
\[
   B_{p,q}
   =
   \bigoplus_{j_0<\cdots<j_p}\bigl(R(j_p)\otimes D(j_0)\bigr)_q,
   \qquad
   A_n=\bigoplus_{p+q=n}B_{p,q},
\]
so that $q$ is the total internal degree of the tensor factors, and filter $A$ by bar degree:
\[
   F_pA_n=\bigoplus_{\substack{0\le i\le p\\ i+q=n}}B_{i,q},
   \qquad F_{-1}A=0 .
\]
This is increasing homological notation; the decreasing reindexing is $\mathcal F^sA=F_{-s}A$, and $\operatorname{gr}^F_pA=F_pA/F_{p-1}A$. Then:
\begin{enumerate}
\item[\textup{(i)}] $d_{\mathrm{tot}}$ preserves $F_\bullet$, because $\delta_R$ and $\delta_D$ preserve bar degree and $\delta_B$ lowers it by one; and
$\operatorname{gr}^F_pA_{p+q}\cong B_{p,q}$, with induced differential $\delta_R+\delta_D$.
\item[\textup{(ii)}] The filtration is bounded in each total degree: $J$ is finite, so $p$ is bounded, and $R,D$ take values in $\Chb(\mathrm{Vec}_k)$.
\item[\textup{(iii)}] Over the field $k$, K\"unneth \cite[\S3.6]{Weibel} gives
\[
   E^1_{p,q}
   \cong
   \bigoplus_{j_0<\cdots<j_p}\ \bigoplus_{q'+q''=q}
   H_{q'}R(j_p)\otimes H_{q''}D(j_0),
\]
with $d^1$ induced by $\partial_{\mathrm{bar}}$. If $D$ is concentrated in internal degree zero (as for $D=(q_2^J)^\ast F$ with $F$ an ordinary sheaf), this reduces to
\[
   E^1_{p,q}\cong\bigoplus_{j_0<\cdots<j_p}H_qR(j_p)\otimes D(j_0).
\]
\item[\textup{(iv)}] The spectral sequence converges:
$E^\infty_{p,q}\cong F_pH_{p+q}(A)/F_{p-1}H_{p+q}(A)$, where $F_pH_n(A)=\operatorname{im}\bigl(H_n(F_pA)\to H_n(A)\bigr)$.
\end{enumerate}
In particular, if $H_\ast R(j)=0$ for every $j\in J$, then $E^1=0$ and $A$ is acyclic.
\end{lemma}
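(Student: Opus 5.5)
This is the standard spectral sequence of a filtered complex, so the plan is to check the four items in order against Definition \ref{def:two-sided-bar} and then quote the classical convergence theorem.

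For (i), I would read off the three parts of $d_{\mathrm{tot}}$ on a generator $r\otimes x$ indexed by $j_0<\cdots<j_p$. The parts $\delta_R$ and $\delta_D$ keep the chain and lower the internal degree $q$ by one. The part $\delta_B$ is a signed sum of faces landing in chains of length $p-1$ with the same internal degree. So $d_{\mathrm{tot}}(F_pA)\subseteq F_pA$. In the quotient $F_pA/F_{p-1}A$ the $\delta_B$ part dies, which leaves $B_{p,q}$ with differential $\delta_R+\delta_D$. Lemma \ref{lem:bar-total-differential} already guarantees that this is a differential.

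For (ii), note that $p$ ranges from $0$ to the length of $J$, which is finite. The internal degrees are bounded because $R$ and $D$ are bounded complexes and $J$ is finite. Hence each $A_n$ has finitely many nonzero filtration steps, with $F_{-1}A_n=0$ and $F_mA_n=A_n$ for $m$ the length of $J$.

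For (iii), the complex $(B_{p,\ast},\delta_R+\delta_D)$ is a direct sum over chains of the tensor product complexes $R(j_p)\otimes_k D(j_0)$. The sign $(-1)^{|r|}$ in $\delta_D$ is exactly the Koszul sign of the tensor product. Over a field, Künneth gives $E^1_{p,q}=H_q(\operatorname{gr}_p)$ as stated. The differential $d^1$ is the connecting map of $F_{p-1}\subset F_p\subset F_{p+1}$; on cycles this is $\delta_B$, so up to the sign $(-1)^{|r|+|x|}$ it is $\partial_{\mathrm{bar}}$ acting on homology classes. When $D$ sits in internal degree $0$, only $q''=0$ survives and the formula reduces as stated.

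For (iv), I would cite the convergence theorem for bounded filtrations \cite[\S2.2, Thm.~2.6]{McCleary}. The final sentence follows because $E^1=0$ forces $E^\infty=0$, so every graded piece of $H_\ast(A)$ vanishes, and boundedness then gives $H_\ast(A)=0$. The only delicate point is the sign bookkeeping in identifying $d^1$ with $\partial_{\mathrm{bar}}$. That identification only needs to hold up to a sign depending on $(p,q)$, which does not affect the $E^2$-pages used elsewhere.
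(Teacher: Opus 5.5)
Your proposal is correct and is essentially the paper's proof: (i) and (ii) are read off Definition \ref{def:two-sided-bar}, and (iii)--(iv) are the spectral sequence of a bounded filtration \cite[\S2.2, Thm.~2.6]{McCleary}, with $E^1_{p,q}=H_q(B_{p,\bullet})$ computed by K\"unneth summandwise. Your sign remark is also right and can be sharpened: since $|r|+|x|=q$, one has $d^1=(-1)^q\partial_{\mathrm{bar}}$ on row $q$, a sign that is constant along each row, and the paper's phrase ``induced by $\partial_{\mathrm{bar}}$'' silently absorbs it.
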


\begin{proof}
(i) and (ii) are immediate from Definition \ref{def:two-sided-bar} and the displayed indexing. Given them, (iii) and (iv) are the filtered-complex spectral sequence of a bounded filtration, in the homological reindexing of the statement \cite[\S2.2, Thm.~2.6]{McCleary}: $E^1_{p,q}=H_{p+q}(\operatorname{gr}^F_pA)=H_q(B_{p,\bullet})$, and $B_{p,\bullet}$ is a finite direct sum of tensor products of complexes of $k$-vector spaces, so K\"unneth applies summandwise. The last sentence follows since then every $E^1_{p,q}$ vanishes.
\end{proof}

\begin{lemma}[Objectwise invariance]\label{lem:bar-invariance}
Let $I$ be a finite poset and let $\alpha\colon G\to G'$ be a map of covariant diagrams $I\to\Chb(\mathrm{Vec}_k)$ which is a quasi-isomorphism at every object of $I$. Then the induced bar map
\[
   \alpha_\#\colon\operatorname{Bar}(I,G)\longrightarrow\operatorname{Bar}(I,G')
\]
of Definition \ref{def:bar-hocolim}, taken along $\id_I$, is a quasi-isomorphism.
\end{lemma}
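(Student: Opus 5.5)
The cleanest route is the bar-degree filtration of Lemma \ref{lem:bar-filtration}, specialised to a constant first variable $R=k$, so that $\operatorname{Bar}(I,G)=B(k,I,G)$ and likewise for $G'$. The map $\alpha_\#$ sends the summand at $i_0<\cdots<i_p$ to the same summand by $\alpha_{i_0}$. Since $\alpha$ is a map of diagrams, $\alpha_\#$ commutes with every face and with the internal differentials, and it preserves bar degree. It is therefore a map of filtered complexes, $F_p\operatorname{Bar}(I,G)\to F_p\operatorname{Bar}(I,G')$, and it induces a morphism of the associated spectral sequences.

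Next I would compute $E^1$ by Lemma \ref{lem:bar-filtration}\textup{(i)},\textup{(iii)} at $R=k$. The graded piece in bar degree $p$ is $\bigoplus_{i_0<\cdots<i_p}G(i_0)$ with its internal differential only, so
\[
E^1_{p,q}\cong\bigoplus_{i_0<\cdots<i_p}H_qG(i_0),
\]
and the induced map on $E^1$ is $\bigoplus H_q(\alpha_{i_0})$. This is an isomorphism by the objectwise hypothesis. A morphism of spectral sequences that is an isomorphism on $E^1$ is then an isomorphism on every later page, hence on $E^\infty$.

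To finish, I would use boundedness (Lemma \ref{lem:bar-filtration}\textup{(ii)}). Since $I$ is finite and the values are bounded, each total degree has a finite filtration, and the comparison theorem yields an isomorphism on $H_\ast$. Alternatively, I can avoid spectral sequences altogether and induct on $p$ using the short exact sequences $0\to F_{p-1}\to F_p\to\operatorname{gr}_p\to0$ together with the five lemma. Here $\operatorname{gr}_p\alpha_\#$ is a direct sum of the quasi-isomorphisms $\alpha_{i_0}$, and the filtration is exhaustive at $p$ equal to the length of $I$.

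The only point needing care is the naturality of $\alpha_\#$ with the $d_0$ face, together with the Koszul sign $(-1)^m$. The first holds because $\alpha$ commutes with $G(i_0\le i_1)$, and the second because $\alpha$ preserves internal degree. So there is no real obstacle; the five-lemma induction is the version I would write out.
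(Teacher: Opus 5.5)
Your proposal is correct and follows essentially the paper's own proof: filter both bar complexes by bar degree via Lemma \ref{lem:bar-filtration} at $R=k$, identify the induced map on $E^1$ with $\bigoplus_{i_0<\cdots<i_p}H_q\alpha_{i_0}$, and conclude from the boundedness of the filtration. The five-lemma induction along $0\to F_{p-1}\to F_p\to\operatorname{gr}_p\to0$, which you say you would write out, is the same filtration argument without spectral sequences. It is equally valid, since $F_{-1}=0$ and $F_p$ is the whole complex once $p$ reaches the length of $I$.
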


\begin{proof}
Apply Lemma \ref{lem:bar-filtration} with $J=I$ and $R=k$, so that $B(k,I,G)=\operatorname{Bar}(I,G)$ (Definition \ref{def:two-sided-bar}), and likewise for $G'$. Part \textup{(iii)} then reads
\[
   E^1_{p,q}\cong\bigoplus_{i_0<\cdots<i_p}H_qG(i_0),
\]
the factor $H_{q'}R(i_p)$ contributing $k$ in $q'=0$ and nothing in any other degree, and likewise for $G'$. Being taken along $\id_I$, the map $\alpha_\#$ preserves bar degree and so the filtration, and the map it induces on $E^1$ is $\bigoplus_{i_0<\cdots<i_p}H_q\alpha_{i_0}$, an isomorphism by hypothesis. A filtered map induces a morphism of spectral sequences and $E^{r+1}=H(E^r,d^r)$, so it is an isomorphism on every page $E^r$, $r\ge1$; both filtrations are bounded by part \textup{(ii)}, $I$ being finite and the values bounded, so $\alpha_\#$ is a quasi-isomorphism \cite[\S2.2, Thm.~2.6]{McCleary}.
\end{proof}

\begin{lemma}[Bottom degree of a derived colimit]\label{lem:bottom-degree}
Let $I$ be a finite poset, let $D\colon I\to\Chb(\mathrm{Vec}_k)$, and suppose there is $q_0\in\mathbb Z$ with
\[
   H_qD(i)=0\qquad\text{for every }i\in I\text{ and every }q<q_0 .
\]
Then
\[
   H_{q_0}\bigl(\operatorname{Bar}(I,D)\bigr)\;\cong\;\colim_{I}H_{q_0}D ,
\]
the isomorphism being the edge homomorphism of the bar-degree filtration. In particular $\operatorname{Bar}(I,D)$ is not acyclic as soon as the ordinary colimit of the bottom homology diagram is nonzero. Neither a terminal object in $I$ nor any condition on the shape of the support of $H_\ast D$ is required.
\end{lemma}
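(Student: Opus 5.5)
The plan is to run the bar-degree filtration of Lemma \ref{lem:bar-filtration} with $J=I$ and $R=k$, so that $B(k,I,D)=\operatorname{Bar}(I,D)$. Part \textup{(iii)} of that lemma gives
\[
   E^1_{p,q}\cong\bigoplus_{i_0<\cdots<i_p}H_qD(i_0),
\]
and $d^1$ is induced by $\partial_{\mathrm{bar}}$. Hence row $q$ of the $E^1$-page, with its $d^1$, is the complex $\operatorname{Bar}(I,H_qD)$ of the vector-space-valued diagram $H_qD$. Under Definition \ref{def:bar-hocolim}, applied to the coefficient diagram $H_qD$, its $H_0$ is $\colim_IH_qD$, by Lemma \ref{lem:colim-coeq} together with the remark following it. Therefore
\[
   E^2_{0,q}\cong\colim_IH_qD .
\]

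Next I would use the hypothesis. Since $E^1_{p,q}=0$ for every $q<q_0$, all rows below $q_0$ vanish identically.

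Consider the total degree $n=q_0$. The terms $E^1_{p,q_0-p}$ with $p\ge1$ lie in rows $q_0-p<q_0$, so they vanish. The only term that can survive is $E_{0,q_0}$.

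It remains to check that $E^2_{0,q_0}$ is not changed on later pages. For $r\ge2$, the differential $d_r$ leaving $E^r_{0,q_0}$ lands in $E^r_{-r,q_0+r-1}$. This is zero because bar degree is nonnegative. The differential entering $E^r_{0,q_0}$ comes from $E^r_{r,q_0-r+1}$, which lies in a row strictly below $q_0$ because $r\ge2$, and so it vanishes too. Hence $E^\infty_{0,q_0}=E^2_{0,q_0}$.

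Convergence and boundedness, from parts \textup{(ii)} and \textup{(iv)} of Lemma \ref{lem:bar-filtration}, then finish the argument. In total degree $q_0$ the filtration of $H_{q_0}\operatorname{Bar}(I,D)$ has only the $p=0$ graded piece, so it equals $F_0H_{q_0}=E^\infty_{0,q_0}\cong\colim_IH_{q_0}D$. The resulting map is the edge map $H_{q_0}(F_0A)\to H_{q_0}(A)$ composed with the identification above.

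The only point needing care is checking that the differentials $d_r$ for $r\ge2$ cannot reach $E_{0,q_0}$. With the increasing homological indexing fixed in Lemma \ref{lem:bar-filtration}, $d_r$ has bidegree $(-r,r-1)$, and the two vanishings follow from that bidegree as described. The final remark is immediate from the isomorphism. No terminal object is used, since only the bottom row is involved.
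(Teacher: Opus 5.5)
Your proposal is correct and follows essentially the same route as the paper. Both filter $\operatorname{Bar}(I,D)=B(k,I,D)$ by bar degree and identify row $q$ of $E^1$ with $\operatorname{Bar}(I,H_qD)$, so that $E^2_{0,q}\cong\colim_IH_qD$. Both then use the vanishing of the rows below $q_0$ to see that $E^2_{0,q_0}$ is a permanent cycle and the only surviving term in total degree $q_0$, with convergence of the bounded filtration finishing the argument.
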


\begin{proof}
Filter $\operatorname{Bar}(I,D)=B(k,I,D)$ by bar degree, Lemma \ref{lem:bar-filtration} with $R=k$ the constant contravariant $I$-diagram in internal degree zero. By Lemma \ref{lem:bar-filtration}(iii) the row $q$ of the $E^1$-page, together with $d^1$, is the bar complex $\operatorname{Bar}(I,H_qD)$ of the $\mathrm{Vec}_k$-valued diagram $H_qD$, so
\[
   E^2_{p,q}\;\cong\;H_p\bigl(\operatorname{Bar}(I,H_qD)\bigr)
   \;=\;\mathbb L_p\!\colim_IH_qD ,
   \qquad
   E^2_{0,q}\;\cong\;\colim_IH_qD ,
\]
the last identification because $H_0$ of a bar complex is the ordinary colimit (Definition \ref{def:bar-hocolim}). By hypothesis $E^2_{p,q}=0$ whenever $q<q_0$.

Consider total degree $q_0$. The only $(p,q)$ with $p+q=q_0$, $p\ge0$ and $q\ge q_0$ is $(0,q_0)$, so every other spot in that total degree already vanishes on $E^2$. Moreover $E^2_{0,q_0}$ is a permanent cycle: no differential leaves it, its targets lying in negative bar degree, and none reaches it, since the only differential into it is $d_r\colon E^r_{r,q_0-r+1}\to E^r_{0,q_0}$, whose source lies in the row $q_0-r+1<q_0$ for $r\ge2$. Thus $E^\infty_{0,q_0}=E^2_{0,q_0}$. The filtration is bounded in each total degree by Lemma \ref{lem:bar-filtration}(ii), so it is exhaustive on homology, and by Lemma \ref{lem:bar-filtration}(iv) the induced filtration of $H_{q_0}(\operatorname{Bar}(I,D))$ has $E^\infty_{0,q_0}$ as its only possibly nonzero quotient. Therefore $H_{q_0}(\operatorname{Bar}(I,D))\cong E^\infty_{0,q_0}\cong\colim_IH_{q_0}D$.
\end{proof}

\subsection{What the bar complex resolves}

\begin{remark}[The diagram-level bar resolution]\label{rem:bar-vs-projective}
The mechanism of the lemma below is the usual bar resolution, in normalized form \cite[\S6.5]{Weibel}, transcribed from modules over a group to diagrams over a finite category. The two-sided bar construction with $kI$ itself in the first variable, often denoted $B(I,I,D)\to D$, is the diagram-level bar resolution. What it replaces is the colimit operation over $I$, at a fixed diagram $D$ (the square \eqref{eq:colim-square}), and not the input complex of sheaves, which is what the projective resolution $QX\to X$ of Remark \ref{rem:deformation} replaces.
\end{remark}

\begin{lemma}[The bar complex computes the derived colimit]\label{lem:bar-derived-colim}
Let $I$ be a finite poset of length $m$ and let $D\in\Chb(\Shv(I;k))$ be a bounded complex of covariant $I$-diagrams, that is of right $kI$-modules (Definition \ref{def:incidence-algebra}). Write $k_I$ for the trivial left $kI$-module and $P_\bullet\to k_I$ for the projective resolution of Lemma \ref{lem:incidence-trivial}\textup{(ii)}. Then:
\begin{enumerate}
\item[\textup{(i)}] $\colim_ID=D\otimes_{kI}k_I$, naturally in $D$;
\item[\textup{(ii)}] writing $q\colon QD\to D$ for the projective left deformation of Remark \ref{rem:deformation} at $I$, the two augmentations make
\[
   \colim_I QD
   \;\longleftarrow\;
   QD\otimes_{kI}P_\bullet
   \;\longrightarrow\;
   D\otimes_{kI}P_\bullet
\]
quasi-isomorphisms. Thus $\mathbb L\!\colim_ID\cong\operatorname{Bar}(I,D)$ in $\Db(\mathrm{Vec}_k)$, naturally in $D$, and $\mathbb L_p\!\colim_ID\cong H_p(\operatorname{Bar}(I,D))$, zero for $p>m$.
\end{enumerate}
\end{lemma}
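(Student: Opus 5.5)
For (i) I would prove the identity directly from the definitions, comparing the universal properties of the two sides. By Lemma \ref{lem:module-diagram} a covariant $D$ is a right $kI$-module with $D_x=De_{xx}$. The trivial left module $k_I$ is $k$ at each object with identity structure maps. Hence
\[
D\otimes_{kI}k_I=\Bigl(\bigoplus_x D(x)\Bigr)\Big/\bigl\langle\, d\,e_{xy}\otimes 1-d\otimes e_{xy}1 \,\bigr\rangle .
\]
Unwinding, this quotient identifies $d\in D(x)$ with $D(x\le y)d\in D(y)$. That is exactly the coequalizer presentation of Lemma \ref{lem:colim-coeq} in its additive form, the cokernel of $g-f$. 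Naturality in $D$ is evident, since both constructions are functorial in module maps. For a bounded complex $D$ the identity holds degreewise, and $\otimes_{kI}$ of a complex with a module is formed degreewise, so the identity extends to complexes.

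For (ii) I would use the balancing of $\operatorname{Tor}$ over $kI$, in its chain-level form. The middle term $QD\otimes_{kI}P_\bullet$ is a bounded double complex. Consider first the left arrow, induced by the augmentation $P_\bullet\to k_I$. I would filter by the $QD$-degree, so that each column is $Q_n\otimes_{kI}P_\bullet\to Q_n\otimes_{kI}k_I$. Each $Q_n$ is a projective right module, hence flat, so this column map is a quasi-isomorphism because $P_\bullet\to k_I$ is a resolution (Lemma \ref{lem:incidence-trivial}(ii)). Both filtrations are bounded: $QD$ can be chosen bounded by finite global dimension (Remark \ref{rem:deformation}), and $P_p=0$ for $p>m$. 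The bounded-filtration comparison \cite[\S2.2, Thm.~2.6]{McCleary} then gives the quasi-isomorphism, with (i) identifying $QD\otimes_{kI}k_I=\colim_IQD$.

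For the right arrow, induced by $q\colon QD\to D$, I would filter instead by the $P$-degree. Each $P_p$ is a projective left module, so $-\otimes_{kI}P_p$ is exact and carries the quasi-isomorphism $q$ to a quasi-isomorphism. The same bounded spectral-sequence argument applies. Concretely, $-\otimes_{kI}kIe_{yy}$ is evaluation at $y$, which is visibly exact.

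Composing the two arrows gives $\mathbb L\!\colim_ID=\colim_IQD\simeq D\otimes_{kI}P_\bullet$. By Lemma \ref{lem:incidence-trivial}(iii) the right-hand side equals $\operatorname{Bar}(I,D)$, sign conventions included. Naturality in $D$ holds because $Q$ is functorial and the resolution $P_\bullet$ is fixed. The vanishing of $\mathbb L_p\!\colim_ID$ for $p>m$ is read off in the case $D$ is a sheaf, in internal degree zero: there $\operatorname{Bar}_p(I,D)=0$ for $p>m$.

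The main obstacle is bookkeeping, not a new idea. The paper's sign conventions (the Koszul factor in $d_{\mathrm{tot}}$) must match the double complex $QD\otimes P_\bullet$, and the two augmentations must be genuine chain maps out of the total complex. I would check these on generators, as in Lemma \ref{lem:incidence-trivial}(iii), and otherwise rely on that lemma's equality of complexes.
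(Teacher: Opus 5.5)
Your proposal is correct and follows the paper's proof of (ii) essentially verbatim: for the left arrow, flatness of each projective $(QD)_n$ against the exact augmented resolution $P_\bullet\to k_I\to0$; for the right arrow, exactness of $-\otimes_{kI}P_p$ together with the bounded filtration by resolution degree; and then Lemma~\ref{lem:incidence-trivial}\textup{(iii)} to identify $D\otimes_{kI}P_\bullet$ with $\operatorname{Bar}(I,D)$. For (i), the paper applies the right exact functor $D\otimes_{kI}-$ to the tail $P_1\to P_0\to k_I\to0$ and reaches the cokernel of $d_0-d_1$, which is the same coequalizer you obtain by unwinding the tensor relations directly.
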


\begin{proof}
(i) By Lemma \ref{lem:incidence-trivial}\textup{(ii)} the tail $P_1\to P_0\to k_I\to0$ is exact, with $P_1=\bigoplus_{y_0<y_1}kIe_{y_0y_0}$ and $P_0=\bigoplus_ykIe_{yy}$. Since $D\otimes_{kI}kIe_{xx}=D_x$ and $D\otimes_{kI}-$ is right exact, applying it gives the exact sequence
\[
   \bigoplus_{y_0<y_1}D_{y_0}
   \xrightarrow{\ d_0-d_1\ }
   \bigoplus_{y}D_{y}
   \longrightarrow
   D\otimes_{kI}k_I
   \longrightarrow
   0 ,
\]
in which $d_1$ is the identity onto the summand indexed by $y_0$ and $d_0$ is the structure map $D_{y_0}\to D_{y_1}$ into the summand indexed by $y_1$. Those are the two maps of Lemma \ref{lem:colim-coeq} at $I$, and the additive form of the coequalizer recorded after that lemma is this cokernel; the coproduct is indexed by the strict relations alone for the reason recorded there. So $D\otimes_{kI}k_I=\colim_ID$, degreewise in the internal degree.

(ii) The functor $\colim_I$ is additive, so Remark \ref{rem:deformation} applies to it verbatim in place of $T$: it is left deformable, $QD$ is a bounded complex of projectives, and $\mathbb L\!\colim_ID=\colim_I(QD)$, which by (i) is $QD\otimes_{kI}k_I$.

For the left-hand map, the augmentation $P_\bullet\to k_I$ makes $QD\otimes_{kI}(P_\bullet\to k_I)$ a double complex whose totalization is the mapping cone of that map, up to a shift. Each $(QD)_n$ is a projective, and so flat, right $kI$-module, so tensoring it with the exact complex $P_\bullet\to k_I\to0$ leaves it exact; the double complex is bounded, in the resolution direction because $P_p=0$ for $p>m$ and in the internal direction because $QD$ is bounded, so its totalization is acyclic. The map is therefore a quasi-isomorphism.

For the right-hand map, each $P_p$ is a projective left $kI$-module, so $-\otimes_{kI}P_p$ is exact and carries the quasi-isomorphism $q$ to a quasi-isomorphism. Filter both totalizations by the resolution degree $p$. The filtrations are bounded, again because $P_p=0$ for $p>m$, and on $E^1$ the induced map is the isomorphism $H_\ast(QD\otimes_{kI}P_p)\to H_\ast(D\otimes_{kI}P_p)$ in each column; so the map is a quasi-isomorphism.

The left-hand target is $\mathbb L\!\colim_ID$ and the right-hand one is $\operatorname{Bar}(I,D)$ by Lemma \ref{lem:incidence-trivial}\textup{(iii)}. Naturality is that of $q$ and of the augmentation, $Q$ being functorial, and the vanishing for $p>m$ is that of $P_p$.
\end{proof}

\section{Finality and the Grothendieck construction}\label{app:finality}

\subsection{The finality criterion and \texorpdfstring{$k$}{k}-linear coefficients}

Both criteria used in Section \ref{sec:derived} are read off the following statement by choosing the second map: the identity gives the finality criterion, and the coefficient projection $q_2^J$ gives what the available test sheaves detect.

\begin{lemma}[Detection along a monotone map]\label{lem:updetect}
Let $\Phi\colon K\to J$ and $\varphi\colon J\to L$ be monotone maps of finite posets. For $x\in L$ put
\[
   W_x=\varphi^{-1}\bigl(\st_L(x)\bigr)=\{j\in J:\ x\le\varphi(j)\},
\]
up-closed in $J$, so that $\Phi^{-1}(W_x)$ is up-closed in $K$. For $G\colon L\to\mathrm{Vec}_k$ write $D=\varphi^\ast G$ and let
\[
   \Phi_\#\colon\operatorname{Bar}(K,\Phi^\ast D)\longrightarrow\operatorname{Bar}(J,D)
\]
be the induced bar map of Definition \ref{def:bar-hocolim}. The following are equivalent.
\begin{enumerate}
\item[\textup{(i)}] $\Phi_\#$ is a quasi-isomorphism for every $G\colon L\to\mathrm{Vec}_k$.
\item[\textup{(ii)}] For every $x\in L$ the map of order complexes induced by $\Phi$,
\[
   C_\ast\bigl(\Delta(\Phi^{-1}W_x);k\bigr)\longrightarrow C_\ast\bigl(\Delta(W_x);k\bigr),
\]
is a quasi-isomorphism.
\end{enumerate}
\end{lemma}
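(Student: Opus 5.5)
The direction (ii)$\Rightarrow$(i) goes by filtering $G$, and (i)$\Rightarrow$(ii) by testing on the representables $k_{\st_L(x)}$.

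\emph{(i)$\Rightarrow$(ii).} Take $G=k_{\st_L(x)}$, the constant diagram $k$ on the up-set $\st_L(x)$ and $0$ elsewhere. Then $D=\varphi^\ast G=k_{W_x}$ and $\Phi^\ast D=k_{\Phi^{-1}W_x}$. By Lemma~\ref{lem:support-restriction}\textup{(i)}, $\operatorname{Bar}(J,k_{W_x})=C_\ast(\Delta(W_x);k)$ and $\operatorname{Bar}(K,k_{\Phi^{-1}W_x})=C_\ast(\Delta(\Phi^{-1}W_x);k)$. The same lemma identifies $\Phi_\#$ with the map of order complexes induced by $\Phi$. So (i) specialises to (ii).

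\emph{(ii)$\Rightarrow$(i).} Write $\operatorname{Cone}(\Phi_\#)$ for the cone of $\Phi_\#$. Both bar complexes are degreewise direct sums of values of $G$, and the faces act through structure maps of $G$. Hence $G\mapsto\operatorname{Cone}(\Phi_\#)$ is an exact functor from $\mathrm{Fun}(L,\mathrm{Vec}_k)$ to complexes: pointwise short exact sequences go to short exact sequences of complexes. It also commutes with arbitrary direct sums. So the class of $G$ for which $\Phi_\#$ is a quasi-isomorphism is closed under direct sums, extensions, and two-out-of-three in short exact sequences.

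Filter an arbitrary $G$ exactly as in the proof of Proposition~\ref{prop:inexact}. Enumerate $L$ compatibly with the order and form the subdiagrams $G_{U_p}$ on the up-sets $U_p$. The subquotients are $\sky_x\otimes_kG(x)$. It therefore suffices to treat the skyscrapers $\sky_x$, for $x\in L$.

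For a skyscraper, use the short exact sequence
\[
0\to k_{\st_L(x)\setminus\{x\}}\to k_{\st_L(x)}\to\sky_x\to0 .
\]
The set $\st_L(x)\setminus\{x\}=L_{>x}$ is up-closed and is the union of the $\st_L(y)$ over the minimal elements $y$ of $L_{>x}$. This does not reduce directly to representables. Instead, run the same filtration on the up-set diagram $k_{L_{>x}}$ itself. Its subquotients are skyscrapers $\sky_y$ with $y>x$, so a descending induction on $x$ through the enumeration handles them. The base case is $x$ maximal, where $\sky_x=k_{\st_L(x)}$ is covered directly by (ii). At the inductive step, every $\sky_y$ with $y>x$ is known, so $k_{L_{>x}}$ is known by extensions. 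The term $k_{\st_L(x)}$ is known by (ii). Two-out-of-three then gives $\sky_x$.

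The main point to check is the exactness and direct-sum compatibility of $G\mapsto\Phi_\#$ at chain level, including that $\Phi_\#$ is natural in $G$. This holds because the normalized bar map of Definition~\ref{def:bar-hocolim} is built summandwise from the identity coefficient maps of $\Phi^\ast\varphi^\ast G$, so it commutes with maps of $G$. Once that is in place, the five-lemma bookkeeping on long exact homology sequences is routine.
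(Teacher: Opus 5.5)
Your proof is correct and is essentially the paper's argument. Both directions match, and for (ii)$\Rightarrow$(i) the paper likewise uses the exact sequence $0\to k_{L_{>x}}\to k_{\st_L(x)}\to\sky_x\to0$ and the skyscraper filtration of $G$; the only difference is that it inducts over up-closed sets $U$ by $\lvert U\rvert$, proving the claim for every $k_U$, where you run a descending induction on elements and rebuild $k_{L_{>x}}$ from the skyscrapers $\sky_y$ with $y>x$.
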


\begin{proof}
By Lemma \ref{lem:support-restriction}\textup{(i)}, for an up-closed $A\subseteq J$ one has $\operatorname{Bar}(J,k_A)=C_\ast(\Delta(A);k)$ and $\operatorname{Bar}(K,\Phi^\ast k_A)=\operatorname{Bar}(K,k_{\Phi^{-1}A})=C_\ast(\Delta(\Phi^{-1}A);k)$, with $\Phi_\#$ the map induced by $\Phi$ on order complexes. Since $\varphi^\ast k_{\st_L(x)}=k_{W_x}$, taking $G=k_{\st_L(x)}$ in \textup{(i)} gives \textup{(ii)}.

Assume \textup{(ii)}. For an up-closed $U\subseteq L$ write $S(U)$ for the assertion that $\Phi_\#$ is a quasi-isomorphism on $\varphi^\ast k_U$, and prove $S(U)$ for every up-closed $U$ by induction on $\lvert U\rvert$. There is nothing to prove for $U=\varnothing$. Let $U\ne\varnothing$ and let $x$ be minimal in $U$. Then $U\setminus\{x\}$ is up-closed, and so is $L_{>x}=\{z\in L:z>x\}$, which is contained in $U\setminus\{x\}$; both are smaller than $U$, so $S(U\setminus\{x\})$ and $S(L_{>x})$ hold by induction, while $S(\st_L(x))$ is hypothesis \textup{(ii)} at $x$. The two sequences of diagrams on $L$
\[
   0\to k_{L_{>x}}\to k_{\st_L(x)}\to\sky_x\to 0,
   \qquad
   0\to k_{U\setminus\{x\}}\to k_U\to\sky_x\to 0
\]
are pointwise exact. Pullback along $\varphi$ and along $\varphi\Phi$ preserves pointwise exactness, and $\operatorname{Bar}(J,-)$ and $\operatorname{Bar}(K,\Phi^\ast(-))$ are degreewise direct sums of the values of the diagram, so they carry each sequence to a short exact sequence of complexes. Apply the five lemma to the two resulting pairs of long exact sequences, first to the left-hand sequence, which gives that $\Phi_\#$ is a quasi-isomorphism on $\varphi^\ast\sky_x$, and then to the right-hand one, which gives $S(U)$.

For a general $G$, enumerate $L$ as $x_1,\dots,x_n$ so that $x_m\le x_j$ implies $m\le j$, and set $U_p=\{x_{p+1},\dots,x_n\}$, up-closed. Writing $G_U$ for the subdiagram equal to $G$ on an up-closed $U$ and $0$ elsewhere, the $G_{U_p}$ form a finite filtration of $G$ with
\[
   G_{U_{p-1}}/G_{U_p}\;\cong\;\sky_{x_p}\otimes_kG(x_p).
\]
Both bar constructions send $\sky_x\otimes_kV$ to $\operatorname{Bar}(-,\varphi^\ast\sky_x)\otimes_kV$, $k$ being a field, so $\Phi_\#$ is a quasi-isomorphism on each subquotient by the previous paragraph. Descending induction along the filtration, with the five lemma at each step, gives \textup{(i)}.
\end{proof}

\begin{corollary}[Homological finality criterion for a bar map]\label{cor:final-criterion}
Let $\Phi\colon K\to J$ be a monotone map of finite posets. Then
\[
   \Phi_\#\colon\operatorname{Bar}(K,\Phi^\ast D)\longrightarrow\operatorname{Bar}(J,D)
\]
is a quasi-isomorphism for every $D\colon J\to\mathrm{Vec}_k$ if and only if $\Phi$ is $k$-homologically final (Definition \ref{def:homotopy-final}).
\end{corollary}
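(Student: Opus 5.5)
The plan is to obtain the corollary from Lemma \ref{lem:updetect} by taking $L=J$ and $\varphi=\id_J$, and then to compare the resulting condition with the comma fibres. With $\varphi=\id_J$, the diagrams $D=\varphi^\ast G$ range over all $D\colon J\to\mathrm{Vec}_k$. The sets $W_x$ become the principal up-sets $\st_J(j)$, $j\in J$. Their preimages are
\[
   \Phi^{-1}\bigl(\st_J(j)\bigr)=\{\kappa\in K:\ j\le\Phi(\kappa)\}=j\downarrow\Phi,
\]
the comma fibre of Definition \ref{def:homotopy-final}. Since $K$ and $J$ are posets, the comma category is a poset, and its nerve is the order complex up to the identification of Lemma \ref{lem:order-nerve}\textup{(iv)}. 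So Lemma \ref{lem:updetect} says that $\Phi_\#$ is a quasi-isomorphism for every $D$ if and only if, for every $j\in J$, the induced map
\[
   C_\ast\bigl(\Delta(j\downarrow\Phi);k\bigr)\longrightarrow C_\ast\bigl(\Delta(\st_J(j));k\bigr)
\]
is a quasi-isomorphism.

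The second step rewrites that condition. The target $\st_J(j)$ has least element $j$, so by Lemma \ref{lem:cone-point} its augmentation to $k$ is a quasi-isomorphism. The map of order complexes is simplicial, so it commutes with the augmentations. By two-out-of-three, the displayed map is therefore a quasi-isomorphism exactly when the augmentation $C_\ast(\Delta(j\downarrow\Phi);k)\to k$ is one. That holds precisely when $j\downarrow\Phi$ is nonempty and $\widetilde H_q(N(j\downarrow\Phi);k)=0$ for all $q\ge0$. If the fibre is empty, its chain complex is $0$ and the augmentation is not a quasi-isomorphism onto $k$. If it is nonempty, the augmentation is surjective in degree $0$ and its kernel computes reduced homology. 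Taken over all $j$, this is exactly $k$-homological finality as defined.

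The only point needing care is the bookkeeping in the first step. One must check that $\Phi^\ast k_{\st_J(j)}=k_{j\downarrow\Phi}$, which is the last clause of Lemma \ref{lem:support-restriction}\textup{(i)}. One must also check that the bar map $\Phi_\#$ on these diagrams is the map of order complexes induced by $\Phi$, with strict chains collapsing to zero when their image is degenerate. The substantive work, building the general $D$ from the representables by filtration and the five lemma, is already contained in Lemma \ref{lem:updetect}. I therefore do not expect a real obstacle here: the corollary is that lemma read at $\varphi=\id$, followed by the cone-point argument.
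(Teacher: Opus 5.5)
Your proposal is correct and is the paper's proof: specialise Lemma~\ref{lem:updetect} to $L=J$, $\varphi=\id$, identify $\Phi^{-1}(\st_J(j))$ with $j\downarrow\Phi$, and then use the least element of $\st_J(j)$ together with naturality of augmentations to reduce condition (ii) to the comma fibre being nonempty and $k$-acyclic. The bookkeeping you flag via Lemma~\ref{lem:support-restriction}(i) is already used inside the proof of Lemma~\ref{lem:updetect}, so nothing further is needed.
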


\begin{proof}
Apply Lemma \ref{lem:updetect} with $L=J$ and $\varphi=\id$, so that every $D$ is of the form $\varphi^\ast G$ and $W_j=\st_J(j)$, whence $\Phi^{-1}(W_j)=j\downarrow\Phi$. The poset $\st_J(j)$ has least element $j$, so its augmentation $C_\ast(\Delta(\st_J(j));k)\to k$ is a quasi-isomorphism (Lemma \ref{lem:cone-point}). Augmentations are natural for simplicial maps, so condition \textup{(ii)} at $j$ holds if and only if $C_\ast(\Delta(j\downarrow\Phi);k)\to k$ is a quasi-isomorphism, that is if and only if $j\downarrow\Phi$ is nonempty with vanishing reduced $k$-homology.
\end{proof}

For $D$ the constant diagram $k$ this is the homological Quillen--McCord theorem for finite posets \cite[Cor.~5.5]{Barmak}, quoted as \cite[Thm.~1.2]{GT}. What the corollary adds is that once the coefficients are allowed to vary, finality is no more than what is needed, and no less.

\begin{lemma}[Grothendieck construction with $k$-linear coefficients]\label{lem:groth-hocolim}
Keep the data of Definition \ref{def:grothendieck}, put $K=\int_IX$, let $G\colon L\to\mathrm{Vec}_k$ and $E=\pi^\ast G$. Then
\begin{enumerate}
\item[\textup{(i)}] $\mathfrak B(i)=\operatorname{Bar}\bigl(X(i),G|_{X(i)}\bigr)$ is a diagram $I\to\Chb(\mathrm{Vec}_k)$, the structure maps being induced by the inclusions $X(i)\subseteq X(i')$ with identity coefficient maps;
\item[\textup{(ii)}] $\operatorname{Bar}(I,\mathfrak B)$ and $\operatorname{Bar}(K,E)$ are quasi-isomorphic, that is
$\hocolim_{i\in I}\hocolim_{X(i)}G|_{X(i)}\simeq\hocolim_KE$, the induced isomorphism in degree zero being the canonical isomorphism of colimits.
\end{enumerate}
\end{lemma}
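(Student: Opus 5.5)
For (i) the check is direct. For $i\le i'$ the inclusion $X(i)\subseteq X(i')$ is monotone, and $G|_{X(i)}$ is the pullback of $G|_{X(i')}$ along it. The induced bar map of Definition \ref{def:bar-hocolim}, taken with identity coefficients, is therefore a chain map $\mathfrak B(i)\to\mathfrak B(i')$. It carries a strict chain of $X(i)$ to the same chain in $X(i')$, so no degenerate image arises. Functoriality in $i$ holds because the inclusions compose strictly, and boundedness holds because each $X(i)$ is finite.

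For (ii) I would build a double complex that maps to both sides. Its $(p,q)$ term is
\[
   \bigoplus_{\substack{i_0<\cdots<i_p\ \text{in}\ I\\ x_0<\cdots<x_q\ \text{in}\ X(i_0)}} G(x_0),
\]
which is exactly $\operatorname{Bar}(I,\mathfrak B)$ by Definition \ref{def:bar-hocolim}. The strategy is to compare it with $\operatorname{Bar}(K,E)$ through the projection $\mathrm{pr}\colon K\to I$, in the way Lemma \ref{lem:change-of-base} compared a bar complex over $K$ with a two-sided one over $J$.

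Concretely, I would use the auxiliary two-sided complex $B(R,I,\mathrm{pr}_!\text{-type data})$ with
\[
   R(i)=\operatorname{Bar}\bigl(\mathrm{pr}^{-1}(I_{\ge i}),E\bigr)
\]
and follow the pattern of Lemma \ref{lem:change-of-base}(ii). Filtering by the $K$-chain degree, each graded piece becomes a bar complex over $I_{\le \mathrm{pr}(c_0)}$, and that poset has a terminal object. Lemma \ref{lem:terminal-bar} then makes the graded piece collapse, so this complex is quasi-isomorphic to $\operatorname{Bar}(K,E)$. A simpler route is to apply Lemma \ref{lem:change-of-base} itself with $\Phi=\mathrm{pr}$ and $D$ the diagram $i\mapsto\colim$. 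That fails, however, because $E$ is not pulled back from $I$. So the direct route is better: write down an explicit "concatenation" map
\[
   \operatorname{Bar}(I,\mathfrak B)\to\operatorname{Bar}(K,E),\qquad
   [i_0<\cdots<i_p]\otimes[x_0<\cdots<x_q]\otimes g\mapsto \sum \text{(shuffles of the two chains into chains of }K\text{)},
\]
Because a chain $x_0<\cdots<x_q$ in $X(i_0)$ also lies in every $X(i_j)$, the pairs $(i_j,x_l)$ make sense in $K$. The map is an Eilenberg--Zilber shuffle map on the product of the chain $i_0<\cdots<i_p$ and the chain $x_0<\cdots<x_q$, restricted to $K$ (with the coefficient read at the bottom $(i_0,x_0)$).

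The main obstacle is proving that this shuffle map is a quasi-isomorphism, since it is not an isomorphism. My first attempt would be to filter both sides by the $I$-coordinate. On the source this means filtering by the bar degree $p$. On the target it means filtering by the image chain $\mathrm{pr}(\text{chain})$ after identifying $\operatorname{Bar}(K,E)$, via Lemma \ref{lem:change-of-base} applied to $\mathrm{pr}$, with a two-sided bar complex over $I$ whose first variable is the comma-fibre diagram $i\mapsto C_\ast(N(i\downarrow\mathrm{pr});k)$-twisted. The comma fibre $i\downarrow\mathrm{pr}=\mathrm{pr}^{-1}(I_{\ge i})$ contains $X(i)$ as the fibre over $i$. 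Moreover, the monotone retraction $(i',x)\mapsto(i,x)$ is not available, because $x$ need not lie in $X(i)$. Instead, the inclusion of the fibre $\mathrm{pr}^{-1}(i)=\{i\}\times X(i)$ is homotopy-final in the direction needed: over $(i',x)$ the relevant comma set is $\{(i,y):y\le x, y\in X(i)\}$. If this step resists a direct argument, I would reduce to representables as in Lemma \ref{lem:updetect}. By dévissage along the filtration of $G$ by up-closed subdiagrams, exactly as in the last paragraph of that proof, it suffices to treat $G=k_{\st_L(x)}$. Then both sides become chain complexes of order complexes by Lemma \ref{lem:support-restriction}(i). The claim reduces to Quillen's Theorem A for the projection $\pi^{-1}(\st_L(x))\to I$, whose fibres are the $X(i)\cap\st_L(x)$, that is, to Thomason's homotopy-colimit theorem in its poset form \cite[Cor.~3.3]{FM}, applied with constant $k$ coefficients. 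Finally, the degree-zero statement is read off from $H_0$: by Lemma \ref{lem:colim-coeq} both sides have $H_0$ equal to the colimit presentation, and on generators the map is the identity, so it induces the canonical isomorphism.
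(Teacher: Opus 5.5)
The part of your proposal that actually proves the lemma is the last one: d\'evissage in $G$ followed by the constant-coefficient Thomason theorem. That route is sound and genuinely different from the paper's. Your concatenation map is natural in $G$, and both $\operatorname{Bar}(I,\mathfrak B)$ and $\operatorname{Bar}(K,\pi^\ast G)$ are degreewise direct sums of values of $G$. So the d\'evissage of Lemma~\ref{lem:updetect} reduces you to $G=k_U$ with $U\subseteq L$ up-closed. There Lemma~\ref{lem:support-restriction}\textup{(i)} turns both sides into chains on order complexes, and the claim becomes Thomason's theorem for $i\mapsto X(i)\cap U$.

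Three things must be made explicit for this to be a proof.
\begin{enumerate}
\item The five lemma needs your \emph{specific} map to be a quasi-isomorphism at each $k_U$; an abstract equivalence of the two sides is not enough. So state that your map factors as the generalized Eilenberg--Zilber shuffle, from the total complex $\operatorname{Bar}(I,\mathfrak B)$ to the normalized diagonal of the bisimplicial replacement, followed by the inclusion of that diagonal into the simplicial replacement of $E$ over $K$. At $G=k_U$ this inclusion is $k$-chains on Thomason's natural map $\eta$, which for posets with inclusions is $(i_0\le\cdots\le i_n;\,x_0\le\cdots\le x_n)\mapsto\bigl((i_0,x_0)\le\cdots\le(i_n,x_n)\bigr)$.
\item The shuffle signs must be matched to the Koszul sign of Definition~\ref{def:bar-hocolim}.
\item The input is Thomason's theorem, not Quillen's Theorem~A. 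Theorem~A would require the comma posets $\pi^{-1}(U)\cap\mathrm{pr}^{-1}(I_{\le i})$ to be contractible. These are equivalent to $X(i)\cap U$, which in general they are not.
\end{enumerate}

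The paper proves the coefficient version directly. Its key step is the one your first attempt was looking for, but on the other side of $i$.
\begin{itemize}
\item The fibre $X(i)$ is homotopy final in the down-comma $K_{\le i}=\mathrm{pr}^{-1}(I_{\le i})$: the comma set over $(i',x)$ is $\{y\in X(i):x\le y\}$, which has least element $x$.
\item The retraction you declared unavailable does exist there, as $\rho_i(i',x)=x$, because $i'\le i$ gives $X(i')\subseteq X(i)$.
\item Your claimed finality of the fibre in the up-comma $\mathrm{pr}^{-1}(I_{\ge i})$ fails. Take $I=\{i<i'\}$, $X(i)=\{y_1,y_2\}$ and $X(i')=\{y_1,y_2,x\}$ with $y_1,y_2<x$. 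Then your set $\{(i,y):y\le x\}$ over $(i',x)$ is a two-point antichain.
\item Likewise your $R(i)=\operatorname{Bar}(\mathrm{pr}^{-1}(I_{\ge i}),E)$ is contravariant in $i$, and is not the diagram wanted.
\end{itemize}
With $\mathfrak B_K(i)=\operatorname{Bar}(K_{\le i},E)$, the paper runs the zigzag $\operatorname{Bar}(I,\mathfrak B)\leftarrow\operatorname{Bar}(I,\mathfrak B_K)\to\operatorname{Bar}(K,E)$. The first map is a quasi-isomorphism by Corollary~\ref{cor:final-criterion} and Lemma~\ref{lem:bar-invariance}. The second is one by filtering in the $K$-direction and using that each $\st_I(\mathrm{pr}(c_q))$ has a least element.

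What each route buys: the paper's stays inside the bar toolkit already built and needs neither Thomason, nor Eilenberg--Zilber, nor d\'evissage; in effect it reproves Thomason homologically. Yours shows that the $k$-linear statement is a formal consequence of the space-level theorem, at the price of importing two external theorems and pinning down the comparison map.
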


\begin{proof}
Part (i) is immediate, the coefficient of a chain in $X(i)$ being the same vector space $G(x)$ before and after the inclusion.

For $i\in I$ put $K_{\le i}=\{(i',x)\in K: i'\le i\}=\mathrm{pr}\downarrow i$, a full subposet of $K$, and let
\[
   \iota_i\colon X(i)\to K_{\le i},\ \iota_i(y)=(i,y),
   \qquad
   \rho_i\colon K_{\le i}\to X(i),\ \rho_i(i',x)=x ,
\]
the latter well defined because $X(i')\subseteq X(i)$, monotone, and satisfying $\rho_i\iota_i=\id$.

Since $\iota_i^\ast(E|_{K_{\le i}})=G|_{X(i)}$, $\iota_i$ induces a bar map $\iota_{i\#}\colon\mathfrak B(i)\to\operatorname{Bar}(K_{\le i},E)$. For $c=(i',x)\in K_{\le i}$,
\[
   c\downarrow\iota_i=\{y\in X(i): x\le y\}
\]
is the up-set of $x$ in $X(i)$, nonempty with least element $x$, so $\widetilde H_\ast(N(c\downarrow\iota_i);k)=0$ by Lemma \ref{lem:cone-point} and $\iota_{i\#}$ is a quasi-isomorphism by Corollary \ref{cor:final-criterion}.

Because $E(i',x)=G(x)=E(i,\rho_i(i',x))$, the coefficient maps along $(i',x)\le(i,x)$ are identities, and $\rho_i$ together with them gives an induced bar map (Definition \ref{def:bar-hocolim}) $\rho_{i\#}\colon\operatorname{Bar}(K_{\le i},E)\to\mathfrak B(i)$ with $\rho_{i\#}\iota_{i\#}=\id$; because $\rho_{i\ast}\iota_{i\ast}=\id_\ast$ and $\iota_{i\ast}$ is an isomorphism on homologies, $\rho_{i\#}$ is a quasi-isomorphism. Write $\mathfrak B_K(i)=\operatorname{Bar}(K_{\le i},E)$, a diagram on $I$ whose structure maps are induced by the inclusions $K_{\le i}\subseteq K_{\le i'}$ with identity coefficients. Then $\rho_\#\colon\mathfrak B_K\Rightarrow\mathfrak B$ is a map of $I$-diagrams, both composites for $i\le i'$ being $(i'',x)\mapsto x$, and it is an objectwise quasi-isomorphism, so the induced bar map $\operatorname{Bar}(I,\mathfrak B_K)\to\operatorname{Bar}(I,\mathfrak B)$, written $\tilde\rho_\#$, is a quasi-isomorphism by Lemma \ref{lem:bar-invariance}.

$T:=\operatorname{Bar}(I,\mathfrak B_K)$ is the total complex of
\[
   T_{p,q}
   =
   \bigoplus_{\substack{i_0<\cdots<i_p\ \text{in}\ I\\ c_0<\cdots<c_q\ \text{in}\ K\\ \mathrm{pr}(c_q)\le i_0}}E(c_0).
\]
The $q$-differential is that of $\operatorname{Bar}(K_{\le i_0},E)$ and the $p$-differential acts only on the $i_j$, the structure maps of $\mathfrak B_K$ being the identity on chains and coefficients. Define
\[
   \pi_T\colon T\longrightarrow\operatorname{Bar}(K,E),
   \qquad
   \pi_T\bigl([i_0;c_0<\cdots<c_q]\otimes e\bigr)=[c_0<\cdots<c_q]\otimes e ,
\]
and $\pi_T=0$ on the summands with $p\ge1$; write $\mathbf c$ for the chain $c_0<\cdots<c_q$. Consider $\operatorname{Bar}(K,E)$ as the double complex concentrated in the column $p=0$, of which it is the total complex. $\pi_T$ is a chain map: in bar degree $p=0$ the $I$-bar differential vanishes and the $q$-differential is that of $\operatorname{Bar}(K,E)$, since $\pi_T$ commutes with every $q$-face and every face of a chain in $K_{\le i_0}$ lies again in $K_{\le i_0}$; in bar degree $p\ge 1$, the $q$-differential preserves $p$ and is killed by $\pi_T$ (i.e. $\pi_T(T_{p,q-1})=0$), the $I$-bar differential out of $p\ge2$ lands in bar degree $p-1\ge1$ and is killed as well, and out of $p=1$ it is $[i_1;\mathbf c]\otimes e-[i_0;\mathbf c]\otimes e$ up to a global sign, which $\pi_T$ sends to zero.

We show that $\pi_T$ is indeed a quasi-isomorphism. Filter $T$ by the $K$-bar degree, $F_qT_n=\bigoplus_{j\le q}T_{n-j,j}$, increasing and bounded since $K$ is finite, with the associated graded quotient $F_qT_n/F_{q-1}T_n\cong T_{n-q,q}$, on which the part of the total differential that lowers $q$ induces zero, leaving only the $I$-bar differential. Fix a $K$-chain $\mathbf c$ of length $q$ and put $U_{\mathbf c}=\st_I(\mathrm{pr}(c_q))$. The strict chains $i_0<\cdots<i_p$ with $\mathrm{pr}(c_q)\le i_0$ are the strict chains in $U_{\mathbf c}$, so the summand of $T_{\ast,q}$ indexed by $\mathbf c$ is
\[
T_{\ast,q}(\mathbf c)=\bigoplus_{i_0<\cdots<i_p\ \text{in}\ U_{\mathbf c}}E(c_0)\cong C_\ast(\Delta(U_{\mathbf c});k)\otimes E(c_0),
\]
the sum running over all strict chains of $U_{\mathbf c}$, the length $p$ being the $I$-bar degree. Since $U_{\mathbf c}$ has a least element, Lemma \ref{lem:cone-point} gives $T_{\ast,q}(\mathbf c)$ homology $E(c_0)$ concentrated in $I$-bar degree $0$, the isomorphism being induced by the augmentation, that is by $\pi_T$. Giving $\operatorname{Bar}(K,E)$ the corresponding bounded filtration by $q$, $\pi_T$ preserves filtrations and induces an isomorphism on $E^1$ and, as in the proof of Lemma \ref{lem:bar-invariance}, on every later page, so it is a quasi-isomorphism \cite[\S2.2, Thm.~2.6]{McCleary}. Finally, the zigzag
\[
\operatorname{Bar}(I,\mathfrak B)\xleftarrow{\ \tilde\rho_\#\ } \operatorname{Bar}(I,\mathfrak B_K) \xrightarrow{\ \pi_T\ } \operatorname{Bar}(K, E)
\]
gives (ii). In degree zero, $H_0$ of a bar complex is the ordinary colimit, and the resulting map is the canonical comparison $\colim_I\colim_{X(i)}G\to\colim_KE$, an isomorphism by the universal properties, both sides being initial among families compatible with all structure maps of $E$.
\end{proof}

\section{The defect module}\label{app:incidence}

\subsection{Resolution, and bar cancellation}

\begin{example}[A defect module resolved over its incidence algebra]\label{ex:defect-resolution}
Definition \ref{def:defect-module} presents $\operatorname{Def}_\Phi|_{\Theta_u}$ as a diagram on $\Theta_u^{op}$, and so as a right $k\Theta_u^{op}$-module (Definition \ref{def:incidence-algebra}), and Lemma \ref{lem:incidence-trivial} then turns the essentiality condition of Definition \ref{def:essential-defect} into a computation with an explicit finite resolution. Take the rank-$3$ lattice of Example \ref{ex:essential-nonsingleton} with $\sigma=p_1$, $(a,b)=(1,2)$ and $u=t$, where $\Theta_t=\{z,p_1\}$ and $\Theta_t^{op}$ is the chain $p_1<z$. Then $k\Theta_t^{op}$ has basis $e_{p_1p_1},e_{zz},e_{p_1z}$ and is the algebra of upper triangular $2\times2$ matrices over $k$; its length is $1$, so the resolution of Lemma \ref{lem:incidence-trivial}\textup{(ii)} has two terms:
\[
   0\longrightarrow k\Theta_t^{op}e_{p_1p_1}
   \longrightarrow k\Theta_t^{op}e_{p_1p_1}\oplus k\Theta_t^{op}e_{zz}
   \longrightarrow k\longrightarrow 0 .
\]
Tensoring with $\operatorname{Def}_\Phi|_{\Theta_t}$ evaluates it at the first element of each chain of $\Theta_t^{op}$: in bar degree $0$ that gives $\operatorname{Def}_\Phi(p_1,t)\oplus\operatorname{Def}_\Phi(z,t)$, one summand per element, and in bar degree $1$ it gives $\operatorname{Def}_\Phi(p_1,t)$, from the single chain $p_1<z$. The comma fibre over $(p_1,t)$ has a greatest element, so $\operatorname{Def}_\Phi(p_1,t)$ is acyclic wherever it appears, and only the defect at the bottom of $\Theta_t$ survives: the total complex is quasi-isomorphic to $\operatorname{Def}_\Phi(z,t)$, and the $\operatorname{Tor}$ is $k$ in degree $0$ and vanishes elsewhere, the conclusion drawn there from Lemma \ref{lem:essential-certificates}\textup{(ii)}.
\end{example}

\begin{remark}[Bar cancellation and inessential defect modules]\label{rem:bar-cancellation}
Lemma \ref{lem:bar-filtration} computes the $E^1$-page of the bar-degree filtration summandwise, but not $H_\ast B(R,J,D)$: the bar differential and those above it connect different $J$-indexed summands, so a nonzero class in some $H_\ast R(j)$ may still be cancelled. In the proof of Lemma \ref{lem:defect-detection}\textup{(ii)} the choice of $u_0$ maximal in $q_2^J(S)$ removes the cancellation coming from outside the fibre, and what remains is whether the bar faces and the structure maps of $\operatorname{Def}_\Phi|_{\Theta_{u_0}}$ can cancel all defect classes within it.

That is what essentiality forbids (Definition \ref{def:essential-defect}), and the diagrams below show that it forbids something: each has nonzero homology somewhere, yet vanishing derived colimit, so each would be an inessential defect module if it were realised as some $\operatorname{Def}_\Phi|_{\Theta_{u_0}}$. None of them is so realised here (Remark \ref{rem:detection-scope}); what they establish is that essentiality cannot be replaced by a condition on the values and the structure maps of the module taken alone. Each is concentrated in homological degree $0$.

First, a homotopy colimit of a diagram with nonvanishing homology can vanish, as at the end of the proof of Lemma \ref{lem:support-restriction}: on the two-element \emph{index} poset $i_0<i_1$ with $H_\ast G(i_0)=k$ and $H_\ast G(i_1)=0$, the nonzero value sitting at the coefficient end because a bar chain carries the value of its bottom index, $\operatorname{Bar}(\{i_0<i_1\},G)$ is acyclic and $\colim G=0$. The orientation is the whole content: with the nonzero value at $i_1$ instead, the same two-element index poset gives $H_0=k$. With the values of $H_\ast G$ shown in parentheses:
\[
\begin{array}{c@{\qquad\qquad}c}
\text{acyclic} & \text{not acyclic}\\[0.6em]
\begin{tikzcd}[row sep=1.8em,column sep=1.2em,cramped]
i_1\ (0)\\
i_0\ (k)\arrow[u]
\end{tikzcd}
&
\begin{tikzcd}[row sep=1.8em,column sep=1.2em,cramped]
i_1\ (k)\\
i_0\ (0)\arrow[u]
\end{tikzcd}
\end{array}
\]
Read through Definition \ref{def:essential-defect}, where the index is $\Theta_u^{op}$, the acyclic case is a defect present at $i_0$ and filled in at the larger comma fibre below it: an inessential defect, and the reason essentiality is a hypothesis and not a triviality.

Second, requiring the homology to be nonzero at \emph{every} object does not help: for $i_1>i_0<i_2$ with $G(i_0)=k^2$ and $G(i_1)=G(i_2)=k$, the two coordinate projections make $g\mapsto(\tau_1g,-\tau_2g)$ surjective, so the colimit vanishes with every value nonzero.

Third, it does not help to require in addition that every structure map be \emph{injective} on homology, the condition that a defect class, once present, cannot die under enlargement. Let $I$ be the crown $\{a_1,a_2\}<\{b_1,b_2\}$, with $a_i<b_j$ for all four pairs and no other relations, so that $\Delta(I)$ is a $4$-cycle. Fix $\lambda\in k^\ast$, and let $G$ be $k$ at every object, with structure maps
\[
\begin{tikzcd}[row sep=large,column sep=huge]
b_1 & & b_2\\
a_1\arrow[u,"1"']\arrow[urr,"1"',pos=0.2] & & a_2\arrow[u,"\lambda"]\arrow[ull,"1",pos=0.2]
\end{tikzcd}
\]
that is
\[
   G(a_1\le b_1)=G(a_1\le b_2)=G(a_2\le b_1)=1,
   \qquad
   G(a_2\le b_2)=\lambda ,
\]
a rank-one system with monodromy $\lambda$; there are no chains of length $2$, so functoriality is automatic and every structure map is an isomorphism. In the basis of the four length-one chains, $\partial=d_0-d_1\colon\operatorname{Bar}_1(I,G)\to\operatorname{Bar}_0(I,G)$ has matrix
\[
\begin{pmatrix}
   -1 & -1 & 0 & 0\\
    0 & 0 & -1 & -1\\
    1 & 0 & 1 & 0\\
    0 & 1 & 0 & \lambda
\end{pmatrix},
\qquad \det=\lambda-1 ,
\]
rows indexed by $a_1,a_2,b_1,b_2$ and columns by $a_1b_1,a_1b_2,a_2b_1,a_2b_2$. Since $\operatorname{Bar}_2(I,G)=0$, the complex $\operatorname{Bar}(I,G)$ is acyclic if and only if $\lambda\ne1$ in $k$, and then $\colim G=0$ as well, although $G$ is nonzero at every object, all its structure maps are isomorphisms, and its support is down-closed. For $\lambda=-1$ the determinant is $-2$, so the example is acyclic in every characteristic other than $2$, while in characteristic $2$ one has $\lambda=1$ and $H_0\cong H_1\cong k$: the degeneration is not an accident of the field but the trivial-monodromy locus met in characteristic $2$.

Every structure map of $G$ being an isomorphism, $\operatorname{Bar}(I,G)$ is the simplicial chain complex of $\Delta(I)$ with coefficients in the rank-one local system of monodromy $\lambda$ around the $4$-cycle, so the vanishing is that of $H_\ast(S^1;\mathcal L_\lambda)$ for $\lambda\ne1$. What fails is not down-closedness of the support but simple connectivity together with triviality of the monodromy: $I$ has two minimal objects, and the monodromy kills the class that either one alone would carry. A least element rules this out by making the support contractible, which is why Lemma \ref{lem:essential-certificates}(ii) asks for one; Lemma \ref{lem:essential-certificates}(i) rules it out differently, the colimit in the bottom degree vanishing here.

Any proof of the unconditional form of Lemma \ref{lem:defect-detection}\textup{(ii)} must therefore use a property of the defect modules of meet functors, and not a property of arbitrary diagrams on the fibre: some reason why a defect cannot simply be filled in at the larger comma fibre below it, and some reason why a nontrivial local system cannot occur among the comma fibres of a meet.
\end{remark}

\bibliographystyle{alpha}
\bibliography{references}

\end{document}